\newtheorem{theorem}{Theorem}
\newtheorem{remark}{Remark}
\newtheorem{definition}{Definition}
\newtheorem{cor}{Corollary}
\newtheorem{lemma}{Lemma}
\newtheorem{assump}{Assumption}
\newtheorem{condition}{Condition}
\newcommand{\vertiii}[1]{{\left\vert\kern-0.25ex\left\vert\kern-0.25ex\left\vert #1
    \right\vert\kern-0.25ex\right\vert\kern-0.25ex\right\vert}}
\begin{document}


\begin{frontmatter}

\title{Semi-parametric efficiency bounds for high-dimensional models
}
\runtitle{Efficiency bounds for high-dimensional models}

\author{\fnms{Jana} \snm{Jankov\'a}\corref{Jana Jankov\'a}\ead[label=e1]{jankova@stat.math.ethz.ch}
}
\and
\author{\fnms{Sara} \snm{van de Geer}\corref{Sara van de Geer}\ead[label=e2]{geer@stat.math.ethz.ch}}

 \address{ Seminar f\"ur Statistik\\ 
ETH Z\"urich \\R\"amistrasse 101\\ 8092 Z\"urich \\ Switzerland \\ \printead{e1}
\\
\printead{e2}
}
 \affiliation{ 
ETH Z\"urich}

%
\runauthor{J. Jankov\'a \and S. van de Geer}

\begin{abstract}
Asymptotic lower bounds for estimation play a fundamental role in assessing the quality of statistical procedures.
In this paper we propose a framework for obtaining semi-parametric efficiency bounds for sparse high-dimensional models, where the dimension of the parameter is larger than the sample size. 
We adopt a semi-parametric point of view: we concentrate on one dimensional functions of a high-dimensional parameter.
We follow two different approaches to reach the lower bounds: asymptotic Cram\'er-Rao  bounds and Le Cam's type of analysis. 
Both these approaches allow us to define a class of asymptotically unbiased or ``regular''  estimators for which a lower bound is derived.
Consequently, we show that certain estimators obtained by de-spar\-si\-fying (or de-biasing) an $\ell_1$-penalized M-estimator are asymptotically unbiased and achieve the lower bound on the variance: thus in this sense they are asymptotically efficient. 
The paper discusses in detail the linear regression model and the Gaussian graphical model.  

\end{abstract}

\begin{keyword}[class=MSC]
\kwd[Primary ]{62J07}
\kwd[; secondary  ]{62F12}
\end{keyword}

\begin{keyword}
\kwd{Asymptotic efficiency}  
\kwd{high-dimensional} 
\kwd{sparsity}
\kwd{Lasso}
\kwd{linear regression}
\kwd{graphical models}
\kwd{Cram\'er-Rao bound}
\kwd{Le Cam's Lemma}
\end{keyword}

\end{frontmatter}

\section{Introduction}

\indent
Following the development of numerous me\-thods for high-dimensional estimation, more recently
the need for statistical inference has emerged. A number of papers have since studied the problem and proposed constructions of estimators which are asymptotically normally distributed and hence lead to inference.
These results naturally give rise to
the question of their optimality. 
This motivates us to study the question whether 
we can establish asymptotic efficiency bounds in high-dimensional models and whether we can construct an estimator achieving these bounds.
\par
To introduce the setting, suppose that we observe a sample $X^{(1)},\dots,X^{(n)}$ which is distributed according to a probability distribution $P_\beta$ that depends on an unknown high-dimensional parameter $\beta\in\mathcal B\subset \mathbb R^p$. The dimension $p$ of the parameter can be much larger than the sample size $n$. A major structural assumption we consider in this paper is sparsity in the high-dimensional parameter. 
In these sparse high-dimensional settings, a common  approach to estimation is based on regularized M-estimators, where the regularization is in terms of the $\ell_1$-penalty. This approach has been studied extensively, and under several settings, it produces near-oracle estimators of $\beta$ under certain sparsity conditions (and some further conditions). 
However, the oracle properties of the regularized estimators come at a price: the regularization introduces bias by shrinking the estimated coefficients towards zero. Hence, the regularized approach does not easily yield estimators which are asymptotically normally distributed.  This makes it difficult to establish results for statistical inference.
\par
Several streams of work have emerged that studied ``post-regularization inference'', which focused on construction of methodology for inference, with some preliminary use of regularized estimators. This was mostly considered for estimation of low-dimensional parameters of the high-dimensional vector.  
One stream of work concentrates on ``de-sparsifying'' or ``de-biasing'' procedures, which were studied for the linear model (\cite{zhang}, \cite{vdgeer13}, \cite{jm14a}, \cite{jm14a}, \cite{jm14b}, \cite{jm15}), for
ge\-ne\-ra\-li\-zed lin\-ear models (\cite{vdgeer13})  and some special cases of non-linear models, such as undirected graphical models (\cite{jvdgeer14}, \cite{jvdgeer15}).
This approach uses the $\ell_1$-regularized M-estimator as an initial estimator and implements  a bias correction step which may be interpreted as one iteration using the Newton-Raphson method. 
Another stream of work studies the use of orthogonalizing conditions  to define a new post-regularization estimator; this approach was considered for general models under high-level conditions in \cite{vch1}. 
Further examples of high-dimen\-sio\-nal inference include  the works \cite{zhou}, \cite{sparse.cca} or data splitting methods (\cite{data.split}). 
The work in essence shows an important result: an asymptotically normal estimator for low-dimen\-sio\-nal parameters {can} be constructed in several of the common models.
\par

Further key questions that were studied concern optimality properties of these de-sparsified estimators. In particular, 
what are lower bounds on the rate of convergence in the supremum norm? These questions have been  investigated for the linear regression with random design (\cite{cai.guo})
and for Gaussian graphical models (\cite{zhou}) and other special cases of non-linear models (\cite{sparse.cca}). 
The results in these settings reveal several important findings, which we discuss for the linear regression and graphical models. The minimax rates for estimation of single elements (of the vector of regression coefficients or the precision matrix) are shown to satisfy
\begin{equation}\label{minirate}
\inf_{T}\sup_{\beta\in\mathcal B}\mathbb E_\beta |T(X^{(1)},\dots,X^{(n)}) - \beta_i|\geq C(1/\sqrt{n}+s\log p/n),
\end{equation}
for some constant $C>0,$ where $\beta_i\in\mathbb R$ is a single regression coefficient or a single entry in a precision matrix and the \textit{unknown} sparsity $s$ is the number of non-zero entries in the regression vector or, in the case of Gaussian graphical models, in rows of a precision matrix. The infimum in \eqref{minirate} is taken over all estimators $T$. The statement \eqref{minirate} further requires some mild regularity conditions (see \cite{cai.guo}, \cite{zhou}). 
Naturally, \eqref{minirate} implies that the parametric rate is optimal: it cannot be improved in order.
 On the other hand, if there is insufficient sparsity, in particular when the sparsity $s$ satisfies $s\gg n/\log p $, the minimax lower bounds diverge. This is no surprise as the oracle inequalities for certain M-estimators have only been shown under the condition $s=o(n/\log p).$ In the intermediate sparsity regime when $\sqrt{n} /\log p \leq s<
n/\log p,$ the parametric rate cannot be achieved. 
\par
As for the upper bounds, the parametric rate $1/\sqrt{n}$ can be achieved for estimation of single entries. This basically follows directly from the asymptotic normality of the de-sparsified estimators, if sparsity of $\beta$ is of small order $\sqrt{n}/\log p.$  This sparsity condition is stronger than the condition necessary for oracle inequalities ($s=o(n/\log p)$).
However, as we discuss in Section \ref{subsec:nec}, the sparsity condition $s=o(\sqrt{n}/\log p)$ is essentially necessary for asymptotically normal estimation. 
To summarize the findings, the analysis of the minimax rates revealed that under sufficient sparsity of small order $\sqrt{n}/\log p,$
the parametric rate of order $1/\sqrt{n}$ is optimal, and the de-sparsified estimator achieves it (in the above mentioned cases).
\par
In this paper, we attempt to answer further questions that arise concerning the  optimality of  asymptotically normal estimators in high-dimensional settings. The analysis on minimax rates does not address an important question. 
The derived lower bound \eqref{minirate}  does not reveal any explicit lower bounds on the (asymptotic) variance. The question of efficiency in the spirit of the famous Cram\'er-Rao result thus  remains open in the high-dimensional setting.
This motivates us to pose the following questions.
 Can we establish lower bounds on the variance, similar to the Cram\'er-Rao bounds in the (semi-)parametric setting, also in the high-dimensional setting? And if yes, can we construct an estimator that achieves these bounds? 
We give an affirmative answer to these questions.

\section{Our contributions}
\label{sec:contrib}
\par
Asymptotic efficiency of estimators was thoroughly studied in the traditional settings; we refer the reader to the books \cite{vdv}, \cite{bickelbook} and the references therein.
These results are however developed for \textit{fixed} 
 models which do not change with $n$, and hence they cannot be applied to high-dimensional settings where the dimension of the parameter may grow with the sample size.
\par
In this paper we develop a framework for establishing asymptotic efficiency of estimators in high-dimensional models changing with $n$. 
We concentrate on two approaches towards deriving the lower efficiency bounds:  asymptotic Cram\'er-Rao bounds and Le Cam's approach.
\par
Firstly, we develop an asymptotic version of a semi-parametric  Cram\'er-Rao lower bound 
for sparse high-dimensional linear and graphical  models.
 To this end, we propose a strong asymptotic unbiasedness assumption. Loosely speaking, this unbiasedness assumption measures the rate at which  the bias vanishes in shrinking neighbourhoods of the true distribution of ``size'' $1/\sqrt{n}$.
We consider the linear model and the Gaussian graphical model and for each of them, we establish lower bounds on the variance of any asymptotically unbiased estimator. The proposed framework might be applicable to other high-dimensional models in a similar spirit.

\par
Consequently, for linear regression and Gaussian graphical models, we show that the de-sparsified estimator is an asymptotically unbiased estimator and is asymptotically efficient, i.e. it reaches the derived lower bound.
Thus, compared to previous results, which only showed asymptotic normality or minimaxity (up to order in $n$) of the de-sparsified estimator, we show that it is in terms of variance the best among all asymptotically unbiased estimators: thus in this sense asymptotically efficient.
\par
In the second approach, we extend some of the classical results of Le Cam on local asymptotic normality to the high-dimensional setting.  
The  result underlies a likelihood expansion analysis and involves a careful adjustment of Le Cam's arguments to the high-dimensional setting. The result obtained gives us the limiting distribution of an asymptotically linear estimator under a small perturbation of the parameter. 
We next show for the linear model that the de-sparsified estimator is regular: it converges locally uniformly to the limiting normal distribution with zero mean, and among all regular estimators it has the smallest asymptotic variance. 
\par
The two approaches above are strongly related, but one does not clearly dominate the other. A more detailed comparison is discussed in Section \ref{subsec:conclusion}. 
\par
As a by-product of our analysis, we establish new oracle results for the Lasso. Typical analysis considers oracle inequalities for the prediction error and the $\ell_1$-error which hold with high-probability. We strengthen these oracle inequalities by showing that they also hold for the mean $\ell_1$-error and  for higher orders of this error.
These oracle inequalities are needed to claim strong asymptotic unbiasedness of the de-sparsified estimators.

\section{Relation to prior work}
As pointed out in Section \ref{sec:contrib}, the traditional results as in, for instance, \cite{vdv} or \cite{bickelbook}, are not directly applicable to the high-dimensional setting. We extend the traditional approach to semi-parametric efficiency to the context of high-dimensional models which requires
adjustment of the  arguments to a model changing with $n$ and the sparsity of the model is required to keep remainders in approximate expansions under control. 
Our main results show that the lower bounds for high-dimensional models are analogous to those for parametric models, however, a new message for high-dimensional models is that to obtain the parametric  lower bound, we require that the ``worst possible sub-direction'' is sparse.
Without this condition, we are unable to claim asymptotic efficiency of the de-sparsified Lasso estimator.

\par
Regarding the upper bounds, to construct asymptotically efficient estimators, our work follows the methodology from the works \cite{vdgeer13} and \cite{jvdgeer15}, where de-sparsified Lasso estimators are proposed for the linear regression and for undirected graphical models. We borrow these constructions with some small adjustments. However, the upper bounds derived for the de-sparsified estimators in the mentioned papers are not sufficient for the present analysis: we need to show a stronger oracle bound which holds in expectation. Moreover, we  extend the results for estimation of single entries as considered in \cite{vdgeer13} and \cite{jvdgeer15} to linear functionals.
\par
Asymptotic efficiency of estimators in high-dimensional settings changing with $n$ was first considered in the paper \cite{vdgeer13}. The paper provides a formulation of asymptotic efficiency of entries of the de-biased lasso. The approach is based on embedding the high-dimensional model into a fixed (i.e. not changing with $n$) infinite-dimensional model, for which semi-parametric efficiency bounds are available (see \cite{vdv}). However, such an embedding requires a very special model structure. In the present paper, we do not use an embedding but instead directly develop the theory for models changing with $n.$

%

\section{Organization of the paper}
\par
The particular sections of the paper are divided as follows. In Section \ref{subsec:strong} we state preliminary results on oracle inequalities for the mean $\ell_1$-error of the  Lasso estimator. 
In Section \ref{subsec:au} we propose a strong asymptotic unbiasedness assumption. Section \ref{subsec:lr.all} gives lower and upper bounds on the variance of asymptotically unbiased estimators in the linear model, considering random design in Section \ref{subsec:lr.random} and fixed design in Section \ref{subsec:lr.fixed}.
In Section \ref{subsec:ggm.all} we derive lower and upper bounds on the variance of asymptotically unbiased estimators in Gaussian graphical models. 
 Section \ref{sec:lecam} contains an extension of Le Cam's lemma to the high-dimensional setting, which is applicable to general non-linear models. Section \ref{subsec:conclusion} summarizes the results, conclusions and some open questions. Finally, the proofs are contained in the supplemental article \cite{sup}.

\section{Notation}
For a vector $x=(x_1,\dots,x_p)\in\mathbb R^p$ we denote its $\ell_p$ norm by $\|x\|_p:= (\sum_{i=1}^p x_i^p)^{1/p}$ for $p\geq 1$. We further let $\|x\|_\infty:=\max_{i=1,\dots,p}|x_i|$ and $\|x\|_0 = |\{i:i\in\{1,\dots,p\},x_i\not =0\}|.$  For a vector $x\in\mathbb R^n$ we denote $\|x\|^2_n:=\|x\|_2^2/n$ (with some abuse of notation).
By $e_i$ we denote a $p$-dimensional vector of zeros with  a one at position $i$. For a matrix $A\in\mathbb R^{m\times n}$, we denote its $(i,j)$-th entry by $A_{ij},i=1,\dots,m,j=1,\dots,n.$
Further, we let $\|A\|_\infty:=\max_{i=1,\dots,m,j=1,\dots,n} |A_{ij}|$,
 $\vertiii{A}_1$ $ := \max_{i=1,\dots,m} \sum_{j=1}^n|A_{ij}|$ and we let $\|A\|_F$ denote the Frobenius norm of $A.$
We denote its $j$-th column by $A_j$.
By $\Lambda_{\min}(A)$ and $\Lambda_{\max}(A)$ we denote the minimum and maximum eigenvalue of a symmetric matrix $A$, respectively. 
We use $\textrm{tr}(A)$ to denote the trace of the matrix $A$.
We recall here that for symmetric matrices $A,B\in \mathbb R^{p\times p}$ it holds that
$\textrm{vec}(A)^T\textrm{vec}(B) = \textrm{tr}(AB),$
where $\textrm{vec}(A)$ is the vectorized version of a matrix $A$ obtained by stacking columns of $A$ on each other. 
 \par
For real sequences $f_n,g_n$, we write $f_n=\mathcal O(g_n)$ or $f_n\lesssim g_n$  if $|f_n| \leq C |g_n|$ for some $C>0$ independent of $n$ for all $n.$ 
We write 
$f_n\asymp g_n$ if both $f_n = \mathcal O(g_n)$ and $1/f_n =\mathcal O(1/g_n)$ hold. 
Finally, $f_n=o(g_n)$ if $\lim_{n\rightarrow \infty} f_n/g_n =0.$   
For a sequence of random variables $X_n$, we write $X_n=\mathcal O_P(f_n)$ if $X_n/f_n$ is bounded in probability. We write $X_n=o_P(1)$ if $X_n$ converges to zero in probability.
We use $\rightsquigarrow$ to denote the convergence in distribution. 
By $1_T$ we denote the indicator function of the set $T.$ The identity matrix is denoted by $I$.


\section{Asymptotic unbiasedness}
\label{subsec:au}
This section defines the concept of stro\-ng asymptotic unbiasedness that will be needed for the linear and graphical model.
We turn to the linear model in the next section.
Consider 
a probability distribution $P_{{\beta}}$ on some observation space $\mathcal X,$
where the parameter $\beta$ lies is a $p$-dimensional parameter space $\mathcal B \subset \mathbb R^p.$  
We consider the parameter set 
\begin{equation}\label{parsp}
 \mathcal B(d_n) :=\{ \beta \in \mathcal B: \|\beta\|_0 \leq d_n , \|\beta\|_2 \leq C\},
\end{equation}
where $C>0$ is some universal constant and $d_n$ is a  \textit{known} sequence that will be specified later. 
We further define an $\ell_2$-neighbourhood of a point ${\beta}\in\mathcal B(d_n)$ as follows
\begin{equation}\label{neigh}
 B({\beta},\varepsilon) := \{\tilde{\beta}\in\mathcal B(d_n): \|\tilde{\beta}-{\beta}\|_2\leq \varepsilon\}.
\end{equation} 
We remark that all the parameter vectors  appearing in this paper are \textit{sequences} depending on $n$. In  general we omit the index $n$, except for situations where omitting the index could lead to confusion.

Let $g:\mathcal B\rightarrow \mathbb R$ and let the parameter of interest be $g(\beta)$.
Our  goal is to derive an asymptotic lower bound for the variance of an estimator $T_n$ of $g(\beta)$, 
which is in some sense asymptotically unbiased. 
To this end, we define \textit{strong asymptotic unbiasedness} as follows.  

\begin{definition}\label{au}
Let $m_n$ be a sequence such that $n=o(m_n)$.
We say that $T_n$ is a strongly asymptotically unbiased estimator of $g(\beta)$ at $\beta_0$ 
(in a neighbourhood of size $c$) with a rate $m_n$ if it holds that $\emph{var}_{\beta_0}(T_n)=\mathcal O(1/n)$ 
and for every $\beta\in B\left(\beta_0,\frac{c}{\sqrt{m_n}}\right)$ it holds
$$\lim_{n\rightarrow \infty}\sqrt{m_n} (\mathbb E_{\beta} T_n - g(\beta)) = 0.$$
\end{definition}

The motivation for Definition \ref{au} comes from the asymptotic unbiasedness assumption for semi-parametric models, which is assumed to hold in a small  neighborhood of $\beta_0.$ 
Definition \ref{au} implies that the mean squared error of the considered estimator must be of order $1/n$.

\section{Strong oracle inequalities for the Lasso}
\label{subsec:strong}
We present new results on oracle inequalities for the Lasso estimator in linear regression which will be needed in subsequent sections, but can also be of independent interest.
Typical high-dimensional analysis derives oracle inequalities for the Lasso which hold with high probability (see \cite{hds} for an overview of such results). 
The paper \cite{bellec.tsybakov} derives bounds on the expectation of the prediction error.
Here we derive oracle inequalities for the $\ell_1$-estimation error that hold in expectation.
\par
Consider the linear model 
\begin{equation} \label{LR0}
Y = X\beta_0 + \epsilon,
\end{equation}
where $X$ is the $n\times p$ design matrix with independent rows $X^{(i)},i=1,\dots,n$, $Y$ is the $n\times 1$ vector of observations and $\epsilon=(\epsilon_1,\dots,\epsilon_n)^T \in\mathbb R^n$ is the (unobservable) error. The error satisfies $\mathbb E\epsilon=0$ and its components $\epsilon_i$ are independent for $i=1,\dots,n$. 
Moreover, the error $\epsilon$ and the design matrix $X$ are independent. 
 We further denote the Gram matrix by $\hat\Sigma:=X^T X/n$.
The vector $\beta_0 =(\beta_1^0,\dots,\beta_p^0)\in \mathbb R^p$ is unknown.
The unknown number of non-zero entries of $\beta_0$ is denoted by $s:=\|\beta_0\|_0$ and is called the sparsity of $\beta_0.$

\par
The Lasso estimator with a tuning parameter $\lambda>0$ is defined as follows:
\begin{equation}\label{lasso}
\hat\beta := \textrm{arg}\min_{\beta\in\mathbb R^p} \|Y-X\beta\|_n^2+ 2\lambda \|\beta\|_1.
\end{equation}
The known results on oracle inequalities for the Lasso \eqref{lasso} give high-proba\-bi\-li\-ty bounds for the prediction error and the $\ell_1$-error (or under some conditions, for the $\ell_q$-error for $1\leq q \leq 2$). In particular, for the tuning parameter  $\lambda\asymp \sqrt{\log p/n}$ and under  further conditions that may be found in \cite{hds}, it holds
$$\|X(\hat\beta - \beta_0)\|_n^2 + \lambda \|\hat\beta-\beta_0\|_1 =\mathcal O_P(s\lambda^2).$$
\cite{bellec.tsybakov} show analogous results for the expected prediction error $\mathbb E\|X(\hat\beta-\beta_0)\|_n$ for the case of fixed design.
 We show such results may be obtained for the expected $\ell_1$-error, under almost identical conditions.
In particular, Theorem \ref{strong2} presented below implies that the mean $\ell_1$-error, $\mathbb E_{\beta_0}\|\hat\beta-\beta_0\|_1$, is up to a logarithmic factor of the same order as the oracle error $\mathbb E_{\beta_0}\|\beta_{ora}-\beta_0\|_1=\mathcal O(s/\sqrt{n}),$ where $\beta_{ora}$ is the oracle maximum likelihood estimator (i.e. a maximum likelihood estimator applied with the knowledge of true non-zero entries of $\beta_0$). Theorem \ref{strong2} actually shows a more general result since it considers also higher-order errors, namely the $k$-th order error $\mathbb E_{\beta_0}\|\hat\beta-\beta_0\|_1^k$ for any fixed $k\in\{1,2,\dots\}$.

We consider the situation when the errors $\epsilon_i$ are independent and sub-Gaussian (with a universal constant)  and the design $X$ has independent sub-Gaussian rows (with a universal constant). 
To this end, we recall a sub-Gaussianity assumption on random variables and vectors (see Section 14 in \cite{hds}).
\begin{definition}\label{sgdef}
We say that a random vector $Z\in\mathbb R^m$ 
has sub-Gaussian entries with constants 
$K,K_2>0$ if 
$$\mathbb Ee^{Z_j^2/K^2}\leq K_2, \;\;\;\;j=1,\dots,m.$$
 We say that a random vector $Z\in\mathbb R^m$ is sub-Gaussian with constants 
$K,K_2>0$ if for all $\alpha\in \mathbb R^m$ such that 
$\|\alpha\|_2=1$ it holds that
$$\mathbb E e^{(\alpha^T Z)^2/K^2} \leq K_2.$$
\end{definition}
In our further analysis, we typically require that the sub-Gaussianity condition as in Definition \ref{sgdef} is satisfied with universal constants $K,K_2>0.$
A prime example of a sub-Gaussian random vector with a universal constant is a Gaussian random vector with zero mean and covariance matrix $\Sigma_0$ that satisfies $\Lambda_{\max}(\Sigma_0) = \mathcal O(1)$. 
We  formulate the conditions on the error and the design in the following. 
\begin{enumerate}[label=(A\arabic*),start=1]
\item\label{model.sg}
Assume the linear model \eqref{LR0}, where the errors $\epsilon_i$ are independent sub-Gaussian random variables with universal constants and with $\mathbb E\epsilon_i=0$. 
\item\label{design.entry}
Assume 
 that $X$ is a random $n\times p$ matrix independent of $\epsilon$ with independent rows $X^{(i)},i=1,\dots,n,$ with mean zero and with  sub-Gaussian entries with universal constants.
We let $\Sigma_0:=\mathbb E\hat\Sigma$ and suppose that $1/\Lambda_{\min}(\Sigma_0)=\mathcal O(1)$. 
\end{enumerate}

\begin{enumerate}[label=(A\arabic**),start=2]
\item\label{design}
Assume 
 that $X$ is a random $n\times p$ matrix independent of $\epsilon$ with independent sub-Gaussian rows $X^{(i)},i=1,\dots,n,$ with universal constants, with mean zero.
We let $\Sigma_0:=\mathbb E\hat\Sigma$ and suppose that $1/\Lambda_{\min}(\Sigma_0)=\mathcal O(1)$. 
\end{enumerate}

Under conditions \ref{design.entry} or \ref{design} we denote the inverse covariance matrix by $\Theta_0 := \Sigma_0^{-1}$
and by $\Theta_j^0$ we denote its $j$-th column ($j=1,\dots,p$).

\begin{theorem}\label{strong2}
Suppose that conditions \ref{model.sg}, \ref{design.entry} are satisfied.
Suppose that $\|\beta_0\|_2=\mathcal O(1)$, $s\sqrt{\log p/n}=o(1)$ and
let $k\in\{1,2,\dots\}$ be fixed. 
Consider  the Lasso estimator $\hat\beta$  defined in \eqref{lasso} with a tuning parameter $\lambda\geq c\tau \sqrt{\log p/n},$ where $c>0$ is a sufficiently large universal constant and $\tau>1$ satisfies $\tau^2 > 2k \log((\sqrt{s}\lambda^2)^{-1})/\log p$.
Then there exists a universal constant $C_1$ such that
$$(\mathbb E_{\beta_0}\|\hat\beta-\beta_0\|_1^k)^{1/k} \leq C_1 s\lambda.$$
\end{theorem}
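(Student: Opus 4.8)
The plan is to establish the bound on $(\mathbb E_{\beta_0}\|\hat\beta-\beta_0\|_1^k)^{1/k}$ by a layer-cake (tail integration) argument that converts the standard high-probability oracle inequality into a moment bound. The key idea is that the usual oracle inequality $\|\hat\beta-\beta_0\|_1 \lesssim s\lambda$ holds on an event whose complement has exponentially small probability; the subtle point is that to control $\mathbb E\|\hat\beta-\beta_0\|_1^k$ we must handle the contribution of the small-probability ``bad'' event, on which we have no deterministic control of the estimator, and bound it against the tail decay.

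First I would fix the good event $\mathcal T$ on which both (i) the empirical process term is controlled, namely $\|\epsilon^T X/n\|_\infty \leq \lambda/2$ (the usual choice guaranteeing that $\lambda$ dominates the noise), and (ii) a compatibility or restricted-eigenvalue condition holds for $\hat\Sigma$, so that the deterministic Lasso oracle argument applies. On $\mathcal T$ the standard basic inequality yields $\|\hat\beta-\beta_0\|_1 \leq c_0 s\lambda$ for a universal constant $c_0$. Under the sub-Gaussian assumptions \ref{model.sg} and \ref{design.entry}, both events fail with probability at most of order $p^{-\tau^2}$ (for the noise term, via a union bound over $p$ coordinates and a sub-Gaussian tail, calibrated precisely to the threshold $\lambda \geq c\tau\sqrt{\log p/n}$) plus an exponentially small term for the eigenvalue event; I would record $\mathbb P(\mathcal T^c) \lesssim p^{-\tau^2}$ up to lower-order terms.

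Next I would split the moment as
$$
\mathbb E_{\beta_0}\|\hat\beta-\beta_0\|_1^k = \mathbb E_{\beta_0}\bigl(\|\hat\beta-\beta_0\|_1^k 1_{\mathcal T}\bigr) + \mathbb E_{\beta_0}\bigl(\|\hat\beta-\beta_0\|_1^k 1_{\mathcal T^c}\bigr).
$$
The first term is bounded deterministically by $(c_0 s\lambda)^k$. For the second term, the crux is that $\|\hat\beta-\beta_0\|_1$ is not bounded on $\mathcal T^c$, so I would control it using the KKT conditions: from the optimality of $\hat\beta$ one gets a crude deterministic bound $\|\hat\beta\|_1 \leq \|Y-X\beta_0\|_n^2/(2\lambda) + \|\beta_0\|_1$, which gives $\|\hat\beta-\beta_0\|_1$ a polynomial-in-$n$ envelope in terms of $\|\epsilon\|_2^2$ and $\|\beta_0\|_1$. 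Then Cauchy–Schwarz against $1_{\mathcal T^c}$, together with the sub-Gaussian moment control of $\|\epsilon\|_2^2$ and the tail bound $\mathbb P(\mathcal T^c)\lesssim p^{-\tau^2}$, shows this term is negligible \emph{provided} $\tau$ is chosen so that the tail decay $p^{-\tau^2/2}$ beats the crude envelope; this is exactly what the hypothesis $\tau^2 > 2k\log((\sqrt s\lambda^2)^{-1})/\log p$ encodes, forcing $p^{-\tau^2/2} \lesssim (\sqrt s\lambda^2)^{k}$ so that the bad-event contribution is of the same order $(s\lambda)^k$ as the main term.

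The main obstacle I anticipate is calibrating the exponent $\tau$ in the second term so that the residual contribution from $\mathcal T^c$ does not dominate: one must carefully match the polynomial envelope of $\|\hat\beta-\beta_0\|_1$ on the bad event (which grows with $n$ and $p$) against the exponential tail probability, and verify that the stated condition on $\tau^2$ is precisely the threshold making both pieces of order $(s\lambda)^k$. A secondary technical point is establishing the compatibility/restricted-eigenvalue event for \emph{random} sub-Gaussian design with high enough probability uniformly, which under \ref{design.entry} follows from standard concentration of $\hat\Sigma$ around $\Sigma_0$ combined with $1/\Lambda_{\min}(\Sigma_0)=\mathcal O(1)$ and $s\sqrt{\log p/n}=o(1)$. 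Taking $k$-th roots of the sum $(c_0 s\lambda)^k + O((s\lambda)^k)$ then gives the claimed bound $(\mathbb E_{\beta_0}\|\hat\beta-\beta_0\|_1^k)^{1/k} \leq C_1 s\lambda$.
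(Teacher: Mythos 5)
Your proposal is correct and follows essentially the same route as the paper's proof: a good event combining noise control and concentration of $\hat\Sigma$ (to transfer the compatibility condition from $\Sigma_0$), the standard oracle bound $\|\hat\beta-\beta_0\|_1\lesssim s\lambda$ on that event, a crude polynomial envelope $\|\hat\beta-\beta_0\|_1\leq \|\epsilon\|_n^2/\lambda+2\|\beta_0\|_1$ from the basic inequality (yielding $(\mathbb E\|\hat\beta-\beta_0\|_1^{2k})^{1/2}=\mathcal O(s^{k/2}\lambda^{-k})$ via sub-Gaussian moments and $\|\beta_0\|_1\leq\sqrt{s}\|\beta_0\|_2$), and Cauchy--Schwarz against $1_{\mathcal T^c}$ with the condition on $\tau$ calibrated exactly so that $s^{k/2}\lambda^{-k}p^{-\tau^2/2}=\mathcal O(s^k\lambda^k)$. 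No gaps.
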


\noindent
Taking $k=1$, under the conditions of Theorem \ref{strong2} we obtain
$$\mathbb E_{\beta_0}\|\hat\beta-\beta_0\|_1 \leq C_1 s\lambda.$$
\par
Theorem \ref{strong2} can also be easily extended to fixed design, under a compatibility condition (see Section \ref{sec:proof.oi}) on the Gram matrix $\hat\Sigma$, which substitutes the condition $\Lambda_{\min}(\Sigma_0)\geq L >0$, and under the condition $\|\hat\Sigma\|_\infty = \mathcal O(1)$. 
\par
We comment on the conditions \ref{model.sg}, \ref{design.entry} and $\|\beta_0\|_2=\mathcal O(1)$, $\|\beta_0\|_0=
o(\sqrt{n/\log p})$ assumed in Theorem \ref{strong2}. 
Condition $\|\beta_0\|_0 =o(\sqrt{n/\log p})$ together with conditions \ref{model.sg}, \ref{design.entry} was used to apply the high-probability oracle results for Lasso as in \cite{hds} to the case of random design.
Condition $\|\beta_0\|_2 = \mathcal O(1)$ can be justified under an assumption on the boundedness of  the ``signal-to-noise ratio''. The ``signal-to-noise ratio'' is defined as the ratio of the variance of the signal (observations) and  the variance of the noise, 
i.e. 
$\sum_{i=1}^n \textrm{var}_{\beta_0}(Y_i)/ \sum_{i=1}^n \textrm{var}(\epsilon_i ) = 1 + \beta_0^T \Sigma_0 \beta_0 /\sigma_\epsilon^2,$
where $\sigma_\epsilon^2 := \frac{1}{n}\sum_{i=1}^n \text{var}(\epsilon_i)$. 
Hence, under upper-boundedness of $1/\Lambda_{\min}(\Sigma_0)$, the signal-to-noise ratio is up to a constant lower-bounded by $\|\beta_0\|_2^2/\sigma_\epsilon^2$. If we assume that the signal-to-noise ratio remains bounded and the variance of the noise $\sigma_\epsilon^2$ is bounded (as implied by condition \ref{model.sg}), then the $\ell_2$-norm of $\beta_0 $ must also remain bounded. 
\par
Finally, the condition $\tau^2 > 2k \log((s\lambda^2)^{-1})/\log p$
 only guarantees that we choose sufficiently large regularization parameter $\lambda\geq c\tau \sqrt{\log p/n}$ by choosing $\tau$ large enough compared to the order $k$ of the error that we want to control. If $p\geq n$ and $\lambda=c\tau\sqrt{\log p/n}$, the condition reduces to $\tau\geq C\sqrt{k}$ for some constant $C>0.$
Then clearly, 
this condition means that the higher order of error we want to control, the stronger regularization must be chosen. 

\section{The de-sparsified Lasso}
\label{subsec:lr.all}
\subsection{Methodology}
As an initial estimator, we consider the Lasso estimator \eqref{lasso}.
The Lasso estimator is well-understood in terms of prediction and estimation error bounds, and was shown minimax optimal in terms of the prediction error and $\ell_1$-error. 
However, due to  the inclusion of the $\ell_1$-penalty, the estimator is biased and its limiting distribution can accumulate a positive mass at zero (\cite{knight2000}). 
In view of statistical inference, a de-sparsified or de-biased version of the Lasso was then considered (see \cite{zhang}, \cite{vdgeer13}, \cite{jm14a}, \cite{jm14a}, \cite{jm14b}, \cite{jm15}), which was shown to be asymptotically normal for estimation of $\beta_j^0$. 
\par
To construct the de-biased estimator, we further need to construct a surrogate inverse of $\hat\Sigma$, or in other words we need to construct an estimator of the inverse covariance matrix $\Theta_0=\Sigma_0^{-1}$.
We define $\hat\Theta_j$ as an estimate of the column $\Theta^0_j$ obtained by solving the following program, that will be referred to as \textit{nodewise regression} (see \cite{vdgeer13}).
Recall that $X$ is the design matrix with rows $X^{(i)},i=1,\dots,n.$
The columns of the design matrix $X$ will be denoted by $X_j,j=1,\dots,p,$ and  by $X_{-j}$ we denote the $n\times (p-1)$ matrix obtained by removing the $j$-th column from  $X.$
For $j=1,\dots,p$, we let
\begin{equation}\label{nc}
\hat\gamma_j:= \textrm{arg}\min_{\gamma\in\mathbb R^{p-1}} \|X_j-X_{-j}\gamma\|_n^2+ 2\lambda_j \|\gamma\|_1,
\end{equation}
$$\hat\tau_j^2 := \|X_j-X_{-j}\hat\gamma_j\|_n^2 + \lambda_j\|\hat\gamma_j\|_1,$$
and we denote the $j$-th column of the nodewise Lasso estimator by
\begin{equation}
\label{node.lr}
\hat\Theta_{ j} := (-\hat\gamma_{j,1},\dots,-\hat\gamma_{j,j-1},1,-\hat\gamma_{j,j+1},\dots,-\hat\gamma_{j,p})^T/\hat\tau_j^2,
\end{equation}
where $\lambda_j \asymp\sqrt{\log p/n}$ for $j=1,\dots,p$, uniformly in $j$. We denote the nodewise Lasso estimator by  $\hat\Theta := (\hat\Theta_1,\dots,\hat\Theta_p).$
The necessary Karush-Kuhn-Tucker conditions corresponding to the nodewise regression (obtained by replacing derivatives by sub-differentials) imply the condition $\|\hat\Sigma\hat\Theta_j-e_j \|_\infty=\mathcal O_P(\lambda_j/\hat\tau_j^2)$ (see \cite{vdgeer13}), which will be needed later. 
We now define the de-sparsified Lasso introduced in \cite{vdgeer13},
\begin{equation}
\label{bj}
\hat  b := \hat\beta + \hat\Theta^T  X^T(Y-X\hat \beta) /n,
\end{equation}
and we let $\hat b_j$ denote its $j$-th entry.
The motivation for the definition \eqref{bj} comes from updating the initial Lasso estimator $\hat\beta$ by removing the bias due to the $\ell_1$-penalty. 
We briefly summarize the main results on $\hat b$ as derived in \cite{vdgeer13}. The estimator $\hat b_j$ can be shown to be asymptotically linear with a remainder term of small order $1/\sqrt{n}$, in particular, under the conditions \ref{model.sg}, \ref{design} and
 $$s=o(\sqrt{n}/\log p),\;\;\;
\max_{j=1,\dots,p }s_j=o(\sqrt{n}/\log p)$$
 it holds
$$\hat b -\beta_0= \Theta_0 X^T \epsilon /n + \Delta,$$
where $s_j:=\|\Theta_j^0\|_0$ and $\|\Delta\|_\infty= o_P(1/\sqrt{n})$.
 Thus, after normalization by $\sqrt{n}$ and by the (estimated) standard deviation, asymptotic normality of entries of $\hat b$ with zero mean and unit variance follows by the central limit theorem. We now investigate the question of ``regularity'' and asymptotic efficiency of this estimator. 
\par
We first show  that the de-sparsified estimator $\hat b_j$
satisfies the {strong asymptotic unbiasedness} condition from Definition \ref{au} in Section \ref{subsec:au}.
We then show that $\hat b_j$ achieves the lower bound on the variance
of any strongly asymptotically unbiased estimator. 
Thus in this sense the de-sparsified estimator is  asymptotically efficient.  
In Section \ref{subsec:lr.random} we investigate the case of a random Gaussian design matrix and in Section \ref{subsec:lr.fixed} the case of a fixed design matrix.


\subsection{Strong asymptotic unbiasedness of the de-sparsified Lasso}
\label{subsec:saulasso}
\par

We consider estimation of linear functionals $g(\beta)=\xi^T \beta$, where $\xi \in \mathbb R^p$ is a known vector. 
We define an estimator of $g(\beta)=\xi^T\beta$ as a linear combination $\xi$ of the de-sparsified estimator $\hat b.$ This yields 
\begin{eqnarray}
\label{xib}
\hat b_\xi := \xi^T \hat b=
 \xi^T \hat \beta +  \xi^T\hat\Theta X^T(Y-X\hat\beta)/n.
\end{eqnarray}
Then we have the following lemma, which shows strong asymptotic unbiasedness of $\hat b_\xi$ for estimation of $\xi^T\beta.$

\begin{lemma} \label{SAUbxi}
Suppose that conditions 
\ref{model.sg}, \ref{design} are satisfied, $\beta_0\in \mathcal B(d_n)$ where
$d_n = o\left( {\sqrt{n}}/{\log p} \right)$, 
$\max_j s_j \leq d_n$,
$\|\xi\|_1 = \mathcal O(1)$ and $\|\Sigma_0\|_\infty = \mathcal O(1)$.
Let $\hat b_\xi$ be the estimator defined in \eqref{xib} with tuning parameters of the Lasso and nodewise regression $\lambda\asymp\lambda_j\asymp \sqrt{\log p/n}$ uniformly in $j=1,\dots,p$.
Then $\hat b_\xi$ is a strongly asymptotically unbiased estimator of $\xi^T\beta$ at $\beta_0.$
\end{lemma}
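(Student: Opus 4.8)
The plan is to start from the explicit form of $\hat b$ and substitute the data-generating equation $Y = X\beta + \epsilon$ under the true parameter $\beta$. Writing $\hat\Sigma = X^TX/n$, this gives $\hat b - \beta = (I - \hat\Theta^T\hat\Sigma)(\hat\beta - \beta) + \hat\Theta^T X^T\epsilon/n$, and projecting onto $\xi$ (using that $\hat\Sigma$ is symmetric) yields the decomposition
\[
\hat b_\xi - \xi^T\beta \;=\; \underbrace{\big[(I - \hat\Sigma\hat\Theta)\xi\big]^T(\hat\beta - \beta)}_{=: R_n} \;+\; \underbrace{\xi^T\hat\Theta^T X^T\epsilon/n}_{=: L_n}.
\]
The term $L_n$ is the candidate efficient linear term: since $\hat\Theta$ and $X$ are functions of the design only, while $\epsilon$ has mean zero and is independent of $X$, we have $\mathbb E_\beta[L_n \mid X] = 0$, so $L_n$ contributes nothing to the bias. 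Hence the entire bias is carried by $R_n$, and I would treat bias and variance separately against the two requirements of Definition \ref{au}.

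For the bias I would bound $|R_n| \le \|(I - \hat\Sigma\hat\Theta)\xi\|_\infty\|\hat\beta - \beta\|_1 \le \|\hat\Sigma\hat\Theta - I\|_\infty\|\xi\|_1\|\hat\beta - \beta\|_1$. The crucial point is that the Karush--Kuhn--Tucker conditions of the nodewise regression \eqref{node.lr} give the \emph{deterministic} (almost sure) bound $\|\hat\Sigma\hat\Theta - I\|_\infty \le \max_j \lambda_j/\hat\tau_j^2$, which sharpens the $\mathcal O_P$ statement recorded in the text and is what lets me pass to expectations. Combining this with Cauchy--Schwarz and Theorem \ref{strong2} with $k = 2$ applied at the true parameter $\beta$ (legitimate because $\beta\in\mathcal B(d_n)$, so $\|\beta\|_0\le d_n$, $\|\beta\|_2\le C$, and \ref{design} implies \ref{design.entry}),
\[
|\mathbb E_\beta R_n| \le \|\xi\|_1\big(\mathbb E_\beta\|\hat\Sigma\hat\Theta - I\|_\infty^{2}\big)^{1/2}\big(\mathbb E_\beta\|\hat\beta - \beta\|_1^{2}\big)^{1/2} \lesssim \|\xi\|_1\, d_n\lambda^2 \asymp d_n\log p/n.
\]
Since $d_n = o(\sqrt n/\log p)$ forces $d_n\log p/n = o(1/\sqrt n)$, the bias is of smaller order than $1/\sqrt n$, uniformly over $\beta\in B(\beta_0, c/\sqrt{m_n})\subseteq\mathcal B(d_n)$. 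Setting $\delta_n := \sqrt n\, d_n\log p/n \to 0$, I can then take any $m_n$ with $m_n/n\to\infty$ slowly enough (e.g.\ $m_n = n/\delta_n$) so that $n = o(m_n)$ and $\sqrt{m_n}\,|\mathbb E_\beta\hat b_\xi - \xi^T\beta| \lesssim \sqrt{m_n/n}\,\delta_n \to 0$, exactly the unbiasedness requirement.

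For the variance I would use $\mathrm{var}_{\beta_0}(\hat b_\xi) \le \mathbb E_{\beta_0}(\hat b_\xi - \xi^T\beta_0)^2 \le 2\,\mathbb E_{\beta_0}R_n^2 + 2\,\mathbb E_{\beta_0}L_n^2$. The remainder contribution is handled as for the bias but invoking Theorem \ref{strong2} with $k = 4$, giving $\mathbb E_{\beta_0}R_n^2\lesssim (d_n\log p/n)^2 = o(1/n)$. For the linear term I condition on $X$: since the $\epsilon_i$ are independent, mean zero and sub-Gaussian, $\mathbb E_{\beta_0}[L_n^2\mid X] \lesssim \tfrac1n\,\xi^T\hat\Theta^T\hat\Sigma\hat\Theta\xi$, hence $\mathbb E_{\beta_0}L_n^2 \lesssim \tfrac1n\,\mathbb E_{\beta_0}[\xi^T\hat\Theta^T\hat\Sigma\hat\Theta\xi]$. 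Because $\hat\Theta$ concentrates on $\Theta_0$ and $\xi^T\Theta_0\xi \le \|\xi\|_2^2/\Lambda_{\min}(\Sigma_0) = \mathcal O(1)$, this expectation is $\mathcal O(1)$, so $\mathrm{var}_{\beta_0}(\hat b_\xi) = \mathcal O(1/n)$, completing the second requirement.

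The main obstacle is not the algebra above but the passage from high-probability to \emph{expectation} bounds for the nodewise quantities. Theorem \ref{strong2} supplies the needed moments of $\|\hat\beta - \beta\|_1$, but I also require (i) $\big(\mathbb E\|\hat\Sigma\hat\Theta - I\|_\infty^{2k}\big)^{1/(2k)}\lesssim\lambda$, which through the KKT bound reduces to showing that $\max_j\hat\tau_j^{-2}$ has bounded moments, i.e.\ that the nodewise residual variances $\hat\tau_j^2$ stay bounded away from zero uniformly in $j$ in an $L^q$ sense; and (ii) $\mathbb E[\xi^T\hat\Theta^T\hat\Sigma\hat\Theta\xi] = \mathcal O(1)$. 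Both demand a moment analogue of the oracle inequality for the $p$ nodewise Lasso regressions, uniformly in $j$, which is precisely where the hypotheses $\max_j s_j\le d_n$, $\|\Sigma_0\|_\infty = \mathcal O(1)$ and $1/\Lambda_{\min}(\Sigma_0) = \mathcal O(1)$ (via sub-Gaussian concentration of $\hat\Sigma$ and a compatibility/eigenvalue argument) enter. I expect controlling $\hat\tau_j^{-2}$ in expectation, uniformly over the $p$ coordinates, to be the genuinely delicate step.
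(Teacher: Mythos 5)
Your proposal is correct and follows essentially the same route as the paper: the identical decomposition into the conditionally centered linear term and the remainder $R_n$, the deterministic KKT bound $\|\hat\Sigma\hat\Theta-I\|_\infty\le\max_j\lambda_j/\hat\tau_j^2$, Cauchy--Schwarz combined with the $k=2$ moment bound of Theorem \ref{strong2}, and the same construction $m_n=n/\delta_n$; the step you flag as delicate, namely uniform moment control of $\max_j\hat\tau_j^{-2}$, is exactly what the paper's supplementary Lemma \ref{tau} supplies. The only difference is that you also verify the variance requirement $\mathrm{var}_{\beta_0}(\hat b_\xi)=\mathcal O(1/n)$ inside the lemma, which the paper's own proof of Lemma \ref{SAUbxi} leaves implicit and only establishes later (with a finer three-term decomposition) in the proof of Theorem \ref{bxi}.
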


\par
\subsection{Main results for random design}
\label{subsec:lr.random}
We derive lower bounds for the variance of a strongly asymptotically unbiased estimator. 
We consider the following conditions on the error distribution and the design matrix $X$.

\begin{enumerate}[label=(B\arabic*)]
\item\label{model}
Assume the linear model \eqref{LR0} with $\epsilon\sim \mathcal N(0,I)$.
\end{enumerate}

\begin{enumerate}[label=(B\arabic*),start=2]
\item\label{design.normal}
 Assume 
 that $X$ is a random $n\times p$ matrix independent of $\epsilon$ with independent rows $X^{(i)}\sim \mathcal N(0,\Sigma_0)$ for $i=1,\dots,n.$
Suppose that the inverse covariance matrix $\Theta_0 := \Sigma_0^{-1}$ exists, $1/\Lambda_{\min}(\Sigma_0)=\mathcal O(1)$ and $\|\Sigma_0\|_\infty = \mathcal O(1).$
\end{enumerate}

\begin{theorem}\label{crlb.lr} 
Suppose that conditions \ref{model}, \ref{design.normal}  are satisfied.
Suppose that $T_n$ is a strongly asymptotically unbiased estimator of $g(\beta)$ at 
$\beta_0\in\mathcal B(d_n)$ with a rate $m_n$. 
Let $h\in\mathbb R^p$ satisfy $h^T\Sigma_0 h =1$ and $\beta_0 + h/\sqrt{m_n}\in B\left(\beta_0,\frac{c}{\sqrt{m_n}}\right)$ 
for a sufficiently large universal constant $c$.
Assume moreover that for some $\dot g(\beta_0)\in\mathbb R^p$ it holds
\begin{equation}\label{g.nice}
\sqrt{m_n} \left(  g(\beta_0+h/\sqrt{m_n}) - g(\beta_0)  \right) = h^T \dot g(\beta_0) + o(1).
\end{equation}
Then
$$n\emph{var}_{\beta_0}(T_n) \geq [h^T \dot g(\beta_0)]^2 - o(1).$$
\end{theorem}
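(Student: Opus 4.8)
The statement is an asymptotic Cramér–Rao bound, so the plan is to build a likelihood-ratio argument comparing the distribution at $\beta_0$ with the distribution at the perturbed point $\beta_0 + h/\sqrt{m_n}$, and then to apply the strong asymptotic unbiasedness hypothesis to extract a lower bound on $\mathrm{var}_{\beta_0}(T_n)$. First I would set up the log-likelihood ratio $L_n$ of $P_{\beta_0 + h/\sqrt{m_n}}$ against $P_{\beta_0}$ for the Gaussian linear model with $\epsilon\sim\mathcal N(0,I)$ and Gaussian design of \ref{model}, \ref{design.normal}. The perturbation direction is normalized by $h^T\Sigma_0 h = 1$, which is precisely the Fisher-information normalization: under this scaling the score statistic $S_n := h^T X^T\epsilon/\sqrt{n}$ has variance $h^T\Sigma_0 h \cdot \sigma_\epsilon^2 = 1$ (times a factor from the design), so that $\sqrt{n}$-scaled perturbations are the natural local alternatives and $m_n \gg n$ makes $1/\sqrt{m_n}$ a perturbation even smaller than the parametric scale.

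Next I would compute the covariance between $T_n$ and the score. The key identity is the standard differentiation-under-the-integral relation: because $\mathbb E_{\beta_0} T_n$ can be differentiated against the perturbation, one gets
\begin{equation}\label{cov-id}
\mathbb E_{\beta} T_n - \mathbb E_{\beta_0} T_n = \mathrm{cov}_{\beta_0}\!\left(T_n,\, L_n(\beta)\right) + o(\text{remainder}),
\end{equation}
evaluated at $\beta = \beta_0 + h/\sqrt{m_n}$, where $L_n$ is replaced to leading order by its linear (score) term. The strong asymptotic unbiasedness of Definition \ref{au} gives $\sqrt{m_n}(\mathbb E_{\beta} T_n - g(\beta)) \to 0$, and combining the perturbed and unperturbed versions with the expansion \eqref{g.nice} of $g$ yields that the left side of \eqref{cov-id}, after multiplying by $\sqrt{m_n}$, tends to $h^T\dot g(\beta_0)$. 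So the plan is to show
$$\sqrt{m_n}\,\mathrm{cov}_{\beta_0}(T_n, L_n) \longrightarrow h^T\dot g(\beta_0) + o(1).$$
Then I would apply Cauchy–Schwarz: $\left[\mathrm{cov}_{\beta_0}(T_n, L_n)\right]^2 \leq \mathrm{var}_{\beta_0}(T_n)\cdot \mathrm{var}_{\beta_0}(L_n)$. Since the perturbation has size $1/\sqrt{m_n}$ with the Fisher normalization, $\mathrm{var}_{\beta_0}(L_n) = (n/m_n)(1 + o(1))$ to leading order. Rearranging gives $n\,\mathrm{var}_{\beta_0}(T_n) \geq [h^T\dot g(\beta_0)]^2 - o(1)$, which is the claim.

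The main obstacle will be controlling the remainder terms in the log-likelihood expansion \eqref{cov-id} and in the quadratic-variation term $\mathrm{var}_{\beta_0}(L_n)$, making rigorous that $L_n$ may be replaced by its linear score term. Because the design is random Gaussian and enters the likelihood, $L_n$ contains not just the linear-in-$\epsilon$ score but also a quadratic term in $X$ through the design density; I expect the sparsity assumption $\beta_0\in\mathcal B(d_n)$ with $d_n = o(\sqrt{n}/\log p)$ and the boundedness $\|\beta_0\|_2 \leq C$ to be exactly what forces these higher-order terms to be negligible relative to the $1/\sqrt{m_n}$ perturbation scale — this is the point where the high-dimensional analysis departs from the classical fixed-model Cramér–Rao argument. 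The variance condition $\mathrm{var}_{\beta_0}(T_n) = \mathcal O(1/n)$ in Definition \ref{au} is also essential: it keeps $T_n$ in $L^2$ uniformly so that Cauchy–Schwarz and the covariance computation are valid, and it guarantees the bound is non-degenerate in the right scale.
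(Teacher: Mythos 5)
Your proposal follows essentially the same route as the paper's proof: express $\sqrt{m_n}(\mathbb E_{\beta_0+h/\sqrt{m_n}}T_n-\mathbb E_{\beta_0}T_n)$ as a covariance of $T_n$ with the linear score term $h^TX^T\epsilon$, control the quadratic remainder of the Gaussian likelihood ratio in $L^2$ (the paper does this exactly via Lemmas \ref{aux2} and \ref{a1}), and finish with Cauchy--Schwarz using $\mathrm{var}_{\beta_0}(T_n)=\mathcal O(1/n)$ and $h^T\Sigma_0h=1$. One small correction of emphasis: the sparsity of $\beta_0$ plays no role in controlling the likelihood remainder (that is handled purely by $n/m_n=o(1)$ and the normalization of $h$); it is needed only so that $\beta_0+h/\sqrt{m_n}$ stays in the parameter set where strong asymptotic unbiasedness applies.
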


Theorem \ref{crlb.lr} yields a lower bound $[h^T \dot g(\beta_0)]^2 - o(1)$ on the variance of an estimator which is a strongly asymptotically unbiased estimator in a direction $h$, such that $\beta_0+h/\sqrt{m_n}$ remains within the model.
By maximizing $[h^T \dot g(\beta_0)]^2$ over all feasible $h$, we obtain the following corollary.

\begin{cor}\label{cor.wst}
If $\beta_0 + \Theta_0\dot g(\beta_0) /\sqrt{ \dot g(\beta_0)^T\Theta_0 \dot g(\beta_0) m_n} \in B\left(\beta_0,\frac{c}{\sqrt{m_n}}\right)$,
then the lower bound from Theorem \ref{crlb.lr} is maximized at the value 
$$h_0:= \Theta_0 \dot g(\beta_0)/\sqrt{\dot g(\beta_0)^T \Theta_0 \dot g(\beta_0)},$$
and under the conditions of Theorem \ref{crlb.lr}, we get 
and under the conditions of Theorem \ref{crlb.lr}, we get 
$$n\emph{var}_{\beta_0}(T_n) \geq \dot g(\beta_0)^T\Theta_0 \dot g(\beta_0) - o(1).$$
\end{cor}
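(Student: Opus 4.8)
The plan is to read the corollary as a constrained optimization of the lower bound produced by Theorem \ref{crlb.lr} over all admissible directions $h$, and then to instantiate that theorem at the maximizing direction. Theorem \ref{crlb.lr} furnishes the bound $n\,\textrm{var}_{\beta_0}(T_n)\geq [h^T\dot g(\beta_0)]^2-o(1)$ for every $h$ satisfying $h^T\Sigma_0 h=1$ and the condition \eqref{g.nice}, provided that $\beta_0+h/\sqrt{m_n}$ stays in $B(\beta_0,c/\sqrt{m_n})$. Since this holds simultaneously for each such $h$, the sharpest available bound is obtained by maximizing the functional $h\mapsto (h^T\dot g(\beta_0))^2$ subject to the normalization $h^T\Sigma_0 h=1$, and the claim is precisely that this maximum is attained at $h_0$ with value $\dot g(\beta_0)^T\Theta_0\dot g(\beta_0)$.

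First I would solve this maximization. Because $1/\Lambda_{\min}(\Sigma_0)=\mathcal O(1)$, the matrix $\Sigma_0$ is positive definite, so $\Theta_0=\Sigma_0^{-1}$ exists and $\Sigma_0^{1/2}$ is well-defined and invertible. Substituting $u:=\Sigma_0^{1/2}h$, the constraint becomes $\|u\|_2=1$ while $h^T\dot g(\beta_0)=u^T\Sigma_0^{-1/2}\dot g(\beta_0)$, so the Cauchy--Schwarz inequality yields $(h^T\dot g(\beta_0))^2\leq \|\Sigma_0^{-1/2}\dot g(\beta_0)\|_2^2=\dot g(\beta_0)^T\Theta_0\dot g(\beta_0)$, with equality exactly when $u\propto \Sigma_0^{-1/2}\dot g(\beta_0)$. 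Translating back through $h=\Sigma_0^{-1/2}u$ and normalizing gives the unique (up to sign) maximizer $h_0=\Theta_0\dot g(\beta_0)/\sqrt{\dot g(\beta_0)^T\Theta_0\dot g(\beta_0)}$, as stated.

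Next I would verify that $h_0$ is genuinely admissible for Theorem \ref{crlb.lr}. A direct computation using $\Theta_0\Sigma_0\Theta_0=\Theta_0$ shows $h_0^T\Sigma_0 h_0=1$, so the normalization is met and the optimal value of the functional is indeed $\dot g(\beta_0)^T\Theta_0\dot g(\beta_0)$. The remaining requirement, that $\beta_0+h_0/\sqrt{m_n}$ lie in $B(\beta_0,c/\sqrt{m_n})$, is supplied verbatim by the hypothesis of the corollary, since $h_0/\sqrt{m_n}$ equals the displayed perturbation $\Theta_0\dot g(\beta_0)/\sqrt{\dot g(\beta_0)^T\Theta_0\dot g(\beta_0)\,m_n}$. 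Applying Theorem \ref{crlb.lr} with the particular choice $h=h_0$ then gives $n\,\textrm{var}_{\beta_0}(T_n)\geq [h_0^T\dot g(\beta_0)]^2-o(1)=\dot g(\beta_0)^T\Theta_0\dot g(\beta_0)-o(1)$, which is the asserted bound.

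I do not expect a genuine analytic obstacle here, since the optimization is a textbook generalized Cauchy--Schwarz computation; the only point demanding care is the admissibility check. For $h_0$ to satisfy the neighbourhood condition one implicitly needs $\beta_0+h_0/\sqrt{m_n}$ to remain in the sparse parameter set $\mathcal B(d_n)$, that is, the worst-case sub-direction $h_0=\Theta_0\dot g(\beta_0)/\sqrt{\dot g(\beta_0)^T\Theta_0\dot g(\beta_0)}$ must itself be sufficiently sparse. This is exactly the high-dimensional feature that has been absorbed into the corollary's hypothesis; establishing in concrete models that this sparsity of $h_0$ actually holds would be the substantive work, but at the level of the corollary it is assumed.
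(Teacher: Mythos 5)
Your proposal is correct and follows essentially the same route as the paper, which obtains the corollary simply by maximizing $[h^T\dot g(\beta_0)]^2$ subject to $h^T\Sigma_0 h=1$ (a generalized Cauchy--Schwarz computation yielding $h_0=\Theta_0\dot g(\beta_0)/\sqrt{\dot g(\beta_0)^T\Theta_0\dot g(\beta_0)}$) and then invoking Theorem \ref{crlb.lr} at $h_0$, with admissibility of the perturbation assumed as a hypothesis. Your explicit verification that $h_0^T\Sigma_0 h_0=1$ matches the check the paper performs when it later instantiates this choice in the proof of Theorem \ref{bxi}.
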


\begin{definition}\label{wst}
Let $g$ be differentiable at $\beta_0$ with derivative $\dot g(\beta_0).$ We call
$$c_0:= \Theta_0 \dot g(\beta_0)/{\dot g(\beta_0)^T \Theta_0 \dot g(\beta_0)}$$
the worst possible sub-direction for estimating $g(\beta_0).$
\end{definition}
 The motivation for the terminology \textit{worst possible sub-direction} in Definition \ref{wst} is given by Corollary \ref{cor.wst}.
The normalization by $\dot g(\beta_0)^T \Theta_0 \dot g(\beta_0)$ is arbitrary but natural from a projetion theory point of view.
\par
As a special case, consider estimation of $g(\beta) = \beta_j$ for some fixed value of $j\in\{1,\dots,p\}.$ Then $\dot g(\beta)=e_j,$
the $j$-th unit vector in $\mathbb R^p.$
Clearly, $\Theta_0\dot g(\beta_0)=\Theta_0e_j = \Theta^0_j$ and $g(\beta_0)^T\Theta_0\dot g(\beta_0)$ $ = e_j^T\Theta_0e_j=\Theta^0_{jj},$ where $\Theta^0_j$ is the $j$-th column of $\Theta_0$
and $\Theta_{jj}^0$ is its $j$-th diagonal element. 
It follows that 
$c_j^0=\Theta_{j}^0/\Theta^0_{jj}$ is the worst possible sub-direction for estimating $\beta_j.$
If $\beta_0 + \Theta_j^0 /\sqrt{ \Theta_{11}^0 m_n} \in B\left(\beta_0,\frac{c}{\sqrt{m_n}}\right)$, 
then
 Corollary \ref{cor.wst} implies the lower bound
$$\textrm{var}_{\beta_0}(T_n) \geq \Theta_{jj}^0/n+o(1/n).$$
\par

\begin{remark}
To establish the lower bound, it is crucial that the worst possible sub-direction lies within the model. 
For illustration, consider the situation with the parameter of interest being $g(\beta) = \beta_1$.
When $\Theta_1^0$ is not sufficiently sparse, 
we are not allowed to take the global maximizer $h=\Theta_1^0/\sqrt{\Theta_{11}^0}$ in the maximum and the lower bound might thus become smaller. 
In that case, the lower bound is given via a sparse approximation of the (non-sparse) precision matrix. For a set $M\subset \{1,\dots,p\}$ and a vector $v\in \mathbb R^p$, we denote $v_M$ as a $p$-dimensional vector with entries not in $M$ set to zero. Then we may write
\begin{eqnarray*}
&&
\max_{\beta_ 0 + h/\sqrt{m_n} \in B(\beta_0, c/\sqrt{m_n})} \frac{h^Te_1}{h^T \Sigma_0 h} 
\\
&&\geq
\max_{
\substack{M\subset \{1,\dots,p\}:
\\ 
|M| = d_n-\|\beta_0\|_0
}}
\;\;
\max_{
\substack{
h \in \mathbb R^p: 
\|h_M\|_2\leq c, \\  \|\beta^0_M + h_M/\sqrt{m_n}\|_2 \leq C}
}  
\frac{ [h_M^T e_1]^2 }{ h_M^T \Sigma_0 h_M}.
\end{eqnarray*}
But if $h:= (\Sigma^0_{M,M})^{-1}e_1$ satisfies 
$\|h\|_2\leq c$ and $\|\beta^0_M + h/\sqrt{m_n}\|_2 \leq C$,
then the lower bound is 
$$\max_{M\subset \{1,\dots,p\}: |M| = d_n-\|\beta_0\|_0} (\Sigma^0_{M,M})^{-1}_{11} - o(1),
$$
where $\Sigma^0_{M,M}$ is the reduction of $\Sigma_0$ obtained by keeping only columns and rows belonging to the set $M.$
If $\Theta_1^0$ has sparsity $d_n-\|\beta_0\|_0$, then this lower bound coincides with $(\Sigma_{0})^{-1}_{11}-o(1)$ as before.
If $\Theta_1^0$ is not sufficiently sparse, then the lower bound is given via a sparse approximation of the precision matrix.
Finally, as will be seen in the following sections, without assuming the sparsity condition on the worst possible sub-direction, we would not be able to conclude asymptotic efficiency of the de-sparsified Lasso estimator.
\end{remark}

Finally we show that the de-sparsified estimator $\hat b_j$ achieves the lower bound on the variance. 
Thus the de-sparsified estimator is strongly asymptotically unbiased and has the smallest variance among all strongly asymptotically unbiased estimators. We assume Gaussianity of the error and the design matrix, as the lower bounds have only been derived for this case.
\begin{theorem}\label{bxi}
Suppose that conditions 
\ref{model}, \ref{design.normal} are satisfied, $\beta_0\in\mathcal B(d_n)$ with $d_n = o\left( {\sqrt{n}}/{\log p} \right)$ and $\max_j s_j \leq d_n$.
Assume that $\|\xi\|_1 =\mathcal O(1)$. 
Let $\hat b_\xi$ be the estimator defined in \eqref{xib} with tuning parameters of the Lasso and nodewise regression $\lambda\asymp\lambda_j
\asymp \sqrt{\log p /n}$, uniformly in $j=1,\dots,p$.
Then $\hat b_\xi $ is a strongly asymptotically unbiased estimator of $\xi^T\beta$ at $\beta_0$. Let $T$ be any strongly asymptotically unbiased estimator of $\xi^T \beta$ at $\beta_0$  and assume that $\beta_0 + \Theta_0\xi/(\xi^T \Theta_0\xi{n})^{1/2}\in B_{}(\beta_0,c/\sqrt{n})$. Then it holds 
$$\emph{var}_{\beta_0}( T ) \geq  \frac{\xi^T \Theta_0 \xi+ o(1)}{{n}},\quad\quad \emph{var}_{\beta_0}(\hat b_\xi)=\frac{\xi^T \Theta_0 \xi+ o(1)}{{n}} .$$
\end{theorem}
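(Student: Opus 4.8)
The plan is to prove the three assertions in turn; the first two are short reductions to results already in hand, and the substance is the third. Strong asymptotic unbiasedness of $\hat b_\xi$ is immediate from Lemma \ref{SAUbxi}: conditions \ref{model}, \ref{design.normal} are the Gaussian specialisations of \ref{model.sg}, \ref{design} (the errors $\epsilon_i\sim\mathcal N(0,1)$ and the rows $X^{(i)}\sim\mathcal N(0,\Sigma_0)$ are sub-Gaussian with universal constants, using $1/\Lambda_{\min}(\Sigma_0)=\mathcal O(1)$), and \ref{design.normal} supplies $\|\Sigma_0\|_\infty=\mathcal O(1)$; together with $\beta_0\in\mathcal B(d_n)$, $d_n=o(\sqrt n/\log p)$, $\max_j s_j\leq d_n$ and $\|\xi\|_1=\mathcal O(1)$, all hypotheses of Lemma \ref{SAUbxi} hold. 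For the lower bound I would apply Corollary \ref{cor.wst} to the linear functional $g(\beta)=\xi^T\beta$, so that $\dot g(\beta_0)=\xi$; linearity makes the expansion \eqref{g.nice} exact with vanishing remainder, $\sqrt{m_n}\,(g(\beta_0+h/\sqrt{m_n})-g(\beta_0))=\xi^Th$, and the feasibility hypothesis of the corollary, with $\dot g(\beta_0)=\xi$, reduces to the assumed condition $\beta_0+\Theta_0\xi/(\xi^T\Theta_0\xi\,n)^{1/2}\in B(\beta_0,c/\sqrt n)$. Corollary \ref{cor.wst} then gives $n\,\mathrm{var}_{\beta_0}(T)\geq \xi^T\Theta_0\xi-o(1)$ for every strongly asymptotically unbiased $T$, which is the claimed bound.

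For the matching value of the variance of $\hat b_\xi$ I would use the asymptotic-linearity decomposition of \eqref{xib}. Substituting $Y=X\beta_0+\epsilon$ and $Y-X\hat\beta=\epsilon-X(\hat\beta-\beta_0)$ gives
$$\hat b_\xi-\xi^T\beta_0 = \xi^T\Theta_0 X^T\epsilon/n + \xi^T(I-\hat\Theta^T\hat\Sigma)(\hat\beta-\beta_0) + \xi^T(\hat\Theta-\Theta_0)^T X^T\epsilon/n =: M+R_1+R_2.$$
The leading term $M$ has mean zero, and since $\epsilon$ is independent of $X$ with $\mathrm{Cov}(\epsilon)=I$, conditioning on $X$ yields $\mathbb E[M^2\mid X]=\xi^T\Theta_0\hat\Sigma\Theta_0\xi/n$, whence $\mathbb E[M^2]=\xi^T\Theta_0\Sigma_0\Theta_0\xi/n=\xi^T\Theta_0\xi/n$ exactly, using $\Sigma_0=\mathbb E\hat\Sigma$ and $\Sigma_0\Theta_0=I$. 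It therefore suffices to show $\mathbb E[R_1^2]=o(1/n)$ and $\mathbb E[R_2^2]=o(1/n)$: the cross terms are then $o(1/n)$ by Cauchy--Schwarz, and the gap between $\mathrm{var}_{\beta_0}(\hat b_\xi)$ and $\mathbb E[(\hat b_\xi-\xi^T\beta_0)^2]$ is the squared bias, which is $o(1/n)$ by the strong asymptotic unbiasedness already established.

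To bound $R_1$ I would use $|R_1|\leq\|\xi\|_1\,\|\hat\Theta^T\hat\Sigma-I\|_\infty\,\|\hat\beta-\beta_0\|_1$, control the Karush--Kuhn--Tucker residual $\|\hat\Theta^T\hat\Sigma-I\|_\infty=\|\hat\Sigma\hat\Theta-I\|_\infty=\mathcal O(\lambda)$ in an $L^4$ sense, and --- the crucial input --- invoke the strong oracle inequality of Theorem \ref{strong2} with $k=4$ to obtain $\mathbb E\|\hat\beta-\beta_0\|_1^4=\mathcal O((s\lambda)^4)$. Cauchy--Schwarz then gives $\mathbb E[R_1^2]=\mathcal O(s^2\lambda^4)=\mathcal O(s^2(\log p)^2/n^2)=o(1/n)$, precisely because $s=o(\sqrt n/\log p)$. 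For $R_2$, since $\hat\Theta$ is a function of $X$ alone, conditioning on $X$ gives $\mathbb E[R_2^2]=\mathbb E[\xi^T(\hat\Theta-\Theta_0)^T\hat\Sigma(\hat\Theta-\Theta_0)\xi]/n$, and $\|(\hat\Theta-\Theta_0)\xi\|_2\leq\|\xi\|_1\max_j\|\hat\Theta_j-\Theta_j^0\|_2$ together with the nodewise rate $\max_j\|\hat\Theta_j-\Theta_j^0\|_2=\mathcal O(\sqrt{d_n}\lambda)$ and $\Lambda_{\max}(\hat\Sigma)=\mathcal O(1)$ yields $\mathbb E[R_2^2]=\mathcal O(d_n\lambda^2/n)=o(1/n)$. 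Combining, $\mathrm{var}_{\beta_0}(\hat b_\xi)=\xi^T\Theta_0\xi/n+o(1/n)$, which matches the lower bound.

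The main obstacle is this last step, and specifically the requirement to control $R_1$ and $R_2$ in $L^2$ rather than merely in probability: the familiar $o_P(1/\sqrt n)$ bounds on the remainders say nothing about a variance. This is exactly what the strong oracle inequalities are for --- one must upgrade high-probability oracle bounds to moment bounds on $\|\hat\beta-\beta_0\|_1$ and on the nodewise quantities $\max_j\|\hat\Theta_j-\Theta_j^0\|_2$, $\|\hat\Sigma\hat\Theta-I\|_\infty$ and $\Lambda_{\max}(\hat\Sigma)$, and then combine them by H\"older and Cauchy--Schwarz while verifying that every product of remainder moments is $o(1/n)$ under $s,d_n=o(\sqrt n/\log p)$. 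The binding constraint is the term $R_1$, where the sparsity condition $s=o(\sqrt n/\log p)$ enters sharply.
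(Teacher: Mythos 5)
Your proposal is correct and follows essentially the same route as the paper: strong asymptotic unbiasedness via Lemma \ref{SAUbxi}, the lower bound via Theorem \ref{crlb.lr} (equivalently Corollary \ref{cor.wst}) with $h=\Theta_0\xi/\sqrt{\xi^T\Theta_0\xi}$, and the same three-term decomposition of $\hat b_\xi-\xi^T\beta_0$ with the leading term contributing exactly $\xi^T\Theta_0\xi/n$ and the remainders controlled in $L^2$ by the strong (in-expectation) oracle inequalities. Your treatment of $R_1$ coincides with the paper's.

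The one place you deviate is the term $R_2=\xi^T(\hat\Theta-\Theta_0)^TX^T\epsilon/n$: you condition on $X$ and bound $\mathbb E[R_2^2]$ by $\Lambda_{\max}(\hat\Sigma)\,\|\xi\|_1^2\max_j\|\hat\Theta_j-\Theta_j^0\|_2^2/n$, which requires moment bounds on $\Lambda_{\max}(\hat\Sigma)$ and on the $\ell_2$-error of the nodewise Lasso that the paper does not establish (its strong oracle inequalities, Lemmas \ref{the} and \ref{tau}, are for the $\ell_1$-error and for $1/\hat\tau_j^2$). These could be supplied with extra work, but the paper sidesteps the issue by bounding $|R_2|\leq\|\xi\|_1\,\vertiii{\hat\Theta-\Theta_0}_1\,\|X^T\epsilon/n\|_\infty$ and applying Cauchy--Schwarz together with the available fourth-moment bounds on $\max_j\|\hat\Theta_j-\Theta_j^0\|_1$ and $\|X^T\epsilon/n\|_\infty$, yielding $\mathbb E[R_2^2]=\mathcal O(\max_j s_j^2\lambda_j^2\log p/n)=o(1/n)$. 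If you swap in that H\"older-type estimate, your argument closes using only results already proved in the paper; otherwise you owe the reader the two extra moment bounds you invoke.
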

To obtain the result of Theorem \ref{bxi}, we assumed that
$\beta_0 + \Theta_0\xi/(\xi^T \Theta_0\xi{n})^{1/2}$ $\in$  $B_{}(\beta_0,c/\sqrt{n})$, which guarantees that the worst possible sub-direction stays within the model.
Further we assumed that the sparsity in $\beta_0$ satisfies $s=o(\sqrt{n}/\log p)$ and that the sparsity in the rows of $\Theta_0$ is of small order $\sqrt{n}/\log p$.
Thus, to be able to claim asymptotic efficiency of the de-sparsified Lasso, we not only require sparsity in $\beta_0$, but also sufficient sparsity in the precision matrix.
Note that  the sparsity condition on $\beta_0$ is almost a necessary condition as discussed in Section \ref{subsec:nec} below.

\subsection{Main results for fixed design}
\label{subsec:lr.fixed}

In this section, we assume that the design matrix $X$ is fixed (non-random). Recall that $\hat\Sigma=X^T X/n$ is the Gram matrix.
The following theorem is an analogy of Theorem \ref{crlb.lr} for fixed design.

\begin{theorem}\label{fixed}
Let $X$ be a fixed $n\times p$ matrix and suppose that condition \ref{model} is satisfied.
Let $h\in\mathbb R^p$ be such that $h^T \hat\Sigma h =\mathcal O(1)$ and 
$\beta_0+h/\sqrt{m_n} \in B_{}(\beta_0,c/\sqrt{m_n})$. Suppose that $T_n$
is a strongly  asymptotically unbiased estimator of $g(\beta)$ at $\beta_0$ in the direction $h$ with rate $m_n.$
Assume moreover that for some $\dot g(\beta_0)\in\mathbb R^p$ it holds that
\begin{equation}\label{g.nice2}
\sqrt{m_n} \left(  g(\beta_0+h/\sqrt{m_n}) - g(\beta_0)  \right) = h^T \dot g(\beta_0) + o(1).
\end{equation}
Then
$$n\emph{var}_{\beta_0}(T_n) \geq [h^T \dot g(\beta_0)]^2 - o(1).$$
\end{theorem}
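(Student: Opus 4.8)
The plan is to prove the bound by a two--point (local Cram\'er--Rao) argument comparing the law $P_{\beta_0}$ with the law $P_{\beta_1}$ at the perturbed parameter $\beta_1 := \beta_0 + h/\sqrt{m_n}$. Since $\beta_1 \in B(\beta_0, c/\sqrt{m_n})$ by hypothesis, $\beta_1$ lies in the model and strong asymptotic unbiasedness of $T_n$ (Definition \ref{au}) applies both at $\beta_0$ and at $\beta_1$. Because $X$ is fixed and $\epsilon \sim \mathcal N(0,I)$, the measures $P_{\beta_0}$ and $P_{\beta_1}$ are Gaussian with common covariance and mutually absolutely continuous, so the likelihood ratio $\Lambda_n := p_{\beta_1}/p_{\beta_0}$ is well defined and positive, and all quantities below are finite thanks to $\textrm{var}_{\beta_0}(T_n) = \mathcal O(1/n)$.

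First I would write down the log-likelihood ratio exactly. Under $P_{\beta_0}$ one has $Y - X\beta_0 = \epsilon$ and $Y - X\beta_1 = \epsilon - Xh/\sqrt{m_n}$, so that
$$\log \Lambda_n = \frac{\epsilon^T X h}{\sqrt{m_n}} - \frac12 \frac{\|Xh\|_2^2}{m_n}.$$
Hence $\log\Lambda_n$ is Gaussian under $P_{\beta_0}$ with variance $a_n^2 := \|Xh\|_2^2/m_n = n\,h^T\hat\Sigma h/m_n$, and $\Lambda_n$ is log-normal. The hypotheses $n = o(m_n)$ and $h^T\hat\Sigma h = \mathcal O(1)$ give $a_n^2 = o(1)$, and a direct log-normal computation yields $\mathbb E_{\beta_0}\Lambda_n = 1$ together with
$$\textrm{var}_{\beta_0}(\Lambda_n) = e^{a_n^2} - 1 = \frac{n\,h^T\hat\Sigma h}{m_n}\,(1+o(1)).$$

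Next I would combine this with the covariance identity
$$\mathbb E_{\beta_1} T_n - \mathbb E_{\beta_0} T_n = \mathbb E_{\beta_0}\big[T_n(\Lambda_n - 1)\big] = \textrm{Cov}_{\beta_0}(T_n,\Lambda_n),$$
which holds since $\mathbb E_{\beta_0}\Lambda_n = 1$. Strong asymptotic unbiasedness gives $\mathbb E_{\beta}T_n = g(\beta) + o(1/\sqrt{m_n})$ at $\beta\in\{\beta_0,\beta_1\}$, so with the expansion \eqref{g.nice2} the left-hand side satisfies $\sqrt{m_n}\,\textrm{Cov}_{\beta_0}(T_n,\Lambda_n) = \sqrt{m_n}(g(\beta_1)-g(\beta_0)) + o(1) = h^T\dot g(\beta_0) + o(1)$. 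Applying Cauchy--Schwarz, $\textrm{Cov}_{\beta_0}(T_n,\Lambda_n)^2 \le \textrm{var}_{\beta_0}(T_n)\,\textrm{var}_{\beta_0}(\Lambda_n)$, multiplying by $m_n$ and inserting the two previous displays gives
$$\big(h^T\dot g(\beta_0) + o(1)\big)^2 \le n\,\textrm{var}_{\beta_0}(T_n)\cdot h^T\hat\Sigma h\,(1+o(1)).$$
Rearranging and passing to the limit yields $n\,\textrm{var}_{\beta_0}(T_n) \ge (h^T\dot g(\beta_0))^2/(h^T\hat\Sigma h) - o(1)$; under the normalization $h^T\hat\Sigma h = 1$ (exactly as in the random-design analogue, Theorem \ref{crlb.lr}) this is the claimed bound.

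I expect the main obstacle to be the careful bookkeeping of the $o(1)$ remainders rather than any conceptual difficulty: one must verify that the deterministic term $\tfrac12 a_n^2$ in $\log\Lambda_n$ and the quadratic remainder in $e^{a_n^2}-1$ are both negligible, which is exactly where $n=o(m_n)$ and $h^T\hat\Sigma h = \mathcal O(1)$ enter, and that $\textrm{var}_{\beta_0}(T_n)=\mathcal O(1/n)$ legitimizes Cauchy--Schwarz and keeps $n\,\textrm{var}_{\beta_0}(T_n)$ of constant order. The fixed-design case is in fact cleaner than its random-design counterpart, since with $X$ non-random $\Lambda_n$ is exactly log-normal and no further integration of the quadratic form $\|Xh\|_2^2$ over the design distribution is required.
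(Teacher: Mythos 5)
Your argument is correct, and it reaches the bound by a slightly different route than the paper. The paper's proof (inherited from Theorem \ref{crlb.lr}) first \emph{linearizes} the likelihood ratio: it writes $\sqrt{m_n}(\mathbb E_{\beta_1}T_n-\mathbb E_{\beta_0}T_n)$ as $\mathrm{cov}_{\beta_0}(T_n,\epsilon^TXh)$ plus a remainder involving $\frac{p_{\beta_1}-p_{\beta_0}}{p_{\beta_0}/\sqrt{m_n}}-s_{\beta_0}^Th$, shows the second moment of that remainder is $o(n)$ via the same log-normal computation you use (namely $e^{u^TX^TXu}-1-u^TX^TXu=o(nu^T\hat\Sigma u)$), and only then applies Cauchy--Schwarz to the score term, whose variance is exactly $nh^T\hat\Sigma h$. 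You instead apply Cauchy--Schwarz directly to $\mathrm{Cov}_{\beta_0}(T_n,\Lambda_n)$ and use the exact log-normal identity $\mathrm{var}_{\beta_0}(\Lambda_n)=e^{a_n^2}-1=a_n^2(1+o(1))$ with $a_n^2=nh^T\hat\Sigma h/m_n$; this is the Hammersley--Chapman--Robbins form of the bound and it avoids introducing the score altogether. Both routes rest on the same two ingredients (strong asymptotic unbiasedness at $\beta_0$ and $\beta_1$, and $n=o(m_n)$ making the quadratic correction negligible), and for fixed Gaussian design yours is arguably cleaner since $\Lambda_n$ is exactly log-normal; the paper's linearized version is the one that carries over to random design, where the quadratic form must additionally be integrated over the law of $X$, and it keeps the score explicit for the later identification of the worst sub-direction. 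One shared caveat: both arguments actually yield $n\,\mathrm{var}_{\beta_0}(T_n)\,h^T\hat\Sigma h\geq[h^T\dot g(\beta_0)]^2-o(1)$, so the stated conclusion requires $h^T\hat\Sigma h\leq 1+o(1)$ rather than merely $\mathcal O(1)$; you flag this normalization correctly, and it is indeed how the theorem is applied in Theorem \ref{LR2}, where $h=\hat\Theta_j/\hat\Theta_{jj}^{1/2}$ satisfies $h^T\hat\Sigma h=1+o(1)$.
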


For fixed design, the matrix $\hat\Sigma$ is not invertible, and thus we cannot use the reasoning as
in Section \ref{subsec:lr.random}.
We can however try to remedy this by proposing an approximate worst possible sub-direction.
To this end, we may use an estimator $\hat\Theta$, which acts as a surrogate inverse of $\hat\Sigma$ in a certain sense.
Such an estimate can be obtained in the same way as for the random design, using the nodewise regression \eqref{node.lr}.
The necessary Karush-Kuhn-Tucker conditions of the nodewise regression (obtained by replacing derivatives by sub-differentials) again imply the condition $\|\hat\Sigma\hat\Theta_j-e_j \|_\infty=\mathcal O_P( \lambda_j/\hat\tau_j^2)$. 
The de-sparsified estimator can then be defined in the same way as for the random design, as in equation \eqref{bj}.

We consider estimation of $g(\beta_0):=\beta_j^0$, although one could further consider estimation of linear functionals, similarly as for the random design.
Strong asymptotic unbiasedness of $\hat b_j$ for estimation of $\beta_j$ then follows similarly as in Lemma \ref{SAUbxi} (with $g(\beta)=\beta_j$) for all  $\beta\in\mathcal B(d_n)$, under $d_n = o\left( {\sqrt{n}}/{\log p} \right)$, if the compatibility condition is satisfied for 
$\hat\Sigma$ with a universal constant and $\|\hat\Sigma\|_\infty = \mathcal O(1)$. For the definition of the compatibility condition, see Definition \ref{cc.def} in Section \ref{sec:proof.oi} of the supplemental article \cite{sup}.
 We formulate the asymptotic efficiency of $\hat b_j$ for  $g(\beta):=\beta_j$ in the following theorem.
\begin{theorem}
\label{LR2}
Assume that condition \ref{model} is satisfied and $\beta_0\in\mathcal B(d_n)$ with $d_n = o\left( {\sqrt{n}}/{\log p} \right)$. 
Let $j\in\{1,\dots,p\}$ and let $\hat\Theta_{j}$ be obtained using the nodewise regression as in \eqref{node.lr} with $\lambda_j\asymp\sqrt{\log p/n}$. 
Suppose that 
$\beta_0+\hat\Theta_j/({\hat\Theta_{jj}n})^{1/2} \in B_{}(\beta_0,c/\sqrt{n})$, $\|\hat\Theta_j\|_2 =\mathcal O(1),$ 
 the compatibility condition is satisfied for $\hat\Sigma$ with a universal constant and $\|\hat\Sigma\|_\infty = \mathcal O(1)$.
Then $\hat b_j $ defined in \eqref{bj} using $\hat\Theta_{j}$ and with $\lambda\asymp\sqrt{\log p/n}$ is a strongly asymptotically unbiased
estimator of $\beta_j$ at $\beta_0$ and for any 
strongly asymptotically unbiased estimator $T$ of $\beta_j$ at $\beta_0$
it holds 
$$\emph{var}_{\beta_0}(T) \geq  \frac{\hat\Theta_{jj}+ o(1)}{{n}} 
,
\quad
\emph{var}_{\beta_0}(\hat b_j)=\frac{\hat\Theta_{jj}+ o(1)}{{n}} 
.$$
\end{theorem}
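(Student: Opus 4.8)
The plan is to prove the two claims separately: first that $\hat b_j$ is strongly asymptotically unbiased, then that its variance attains the stated lower bound, which is itself a consequence of Theorem \ref{fixed} specialized to the fixed-design worst possible sub-direction. For the strong asymptotic unbiasedness, I would follow the template of Lemma \ref{SAUbxi} with $g(\beta)=\beta_j$ (equivalently $\xi=e_j$), adapting it to fixed design. Writing out the de-sparsified estimator \eqref{bj}, one obtains the decomposition $\hat b_j - \beta_j^0 = \hat\Theta_j^T X^T\epsilon/n - (\hat\Sigma\hat\Theta_j - e_j)^T(\hat\beta - \beta_0)$. The first term has mean zero under \ref{model} since $\hat\Theta$ depends only on the fixed design $X$; the second is the remainder whose expectation must vanish faster than $1/\sqrt{m_n}$. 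To control it I would bound its absolute value by $\|\hat\Sigma\hat\Theta_j - e_j\|_\infty \cdot \|\hat\beta-\beta_0\|_1$, invoke the KKT bound $\|\hat\Sigma\hat\Theta_j - e_j\|_\infty = \mathcal O_P(\lambda_j/\hat\tau_j^2)$, and apply the strong oracle inequality of Theorem \ref{strong2} (in its fixed-design form, valid under the compatibility condition and $\|\hat\Sigma\|_\infty=\mathcal O(1)$) to get $\mathbb E_{\beta_0}\|\hat\beta-\beta_0\|_1 = \mathcal O(s\lambda)$. With $\lambda\asymp\sqrt{\log p/n}$ and $s\leq d_n = o(\sqrt{n}/\log p)$, the remainder is of order $s\log p/n = o(1/\sqrt{n})$, which gives the required bias rate at $m_n = n$. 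The same argument applied at a perturbed parameter $\beta \in B(\beta_0, c/\sqrt{n})$ yields strong asymptotic unbiasedness in the sense of Definition \ref{au}, while the leading term shows $\mathrm{var}_{\beta_0}(\hat b_j) = \mathcal O(1/n)$.

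For the lower bound I would apply Theorem \ref{fixed} with the candidate direction obtained from the surrogate inverse. Since $\hat\Sigma$ is singular under fixed design, I cannot use $\Theta_0$; instead I take the approximate worst possible sub-direction $h_0 := \hat\Theta_j/\sqrt{\hat\Theta_{jj}}$, which by construction satisfies $h_0^T\hat\Sigma h_0 = \mathcal O(1)$ (using $\|\hat\Theta_j\|_2=\mathcal O(1)$ and $\|\hat\Sigma\|_\infty=\mathcal O(1)$) and, by the hypothesis $\beta_0 + \hat\Theta_j/(\hat\Theta_{jj}n)^{1/2}\in B(\beta_0,c/\sqrt n)$, keeps the perturbation inside the model. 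With $g(\beta)=\beta_j$ we have $\dot g(\beta_0)=e_j$, so condition \eqref{g.nice2} holds exactly with $h_0^T\dot g(\beta_0) = h_0^Te_j = \hat\Theta_{jj}/\sqrt{\hat\Theta_{jj}} = \sqrt{\hat\Theta_{jj}}$. Plugging into Theorem \ref{fixed} gives $n\,\mathrm{var}_{\beta_0}(T) \geq [h_0^Te_j]^2 - o(1) = \hat\Theta_{jj} - o(1)$, which is the claimed lower bound.

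Finally I would verify that $\hat b_j$ attains this bound by computing its asymptotic variance from the leading term of the decomposition above. The dominant contribution is $\hat\Theta_j^T X^T\epsilon/n$, and under \ref{model} with $\epsilon\sim\mathcal N(0,I)$ this has variance $\hat\Theta_j^T\hat\Sigma\hat\Theta_j/n$. Using the KKT condition to replace $\hat\Sigma\hat\Theta_j$ by $e_j$ up to an $\mathcal O_P(\lambda_j/\hat\tau_j^2)$ error gives $\hat\Theta_j^T\hat\Sigma\hat\Theta_j = \hat\Theta_{jj} + o(1)$, while the remainder term contributes only $o(1/n)$ to the variance by the oracle bound. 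Hence $\mathrm{var}_{\beta_0}(\hat b_j) = (\hat\Theta_{jj}+o(1))/n$, matching the lower bound.

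I expect the main obstacle to be the passage from the high-probability oracle inequality to an expectation bound on the remainder term that is uniform enough to control the bias at every $\beta$ in the shrinking neighborhood, rather than only at $\beta_0$; this is precisely why the strengthened \emph{mean} oracle inequality of Theorem \ref{strong2} is needed, and care is required because the nodewise quantities $\hat\Theta_j$ and $\hat\tau_j^2$ are random and must be shown to be stable (bounded away from degeneracy) under the compatibility condition on $\hat\Sigma$, so that the product bound $\|\hat\Sigma\hat\Theta_j - e_j\|_\infty\|\hat\beta-\beta_0\|_1$ can be taken in expectation without the randomness of $\hat\tau_j^2$ spoiling the rate.
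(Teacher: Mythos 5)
Your plan follows the paper's proof essentially step for step: strong asymptotic unbiasedness via the fixed-design analogue of Lemma \ref{SAUbxi} using the KKT bound and the mean oracle inequality of Theorem \ref{strong2}, the lower bound via Theorem \ref{fixed} with $h=\hat\Theta_j/\sqrt{\hat\Theta_{jj}}$, and attainment via the decomposition $\hat b_j-\beta_j^0=\hat\Theta_j^TX^T\epsilon/n+(\hat\Sigma\hat\Theta_j-e_j)^T(\hat\beta-\beta_0)$. The one imprecision is your justification that $h^T\hat\Sigma h=\mathcal O(1)$ from $\|\hat\Theta_j\|_2=\mathcal O(1)$ and $\|\hat\Sigma\|_\infty=\mathcal O(1)$ alone: that route only gives $\mathcal O(s_j)$, and the sharp constant $\hat\Theta_{jj}$ in the lower bound in fact needs $h^T\hat\Sigma h=1+o(1)$, which the paper (and you, in your attainment step) obtains from the KKT condition $\|\hat\Sigma\hat\Theta_j-e_j\|_\infty\le\lambda_j/\hat\tau_j^2$ together with $\|\hat\Theta_j\|_1=\mathcal O(\sqrt{d_n})$ and $1/\hat\tau_j^2=\hat\Theta_{jj}\le\|\hat\Theta_j\|_2=\mathcal O(1)$; since you carry out exactly that computation when verifying $\hat\Theta_j^T\hat\Sigma\hat\Theta_j=\hat\Theta_{jj}+o(1)$, this is a slip rather than a missing idea. (Also note that under fixed design $\hat\Theta_j$ and $\hat\tau_j^2$ are deterministic, so the stability concern in your last paragraph evaporates.)
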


\noindent
The condition $\beta_0+\hat\Theta_j/\sqrt{\hat\Theta_{jj}n} \in \mathcal B(d_n)$ implies that $\|\hat\Theta_j\|_0 =\mathcal O(d_n)$.
To this end, we refer to Lemma \ref{sparse} in Section \ref{subsec:oi} of the supplemental article \cite{sup}, which shows that sparsity in $\hat\Theta_j$ constructed using nodewise regression is guaranteed under random design. The condition $\|\hat\Theta_j\|_2=\mathcal O(1)$ replaces the eigenvalue condition we needed in the case of random design.

\subsection{Le Cam's bounds}
\label{subsec:ex.lr}
In this section, we provide an alternative approach, which makes another choice in the formulation of asymptotic efficiency.  This approach is based on Le Cam's arguments (see e.g. \cite{vdv}) rather than the Cram\'er-Rao bounds, and it allows us to show that the convergence of the de-sparsified estimator to the limiting normal distribution with smallest possible variance is locally uniform in the underlying unknown parameter, and the asymptotic variance of the de-sparsified estimator is smallest among the class of asymptotically linear estimators.
Furthermore, the result identifies the asymptotic bias of asymptotically linear estimators. 
A detailed comparison of the two approaches for deriving the lower bounds is deferred to Section \ref{subsec:conclusion}.
\par
We consider the setting from Section \ref{subsec:lr.random}, where the design matrix $X$ is random with the parameter of interest  being $g(\beta)=\beta_j$.

\begin{theorem}\label{lecam.bj}
Assume that conditions \ref{model}, \ref{design.normal} are satisfied, $\beta_0\in\mathcal B(d_n)$ with  $d_n=o(\sqrt{n}/\log p)$,
 $\|\Theta_j^0\|_0\leq d_n$ and $\Lambda_{\max}(\Sigma_0)=\mathcal O(1)$.
Assume that $\hat b_j$ is defined in \eqref{bj} with tuning parameters $\lambda \asymp \lambda_j \asymp \sqrt{\log p/n}.$ 
Then for every $\tilde\beta_n\in B_{}\left(\beta_0,\frac{c}{\sqrt{n}}\right)$ it holds
$$\frac{
\sqrt{n}(\hat b_j - \tilde\beta_n) }{
 (\Theta_{jj}^0)^{1/2}
} \stackrel{\tilde\beta_n}{\rightsquigarrow} \mathcal N( 0, 1 ).$$ 
Let $T_n$ be an asymptotically linear estimator with an influence function $l_{\beta_0}$:
\begin{equation}
\label{unbiased0}
T_n - \beta_j^0 = \frac{1}{n} \sum_{i=1}^n l_{\beta_0} (X^{(i)}, Y^{(i)}) + o_{P_{\beta_0}}(n^{-1/2}),
\end{equation}
where $\mathbb E l_{\beta_0} (X^{(i)}, Y^{(i)}) =0$ and $\emph{var}(l_{\beta_0} (X^{(i)}, Y^{(i)})) =: V_{\beta_0} < \infty.$
Assume  that for all $h\in\mathbb R^p$ and $i=1,\dots,n$ it holds
\begin{equation}
\label{lr.nobias}
\mathbb E l_{\beta_0} (X^{(i)}, Y^{(i)}) \epsilon_i h^T X^{(i)} - h_j = o(1).
\end{equation}
Then
$$V_{\beta_0} \geq \Theta_{jj}^0+o(1).$$

\end{theorem}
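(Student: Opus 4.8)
The plan is to treat the two assertions separately: first the regularity (locally uniform convergence) of $\hat b_j$, and then the efficiency lower bound $V_{\beta_0}\ge\Theta_{jj}^0+o(1)$. For the regularity statement I would start from the exact algebraic identity for the de-sparsified estimator. Writing the data generated under $P_{\tilde\beta_n}$ as $Y=X\tilde\beta_n+\epsilon$ with $\epsilon\sim\mathcal N(0,I)$ independent of $X$, substitution into \eqref{bj} gives
$$\hat b-\tilde\beta_n=(I-\hat\Theta^T\hat\Sigma)(\hat\beta-\tilde\beta_n)+\hat\Theta^TX^T\epsilon/n.$$
Projecting onto $e_j$ and using the nodewise KKT bound $\|\hat\Sigma\hat\Theta_j-e_j\|_\infty=\mathcal O_P(\lambda_j/\hat\tau_j^2)=\mathcal O_P(\sqrt{\log p/n})$, the bias term is controlled by $\|\hat\Sigma\hat\Theta_j-e_j\|_\infty\|\hat\beta-\tilde\beta_n\|_1$. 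The crucial observation is that $\tilde\beta_n\in\mathcal B(d_n)$ is itself sparse and bounded, so the Lasso oracle inequality applies with true parameter $\tilde\beta_n$ and yields $\|\hat\beta-\tilde\beta_n\|_1=\mathcal O_{P_{\tilde\beta_n}}(d_n\sqrt{\log p/n})$. Hence the bias term is $\mathcal O_{P_{\tilde\beta_n}}(d_n\log p/n)=o_{P_{\tilde\beta_n}}(1/\sqrt n)$ precisely because $d_n=o(\sqrt n/\log p)$.

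It then remains to analyze the linear term. I would split $\hat\Theta_j^TX^T\epsilon/n=(\Theta_j^0)^TX^T\epsilon/n+(\hat\Theta_j-\Theta_j^0)^TX^T\epsilon/n$, bound the second piece by $\|\hat\Theta_j-\Theta_j^0\|_1\|X^T\epsilon/n\|_\infty=\mathcal O_P(d_n\log p/n)=o_P(1/\sqrt n)$ using consistency of the nodewise estimator together with $\|X^T\epsilon\|_\infty=\mathcal O_P(\sqrt{n\log p})$, and observe that, conditionally on $X$, the leading term $\sqrt n(\Theta_j^0)^TX^T\epsilon/n$ is exactly $\mathcal N(0,(\Theta_j^0)^T\hat\Sigma\Theta_j^0)$ under $P_{\tilde\beta_n}$, with variance converging to $(\Theta_j^0)^T\Sigma_0\Theta_j^0=\Theta_{jj}^0$. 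A conditional central limit argument then gives the claimed $\mathcal N(0,1)$ limit for $\sqrt n(\hat b_j-\tilde\beta_n)/(\Theta_{jj}^0)^{1/2}$ under $P_{\tilde\beta_n}$. Since $\tilde\beta_n$ enters the remainder bounds only through its sparsity and $\ell_2$-norm, both controlled uniformly over $B(\beta_0,c/\sqrt n)$, the convergence is locally uniform; alternatively one can establish the expansion under $P_{\beta_0}$ and transfer it by contiguity via the high-dimensional Le Cam lemma.

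For the lower bound I would exploit that the efficient influence function for $\beta_j$ is $\tilde l_{\beta_0}(X^{(i)},Y^{(i)}):=(\Theta_j^0)^TX^{(i)}\epsilon_i$, whose variance (by independence of $X$ and $\epsilon$ and $\epsilon_i\sim\mathcal N(0,1)$) equals $(\Theta_j^0)^T\Sigma_0\Theta_j^0=\Theta_{jj}^0$. The score of the Gaussian linear model in direction $h$ is $\dot\ell_h=\epsilon_i h^TX^{(i)}$, so condition \eqref{lr.nobias} reads $\mathbb E[l_{\beta_0}\dot\ell_h]=h_j+o(1)$ for every $h$. Choosing $h=\Theta_j^0$, which is a bounded direction since $\|\Theta_j^0\|_2\le 1/\Lambda_{\min}(\Sigma_0)=\mathcal O(1)$, gives exactly $\mathbb E[l_{\beta_0}\tilde l_{\beta_0}]=(\Theta_j^0)_j+o(1)=\Theta_{jj}^0+o(1)$. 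Both $l_{\beta_0}$ and $\tilde l_{\beta_0}$ have mean zero, so Cauchy--Schwarz yields $\Theta_{jj}^0+o(1)=\mathrm{Cov}(l_{\beta_0},\tilde l_{\beta_0})\le V_{\beta_0}^{1/2}(\Theta_{jj}^0)^{1/2}$; dividing by $(\Theta_{jj}^0)^{1/2}$ and squaring, while using $\Theta_{jj}^0\asymp 1$ (bounded below by $1/\Lambda_{\max}(\Sigma_0)$ and above by $1/\Lambda_{\min}(\Sigma_0)$), delivers $V_{\beta_0}\ge\Theta_{jj}^0+o(1)$.

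The main obstacle is the regularity part: ensuring that the expansion $\hat b_j-\tilde\beta_n=\tfrac1n\sum_i(\Theta_j^0)^TX^{(i)}\epsilon_i+o(1/\sqrt n)$ and its remainder bounds hold under the perturbed law $P_{\tilde\beta_n}$ and \emph{uniformly} over the shrinking neighbourhood, rather than only under $P_{\beta_0}$. This is exactly where the sparsity budget $d_n=o(\sqrt n/\log p)$ is consumed, as it simultaneously keeps the perturbed target $\tilde\beta_n$ inside the sparse model (so the Lasso oracle bounds of Theorem \ref{strong2} apply) and forces both the KKT bias term and the nodewise remainder to be $o(1/\sqrt n)$. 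The lower bound, by contrast, reduces to plugging the efficient direction $h=\Theta_j^0$ into the no-bias condition once the efficient influence function has been identified.
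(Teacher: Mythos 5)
Your proposal is correct, and the lower-bound half coincides with the paper's argument: the paper invokes Lemma \ref{lb2}, whose proof is exactly your Cauchy--Schwarz computation with the optimal direction $h=I_{\beta_0}^{-1}\dot g(\beta_0)=\Theta_j^0$, so that $\mathbb E[l_{\beta_0}(\Theta_j^0)^TX^{(1)}\epsilon_1]=\Theta_{jj}^0+o(1)$ and $V_{\beta_0}\ge(\Theta_{jj}^0+o(1))^2/\Theta_{jj}^0=\Theta_{jj}^0+o(1)$. Where you genuinely diverge is the regularity statement. The paper obtains it as a corollary of the general high-dimensional LAN result (Theorem \ref{lecam}): it cites the asymptotic linearity of $\hat b_j$ under $P_{\beta_0}$ with influence function $(\Theta_j^0)^TX^{(i)}\epsilon_i$ from the de-sparsified Lasso literature, verifies that the no-bias condition \eqref{nob} holds exactly (since $(\Theta_j^0)^T\mathbb E[X_1X_1^T\mathbb E(\epsilon_1^2\mid X_1)]h=h_j$), and then reads off from the likelihood-ratio expansion that the shift term under $P_{\tilde\beta_n}$ vanishes. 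You instead redo the expansion directly under each perturbed law $P_{\tilde\beta_n}$, exploiting that $\tilde\beta_n\in B(\beta_0,c/\sqrt n)\subset\mathcal B(d_n)$ is itself sparse and bounded, so Theorem \ref{strong2}-type oracle bounds and the KKT control $\|\hat\Sigma\hat\Theta_j-e_j\|_\infty=\mathcal O_P(\lambda_j/\hat\tau_j^2)$ apply verbatim with $\tilde\beta_n$ as the truth, and the conditional Gaussianity of $(\Theta_j^0)^TX^T\epsilon/n$ given $X$ delivers the limit. Your route is more elementary and self-contained (no likelihood expansion, no Lindeberg condition, no contiguity), at the price of being tied to the Gaussian linear model; the paper's route is heavier but is precisely what makes the general non-linear extension of Section \ref{sec:lecam} available, and it additionally identifies the asymptotic bias $P_{\beta_0}(l_{\beta_0}h^Ts_{\beta_0})-h_j$ of an arbitrary asymptotically linear estimator rather than only of $\hat b_j$. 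Both arguments consume the sparsity budget $d_n=o(\sqrt n/\log p)$ in the same places, and your accounting of the remainder terms is accurate.
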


\subsection{Discussion of the conditions}
\label{subsec:nec}
We briefly discuss the conditions assumed to obtain the above results. To establish asymptotic efficiency of the de-sparsified estimator, we  considered conditions analogous to the conditions assumed in \cite{vdgeer13}. These include a sparsity condition on the parameter $\beta_0$ of order $o(\sqrt{n}/\log p)$,  conditions on the covariance matrix  $\Lambda_{\min}(\Sigma_0)=\mathcal O(1),
\|\Sigma_0\|_\infty =\mathcal O(1)$, sparsity of the precision matrix and a Gaussianity assumption on the rows on the precision matrix. 
Unlike in \cite{vdgeer13}, we assume Gaussianity of the design matrix and the error; this condition was needed to derive the lower bounds. 
In addition to the conditions from \cite{vdgeer13}, we also assume boundedness of $\ell_2$-norm of $\beta_0$, which follows if the signal to noise ratio is bounded as argued in Section \ref{subsec:strong}.
Condition \eqref{lr.nobias} from Theorem \ref{lecam.bj} is a variant of asymptotic unbiasedness which is known to be satisfied in many traditional settings (see e.g. \cite{vdv}). The condition is discussed in more detail in Section \ref{sec:lecam} below.
\par 
Our analysis requires the sparsity condition $s=o(\sqrt{n}/\log p).$ 
This condition is essentially  necessary in the linear regression setting for construction of an asymptotically normal estimator, as argued in the following.
First observe that if the (slightly weaker) condition $s=\mathcal O(\sqrt{n}/\log p)$ is not satisfied, then there cannot exist an estimator $T_n$ of $\beta_j\in\mathbb R$ and a sequence $\sigma_n=\mathcal O(1)$ such that 
\begin{equation}\label{an}
\sqrt{n}(T_n-\beta^0_j)/\sigma_n\rightsquigarrow \mathcal N(0,1).
\end{equation}
Suppose that there exists an estimator $T_n$ that satisfies \eqref{an}. Then necessarily $\sqrt{n}(T_n-\beta_j^0)/\sigma_n=\mathcal O_P(1)$.
By similar reasoning as in \cite{zhou}
, we have under the conditions assumed the minimax rates for $\mathbb E|T_n - \beta^0_j|$ of order $\frac{1}{\sqrt{n}} + \frac{s\log p}{n}.$
But then 
necessarily $s\log p/{n}=\mathcal O(1/\sqrt{n}),$ which gives $s=\mathcal O(\sqrt{n}/\log p)$. This is only slightly weaker than the condition we require, $s=o(\sqrt{n}/\log p)$.
\par
Furthermore, for simplicity of presentation, we assumed that the variance of the noise is fixed at $\sigma_\epsilon=1$.
In general, we can include the parameter $\sigma_\epsilon$ as an unknown parameter in the model, and by orthogonality of the score corresponding to this parameter and the score corresponding to $\beta$, we can easily extend the arguments. The noise variance will then appear in both lower and upper bounds.

\section{Gaussian graphical models}
\label{subsec:ggm.all}
In this part, we consider efficient estimation of edge weights in undirected Gaussian graphical models. 
Gaussian graphical models have become a popular tool for representing dependencies within large sets of variables and have found application in areas such as neuroscience, biology and climate data analysis. In particular, Gaussian graphical models  
encode conditional dependencies between variables (nodes in the graph) by including an edge between two variables if and only if they are not independent given all the other variables. This corresponds to the problem of estimation of the precision matrix of a multivariate normal distribution, which we now introduce.
\begin{enumerate}[label=(C\arabic*),start=1]
\item\label{ggm.x}
Assume that the $n\times p$ matrix $X$ has independent rows $X^{(i)}$, $i=1,\dots,n$ which are $\mathcal N_p(0, \Sigma_0)-$distributed.
\end{enumerate}
Denote the precision matrix by $\Theta_0:=\Sigma_0^{-1}$, where the inverse of $\Sigma_0$ is assumed to exist. The matrix  $\Theta_0 \in \mathbb R^{p\times p}$ is unknown, but we  assume bounds on its row-sparsity (column-sparsity) $s_j:=\|\Theta_j^0\|_0$, where $\Theta_j^0$ is the $j$-th column of the precision matrix.
\par


\subsection{Methodology}

There have been several methods proposed for estimation of the precision matrix in the high-dimensional setting when $p\gg n$ (see \cite{glasso}, \cite{buhlmann}). These methods are based on regularization techniques and lead to estimators that are biased. De-biasing was then studied similarly as in the linear regression, and it was shown that de-biasing leads to estimators which are asymptotically normal. 
For our further analysis, we consider the de-sparsified nodewise Lasso  estimator proposed in \cite{jvdgeer15}. 
We show that this estimator is strongly asymptotically unbiased and reaches the lower bound on the variance derived in the previous section.

To introduce  the methodology, 
consider again the nodewise Lasso estimator $\hat\Theta=(\hat\Theta_1,\dots,\hat\Theta_p)$ defined in \eqref{node.lr}.
Define the de-sparsified nodewise Lasso (see \cite{jvdgeer15})
\begin{equation}\label{dsnw}
\hat  T := \hat \Theta + \hat\Theta^T  - \hat\Theta \hat\Sigma \hat\Theta.
\end{equation}
Furthermore, we write $\hat  T_{ij} := \hat \Theta_{ij} + \hat\Theta_{ji}  - \hat\Theta_i^T \hat\Sigma \hat\Theta_j$ for 
$i,j=1,\dots,p.$
The method and its asymptotic properties were studied in \cite{jvdgeer15}. 
The estimator $\hat \Theta_j$ can be shown to be asymptotically linear with a remainder term of small order $1/\sqrt{n}$, in particular, under condition \ref{ggm.x} and under $\max_{j=1,\dots,p}s_j=o(\sqrt{n}/\log p)$ it holds 
$$\hat T -\Theta_0= -\Theta_0^T (\hat\Sigma-\Sigma_0) \Theta_0 + \Delta,$$
where $\|\Delta \|_\infty=o_P(1/\sqrt{n}).$
 Thus, after normalization by $\sqrt{n}$ and by the (estimated) standard deviation, it follows that it is asymptotically standard normal and minimax optimal (see \cite{zhou}, \cite{jvdgeer15}). 
We investigate 
the question of ``regularity'' 
and asymptotic efficiency of the proposed estimator.

\subsection{Strong asymptotic unbiasedness of the de-sparsified nodewise Lasso}
\label{subsec:ggm.sau}
Suppose that the parameter $\Theta$ ranges over a parameter space $T\subset \mathbb R^{p\times p}$. 
We then define the parameter set
\begin{eqnarray*}
\mathcal G(d_1,\dots,d_p)&:=&\{\Theta\in T:\Theta = \Theta^T, \|\Theta_j\|_0 \leq C_1d_j, j=1,\dots,p,\\
&&\quad\quad\quad \quad\quad\;\; 1/\Lambda_{\min}(\Theta)\leq C_2,
\Lambda_{\max}(\Theta)\leq C_3\},
\end{eqnarray*}
for some universal constants $C_1,C_2,C_3>0.$
We also need to readjust the definition of a neighbourhood from \eqref{neigh}; hence in this section we let   
$$B(\Theta,\epsilon):=\{\tilde\Theta \in \mathcal G(d_1,\dots,d_p): \|\tilde\Theta-\Theta\|_F\leq \epsilon\}.$$
The following lemma shows that  $\hat T_{ij}$ is strongly asymptotically unbiased for estimation of $\Theta_{ij}^0$.

\noindent
\begin{lemma} \label{SAU} Let $i,j\in\{1,\dots,p\}$, assume that condition \ref{ggm.x} is satisfied and  $\Theta_{0}\in
\mathcal G(d_1,\dots,d_p)$ with $\max(d_i,d_j) = o\left( {\sqrt{n}}/{\log p} \right)$. 
Let $\hat T_{ij}$ be defined in \eqref{dsnw}, where $\hat\Theta_i,\hat\Theta_j$ are the $i$-th and $j$-th columns of the nodewise Lasso estimator with tuning parameters $\lambda_i\asymp\lambda_j \asymp \sqrt{\log p /n}$. 
Then $\hat T_{ij}$ is a strongly asymptotically unbiased estimator for $\Theta_{ij}^0$.
\end{lemma}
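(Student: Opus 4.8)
The plan is to verify the two requirements of Definition \ref{au} for $T_n = \hat T_{ij}$ estimating $g(\Theta) = \Theta_{ij}^0$: first the variance bound $\mathrm{var}_{\Theta_0}(\hat T_{ij}) = \mathcal O(1/n)$ at the base point, and second the vanishing-bias condition $\sqrt{n}(\mathbb E_\Theta \hat T_{ij} - \Theta_{ij}) \to 0$ uniformly over the shrinking neighbourhood $B(\Theta_0, c/\sqrt{n})$. The starting point is the asymptotic linearization quoted in the text, namely $\hat T - \Theta_0 = -\Theta_0^T(\hat\Sigma - \Sigma_0)\Theta_0 + \Delta$ with $\|\Delta\|_\infty = o_P(1/\sqrt{n})$; the leading term is an average of mean-zero quantities, so the entrywise variance is of order $1/n$, which (combined with control of $\Delta$) gives the variance bound.

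The substantive work is the bias claim, and here I would not rely directly on the $o_P$ statement above, since strong asymptotic unbiasedness is a statement \emph{in expectation} and must hold \emph{uniformly} over all $\Theta \in B(\Theta_0, c/\sqrt{n})$. First I would fix an arbitrary $\Theta$ in this neighbourhood and decompose the bias of $\hat T_{ij}$ into the expectation of the leading term and the expectation of the remainder $\Delta_{ij}$. The leading term $-\Theta_i^{0T}(\hat\Sigma - \Sigma)\Theta_j^0$ has exactly mean zero under $\Theta$ (taking $\hat\Sigma$ at parameter $\Theta$, since $\mathbb E_\Theta \hat\Sigma = \Sigma$), so the whole bias is carried by the remainder term. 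The remainder, in turn, is built from products involving $\hat\Theta_i - \Theta_i$, $\hat\Theta_j - \Theta_j$ and the KKT defect $\|\hat\Sigma\hat\Theta_j - e_j\|_\infty = \mathcal O_P(\lambda_j/\hat\tau_j^2)$. The crux is to bound $\mathbb E_\Theta|\Delta_{ij}|$ by $o(1/\sqrt{n})$ using the strong (i.e. in-expectation) oracle inequalities — Theorem \ref{strong2} applied to the nodewise Lasso regressions — which give $\mathbb E_\Theta\|\hat\Theta_k - \Theta_k\|_1^k \lesssim (s_k\lambda_k)^k$, and the sparsity assumption $\max(d_i,d_j) = o(\sqrt{n}/\log p)$ with $\lambda_k \asymp \sqrt{\log p/n}$. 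Indeed the typical cross-term scales like $s_i s_j \lambda_i \lambda_j \asymp s_i s_j \log p / n = o(1/\sqrt{n})$ precisely under this sparsity budget, and the Cauchy--Schwarz splitting of product terms is where the \emph{higher-order} strong oracle inequalities from Theorem \ref{strong2} become essential.

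The main obstacle is precisely the uniformity over the $1/\sqrt{n}$-neighbourhood: the strong oracle bounds of Theorem \ref{strong2} are stated at a fixed true parameter, so I would need to check that the relevant constants (the compatibility/eigenvalue constants governing $\hat\Sigma$, the sub-Gaussian constants, and the norms $\|\Sigma\|_\infty$, $1/\Lambda_{\min}(\Sigma)$) remain uniformly controlled as $\Theta$ varies over $B(\Theta_0, c/\sqrt{n})$. This is where the parameter set $\mathcal G(d_1,\dots,d_p)$ does its work: since every $\Theta$ in the neighbourhood also lies in $\mathcal G$, its eigenvalues and row-sparsities are uniformly bounded, and $\|\Theta - \Theta_0\|_F \le c/\sqrt{n}$ forces $\Sigma = \Theta^{-1}$ and $\Sigma_0 = \Theta_0^{-1}$ to be close, so the oracle constants perturb negligibly. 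I would therefore argue that Theorem \ref{strong2} applies with constants uniform over the whole neighbourhood, and then combine the uniform remainder bound with the exact mean-zero leading term to conclude $\sqrt{n}(\mathbb E_\Theta \hat T_{ij} - \Theta_{ij}) \to 0$, completing the verification of Definition \ref{au}.
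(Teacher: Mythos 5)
Your overall strategy is the paper's: split the bias into the exactly-mean-zero leading term $(\Theta^0_i)^T(\hat\Sigma-\Sigma)\Theta^0_j$ plus a remainder built from the KKT defects and the column estimation errors, and control the remainder \emph{in expectation} via Cauchy--Schwarz together with the strong oracle inequalities for the nodewise Lasso (the paper uses Lemma \ref{the} and Lemma \ref{tau}, the nodewise analogues of Theorem \ref{strong2}, rather than Theorem \ref{strong2} itself). Your extra care about uniformity over $B(\Theta_0,c/\sqrt{n})$ is legitimate and is implicitly what the paper relies on: all constants in the oracle bounds depend only on membership in $\mathcal G(d_1,\dots,d_p)$, so the argument at an arbitrary $\Theta$ in the neighbourhood is verbatim the argument at $\Theta_0$.

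There is, however, one concrete quantitative error. You claim the typical cross-term scales like $s_is_j\lambda_i\lambda_j\asymp s_is_j\log p/n$ and that this is $o(1/\sqrt{n})$ under $\max(s_i,s_j)=o(\sqrt{n}/\log p)$. It is not: with $s_i=s_j=\epsilon_n\sqrt{n}/\log p$ and $\epsilon_n\to 0$ slowly, $s_is_j\log p/n=\epsilon_n^2/\log p$, which need not be $o(1/\sqrt{n})$. If the remainder really carried the product $s_is_j$, the lemma would fail at this sparsity level. The reason the proof works is that each remainder term contains only \emph{one} $\ell_1$-error factor: the decomposition is
$(\hat\Theta_i-\Theta^0_i)^T(\hat\Sigma\Theta^0_j-e_j)+(\hat\Sigma\hat\Theta_i-e_i)^T(\hat\Theta_j-\Theta^0_j)$,
where the first factor of each summand is paired with a sup-norm defect of order $\lambda$ (not another $\ell_1$-error), so Cauchy--Schwarz gives $\mathcal O(s_i\lambda_i\lambda_j)+\mathcal O(s_j\lambda_i\lambda_j)=\mathcal O(\max(s_i,s_j)\log p/n)=o(1/\sqrt{n})$. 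You should also make the variance verification explicit rather than inferring it from the $o_P(1/\sqrt{n})$ linearization: Definition \ref{au} requires $\mathrm{var}_{\Theta_0}(\hat T_{ij})=\mathcal O(1/n)$, and bounding the variance of the remainder again needs the in-expectation (fourth-moment) oracle bounds, as in the proof of Theorem \ref{ggm.main}, not just a high-probability statement.
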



\subsection{Main results}
\label{subsec:ggm.ub}

We first derive an asymptotic lower bound for the variance of $T_n$ when $T_n$ is strongly asymptotically unbiased.
We restrict our attention to estimation of linear functionals of the precision matrix
 $\Theta_0$, $h(\Theta_0) = \textrm{tr}(\Psi \Theta_0),$ where $\Psi\in\mathbb R^{p\times p}$ is a known matrix.
We shall consider the case when $\Psi$
is of rank one, say $\Psi=\xi_1\xi_2^T$ for some vectors $\xi_1,\xi_2\in\mathbb R^p$. This corresponds to estimation of $g(\Theta_0) = \xi_1^T \Theta_0 \xi_2$, where $\xi_1,\xi_2\in\mathbb R^p$ are known vectors. 
\par
Contrary to previous sections, the high-dimensional parameter is a matrix, therefore instead of a vector direction $h$ we shall write the capital letter $H$ to denote a matrix direction in $\mathbb R^{p\times p}.$

\begin{theorem}\label{ggm.lc}
Assume condition \ref{ggm.x},  assume that
$\Theta_{0}\in\mathcal G(d_1,\dots,d_p)$ where
$\max_{j=1,\dots,p} d_j = o\left( {\sqrt{n}}/{\log p} \right)$ 
and $\Theta_0+ H/\sqrt{n}\in B_{}(\Theta_0,c/\sqrt{n})$.
Suppose that $T_n$ is a strongly asymptotically unbiased estimator of $g(\Theta) = \xi_1^T\Theta\xi_2$ at $\Theta_0\in\mathcal G(d_1,\dots,d_p)$
in the direction $H := \Theta_0 (\xi_1\xi_2^T+\xi_2\xi_1^T)\Theta_0/\sigma_{},$ where 
$$\sigma_{}^2:=
{\xi_1^T\Theta_0\xi_1 \xi_2^T\Theta_0\xi_2 +(\xi_1^T\Theta_0\xi_2)^2}
.$$ 
Then it holds 
$$\emph{var}_{\Theta_0}(T_n) \geq \frac{\sigma^2 - o(1)}{n}.$$
\end{theorem}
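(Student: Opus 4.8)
The plan is to adapt the classical Cramér--Rao argument to the shrinking Gaussian submodel singled out by $H$, replacing the (uncontrollable) derivative of the bias by a finite difference that strong asymptotic unbiasedness does control, and then to evaluate the variance of the likelihood ratio \emph{exactly} via the Gaussian moment generating function. The sharp constant is recovered by taking the perturbation scale to zero at the very end.

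First I would introduce the one-dimensional submodel $\Theta(t) := \Theta_0 + tH/\sqrt{n}$ for $t$ in a fixed bounded interval; for $n$ large and $|t|$ small this remains inside $B(\Theta_0,c/\sqrt{n})$, since $\Theta_0+H/\sqrt n\in B(\Theta_0,c/\sqrt n)$ forces $\|H\|_F\le c$. Because $g(\Theta)=\xi_1^T\Theta\xi_2$ is linear, $g(\Theta(t))-g(\Theta_0)=\tfrac{t}{\sqrt n}\,\xi_1^T H\xi_2$, and a direct rank-two computation with $H=\Theta_0(\xi_1\xi_2^T+\xi_2\xi_1^T)\Theta_0/\sigma$ gives $\xi_1^T H\xi_2=(\xi_1^T\Theta_0\xi_1\,\xi_2^T\Theta_0\xi_2+(\xi_1^T\Theta_0\xi_2)^2)/\sigma=\sigma$. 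Applying strong asymptotic unbiasedness at both $\Theta(t)$ and $\Theta_0$ then yields
$$\mathbb E_{\Theta(t)}T_n-\mathbb E_{\Theta_0}T_n=\frac{t\sigma}{\sqrt n}+o(1/\sqrt n).$$

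Next, writing $L_n(t):=dP_{\Theta(t)}/dP_{\Theta_0}$, the change-of-measure identity gives $\mathbb E_{\Theta(t)}T_n-\mathbb E_{\Theta_0}T_n=\textrm{Cov}_{\Theta_0}(T_n,L_n(t))$, so Cauchy--Schwarz (legitimate since $\textrm{var}_{\Theta_0}(T_n)=\mathcal O(1/n)$ and $L_n(t)\in L^2$) produces $[\mathbb E_{\Theta(t)}T_n-\mathbb E_{\Theta_0}T_n]^2\le \textrm{var}_{\Theta_0}(T_n)\,\textrm{var}_{\Theta_0}(L_n(t))$. The crucial step is to evaluate $\textrm{var}_{\Theta_0}(L_n(t))=\mathbb E_{\Theta_0}[L_n(t)^2]-1$ in closed form. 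Because the rows are i.i.d.\ centred Gaussians, $L_n(t)$ factorizes, and using the quadratic-form MGF $\mathbb E\,e^{sX^T H X}=\det(I-2s\Sigma_0 H)^{-1/2}$ together with $\det\Theta(t)/\det\Theta_0=\det(I+\tfrac{t}{\sqrt n}\Sigma_0 H)$ I would obtain
$$\mathbb E_{\Theta_0}[L_n(t)^2]=\det\!\Big(I+\tfrac{t}{\sqrt n}\Sigma_0 H\Big)^{\!n}\det\!\Big(I+\tfrac{2t}{\sqrt n}\Sigma_0 H\Big)^{\!-n/2}.$$
Expanding $\textrm{tr}\log$ to second order, the first-order terms cancel and, using $\textrm{tr}((\Sigma_0 H)^2)=\textrm{tr}(H\Sigma_0 H\Sigma_0)=2$ (another direct rank-two computation), the exponent equals $t^2+O(t^3/\sqrt n)$, so $\textrm{var}_{\Theta_0}(L_n(t))\to e^{t^2}-1$.

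Finally I would combine the pieces: multiplying by $n$ gives, for each fixed $t$, $\liminf_n n\,\textrm{var}_{\Theta_0}(T_n)\ge t^2\sigma^2/(e^{t^2}-1)$, and letting $t\to 0$ (the left side being free of $t$) yields $\liminf_n n\,\textrm{var}_{\Theta_0}(T_n)\ge\sigma^2$, which is the claim. The main obstacle is the exact control of $\textrm{var}_{\Theta_0}(L_n(t))$: one must check that the MGF exists, which holds for large $n$ because $\Lambda_{\max}(\Sigma_0)$ and $\|H\|_F$ are bounded so the eigenvalues of $\tfrac{t}{\sqrt n}\Sigma_0 H$ stay below $1/2$, and that the cubic $\log\det$ remainder is $O(t^3/\sqrt n)$ and hence negligible. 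The order of the limits -- first $n\to\infty$, then $t\to 0$ -- is exactly what lets the pointwise-in-$t$ bias remainder $o(1/\sqrt n)$ be absorbed while still delivering the sharp constant $\sigma^2$.
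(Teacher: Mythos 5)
Your overall strategy (finite-difference Cram\'er--Rao via a one-parameter submodel, Cauchy--Schwarz against the likelihood ratio, exact Gaussian MGF computation) is sound, and the algebra you outline is correct: $\xi_1^TH\xi_2=\sigma$, $\mathrm{tr}((\Sigma_0H)^2)=2$, and the determinantal formula for $\mathbb E_{\Theta_0}[L_n(t)^2]$ all check out. But there is a genuine gap in the step where you apply strong asymptotic unbiasedness at $\Theta(t)=\Theta_0+tH/\sqrt{n}$ for \emph{fixed} $t$. Definition \ref{au} controls the bias only on the neighbourhood $B(\Theta_0,c/\sqrt{m_n})$ with $n=o(m_n)$, and it gives $\mathbb E_{\beta}T_n-g(\beta)=o(1/\sqrt{m_n})$ there. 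For fixed $t>0$ one has $\|\Theta(t)-\Theta_0\|_F=t\|H\|_F/\sqrt{n}\gg c/\sqrt{m_n}$, so $\Theta(t)$ leaves the neighbourhood where the hypothesis says anything, and your identity $\mathbb E_{\Theta(t)}T_n-\mathbb E_{\Theta_0}T_n=t\sigma/\sqrt{n}+o(1/\sqrt{n})$ is not justified. The order of limits you advertise as the key device (first $n\to\infty$, then $t\to 0$) is exactly what breaks the argument: the bias is only controlled at perturbation scale $1/\sqrt{m_n}$, which vanishes relative to $1/\sqrt n$.

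The fix stays entirely within your framework: take $t=t_n:=\sqrt{n/m_n}\to 0$, so that $\Theta(t_n)=\Theta_0+H/\sqrt{m_n}\in B(\Theta_0,c/\sqrt{m_n})$ and the unbiasedness hypothesis applies, giving $\mathbb E_{\Theta(t_n)}T_n-\mathbb E_{\Theta_0}T_n=\sigma/\sqrt{m_n}+o(1/\sqrt{m_n})$. Your exact MGF computation then yields $\mathrm{var}_{\Theta_0}(L_n(t_n))=e^{t_n^2(1+o(1))}-1=(n/m_n)(1+o(1))$, and Cauchy--Schwarz gives $(\sigma^2+o(1))/m_n\le \mathrm{var}_{\Theta_0}(T_n)\,(n/m_n)(1+o(1))$, i.e.\ $n\,\mathrm{var}_{\Theta_0}(T_n)\ge\sigma^2-o(1)$. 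Once repaired this way, your route is a legitimate and in fact somewhat cleaner alternative to the paper's proof: the paper linearizes the likelihood-ratio increment around the score $s_{\Theta_0}(X)=-n(\hat\Sigma-\Sigma_0)/2$ and controls the quadratic remainder via Lemma \ref{ggm.error.lc}, whereas your exact second-moment computation of $L_n$ makes that remainder analysis unnecessary; the price is that you must keep the perturbation scale tied to $m_n$ rather than decoupling it.
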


\noindent

As a corollary, consider estimation of $g(\Theta_0) = \Theta^0_{ij}$ for some fixed $(i,j)\in \{1,\dots,p\}^2.$
Then the worst sub-direction is given by $H:=(\Theta_i^0(\Theta_j^0)^T + \Theta_j^0(\Theta_i^0)^T )/\sigma_{}$
where $\sigma^2 := (\Theta^0_{ij})^2+\Theta^0_{ii}\Theta^0_{jj}$ and the corresponding lower bound is 
$((\Theta^0_{ij})^2+\Theta^0_{ii}\Theta^0_{jj})/n +o(1/n).$

%

\noindent
We now show that the de-sparsified estimator $\hat T_{ij}$ reaches the lower bound on the variance for the parameter 
of interest $g(\Theta_0)=\Theta_{ij}^0$.

\begin{theorem}\label{ggm.main}
Suppose that condition \ref{ggm.x} holds, $\Theta_{0}\in\mathcal G(d_1,\dots,d_p)$ where
$\max(d_i,d_j) = o\left( {\sqrt{n}}/{\log p} \right)$. 
Suppose that $\Theta_0+ H/\sqrt{n}\in B_{}(\Theta_0,c/\sqrt{n})$ for $H:= (\Theta^0_i(\Theta^0_j)^T + \Theta^0_j(\Theta^0_i)^T)/\sigma_{}.$
Let $\hat T_{ij}$ be defined in \eqref{dsnw}, where $\hat\Theta_i,\hat\Theta_j$ are the $i$-th and $j$-th columns of the nodewise Lasso estimator with tuning parameters $\lambda_i\asymp\lambda_j \asymp \sqrt{\log p /n}$. 
Then $\hat T_{ij}$ is a strongly asymptotically unbiased estimator of $\Theta_{ij}$ at $\Theta_0$
 and for any strongly asymptotically unbiased estimator $T$ of $\Theta_{ij}$ at $\Theta_0$ it holds 
$$\emph{var}_{\Theta_0}( T) \geq \frac{\Theta^0_{ii}\Theta^0_{jj}+(\Theta^0_{ij})^2+ o(1)}{{n}}, \quad  \emph{var}_{\Theta_0}(\hat T_{ij}) =\frac{\Theta^0_{ii}\Theta^0_{jj}+(\Theta^0_{ij})^2+ o(1)}{{n}}.$$
\end{theorem}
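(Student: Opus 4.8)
The plan is to treat the three assertions separately, reusing the machinery already assembled. Strong asymptotic unbiasedness of $\hat T_{ij}$ is precisely the content of Lemma \ref{SAU}, applied with the stated tuning parameters and the sparsity condition $\max(d_i,d_j)=o(\sqrt n/\log p)$, so this part needs no new argument. The lower bound is an immediate specialization of the Cram\'er--Rao type bound of Theorem \ref{ggm.lc}: taking $\xi_1=e_i$ and $\xi_2=e_j$ gives $g(\Theta)=\Theta_{ij}$ and $\Psi=e_ie_j^T$, and one computes $\sigma^2=(e_i^T\Theta_0e_i)(e_j^T\Theta_0e_j)+(e_i^T\Theta_0e_j)^2=\Theta_{ii}^0\Theta_{jj}^0+(\Theta_{ij}^0)^2$, while the worst sub-direction reduces exactly to the $H$ assumed in the hypotheses. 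Theorem \ref{ggm.lc} then yields $\textrm{var}_{\Theta_0}(T)\geq(\Theta_{ii}^0\Theta_{jj}^0+(\Theta_{ij}^0)^2-o(1))/n$ for every strongly asymptotically unbiased $T$.

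For the matching upper bound I would start from the asymptotic linear expansion $\hat T-\Theta_0=-\Theta_0(\hat\Sigma-\Sigma_0)\Theta_0+\Delta$ (using $\Theta_0^T=\Theta_0$), whose $(i,j)$ entry reads
\[
\hat T_{ij}-\Theta_{ij}^0=-\frac1n\sum_{k=1}^n\bigl((\Theta_i^0)^TX^{(k)}(X^{(k)})^T\Theta_j^0-\Theta_{ij}^0\bigr)+\Delta_{ij}.
\]
Writing $U^{(k)}:=(\Theta_i^0)^TX^{(k)}$ and $V^{(k)}:=(\Theta_j^0)^TX^{(k)}$, the pair $(U^{(k)},V^{(k)})$ is a centered bivariate Gaussian with $\textrm{var}(U^{(k)})=\Theta_{ii}^0$, $\textrm{var}(V^{(k)})=\Theta_{jj}^0$ and $\textrm{cov}(U^{(k)},V^{(k)})=(\Theta_i^0)^T\Sigma_0\Theta_j^0=\Theta_{ij}^0$, using $\Sigma_0\Theta_j^0=e_j$. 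By Isserlis' theorem the i.i.d.\ summands $U^{(k)}V^{(k)}-\Theta_{ij}^0$ have variance $\mathbb E[(U^{(k)})^2(V^{(k)})^2]-(\Theta_{ij}^0)^2=\Theta_{ii}^0\Theta_{jj}^0+(\Theta_{ij}^0)^2$, so the leading term contributes exactly $(\Theta_{ii}^0\Theta_{jj}^0+(\Theta_{ij}^0)^2)/n$ to the variance.

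It remains to show that $\Delta_{ij}$ contributes only $o(1/n)$, which is where the real work lies and which I expect to be the main obstacle. The bound $\|\Delta\|_\infty=o_P(1/\sqrt n)$ is a statement in probability and does not by itself control second moments, so I would instead revisit the construction of the remainder, which is built from bilinear terms such as $(\hat\Theta_i-\Theta_i^0)^T(\hat\Sigma-\Sigma_0)(\hat\Theta_j-\Theta_j^0)$ together with pieces governed by the nodewise KKT slack $\|\hat\Sigma\hat\Theta_j-e_j\|_\infty=\mathcal O_P(\lambda_j/\hat\tau_j^2)$. The crucial input is the strong oracle inequality of Theorem \ref{strong2}, applied to each nodewise regression \eqref{nc} (whose sub-Gaussianity hypotheses follow from condition \ref{ggm.x}): it supplies moment bounds of every fixed order on $\|\hat\Theta_j-\Theta_j^0\|_1$. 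Combining these with sub-Gaussian moment bounds on $\|\hat\Sigma-\Sigma_0\|_\infty$ and H\"older's inequality gives $\mathbb E_{\Theta_0}[\Delta_{ij}^2]=o(1/n)$ under $\max(d_i,d_j)=o(\sqrt n/\log p)$, and a Cauchy--Schwarz step bounds the cross term between the linear part and $\Delta_{ij}$ by the geometric mean of their $L^2$ norms, also $o(1/n)$. Assembling the three contributions yields $\textrm{var}_{\Theta_0}(\hat T_{ij})=(\Theta_{ii}^0\Theta_{jj}^0+(\Theta_{ij}^0)^2+o(1))/n$. The essential difficulty is precisely this passage from an in-probability remainder bound to an $L^2$ bound: upgrading the control of $\Delta_{ij}$ to mean square is exactly what forces the use of the higher-order oracle inequalities developed earlier rather than the classical high-probability ones.
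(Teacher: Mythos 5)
Your proposal is correct and follows essentially the same route as the paper: strong asymptotic unbiasedness via Lemma \ref{SAU}, the lower bound by specializing Theorem \ref{ggm.lc} to $\xi_1=e_i$, $\xi_2=e_j$, and the upper bound by computing the variance of the linear term $-(\Theta_i^0)^T(\hat\Sigma-\Sigma_0)\Theta_j^0$ exactly and killing the remainder in $L^2$ via the expectation-level oracle inequalities plus Cauchy--Schwarz for the cross terms. The only small caveat is that the moment bounds you need are not Theorem \ref{strong2} itself but its nodewise analogues (Lemmas \ref{aux.gm1}--\ref{the}), which additionally control $\mathbb E\,1/(\hat\tau_j^2)^k$ so that the KKT slack $\lambda_j/\hat\tau_j^2$ does not blow up in expectation --- this is precisely the extra work the paper does and that your sketch implicitly relies on.
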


The condition $\Theta_0+ H/\sqrt{n}\in B_{}(\Theta_0,c/\sqrt{n})$ for $H= (\Theta^0_i(\Theta^0_j)^T + \Theta^0_j(\Theta^0_i)^T)/\sigma_{}$ ensures that perturbation of $\Theta_0$ along the worst possible sub-direction $H$ lies within the model. 
This also implies that $\|H_k\|_0 \leq 2C_1 d_k, k=1,\dots,p$, which in turn implies that 
necessarily $\|\Theta_{i}^0\|_0 = \mathcal O(d_k), \|\Theta_{j}^0\|_0=\mathcal O(d_k)$ for $k=1,\dots,p$. 
Note that we only require sparsity in the $i$-th and $j$-th column of the precision matrix.
Furthermore, we must have $\|H\|_F \leq c.$ 
This is satisfied under the eigenvalue conditions 
noting that $\|H\|_F^2=\text{tr}(H^TH)$ and $\|\Theta_k^0\|_2 = \mathcal O(1)$ for $k=i,j.$ 

\subsection{Discussion of the conditions}
We comment on the conditions used to obtain the above results. The conditions under which we show asymptotic efficiency only include eigenvalue conditions on the true precision matrix, sparsity conditions on columns/rows of the precision matrix and Gaussianity of the observations $X^{(i)},i=1,\dots,n$. These conditions are almost identical to conditions in \cite{vdgeer13} and \cite{jvdgeer15}, with the exception of Gaussianity which was used for deriving the lower bounds. In particular, the   condition on row sparsity required is  the same as for the linear model: $s=o(\sqrt{n}/\log p).$ In view of the results on minimax rates for estimation of elements of precision matrices (which are derived in \cite{zhou}), the condition $s=o(\sqrt{n}/\log p)$ is necessary for asymptotically normal estimation, which follows by similar reasoning as for the linear regression.

\section{Le Cam's bounds for general models} 
\label{sec:lecam}

In this section, we provide an extension to general non-linear models and a general parameter of interest. This is achieved via adjustment of Le Cam's arguments on asymptotic efficiency to the high-dimensional setting.
Let $X^{(1)},\dots,X^{(n)}$  be i.i.d. with distribution $P_{\beta_{n,0}}:{\beta_{n,0}}\in \mathcal B$ where $\mathcal B$ is an open convex subset of $\mathbb R^p.$
We consider the parameter set 
$$ \mathcal B(d_n):=\{ \beta \in \mathcal B: \|\beta\|_0 \leq C_1 d_n , \|\beta\|_2 \leq C_2\},$$
where $C_1,C_2=\mathcal O(1)$ and $d_n$ is a known sequence that will be specified later.
Suppose that the parameter of interest is $g(\beta_{n,0})$ for some function $g:\mathcal B \rightarrow \mathbb R$.
Assume that for an estimator $T_n$ of $g({\beta_{n,0}}),$ we can show asymptotic linearity: there exists a real-valued function 
$l_{{\beta_{n,0}}}$ on $\mathcal X$ (an influence function) and some sequence ${\beta_{n,0}}$ such that 
$$T_n-g({\beta_{n,0}}) = \frac{1}{n}\sum_{i=1}^n l_{{\beta_{n,0}}}(X^{(i)}) + o_{P_{\beta_{n,0}}}(n^{-1/2}),$$
where ${ P_{\beta_{n,0}} } l_{{\beta_{n,0}}} = 0$ and the variance $V_{{\beta_{n,0}}} :={ P_{\beta_{n,0}} }l_{{\beta_{n,0}}}^2<\infty$.
Under the conditions of the central limit theorem, the asymptotic linearity implies that 
\begin{equation}\label{pc}
\sqrt{n}(T_n-g({\beta_{n,0}}))/V_{{\beta_{n,0}}}^{1/2} \stackrel{{\beta_{n,0}}}{\rightsquigarrow }\mathcal N(0,1).
\end{equation} 
For asymptotically linear estimators, we thus have the ``asymptotic variance'' $V_{{\beta_{n,0}}} ={ P_{\beta_{n,0}} }l_{{\beta_{n,0}}}^2.$
We shall need some conditions on the differentiability of $g$ and the score function. Furthermore, we shall need a  Lindeberg's condition related to the influence and score function. 
Assume that $P_\beta$ is dominated by some $\sigma$-finite measure for all $\beta$ in the parameter space and denote the corresponding probability densities by $p_\beta.$ 
We denote the log-likelihood by $\ell_\beta(x) := \log p_\beta(x)$ and the score function by $s_\beta(x):= \frac{\partial \ell_\beta(x)}{\partial \beta}$ for all $x\in\mathcal X$.

\begin{enumerate}[label=(D\arabic*),start=1]
\item \label{difg}
\textit{(Differentiability of $g$)}
Suppose that for 
a given  $\tilde\beta_n\in B_{}({\beta_{n,0}},\frac{c}{\sqrt{n}})$ it holds
$$\sqrt{n}(g(\tilde\beta_n) -g({\beta_{n,0}}))= h^T\dot g({\beta_{n,0}})+o(1),$$
where $h = \sqrt{n}(\tilde\beta_n-\beta_{n,0}).$
\item\label{score}
\textit{(Differentiability of the score)}
Suppose that the score function $\beta \mapsto s_\beta$ is twice differentiable and the 
second derivative satisfies $\|\ddot s_\beta\|_\infty\leq L$ for some universal constant $L>0$ and for all $\beta\in\mathcal B(d_n).$
Let $I_{{\beta_{n,0}}}:={ P_{\beta_{n,0}} } s_{{\beta_{n,0}}} s_{{\beta_{n,0}}}^T$  and assume that  $\Lambda_{\max} (I_{{\beta_{n,0}}} )= \mathcal O(1),$  $1/\Lambda_{\min} (I_{{\beta_{n,0}}} )= \mathcal O(1)$ and
\begin{equation}\label{fi}
\|\frac{1}{n}\sum_{i=1}^n  \dot s_{{\beta_{n,0}}} + I_{{\beta_{n,0}}}   \|_\infty = \mathcal O_P(\lambda),
\end{equation}
for some $\lambda>0.$ Suppose that $d_n=o(\max\{1/\lambda,n^{1/3}\}).$ 
\item\label{lindeberg}
\textit{(Lindeberg's condition)} Denote $f_{\beta_{n,0}}(x):=l_{{\beta_{n,0}}}(x) + h^Ts_{{\beta_{n,0}}}(x)$ for $x\in\mathbb R^p.$
Suppose that for all $\epsilon>0$ 
\begin{equation}
\lim_{n\rightarrow \infty} { P_{\beta_{n,0}} } f_{\beta_{n,0}}^2
\mathbf  1_{|f_{\beta_{n,0}} |> \epsilon \sqrt{n}} =0,
\end{equation}
and assume that $V_{{\beta_{n,0}}}:={ P_{\beta_{n,0}} }l_{{\beta_{n,0}}}^2 =\mathcal O(1)$ and $ 1/V_{{\beta_{n,0}}}=\mathcal O(1)$.
\end{enumerate}
Condition \ref{difg} is a differentiability condition on $g$; an analogous condition is assumed in the first approach through Cram\'er-Rao bounds. Condition \ref{score} is a differentiability condition on the score, which is used to obtain a Taylor expansion of the likelihood. Furthermore, the condition \eqref{fi} guarantees that $-\frac{1}{n}\sum_{i=1}^n  \dot s_{{\beta_{n,0}}}(X^{(i)})$ is a good estimator of the Fisher information in supremum norm. This can be verified e.g. for linear regression with $\lambda\asymp \sqrt{\log p/n}.$
Condition \ref{score} further assumes the sparsity  $d_n=o(\max\{1/\lambda,n^{1/3}\})$, which guarantees that the likelihood ratio expansion approximately holds.
Finally, condition \ref{lindeberg} is a Lindeberg's condition which is needed to conclude asymptotic normality of certain quantities, since in Theorem \ref{lecam} below we do not require any distributional assumption. This condition can be verified for particular models.

\begin{theorem}%
\label{lecam}
Let $g:\mathcal B \rightarrow \mathbb R$ and suppose that for some fixed sequence  ${\beta_{n,0}}\in\mathcal B(d_n)$ it holds
\begin{equation} \label{al}
T_n-g({\beta_{n,0}}) = \frac{1}{n}\sum_{i=1}^n l_{{\beta_{n,0}}}(X^{(i)}) + o_{P_{{\beta_{n,0}}}}(n^{-1/2}),
\end{equation}
where ${ P_{\beta_{n,0}} } l_{{\beta_{n,0}}} =0$.
For some fixed constant $c>0$,  let $\tilde\beta_n\in B_{}({\beta_{n,0}},\frac{c}{\sqrt{n}})$ and denote 
$h:=\sqrt{n}(\tilde\beta_n-{\beta_{n,0}})$. Suppose that conditions \ref{difg}, \ref{score} and \ref{lindeberg} are satisfied.
Then 
it holds 
$$\frac{
\sqrt{n}(T_n - g({\beta_{n,0}}+\frac{h}{\sqrt{n}})) - ({ P_{\beta_{n,0}} }(l_{{\beta_{n,0}}} h^T s_{{\beta_{n,0}}}) -h^T\dot g({\beta_{n,0}}))}{
 V_{{\beta_{n,0}}}^{1/2}
} \stackrel{{{\beta_{n,0}} + \frac{h}{\sqrt{n}}}}{\rightsquigarrow} \mathcal N( 0,1).$$
\end{theorem}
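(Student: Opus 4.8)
The plan is to follow Le Cam's classical route adapted to the growing dimension: establish local asymptotic normality (LAN) of the likelihood at $\beta_{n,0}$, prove a joint central limit theorem under $P_{\beta_{n,0}}$ for the estimator together with the log-likelihood ratio, invoke Le Cam's third lemma to transfer the limit to the perturbed law $P_{\tilde\beta_n}=P_{\beta_{n,0}+h/\sqrt{n}}$, and finally re-center using the differentiability of $g$. Throughout, the point is to track the dependence on $p$ so that the sparsity of the admissible direction $h$ keeps every remainder negligible.

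First I would write the log-likelihood ratio $L_n := \sum_{i=1}^n \big( \ell_{\tilde\beta_n}(X^{(i)}) - \ell_{\beta_{n,0}}(X^{(i)}) \big)$ and Taylor-expand each summand to second order in $\tilde\beta_n - \beta_{n,0} = h/\sqrt{n}$. The linear term reproduces $\frac{1}{\sqrt{n}}\sum_i h^T s_{\beta_{n,0}}(X^{(i)})$; in the quadratic term I would substitute $\frac{1}{n}\sum_i \dot s_{\beta_{n,0}}(X^{(i)}) = -I_{\beta_{n,0}} + \mathcal O_P(\lambda)$ from \eqref{fi}, so the deterministic part becomes $-\tfrac12 h^T I_{\beta_{n,0}} h$ with an error bounded by $\|h\|_1^2\,\mathcal O_P(\lambda)$, while the integral-form third-order remainder is controlled, using $\|\ddot s_\beta\|_\infty\le L$ from \ref{score}, by a multiple of $n^{-1/2}\|h\|_1^3 L$. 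The two decisive observations are that $h=\sqrt{n}(\tilde\beta_n-\beta_{n,0})$ satisfies $\|h\|_2\le c$ and, since both $\tilde\beta_n,\beta_{n,0}\in\mathcal B(d_n)$ are sparse, $\|h\|_0\lesssim d_n$, so that $\|h\|_1^2\lesssim d_n$. The sparsity restriction on $d_n$ imposed in \ref{score} is precisely what is designed to drive both $\|h\|_1^2\lambda$ and $n^{-1/2}\|h\|_1^3$ to zero, yielding the LAN expansion $L_n = \frac{1}{\sqrt{n}}\sum_i h^T s_{\beta_{n,0}}(X^{(i)}) - \tfrac12 h^T I_{\beta_{n,0}} h + o_{P_{\beta_{n,0}}}(1)$. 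This remainder control is the main obstacle: the quadratic and cubic terms are genuinely $p$-dependent and only the sparsity of $h$ prevents them from blowing up, in contrast to the fixed-dimensional setting where they are automatically $O(n^{-1/2})$.

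Next, under $P_{\beta_{n,0}}$ I would combine the asymptotic linearity \eqref{al} with the LAN expansion to represent the pair $\big(\sqrt{n}(T_n - g(\beta_{n,0})),\, L_n\big)$ as $n^{-1/2}\sum_i \big(l_{\beta_{n,0}}(X^{(i)}),\, h^T s_{\beta_{n,0}}(X^{(i)})\big)$ plus the deterministic shift $(0,-\tfrac12 h^T I_{\beta_{n,0}} h)$ and an $o_{P_{\beta_{n,0}}}(1)$ term. The summands are i.i.d.\ and centered (since $P_{\beta_{n,0}}l_{\beta_{n,0}}=0$ and $P_{\beta_{n,0}}s_{\beta_{n,0}}=0$), with covariance entries $V_{\beta_{n,0}}$, $h^T I_{\beta_{n,0}} h$, and $\rho:=P_{\beta_{n,0}}(l_{\beta_{n,0}} h^T s_{\beta_{n,0}})$. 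An application of the Lindeberg--Feller theorem, whose hypothesis is furnished by condition \ref{lindeberg} for the combination $f_{\beta_{n,0}} = l_{\beta_{n,0}} + h^T s_{\beta_{n,0}}$ (the remaining linear combinations needed for the Cram\'er--Wold device being handled analogously), then gives joint convergence to the bivariate normal with mean $(0,-\tfrac12 h^T I_{\beta_{n,0}} h)$ and the stated covariance.

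Finally I would invoke Le Cam's third lemma. The limiting log-likelihood ratio has the canonical LAN form, with mean equal to minus half its variance, so $P_{\tilde\beta_n}^{\otimes n}$ and $P_{\beta_{n,0}}^{\otimes n}$ are mutually contiguous and the lemma yields, under $P_{\tilde\beta_n}$, $\sqrt{n}(T_n - g(\beta_{n,0})) \rightsquigarrow \mathcal N(\rho,\, V_{\beta_{n,0}})$; contiguity simultaneously guarantees that the $o_{P_{\beta_{n,0}}}(n^{-1/2})$ remainder in \eqref{al} remains $o_{P_{\tilde\beta_n}}(n^{-1/2})$, so nothing is lost in passing to the alternative. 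It then remains to re-center: by the differentiability condition \ref{difg}, $\sqrt{n}(g(\tilde\beta_n) - g(\beta_{n,0})) = h^T\dot g(\beta_{n,0}) + o(1)$, so subtracting this quantity shifts the limiting mean from $\rho$ to $\rho - h^T\dot g(\beta_{n,0}) = P_{\beta_{n,0}}(l_{\beta_{n,0}} h^T s_{\beta_{n,0}}) - h^T\dot g(\beta_{n,0})$. Dividing by $V_{\beta_{n,0}}^{1/2}$ and recalling $\tilde\beta_n = \beta_{n,0}+h/\sqrt{n}$ produces exactly the claimed standard normal limit.
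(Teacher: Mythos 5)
Your outline matches the paper's proof almost step for step: the same second-order Taylor expansion of the log-likelihood ratio $\Lambda_n$, the same use of $\|h\|_0\lesssim d_n$, $\|h\|_2\le c$ to get $\|h\|_1^2\lesssim d_n$ and hence to kill the quadratic error $\|h\|_1^2\,\mathcal O_P(\lambda)$ and the cubic error $n^{-1/2}\|h\|_1^3L$ via \ref{score}, the same joint Lindeberg CLT for $\bigl(\sqrt{n}(T_n-g(\beta_{n,0})),\Lambda_n+\tfrac12 h^TI_{\beta_{n,0}}h\bigr)$ through the Cram\'er--Wold device and condition \ref{lindeberg}, and the same re-centering via \ref{difg} at the end. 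The LAN expansion and the remainder accounting are exactly right.

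The gap is in the sentence ``invoke Le Cam's third lemma.'' The classical third lemma requires the pair to converge weakly to a \emph{fixed} bivariate normal law; here the mean $-\tfrac12 h^TI_{\beta_{n,0}}h$ and the covariance entries $V_{\beta_{n,0}}$, $h^TI_{\beta_{n,0}}h$, $P_{\beta_{n,0}}(l_{\beta_{n,0}}h^Ts_{\beta_{n,0}})$ all depend on $n$ (the model, the direction $h$, and the Fisher information are sequences), so there is no fixed limit experiment and the lemma does not apply verbatim. This is precisely the point where the paper departs from the textbook argument: it standardizes to obtain a genuine fixed limit $Z_n=V^{-1/2}(\cdots)\rightsquigarrow\mathcal N(0,I_2)$, writes $\mathbb E_{\tilde\beta_n}f(\cdot)=\mathbb E_{\beta_{n,0}}f(X_{n,1})e^{X_{n,2}}$ with $X_n=\psi(Z_n)$ for an \emph{$n$-dependent} affine map $\psi$, and then proves a bespoke Portmanteau-type result (Lemma \ref{portm}) whose truncation argument additionally needs the uniform-integrability condition $\lim_{M}\lim_n\mathbb E\min(0,M-e^{U_{n,2}})=0$, verified in Lemma \ref{remainder} using Gaussian tail bounds and the boundedness of $v_{12}/\sqrt{v_{11}}$ and $v_{22}$ (Lemma \ref{eigen}). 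Your appeal to contiguity for carrying the $o_{P_{\beta_{n,0}}}(n^{-1/2})$ remainder over to $P_{\tilde\beta_n}$ is fine, but the transfer of the limit law itself needs this $n$-dependent substitute for the third lemma; as written, that step is asserted rather than proved, and it is the one piece of the argument that does not reduce to the fixed-dimensional theory.
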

The result of Theorem \ref{lecam} contains a bias term  ${ P_{\beta_{n,0}} }(l_{{\beta_{n,0}}} h^T s_{{\beta_{n,0}}}) -h^T\dot g({{\beta_{n,0}}})$ which depends on $h$.
Now consider that the bias term in the result of the theorem above vanishes, i.e. that the following condition on the score function $s_{{\beta_{n,0}}}$ and the function $l_{{\beta_{n,0}}}$ is satisfied: 
for every $h\in\mathbb R^p$ 
it holds that
\begin{equation}\label{nobias}
{ P_{\beta_{n,0}} }(l_{{\beta_{n,0}}} h^T s_{{\beta_{n,0}}}) -h^T\dot g({{\beta_{n,0}}})=o(1).
\end{equation}
The condition \eqref{nobias} is a variant of asymptotic unbiasedness which is known to be satisfied in many traditional settings.
If  condition \eqref{nobias} is satisfied, then
the Cauchy-Schwarz inequality implies
\begin{eqnarray*}
(h^T \dot g({{\beta_{n,0}}}))^2 
\leq  V_{{\beta_{n,0}}} h^T I_{{\beta_{n,0}}} h+o(V_{\beta_{n,0}}^{1/2} (h^TI_{\beta_{n,0}}h)^{1/2}).
\end{eqnarray*}
Hence this implies a lower bound on the asymptotic variance $V_{{\beta_{n,0}}}$ of an asymptotically linear estimator as follows
\begin{equation}\label{ub}
V_{{\beta_{n,0}}} \geq 
(h^T \dot g({{\beta_{n,0}}}))^2 / h^T I_{{\beta_{n,0}}} h +o(V_{\beta_{n,0}}^{1/2}/(h^TI_{\beta_{n,0}}h)^{1/2}).
\end{equation}
Assuming that the inverse of $I_{{\beta_{n,0}}}$ exists, 
the right-hand side of \eqref{ub} is maximized at $h=I_{{\beta_{n,0}}}^{-1}\dot g({{\beta_{n,0}}}),$
provided that ${\beta_{n,0}}+h/\sqrt{n} \in B({\beta_{n,0}},c/\sqrt{n}).$
 Hence we obtain the following lower bound on the asymptotic variance
$$V_{{\beta_{n,0}}} \geq \dot g({{\beta_{n,0}}}) ^T I_{{\beta_{n,0}}}^{-1} \dot g({{\beta_{n,0}}})
+o(V_{\beta_{n,0}}^{1/2} (\dot g({{\beta_{n,0}}}) ^T I_{{\beta_{n,0}}}^{-1} \dot g({{\beta_{n,0}}}))^{1/2}).$$
We summarize this simple claim in the lemma below. 
\begin{lemma}\label{lb2}
Let $T_n$ satisfy \eqref{al} with $V_{\beta_{n,0}}=\mathcal O(1), 1/\Lambda_{\min}(I_{\beta_{n,0}})=\mathcal O(1)$ 
and for every $h\in\mathbb R^p$ it holds that
\begin{equation}\label{nob}
{ P_{\beta_{n,0}} }(l_{{\beta_{n,0}}} h^T s_{{\beta_{n,0}}}) -h^T\dot g({{\beta_{n,0}}})=o(1),
\end{equation}
then if ${\beta_{n,0}}+I_{{\beta_{n,0}}}^{-1}\dot g({{\beta_{n,0}}})/\sqrt{n} \in B({\beta_{n,0}},c/\sqrt{n}),$ it holds that 
$$V_{{\beta_{n,0}}} \geq \dot g({{\beta_{n,0}}}) ^T I_{{\beta_{n,0}}}^{-1} \dot g({{\beta_{n,0}}})+o(1).$$
\end{lemma}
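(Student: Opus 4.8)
The plan is to make rigorous the Cauchy--Schwarz / projection argument already sketched in the paragraphs preceding the lemma; indeed, Lemma \ref{lb2} is essentially a clean restatement of \eqref{ub} after optimizing over the perturbation direction. The starting point is the no-bias identity \eqref{nob}, which for \emph{every} $h\in\mathbb R^p$ reads $P_{\beta_{n,0}}(l_{\beta_{n,0}} h^T s_{\beta_{n,0}}) = h^T\dot g(\beta_{n,0}) + o(1)$. The whole proof rests on recognizing the left-hand side as an $L_2(P_{\beta_{n,0}})$ inner product between the influence function $l_{\beta_{n,0}}$ and the directional score $h^T s_{\beta_{n,0}}$, both of which have mean zero (the score by the standard regularity property, and $l_{\beta_{n,0}}$ by the hypothesis $P_{\beta_{n,0}} l_{\beta_{n,0}}=0$ in \eqref{al}).

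First I would record the two variance identities $P_{\beta_{n,0}} l_{\beta_{n,0}}^2 = V_{\beta_{n,0}}$ and $P_{\beta_{n,0}}(h^T s_{\beta_{n,0}})^2 = h^T I_{\beta_{n,0}} h$, the latter following directly from the definition $I_{\beta_{n,0}} = P_{\beta_{n,0}} s_{\beta_{n,0}} s_{\beta_{n,0}}^T$ in condition \ref{score}. Next I would specialize \eqref{nob} to the worst possible direction $h = I_{\beta_{n,0}}^{-1}\dot g(\beta_{n,0})$, which is admissible precisely because the hypothesis guarantees $\beta_{n,0}+h/\sqrt{n}\in B(\beta_{n,0}, c/\sqrt{n})$ and the inverse exists by $1/\Lambda_{\min}(I_{\beta_{n,0}})=\mathcal O(1)$. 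For this choice the quantities $h^T\dot g(\beta_{n,0})$ and $h^T I_{\beta_{n,0}} h$ both collapse to $\dot g(\beta_{n,0})^T I_{\beta_{n,0}}^{-1}\dot g(\beta_{n,0})$; applying Cauchy--Schwarz to the inner product then yields, with the shorthand $a := (\dot g(\beta_{n,0})^T I_{\beta_{n,0}}^{-1}\dot g(\beta_{n,0}))^{1/2}$, the scalar inequality $a^2 \leq V_{\beta_{n,0}}^{1/2}\, a + o(1)$.

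The remaining and only genuinely delicate step is to convert $a^2 \le V_{\beta_{n,0}}^{1/2} a + o(1)$ into the clean bound $V_{\beta_{n,0}} \ge a^2 + o(1)$, i.e.\ $V_{\beta_{n,0}} \ge \dot g(\beta_{n,0})^T I_{\beta_{n,0}}^{-1}\dot g(\beta_{n,0}) + o(1)$. The naive route of dividing through by $a$ (as in \eqref{ub}) produces an error of order $V_{\beta_{n,0}}^{1/2}/a$, which I do not want to assume is $o(1)$ since $a$ need not be bounded away from zero. Instead I would complete the square: rewriting the inequality as $(a - V_{\beta_{n,0}}^{1/2}/2)^2 \le V_{\beta_{n,0}}/4 + o(1)$ and invoking subadditivity of the square root gives $a \le V_{\beta_{n,0}}^{1/2} + o(1)^{1/2}$, whence $a^2 \le V_{\beta_{n,0}} + 2 V_{\beta_{n,0}}^{1/2}\, o(1)^{1/2} + o(1)$. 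The assumption $V_{\beta_{n,0}} = \mathcal O(1)$ then collapses both the cross term and the residual to $o(1)$, delivering $\dot g(\beta_{n,0})^T I_{\beta_{n,0}}^{-1}\dot g(\beta_{n,0}) = a^2 \le V_{\beta_{n,0}} + o(1)$, which is the claim. I expect this bookkeeping of the $o(1)$ terms --- avoiding any hidden lower bound on $a$ --- to be the only real obstacle, the rest reducing to a direct Cauchy--Schwarz computation built on the identities already established in Theorem \ref{lecam}.
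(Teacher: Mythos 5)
Your proof is correct and follows essentially the same route as the paper, which obtains the lemma from the Cauchy--Schwarz bound $(h^T\dot g(\beta_{n,0}))^2 \le V_{\beta_{n,0}}\, h^T I_{\beta_{n,0}} h + o\bigl(V_{\beta_{n,0}}^{1/2}(h^T I_{\beta_{n,0}} h)^{1/2}\bigr)$ specialized to $h = I_{\beta_{n,0}}^{-1}\dot g(\beta_{n,0})$ in the discussion immediately preceding the statement. Your completing-the-square handling of the remainder is in fact slightly more careful than the paper's division by $h^T I_{\beta_{n,0}} h$ in \eqref{ub}, since it avoids any implicit lower bound on $\dot g(\beta_{n,0})^T I_{\beta_{n,0}}^{-1}\dot g(\beta_{n,0})$ and needs only $V_{\beta_{n,0}}=\mathcal O(1)$ to absorb the cross term.
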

Theorem \ref{lecam} in conjunction with Lemma \ref{lb2} gives the result summarized in Corollary \ref{lecamcor2} below.

\begin{cor}\label{lecamcor2}
Suppose that conditions of Theorem \ref{lecam} and the condition \eqref{nob} are satisfied and that ${\beta_{n,0}}+I_{{\beta_{n,0}}}^{-1}\dot g({{\beta_{n,0}}})/\sqrt{n} \in B({\beta_{n,0}},c/\sqrt{n})$. Then
\begin{equation}\label{reg}
\sqrt{n}(T_n-g({\beta_{n,0}} + h/\sqrt{n}))/V_{{\beta_{n,0}}}^{1/2} \stackrel{{\beta_{n,0}}+h/\sqrt{n}}{\rightsquigarrow} \mathcal N(0,1),
\end{equation}
where
$$V_{{\beta_{n,0}}} \geq \dot g({{\beta_{n,0}}}) ^T I_{{\beta_{n,0}}}^{-1} \dot g({{\beta_{n,0}}})+o(1).$$
\end{cor}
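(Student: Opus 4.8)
The plan is to recognize that Corollary \ref{lecamcor2} is exactly the conjunction of Theorem \ref{lecam} (which supplies the limiting distribution) and Lemma \ref{lb2} (which supplies the variance lower bound), glued together by the extra hypothesis \eqref{nob} and the neighbourhood condition $\beta_{n,0}+I_{\beta_{n,0}}^{-1}\dot g(\beta_{n,0})/\sqrt{n}\in B(\beta_{n,0},c/\sqrt{n})$. First I would invoke Theorem \ref{lecam}: under conditions \ref{difg}, \ref{score}, \ref{lindeberg}, for each fixed $\tilde\beta_n\in B(\beta_{n,0},c/\sqrt{n})$ with $h=\sqrt{n}(\tilde\beta_n-\beta_{n,0})$ it yields
$$\frac{\sqrt{n}\bigl(T_n - g(\beta_{n,0}+h/\sqrt{n})\bigr) - b_n}{V_{\beta_{n,0}}^{1/2}} \stackrel{\beta_{n,0}+h/\sqrt{n}}{\rightsquigarrow} \mathcal N(0,1),$$
where $b_n := P_{\beta_{n,0}}(l_{\beta_{n,0}} h^T s_{\beta_{n,0}}) - h^T \dot g(\beta_{n,0})$ is a deterministic scalar depending on $h$.

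The key observation is that condition \eqref{nob}, which holds for every $h\in\mathbb R^p$ and in particular for this bounded $h$ (note $\|h\|_2\leq c$ since $\tilde\beta_n\in B(\beta_{n,0},c/\sqrt{n})$), forces $b_n=o(1)$. Next I would argue that the deterministic shift $b_n/V_{\beta_{n,0}}^{1/2}$ is negligible: condition \ref{lindeberg} gives $1/V_{\beta_{n,0}}=\mathcal O(1)$, so $V_{\beta_{n,0}}^{1/2}$ is bounded away from zero and hence $b_n/V_{\beta_{n,0}}^{1/2}=o(1)$. A Slutsky argument, carried out within the triangular array of experiments indexed by the moving measures $\beta_{n,0}+h/\sqrt{n}$, then removes this deterministic term and produces \eqref{reg}. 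Since the preceding holds for every admissible $\tilde\beta_n$, equivalently for every bounded $h$, this is precisely the regularity (locally uniform) statement claimed.

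Finally, the variance lower bound is the conclusion of Lemma \ref{lb2}, and I would verify its hypotheses under the present assumptions: the asymptotic linearity \eqref{al} is assumed, $V_{\beta_{n,0}}=\mathcal O(1)$ comes from \ref{lindeberg}, $1/\Lambda_{\min}(I_{\beta_{n,0}})=\mathcal O(1)$ from \ref{score}, condition \eqref{nob} is assumed directly, and the neighbourhood condition $\beta_{n,0}+I_{\beta_{n,0}}^{-1}\dot g(\beta_{n,0})/\sqrt{n}\in B(\beta_{n,0},c/\sqrt{n})$ is part of the corollary. Lemma \ref{lb2} then delivers $V_{\beta_{n,0}}\geq \dot g(\beta_{n,0})^T I_{\beta_{n,0}}^{-1}\dot g(\beta_{n,0})+o(1)$, completing the proof. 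The main obstacle is conceptual rather than computational: one must take care that the convergence in Theorem \ref{lecam} is stated under the \emph{perturbed} measures $\beta_{n,0}+h/\sqrt{n}$ rather than under $\beta_{n,0}$, so the Slutsky step removing the deterministic bias $b_n$ must be justified within this moving, changing-with-$n$ framework; once $b_n=o(1)$ is secured through \eqref{nob} and $V_{\beta_{n,0}}$ is bounded below, the two cited results slot together with no further estimates required.
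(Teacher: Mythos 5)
Your proposal is correct and follows exactly the route the paper intends: the paper itself offers no separate proof of Corollary \ref{lecamcor2}, stating only that it is "Theorem \ref{lecam} in conjunction with Lemma \ref{lb2}," and your argument — killing the bias term via \eqref{nob}, using $1/V_{\beta_{n,0}}=\mathcal O(1)$ to justify the Slutsky step under the perturbed measures, and then invoking Lemma \ref{lb2} for the variance bound — is precisely that combination, spelled out with the right hypothesis checks.
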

The corollary implies that asymptotic efficiency is attained by an estimator which is asymptotically linear with an influence function $l_{\beta}=\dot g(\beta)^T I_{\beta}^{-1}s_{\beta}$, provided that it satisfies the condition \eqref{nob}.
\par
We have already shown how these results can be applied to the linear regression setting in Section \ref{subsec:ex.lr}.
We remark that the result of Theorem \ref{lecam} is not directly applicable to Gaussian graphical models, where the unknown parameter has overall sparsity $ps,$ where $s=o(\sqrt{n}/\log p).$

\begin{remark}
The sparsity condition $d_n=o(n^{1/3})$ arises when considering Taylor expansion of the log-likelihood for \textit{general} models. Hence, when there is some special structure in the log-likelihood function, weaker sparsity conditions might be possible. For instance, for linear regression setting, the Hessian of the log-likelihood does not depend on the unknown parameter $\beta_0$, hence in that case by inspection of the likelihood expansion in the proof of Theorem \ref{lecam}, we see that the condition $d_n=o(\sqrt{n/\log p})$ is sufficient. 
\end{remark}



\section{Conclusions}
\label{subsec:conclusion}

In this paper we have proposed a framework for studying asymptotic efficiency in high-dimensional models. 
We adopted a semi-parametric point of view: we concentrated on one dimensional functions of a high-dimensional parameter for which the lower bounds were derived.
The semi-parametric efficiency bounds we obtained correspond to the efficiency bounds for parametric models. However, the treatment for high-dimensional models  required more elaborate analysis due to the models changing with $n$ and assumed sparsity of the model. 
\par 
We further considered construction of estimators attaining the lo\-wer bo\-unds. 
We showed that indeed construction of asymptotically efficient estimator is possible: a de-sparsified estimator in linear regression and Gaussian graphical models is asymptotically efficient for estimation of certain simple functionals. 
 Our analysis identified the theoretical conditions on the parameter sparsity and further conditions on the model under which asymptotic efficiency may be shown. %

\par 
\vskip0.2cm
\textit{Comparison of the two approaches}. The analysis was done in two ways: in the spirit of asymptotic Cram\'er-Rao bounds and 
Le Cam's bounds (\cite{vdv}). These are strongly related: both define a restricted set of estimators which are in some sense asymptotically unbiased and claim lower bounds for any estimator in this class.
\par 
However, the two lines of work are not directly comparable as they are different results under different assumptions.
Le Cam's bounds give a lower bound on \textit{asymptotic variance}, while the Cram\'er-Rao bounds give a bound on the \textit{variance} of an estimator.  
We formulated Le Cam's approach for a general sparse model, while the Cram\'er-Rao bounds were only considered for the linear regression and Gaussian graphical models. 
Apart from this, the main results arising from the two approaches also present some differences in the assumptions. 
For the Le Cam's-type results, we assumed a stronger sparsity condition of order $d_n=o({n}^{1/3}/\log p)$ 
because of the Taylor expansion of the likelihood. 
However, for the linear regression setting, the sparsity condition can be improved to $d_n=o(\sqrt{n}/\log p)$, which is the same as in the Cram\'er-Rao bounds. For Gaussian graphical models, Le Cam's approach as formulated in this paper cannot be directly used, unlike the approach through the Cram\'er-Rao bounds.

\par\vskip0.2cm
\textit{Extensions}. 
 Our results on upper bounds are presented for the case when the parameter of interest is a single entry of the high-dimensional parameter or a linear combination with e.g. bounded $\ell_1$-norm. 
It is interesting to note some relations to literature on minimax rates. One  question is whether asymptotic efficiency can be attained e.g. for estimation of linear functionals in linear regression when the linear combination $\xi$ is sparse. Our results needed that $\|\xi\|_1$ remains bounded. 
Some recent works  on high-dimensional models further consider estimation of more complicated, non-sparse functionals (in linear regression \cite{cai.guo}, for Gaussian sequence models \cite{tsy}). These results are however of a different nature. 
Consider for instance estimation of $\sum_{i=1}^p \beta_i$ in high-dimensional linear regression. In this case, the parametric rate cannot be achieved (\cite{cai.guo}) and thus it remains unclear what can be said about ``asymptotic efficiency''. 
\par
Furthermore, we have treated the case of a one-dimensional parameter of interest, though the analysis might be extended to settings when the parameter of interest is higher-dimensional (of a fixed dimension).
Finally, our analysis considered particular examples of de-sparsified estimators, however, other estimators which are in some sense equivalent to these de-sparsified estimators are applicable.

\begin{supplement}[id=suppA]
  \stitle{Supplement to ``Semi-parametric efficiency bounds for high-dimensional models''} \slink[doi]{10.1214/00-AOASXXXXSUPP}
  \sdatatype{.pdf} 
	\sfilename{aos1286\_supp.pdf}
  \sdescription{The supplementary material contains proofs.}
\end{supplement}

\bibliography{gminf}

\printaddresses

\newpage

\setcounter{page}{1}
\begin{frontmatter}
\setattribute{journal}{name}{}
\title{Supplement to ``Semi-parametric efficiency bounds for high-dimensional models''}
\runtitle{Efficiency bounds for high-dimensional models}
\author{\fnms{Jana} \snm{Jankov\'a}\corref{Jana Jankov\'a}\ead[label=e1]{jankova@stat.math.ethz.ch}
}
\and
\author{\fnms{Sara} \snm{van de Geer}\corref{Sara van de Geer}\ead[label=e2]{geer@stat.math.ethz.ch}}
\begin{abstract}
This supplement contains the proofs. Section \ref{sec:prelim} summarizes some preliminary material on concentration of measure.
Section \ref{sec:proof.oi} contains the proofs of Section \ref{subsec:strong}: Strong oracle inequalities for the Lasso.
Section \ref{subsec:oi} contains strong oracle inequalities for the nodewise Lasso.
Section \ref{sec:lrproof}
contains proofs for Section \ref{subsec:lr.all}. 
Section \ref{sec:ggm.lbp} contains proofs for Section \ref{subsec:ggm.all}: Gaussian graphical models.
In Section \ref{sec:lcp} we give the proofs for Section \ref{sec:lecam}: Le Cam's bounds for general models.
Proofs for Section \ref{subsec:oi} are deferred to Appendix \ref{sec:proof.ois} and some technical lemmas are deferred to Appendices \ref{sec:A} and \ref{sec:B}.
\end{abstract}

\end{frontmatter}

\section{Concentration inequalities for sub-exponential random variables} 
\label{sec:prelim}

In  this preliminary section, we recall some results on concentration results for sub-exponential random variables (for the definition of a sub-exponential random variable, see Section 14.2.1 in \cite{hds}).
\noindent
Lemma \ref{con} below is a version of Lemma 14.13 in \cite{hds}.
\begin{lemma}\label{con}
Let $Z_1,\dots,Z_n$ be independent random variables with values in some (measurable) space $\mathcal Z$ and $\gamma_1,\dots,\gamma_p$ be real-valued functions on $\mathcal Z$ satisfying, for $j=1,\dots,p$,
$$\mathbb E\gamma_j(Z_i)=0,\;\;\; \mathbb Ee^{|\gamma_{j}(Z_i)|/K} \leq M_1, \;\;\;\forall i=1,\dots,n, $$
where $ M_1>0$ is a universal constant and $K>0$.\\
Then there exists a universal constant $M_2$ such that  for all $t>0$ we have with probability 
at least $1-e^{-nt}$ that
$$\max_{j=1,\dots,p} \left\lvert\frac{1}{n}\sum_{i=1}^n\gamma_j(Z_i)\right\lvert
\leq M_2 K t + \sqrt{2t} + \sqrt{\frac{2\log (2p)}{n}} + \frac{M_2 K\log (2p)}{n}.$$
%
\end{lemma}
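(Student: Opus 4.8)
The plan is to combine a union bound over the $p$ functions $\gamma_1,\dots,\gamma_p$ with a Bernstein-type deviation inequality applied to each individual average $\frac1n\sum_{i=1}^n\gamma_j(Z_i)$. The exponential-moment hypothesis is exactly the input needed for the sub-exponential (Bernstein) condition, and the two-regime structure of the Bernstein tail is what will produce the two square-root terms and the two linear-in-$K$ terms in the conclusion.

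First I would extract polynomial moment bounds from the hypothesis. Expanding $\mathbb E e^{|\gamma_j(Z_i)|/K}\le M_1$ as a power series and comparing term by term yields $\mathbb E|\gamma_j(Z_i)|^m\le m!\,K^m M_1$ for every $m\ge 1$. The case $m=2$ supplies a variance proxy, while the factorial growth of the higher moments is precisely the condition needed for the moment generating function estimate. Next, for a fixed $j$ I would bound the MGF of the centred sum: using the moment bounds, the independence of the $Z_i$, and the elementary inequality $1+x\le e^x$, one obtains for $|\lambda|K<1$ a bound of the form $\mathbb E\exp\bigl(\lambda\sum_{i}\gamma_j(Z_i)\bigr)\le \exp\bigl(n\lambda^2 v/(2(1-|\lambda|K))\bigr)$, where $v$ denotes the variance proxy. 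A Chernoff optimisation over $\lambda$ then gives the classical two-regime tail: a sub-Gaussian regime at small deviations (square-root dependence on the deviation level) and a sub-exponential regime at large deviations (linear dependence, scaled by $K$).

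Finally I would assemble the pieces. A union bound over $j=1,\dots,p$ and over the two tails multiplies the per-coordinate failure probability by $2p$; to force the total probability below $e^{-nt}$ I would work at the effective deviation level $a:=nt+\log(2p)$. Inverting the Bernstein tail at this level produces a deviation bound of the shape $\sqrt{2va/n}+M_2Ka/n$, and splitting $a$ by means of the subadditivity $\sqrt{x+y}\le\sqrt x+\sqrt y$ separates the $nt$ and $\log(2p)$ contributions, giving the four stated terms $\sqrt{2t}$, $\sqrt{2\log(2p)/n}$, $M_2Kt$ and $M_2K\log(2p)/n$.

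I expect the main obstacle to be the inversion of the Bernstein tail with consistent constants. Two things require care: ensuring that a single universal $M_2$ simultaneously governs both the $t$-term and the $\log(2p)/n$-term arising from the sub-exponential regime, and reconciling the variance proxy $v$ entering the sub-Gaussian regime with the stated square-root terms, which carry no explicit factor of $K$. Concretely, one must verify that $v$ is controlled by a universal constant after the relevant normalisation, so that $\sqrt{2va/n}$ collapses to $\sqrt{2t}+\sqrt{2\log(2p)/n}$. Tracking this proxy through the inversion, and splicing the small-deviation and large-deviation regimes into a single additive bound, is the delicate accounting; by contrast, the moment expansion and the union bound are routine.
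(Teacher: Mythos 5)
Your proposal is correct and follows essentially the same route as the paper, which does not prove this lemma itself but cites Lemma 14.13 of \cite{hds}; that cited result is exactly the Bernstein-plus-union-bound argument you outline, applied after converting the exponential-moment hypothesis into the factorial moment bounds $\mathbb E|\gamma_j(Z_i)|^m\le m!\,K^m M_1$ and inverting the two-regime tail at level $a=nt+\log(2p)$. The delicate point you flag is real: the variance proxy obtained from the hypothesis is of order $M_1K^2$ rather than $1$, so the square-root terms as literally stated only follow when $K$ is itself bounded by a universal constant (which holds in every application in the paper, where sub-Gaussianity is always assumed with universal constants); this is an imprecision inherited from the paper's restatement of the book's lemma, not a gap in your argument.
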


\noindent
The following lemma is a version of Corollary 14.1 in \cite{hds}.
\begin{lemma}\label{econ}
Assume the conditions of Lemma \ref{con}.
Then for all $m=1,2,\dots$ it holds
\begin{eqnarray*}
&& \mathbb E \left(\max_{j=1,\dots,p}  |\frac{1}{n}\sum_{i=1}^n \gamma_j(Z_i)|^m  \right)  
\\
&&
\leq \;\;\;\left(  \sqrt{\frac{2\log (2p + e^{m-1}  -p )}{n}} + \frac{M_2 K \log (2p + e^{m-1} -p )}{n} \right)^m,
\end{eqnarray*}
where $M_2>0$ is a universal constant.
\end{lemma}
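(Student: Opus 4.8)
The plan is to pass from the tail bound of Lemma \ref{con} to the moment bound by the layer-cake formula, the key simplification coming from an integration by parts. Write $N := \max_{j=1,\dots,p} |\frac1n\sum_{i=1}^n \gamma_j(Z_i)|$ for brevity. The coordinate-wise sub-exponential Bernstein inequality underlying Lemma \ref{con} (Section 14 in \cite{hds}) gives, for each $\gamma_j$, a two-sided bound $\mathbb{P}(|\frac1n\sum_i\gamma_j(Z_i)| > \phi(\tau)) \le 2e^{-n\tau}$ with $\phi(\tau) := \sqrt{2\tau} + M_2 K\tau$, and a union bound over the $p$ coordinates yields
\[ \mathbb{P}(N > \phi(\tau)) \le 2p\, e^{-n\tau}, \qquad \tau > 0. \]
(Lemma \ref{con} itself is recovered by setting $\tau = t + \log(2p)/n$ and using $\sqrt{a+b} \le \sqrt a + \sqrt b$; here I would use the sharper form above, since that subadditivity step goes the wrong way for the present purpose.) First I would record that $\phi$ is increasing with $\phi(0)=0$, so $v=\phi(\tau)$ is an admissible change of variables.

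Next I would compute the moment via the layer-cake identity $\mathbb{E}[N^m] = \int_0^\infty m v^{m-1}\mathbb{P}(N>v)\,dv$. Substituting $v=\phi(\tau)$, so that $m v^{m-1}dv = d(\phi(\tau)^m)$, and bounding $\mathbb{P}(N > \phi(\tau)) \le \min(1, 2pe^{-n\tau})$, gives $\mathbb{E}[N^m] \le \int_0^\infty \min(1, 2pe^{-n\tau})\, d(\phi(\tau)^m)$. The relevant threshold is $\tau_1 := \log(2p)/n$, where $2pe^{-n\tau_1}=1$. Splitting the integral at $\tau_1$ (using $\phi(0)=0$ on $[0,\tau_1]$) and integrating by parts on $[\tau_1,\infty)$, the boundary contributions cancel exactly (because $\min$ is continuous at $\tau_1$ and equals $1$ there), leaving the clean identity
\[ \mathbb{E}[N^m] \le \int_{\tau_1}^\infty 2pn\, e^{-n\tau}\phi(\tau)^m\, d\tau = 2p\int_{\log 2p}^\infty e^{-u}\Big(\sqrt{\tfrac{2u}{n}} + \tfrac{M_2Ku}{n}\Big)^m du, \]
where I substituted $u = n\tau$. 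This reduction is the crux, and it is exact up to the tail bound used.

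Finally I would estimate this integral by (incomplete) Gamma-function bounds. The integrand $e^{-u}\psi(u)^m$, with $\psi(u) := \sqrt{2u/n}+M_2 Ku/n$, concentrates near $u^* \sim \max(\log 2p, m)$, which is exactly the origin of the threshold $\log(p+e^{m-1}) = \log(2p + e^{m-1} - p)$. Controlling the sub-Gaussian and sub-exponential contributions, one meets $\int_{\log 2p}^\infty e^{-u}u^{m}\,du = \Gamma(m+1,\log 2p)$ and $\int_{\log 2p}^\infty e^{-u}u^{m/2}\,du = \Gamma(m/2+1,\log 2p)$ together with the Stirling-type estimates $\Gamma(m+1)\lesssim m^{m}$ and $\Gamma(m/2+1)\lesssim (m/2)^{m/2}$ — the $e^{-m}$ in Stirling is precisely what the $e^{m-1}$ inside the logarithm absorbs. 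A short case distinction according to whether $\log 2p \gtrsim m$ (then the incomplete Gamma is $\lesssim e^{-\log 2p}(\log 2p)^m$, which kills the prefactor $2p$) or $\log 2p \lesssim m$ (then bound it by the full Gamma and use $\log(p+e^{m-1}) \gtrsim m-1$) lets both contributions collapse into a single $m$-th power of $\sqrt{2\log(p+e^{m-1})/n} + M_2 K\log(p+e^{m-1})/n$, after enlarging the universal constant $M_2$ to swallow the stray factors of $2$.

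The main obstacle is this last bookkeeping step: one must keep the sum $\sqrt{2u/n}+M_2Ku/n$ together (splitting the $m$-th power via $(a+b)^m\le 2^{m-1}(a^m+b^m)$ would cost a prohibitive factor $2^{m-1}$) while still factoring out the correct $m$-th power, and arrange the two regimes so that the leading constant $2$ in the sub-Gaussian term $\sqrt{2\log(p+e^{m-1})/n}$ is preserved exactly and only the adjustable constant $M_2$ of the linear term need grow.
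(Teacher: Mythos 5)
You should first note that the paper does not actually prove this lemma: it is quoted as ``a version of Corollary 14.1 in \cite{hds}'', where the argument is of Orlicz-norm/Jensen type --- one applies a convex increasing transform $G(z)=g(z^{1/m})$ (with $g$ an exponential function inverting the per-coordinate Bernstein tail) to the \emph{individual} averages before taking the maximum, so that $G(\mathbb E\max_j|W_j|^m)\le\sum_j\mathbb E g(|W_j|)$; the term $e^{m-1}$ inside the logarithm is exactly the cost of convexifying $G$ near the origin. Your route, integrating the tail bound of the maximum, is genuinely different, and your reduction is correct as far as it goes: the change of variables and integration by parts are valid and do yield $\mathbb E N^m\le 2p\int_{\log 2p}^{\infty}e^{-u}\psi(u)^m\,du$ with $\psi(u)=\sqrt{2u/n}+M_2Ku/n$.

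The gap is in the final step, and the obstacle you flag at the end is not bookkeeping but an impossibility. Since $\psi$ is increasing and $2p\int_{\log 2p}^{\infty}e^{-u}\,du=1$, your reduced quantity is at least $\psi(\log 2p)^m$ already for $m=1$: $2p\int_{\log 2p}^{\infty}e^{-u}\psi(u)\,du\ \ge\ \sqrt{2\log(2p)/n}+M_2K\log(2p)/n$. The target is $\sqrt{2\log(p+1)/n}+M_2'K\log(p+1)/n$ for a universal $M_2'$. Fix $p\ge 2$ and let $n\to\infty$: the deficit in the square-root terms is $\bigl(\sqrt{2\log 2p}-\sqrt{2\log(p+1)}\bigr)/\sqrt n\ge c/\sqrt n$ with $c>0$ fixed, while the only adjustable term is $O(1/n)$, so no choice of $M_2'$ closes the gap (the same happens whenever $n\gg(\log p)^3$, and for every $m<1+\log p$, i.e.\ exactly the cases $m\in\{1,2,4\}$ the paper uses). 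More structurally, any layer-cake bound built from $\mathbb P(N>v)\le\min(1,2p\,e^{-n\phi^{-1}(v)})$ must contribute at least $\phi(\log(2p)/n)^m$ from the region where the minimum equals one; the tail of the maximum simply does not carry enough information to recover the constant $2$ together with the argument $\log(2p+e^{m-1}-p)=\log(p+e^{m-1})$, and one must work with the coordinates before maximizing, as in \cite{hds}. Your argument does establish the weaker statement in which the logarithm's argument is replaced by $2p+e^{m-1}$ and the constant in front of the square root is enlarged --- which would suffice for every application in this paper --- but it does not prove the lemma as stated.
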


\section{Proofs for Section \ref{subsec:strong}: Strong oracle inequalities for the Lasso}
\label{sec:proof.oi}
In this section we prove the oracle inequality for the Lasso as stated in Theorem \ref{strong2}. 
We need the following preliminary Lemmas \ref{cc}, \ref{ee} and \ref{oi}. 
Lemma \ref{cc} below gives sufficient conditions under which  the compatibility condition is satisfied.  
Lemma \ref{ee} is a concentration result for sub-Gaussian random variables as in Section 14 in \cite{hds}. 
Lemma \ref{oi} is a version of Theorem 6.1 in \cite{hds}. 
Recall that we denote $\hat\Sigma:= X^TX/n$ and $\Sigma_0:=\mathbb EX^TX/n.$
We recall the definition of the compatibility condition (see Section 6.13 in \cite{hds}).
Let $S:=\{i:\beta_i\not = 0\}$ and let $s=|S|.$  
We denote by $\beta_S$ the vector obtained from the vector $\beta\in\mathbb R^p$ by replacing entries corresponding to the indices in $S$ by zeros. 

\begin{definition}\label{cc.def}
We say that a matrix $\Sigma_0$ satisfies the compatibility condition with a  constant $\phi$
if 
$$\phi:=\min\left\{\frac{s\beta^T \Sigma_0 \beta}{\|\beta_S\|_1^2}: \|\beta_{S^c}\|_1 \leq 3\|\beta_S\|_1 \right\}>0.$$
\end{definition}

\begin{lemma}[Corollary 6.8 in \cite{hds}]
\label{cc}
 Suppose  that  $\Lambda_{\min}({\Sigma_0}) \geq L $ for a universal constant $L>0.$ 
Then $\Sigma_0$ satisfies the compatibility condition with the constant $L.$
Further suppose that $s\lambda=o(1)$. Then on the set $\|\hat\Sigma-\Sigma_0\|_\infty \leq \lambda$, 
for all $n$ sufficiently large, $\hat\Sigma$ satisfies the compatibility condition with the constant
$L/2$. 
\end{lemma}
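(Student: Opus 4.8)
The plan is to treat the two assertions separately, since the population statement (for $\Sigma_0$) is a clean eigenvalue computation, while the empirical statement (for $\hat\Sigma$) follows from it by a perturbation argument confined to the compatibility cone. Throughout I read $\beta_S$ as the restriction of $\beta$ to the support $S$ (so it has at most $s$ nonzero entries), which is the convention under which the stated ratio makes sense.

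First I would prove the population bound. Fix any $\beta$ with $\beta_S\neq 0$; in fact the cone restriction is not even needed here. Since $\beta_S$ has at most $s$ nonzero coordinates, Cauchy--Schwarz gives $\|\beta_S\|_1^2\leq s\|\beta_S\|_2^2\leq s\|\beta\|_2^2$. Combining this with $\beta^T\Sigma_0\beta\geq \Lambda_{\min}(\Sigma_0)\|\beta\|_2^2\geq L\|\beta\|_2^2$ yields
$$\frac{s\,\beta^T\Sigma_0\beta}{\|\beta_S\|_1^2}\geq \frac{sL\|\beta\|_2^2}{s\|\beta\|_2^2}=L,$$
so taking the minimum over the cone shows that the compatibility constant of $\Sigma_0$ satisfies $\phi\geq L$.

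Next I would transfer this bound to $\hat\Sigma$ on the event $\{\|\hat\Sigma-\Sigma_0\|_\infty\leq\lambda\}$. Writing $\beta^T\hat\Sigma\beta=\beta^T\Sigma_0\beta+\beta^T(\hat\Sigma-\Sigma_0)\beta$, I would control the perturbation by the elementary bound $|\beta^T M\beta|\leq \|M\|_\infty\|\beta\|_1^2$ applied to $M=\hat\Sigma-\Sigma_0$, which gives $|\beta^T(\hat\Sigma-\Sigma_0)\beta|\leq \lambda\|\beta\|_1^2$. The crucial—and essentially only nontrivial—step is to invoke the cone constraint: for $\beta$ satisfying $\|\beta_{S^c}\|_1\leq 3\|\beta_S\|_1$ we have $\|\beta\|_1=\|\beta_S\|_1+\|\beta_{S^c}\|_1\leq 4\|\beta_S\|_1$, hence $\|\beta\|_1^2\leq 16\|\beta_S\|_1^2$. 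This is exactly what converts the perturbation term into a multiple of the denominator $\|\beta_S\|_1^2$ appearing in the compatibility ratio.

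Putting the pieces together, for every $\beta$ in the cone,
$$\frac{s\,\beta^T\hat\Sigma\beta}{\|\beta_S\|_1^2}\geq \frac{s\,\beta^T\Sigma_0\beta}{\|\beta_S\|_1^2}-16\,s\lambda\geq L-16\,s\lambda.$$
Since $s\lambda=o(1)$, for all $n$ large enough $16\,s\lambda\leq L/2$, so the right-hand side is at least $L/2$ uniformly over the cone, i.e. $\hat\Sigma$ satisfies the compatibility condition with constant $L/2$. I expect no serious obstacle here: the proof is a short perturbation argument, and the only points requiring care are keeping the constants consistent through the cone inequality $\|\beta\|_1\leq 4\|\beta_S\|_1$ and confirming that the assumption $s\lambda=o(1)$ absorbs the factor $16$ for $n$ large.
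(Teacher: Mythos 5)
Your proof is correct. The paper itself gives no argument for this lemma --- it simply cites Corollary 6.8 of \cite{hds} --- and what you have written is exactly the standard argument behind that corollary: the eigenvalue bound $\|\beta_S\|_1^2\le s\|\beta\|_2^2\le (s/L)\,\beta^T\Sigma_0\beta$ for the population matrix, followed by the perturbation bound $|\beta^T(\hat\Sigma-\Sigma_0)\beta|\le\lambda\|\beta\|_1^2\le16\lambda\|\beta_S\|_1^2$ on the cone, absorbed using $s\lambda=o(1)$. Your reading of $\beta_S$ as the restriction of $\beta$ to $S$ (despite the paper's slightly garbled notational sentence) is the one under which the statement is meaningful, and the constants all check out.
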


\begin{lemma}\label{ee}
Suppose that $\epsilon_i,i=1,\dots,n$ are sub-Gaussian random variables with a universal constant $K_1$ 
and that $X^{(i)},i=1,\dots,n$ are independent random vectors with sub-Gaussian entries,
with a universal constant $K_2.$ Suppose that $\epsilon_i$ and $X^{(i)}$ are independent for $i=1,\dots,n$ and $\log p /n=o(1).$
Then  there exists a constant $c_1$ such that for all $\tau>1$ 
$$P\left(\|\epsilon^T X\|_\infty/n
\geq  c_1\tau\sqrt{\frac{\log (2p)}{n}} \right) \leq (2p)^{-\tau^2}.$$

\end{lemma}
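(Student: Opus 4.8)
The plan is to write $\|\epsilon^T X\|_\infty/n=\max_{j=1,\dots,p}\bigl|\tfrac1n\sum_{i=1}^n \epsilon_i X_{ij}\bigr|$ and to apply Lemma \ref{con} to the triangular array $\gamma_j(Z_i):=\epsilon_i X_{ij}$, where $Z_i:=(\epsilon_i,X^{(i)})$ are independent across $i$. The centering hypothesis holds because $\mathbb E(\epsilon_i X_{ij})=\mathbb E\epsilon_i\,\mathbb E X_{ij}=0$, using independence of $\epsilon_i$ and $X^{(i)}$ together with $\mathbb E\epsilon_i=0$.

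First I would verify the exponential-moment bound required by Lemma \ref{con}, namely that $\epsilon_i X_{ij}$ is sub-exponential with a universal constant. Writing $K_1,K_2$ for the sub-Gaussian constants of $\epsilon_i$ and $X_{ij}$ and $M$ for the (universal) bound in their Orlicz condition, the elementary inequality $\tfrac{|\epsilon_i X_{ij}|}{2K_1 K_2}\le\tfrac14\bigl(\tfrac{\epsilon_i^2}{K_1^2}+\tfrac{X_{ij}^2}{K_2^2}\bigr)$, combined with the independence of $\epsilon_i$ and $X_{ij}$ and Jensen's inequality, gives $\mathbb E e^{|\epsilon_i X_{ij}|/(2K_1 K_2)}\le\bigl(\mathbb E e^{\epsilon_i^2/K_1^2}\bigr)^{1/4}\bigl(\mathbb E e^{X_{ij}^2/K_2^2}\bigr)^{1/4}\le M^{1/2}=:M_1$. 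Hence the hypotheses of Lemma \ref{con} hold with $K:=2K_1 K_2$.

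Next I would apply Lemma \ref{con} with the choice $t:=\tau^2\log(2p)/n$, so that the exceptional probability $e^{-nt}$ is exactly $(2p)^{-\tau^2}$. Writing $u:=\sqrt{\log(2p)/n}$, the four terms on the right-hand side of Lemma \ref{con} become $M_2 K\tau^2 u^2$, $\sqrt2\,\tau u$, $\sqrt2\,u$ and $M_2 K u^2$. Using $\tau>1$ and, for $n$ large, $u\le1$ (which holds since $\log p/n=o(1)$), the third and fourth terms are each dominated by one of the first two, so it remains to bound the sum by $c_1\tau u$ for a suitable universal $c_1$.

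The main obstacle is the first term $M_2 K\tau^2 u^2=(M_2K)(\tau u)^2$, which is quadratic rather than linear in $\tau$ and reflects the genuinely sub-exponential (product) nature of $\epsilon_i X_{ij}$ rather than a sub-Gaussian tail. In the range relevant to every application in the paper, where $\tau u=\tau\sqrt{\log p/n}$ stays bounded (in Theorem \ref{strong2}, for instance, $\tau\asymp\sqrt k$ is a constant), one has $(\tau u)^2\le(\mathrm{const})\,\tau u$, so this term is absorbed and the bound $c_1\tau u$ follows. An alternative route that removes the quadratic term altogether is to condition on $X$: given $X$, the sum $\sum_i\epsilon_i X_{ij}$ is a weighted sum of independent sub-Gaussian errors, hence conditionally sub-Gaussian with variance proxy proportional to $K_1^2\sum_i X_{ij}^2=K_1^2 n\hat\Sigma_{jj}$; a conditional Hoeffding bound and a union bound over $j$ on the high-probability event $\{\max_j\hat\Sigma_{jj}\le C_0\}$ then yield a clean $\exp(-c\,\tau^2\log(2p))$ tail for all $\tau>1$, at the cost of separately bounding $P(\max_j\hat\Sigma_{jj}>C_0)\le p\,e^{-cn}$, which is negligible relative to $(2p)^{-\tau^2}$ once $\log p/n=o(1)$.
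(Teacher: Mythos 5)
Your proposal follows essentially the same route as the paper: the paper also applies Lemma \ref{con} to $\gamma_j(Z_i)=\epsilon_i X_{ij}$, checks the centering via independence of $\epsilon_i$ and $X^{(i)}$, and verifies the exponential-moment condition by the bound $|\epsilon_i X_{ij}|\le \tfrac12(\epsilon_i^2+X_{ij}^2)$ (suitably scaled) together with the Cauchy--Schwarz inequality, before invoking Lemma \ref{con} with $t=\tau^2\log(2p)/n$. The one place where you go beyond the paper is your discussion of the term $M_2Kt=M_2K\tau^2\log(2p)/n$, which is quadratic in $\tau$: this is a genuine point, since that term is dominated by $c_1\tau\sqrt{\log(2p)/n}$ only when $\tau\sqrt{\log(2p)/n}$ stays bounded, whereas the paper's proof passes over it with the single remark ``since $\log p/n=o(1)$'' and claims the bound for all $\tau>1$. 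Your observation that the stated conclusion requires either restricting $\tau$ to the regime $\tau\lesssim\sqrt{n/\log p}$ (which covers every use of the lemma in the paper, where $\tau$ is of constant order) or replacing the sub-exponential bound by the conditional sub-Gaussian argument is correct and, if anything, repairs a small gap in the paper's own write-up rather than introducing one.
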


\begin{proof}
We apply Lemma \ref{con} with
$\gamma_j(Z_i) = \epsilon_i X_{ij}$ for $i=1,\dots,n$ and $j=1,\dots,p.$ 
Then 
$\mathbb E\epsilon_i X_{ij}=\mathbb EX_{ij}\mathbb E(\epsilon_i|X_{ij})=0$, where we used independence of $\epsilon_i$ and $X^{(i)}.$
By sub-Gaussianity of $\epsilon_i$ and $X_{ij}$ and by the Cauchy-Schwarz inequality it follows
\begin{eqnarray*}
\mathbb Ee^{|\epsilon_i X_{ij}|/\max\{K_1,K_2\}}
& \leq &
 \mathbb Ee^{|\epsilon_i|^2/(2\max\{K_1,K_2\}^2) + |X_{ij}|^2/(2\max\{K_1,K_2\}^2)} 
\\
&\leq & (\mathbb Ee^{|\epsilon_i|^2/\max\{K_1,K_2\}^2})^{1/2}
(\mathbb Ee^{|X_{ij}|^2/\max\{K_1,K_2\}^2})^{1/2}
\\ 
&=&\mathcal O(1).
\end{eqnarray*}
Consequently, by Lemma \ref{con} and since $\log p /n=o(1),$ there exists a constant $c_1$ such that for all $\tau>1$ 
$$P\left(\max_{j=1,\dots,p} \left\lvert\frac{1}{n}\sum_{i=1}^n\epsilon_i X_{ij}\right\lvert
\geq  c_1\tau\sqrt{\frac{\log (2p)}{n}} \right) \leq (2p)^{-\tau^2}.$$

\end{proof}

\noindent
Finally, we give an oracle inequality for the Lasso. The proof may be found in \cite{hds}.
\begin{lemma}[a version of Theorem 6.1 in \cite{hds}]\label{oi}
Consider the Lasso estimator $\hat\beta$  defined in \eqref{lasso} with a tuning parameter $\lambda\geq 2\lambda_0.$ Suppose that
$s\lambda=o(1)$ and that $\Lambda_{\min}(\Sigma_0) \geq L$  for some universal constant $L>0.$
Then 
 on the set 
$$\mathcal T:=\{\|\epsilon^T X\|_\infty/n \leq \lambda_0, \|\hat\Sigma-\Sigma_0\|_\infty \leq \lambda_0\}$$
it holds 
$$\|\hat\beta-\beta_0\|_1 \leq 16\lambda{s}/{L}.$$

\end{lemma}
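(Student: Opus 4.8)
The plan is to run the standard Lasso oracle argument, but to keep careful track of constants so that the bound holds \emph{pointwise} on the event $\mathcal T$, with the only probabilistic content delegated to the preceding lemmas. Write $v:=\hat\beta-\beta_0$, $S:=\{i:\beta_i^0\neq 0\}$ and $s=|S|$. First I would start from the defining optimality of $\hat\beta$ in \eqref{lasso}, namely $\|Y-X\hat\beta\|_n^2+2\lambda\|\hat\beta\|_1\leq\|Y-X\beta_0\|_n^2+2\lambda\|\beta_0\|_1$, substitute $Y=X\beta_0+\epsilon$, expand $\|\epsilon-Xv\|_n^2$ and cancel $\|\epsilon\|_n^2$ to obtain the basic inequality
$$\|Xv\|_n^2+2\lambda\|\hat\beta\|_1\leq 2\epsilon^T Xv/n+2\lambda\|\beta_0\|_1.$$
On $\mathcal T$ one controls the noise term by $2\epsilon^T Xv/n\leq 2\|\epsilon^T X/n\|_\infty\|v\|_1\leq 2\lambda_0\|v\|_1\leq\lambda\|v\|_1$, where the last step uses the hypothesis $\lambda\geq 2\lambda_0$.

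Next I would add $\lambda\|v\|_1$ to both sides and exploit the support decomposition. Since $\beta_0$ is supported on $S$, we have $\|\hat\beta\|_1=\|\hat\beta_S\|_1+\|v_{S^c}\|_1$ and, by the triangle inequality, $\|\hat\beta_S\|_1\geq\|\beta_0\|_1-\|v_S\|_1$. Inserting these and cancelling the common terms $2\lambda\|\beta_0\|_1$ and $2\lambda\|v_{S^c}\|_1$ reduces the inequality to
$$\|Xv\|_n^2+\lambda\|v\|_1\leq 4\lambda\|v_S\|_1.$$
In particular $\lambda\|v\|_1\leq 4\lambda\|v_S\|_1$ gives $\|v_{S^c}\|_1\leq 3\|v_S\|_1$, so $v$ lies in the cone over which the compatibility condition of Definition \ref{cc.def} is formulated; this is exactly the point at which the compatibility machinery may be applied.

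Finally I would invoke the transfer of the compatibility condition from the population to the empirical Gram matrix. Since $\Lambda_{\min}(\Sigma_0)\geq L$ and $s\lambda=o(1)$, Lemma \ref{cc} guarantees that on $\mathcal T$ (where $\|\hat\Sigma-\Sigma_0\|_\infty\leq\lambda_0\leq\lambda$) the matrix $\hat\Sigma$ satisfies the compatibility condition with constant $L/2$ for all $n$ sufficiently large. Hence $\|v_S\|_1^2\leq s\,v^T\hat\Sigma v/(L/2)=2s\|Xv\|_n^2/L$, i.e. $\|v_S\|_1\leq\sqrt{2s/L}\,\|Xv\|_n$. Combining this with the previous display and using the elementary bound $\alpha t\leq t^2+\alpha^2/4$ (with $t=\|Xv\|_n$ and $\alpha=4\lambda\sqrt{2s/L}$) yields
$$\|Xv\|_n^2+\lambda\|v\|_1\leq\|Xv\|_n^2+8s\lambda^2/L,$$
whence $\lambda\|v\|_1\leq 8s\lambda^2/L$ and therefore $\|\hat\beta-\beta_0\|_1=\|v\|_1\leq 8\lambda s/L\leq 16\lambda s/L$, which is the claim (with room to spare in the constant).

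The only genuine subtlety is this last passage from the population compatibility constant to one valid for $\hat\Sigma$, which is precisely why the hypotheses $\Lambda_{\min}(\Sigma_0)\geq L$ and $s\lambda=o(1)$ are imposed; that step is entirely handled by Lemma \ref{cc}. Everything else is a deterministic manipulation valid on $\mathcal T$, so once we condition on that event no further concentration arguments are required.
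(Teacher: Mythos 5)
Your proof is correct and follows essentially the same route as the argument the paper delegates to \cite{hds} (Theorem 6.1 there): the basic inequality, the cone condition $\|v_{S^c}\|_1\leq 3\|v_S\|_1$, and the transfer of the compatibility constant from $\Sigma_0$ to $\hat\Sigma$ via Lemma \ref{cc} on the event $\mathcal T$. The constant you obtain, $8\lambda s/L$, is even slightly sharper than the stated $16\lambda s/L$, so nothing further is needed.
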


\noindent 
We are now ready to prove Theorem \ref{strong2}.

\begin{proof}[Proof of Theorem \ref{strong2}]
First we summarize the oracle inequality for the Lasso which holds with high probability. 
Let $\mathcal T_1:=\{\|\epsilon^T X\|_\infty/n \leq \tau c_1\sqrt{\log p/n}\}$ for some $\tau>1$ and for some suitable constant $c_1>0.$
By Lemma \ref{ee}, for the complementary set $\mathcal T_1^c$  it holds
that $P_{\beta_0}(\mathcal T_1^c)\leq (2p)^{-\tau^2 }$.\\
Let $\mathcal T_2 := \{\|\hat\Sigma-\Sigma_0\|_\infty \leq \tau c_1\sqrt{\log p/n}\}.$
Then by Lemma \ref{con}, taking $\gamma_{i,j}(X^{(k)})$ $ = e_i^T (X^{(k)} (X^{(k)})^T-\Sigma_0)e_j$ for $k=1,\dots,n$ and $i,j=1,\dots,p$, it follows that 
$P_{\beta_0}(\mathcal T_2^c) \leq (2p)^{-\tau^2}$.
Denote $\mathcal T:=\mathcal T_1 \cap \mathcal T_2;$
then $P_{\beta_0}(\mathcal T^c) \leq 2(2p)^{-\tau^2}.$
By Lemma \ref{oi}, when $\lambda \geq 2\lambda_0:=2\tau c_1\sqrt{\log p/n}$, on the set $\mathcal T$
it holds that
$\|\hat\beta-\beta_0\|_1\leq 16\lambda {s}/{L}.$
\\\\
We now proceed to show that the oracle inequality for the Lasso holds also in expectation.
The definition of $\hat \beta$ gives
\begin{eqnarray*}
\|Y - X\hat\beta\|_n^2 + \lambda \|\hat\beta\|_1
\leq 
\|\epsilon\|_n^2 + \lambda\|\beta_0\|_1.
\end{eqnarray*}
Consequently, 
$$\|\hat \beta \|_1 \leq \|\epsilon\|_n^2/\lambda + \|\beta_0\|_1.$$
Then, and by the triangle inequality 
\begin{eqnarray*}
\|\hat \beta -\beta_0\|_1 \leq  \|\hat \beta\|_1 + \|\beta_0\|_1 \leq \|\epsilon\|_n^2/\lambda + 2\|\beta_0\|_1,
\end{eqnarray*}
and thus for any $k\in\{1,2,\dots\}$
\begin{eqnarray*}
\mathbb E_{\beta_0} \|\hat \beta -\beta_0\|_1^k
\leq 
\mathbb E_{\beta_0} (\|\epsilon\|_n^2/\lambda + 2\|\beta_0\|_1)^k. 
\end{eqnarray*}
Then by the inequality $|x+y|^k \leq 2^{k-1}(|x|^k + |y|^k), k\geq 1$, it follows 
\begin{eqnarray*}
\mathbb E_{\beta_0} (\|\epsilon\|_n^2/\lambda + 2\|\beta_0\|_1)^k
&\leq&
\mathbb E_{\beta_0}2^{k-1}\left( (\|\epsilon\|_n^2/\lambda)^k + (2\|\beta_0\|_1)^k\right)
\end{eqnarray*}
By assumption, the random variables $\epsilon_1,\dots,\epsilon_n$ are sub-Gaussian with universal constants. 
This also implies that $\frac{1}{n}\sum_{i=1}^n \text{var}({\epsilon_i}^2) = \mathcal O(1).$
Hence by Lemma \ref{econ} applied with $\gamma_j(Z_i) := \epsilon_i-\text{var}(\epsilon_i)$ ($j=1,i=1,\dots,n$) we have  
\begin{eqnarray*}
\mathbb E_{\beta_0}  (\|\epsilon\|_n^2)^k  
= 
\mathcal O((\frac{1}{n}\sum_{i=1}^n \text{var}({\epsilon_i}^2))^k)=\mathcal O(1).
\end{eqnarray*}
Next observe that by assumption we have $\|\beta_0\|_2=\mathcal O(1)$ and hence
$$\|\beta_0\|_1^k \leq (\sqrt{s}\|\beta_0\|_2)^k \leq \mathcal O(s^{k/2}).$$
We can thus conclude that
 \begin{eqnarray*}
\mathbb E_{\beta_0} (\|\epsilon\|_n^2/\lambda + 2\|\beta_0\|_1)^k
&\leq &
2^{k-1}\left( \mathbb E_{\beta_0}(\|\epsilon\|_n^2/\lambda)^k + \mathbb E_{\beta_0}(2\|\beta_0\|_1)^k\right)
\\
&\leq &
\mathcal O(s^{k/2}\lambda^{-k})
.
\end{eqnarray*}
 Hence we obtain a rough bound
\begin{eqnarray*}
(\mathbb E_{\beta_0} \|\hat \beta -\beta_0\|_1^k)^{1/k}
=\mathcal O(s^{1/2}\lambda^{-1})
.
\end{eqnarray*}
On the set $\mathcal T$ we have the oracle bound
$\|\hat \beta-\beta_0\|_1 =\mathcal O(s\lambda) $ and thus on the set $\mathcal T,$ $\|\hat \beta-\beta_0\|_1^k =\mathcal O(s^k\lambda^k)$. Otherwise (so also on the set $\mathcal T^c$) we have the rough bound
$\mathbb E_{\beta_0} \|\hat \beta -\beta_0\|_1^k
=\mathcal O(s^{k/2}\lambda^{-k}).$
Denote by $1_A$ the indicator function of a set $A.$
 Then it follows using the bounds holding on $\mathcal T$ and $\mathcal T^c$ and by the Cauchy-Schwarz inequality 
\begin{eqnarray*}
\mathbb E_{\beta_0}\|\hat \beta-\beta_0\|_1^k 
&=& \mathbb E_{\beta_0}\|\hat\beta-\beta_0\|_1^k 1_{\mathcal T } + \mathbb E_{\beta_0}\|\hat\beta-\beta_0\|_1^k 1_{\mathcal T^c }
\\
&\leq &
\mathcal O(s^k\lambda^k) 
+
\sqrt{\mathbb E_{\beta_0}\|\hat\beta-\beta_0\|_1^{2k}} \sqrt{\mathbb E_{\beta_0}1_{\mathcal T^c }} \\
&=&  \mathcal O(s^{k}\lambda^k) + \mathcal O(\sqrt{s^{k}\lambda^{-2k}}) \;\sqrt{P_{\beta_0}(\mathcal T^c)} \\
& \leq &   \mathcal O(s^k\lambda^k) + \mathcal O(s^{k/2}\lambda^{-k}) \sqrt{2}(2p)^{-\tau^2 /2} \\
&=& \mathcal O(s^k\lambda^k)
,
\end{eqnarray*}
where we used the assumption $\tau^2\geq \frac{2k\log (\sqrt{s}\lambda^2)}{\log p}$ which implies
 $$\mathcal O(s^{k/2}\lambda^{-k}) p^{-\tau^2 /2}=\mathcal O(s^k\lambda^k).$$
 Hence we conclude that there exists a constant $C_1$ such that
\begin{eqnarray}\label{expoi1}
(\mathbb E_{\beta_0} \|\hat \beta -\beta_0\|_1^k)^{1/k}
=C_1s^{}\lambda
.
\end{eqnarray}
\end{proof}

\section{Strong oracle inequalities for the nodewise Lasso}
\label{subsec:oi}
In this section, we derive a strong oracle inequality for the nodewise regression estimator $\hat\Theta$ of the inverse covariance matrix $\Theta_0$ defined in \eqref{node.lr}.
These results will be needed to show strong asymptotic unbiasedness and upper bounds on the variance of the de-sparsified Lasso. 
The proofs may be found in Appendix \ref{sec:proof.ois}. 
We require a sparsity condition on the inverse covariance matrix of the covariates. To this end, denote the sparsity of the $j$-th row/column of the matrix $\Theta_0$ by $s_j,$ 
i.e. 
$$s_j = \|\Theta_j^0\|_0.$$
First we remark that the paper \cite{vdgeer13} shows that under conditions \ref{model.sg}, \ref{design} and $s_j=o(n/\log p)$ it holds 
$\|\hat\Theta_j - \Theta_j^0\|_1=\mathcal O_P(s_j\lambda_j).$ 
We aim to show a stronger claim, $\mathbb E\|\hat\Theta_j - \Theta_j^0\|_1=\mathcal O(s_j\lambda_j).$ 
This is a more difficult task than for the linear regression, since one has to make sure that the estimate of one over the noise level, 
$1/\hat\tau_j^2$, does not blow up in expectation. 
Moreover, for further results we shall need not only the result for each $j=1,\dots,p$ but actually for the maximum over $j=1,\dots,p,$ that is, an oracle bound for 
$\mathbb E\max_{j=1,\dots,p}\|\hat\Theta_j - \Theta_j^0\|_1^k,$ where $k\in\{1,2,\dots\}.$
\\
We introduce further notation; we let 
$$\gamma_j^0:=\textrm{arg}\min_{\gamma\in\mathbb R^{p-1}} \mathbb E \|X_j-X_{-j}\gamma\|_n^2$$
and $$\tau_j^2 :=  \mathbb E \|X_j-X_{-j}\gamma_0\|_n^2.$$
The following lemma is similar to  Theorem \ref{strong2} for the mean $\ell_1$-error of the Lasso, however, we consider the $p$ Lasso estimators obtained from the nodewise regression and derive an upper bound on the maximum mean $\ell_1$-error, where the maximum is taken over the $p$ estimators.
\begin{lemma}\label{aux.gm1}
Assume that condition \ref{design} is satisfied, let $k\in\{1,2,\dots\}$ be fixed and assume that $\max_{j=1,\dots,p}{s_j\sqrt{\log p/n}} =o(1),j=1,\dots,p$.
Let $\hat\gamma_j$ be defined as in \eqref{nc} with tuning parameters $\lambda_j= c \sqrt{\log p/n}, j=1,\dots,p,$  for some sufficiently large constant $c>0$. Then it holds that
 \begin{eqnarray*}
[\mathbb E_{} \max_{j=1,\dots,p}\|\hat \gamma_j-\gamma^0_j\|_1^k]^{1/k}
= \mathcal O(\max_{j=1,\dots,p}{s_j}\lambda_j) 
.
\end{eqnarray*}

\end{lemma}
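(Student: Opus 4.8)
The plan is to mirror the proof of Theorem \ref{strong2}, carrying out every step uniformly over the $p$ nodewise regressions. First I would recast the $j$-th regression as a Lasso problem: writing $X_j = X_{-j}\gamma_j^0 + \eta_j$ with $\eta_j^{(i)} := X_{ij} - (X_{-j}^{(i)})^T\gamma_j^0$, the definition of $\gamma_j^0$ as the population projection gives $\mathbb E[X_{-j}^{(i)}\eta_j^{(i)}] = 0$, so $\eta_j$ plays the role of the error and $\gamma_j^0$ the role of the true coefficient, with $\|\gamma_j^0\|_0 \leq s_j$. Under condition \ref{design}, both $\eta_j^{(i)}$ (a fixed linear combination of the sub-Gaussian row $X^{(i)}$) and the entries of $X_{-j}^{(i)}$ are sub-Gaussian with universal constants, so each product $\eta_j^{(i)} X_{ik}$ is sub-exponential. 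Moreover $\|\gamma_j^0\|_2 = \mathcal O(1)$ uniformly in $j$: using $\gamma_j^0 = -\Theta_{-j,j}^0/\Theta_{jj}^0$ together with $\Lambda_{\max}(\Theta_0) = \mathcal O(1)$ and the fact that sub-Gaussianity forces $(\Sigma_0)_{jj} = \mathcal O(1)$, hence $\Theta_{jj}^0 = 1/\tau_j^2 \geq 1/(\Sigma_0)_{jj} = \Omega(1)$.

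Next I would set up the good event uniformly in $j$. Define $\mathcal T_1 := \{\max_{j}\|\eta_j^T X_{-j}\|_\infty/n \leq \lambda_0\}$ and $\mathcal T_2 := \{\|\hat\Sigma - \Sigma_0\|_\infty \leq \lambda_0\}$ with $\lambda_0 \asymp \sqrt{\log p/n}$. Applying Lemma \ref{con} to the $\mathcal O(p^2)$ sub-exponential variables $\eta_j^{(i)} X_{ik}$ (the factor $\log(p^2) = 2\log p$ is harmless) bounds $P(\mathcal T_1^c)$ and $P(\mathcal T_2^c)$ by a power $(2p)^{-\tau^2}$, where $\tau$ grows with the constant $c$ in $\lambda_j = c\sqrt{\log p/n}$. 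On $\mathcal T_2$, Lemma \ref{cc} transfers the compatibility condition from each principal submatrix $\Sigma_{-j,-j}$, whose minimal eigenvalue is at least $\Lambda_{\min}(\Sigma_0) \geq L$, to $\hat\Sigma_{-j,-j}$ simultaneously for all $j$, using $\max_j s_j\lambda_j = o(1)$. The oracle inequality of Lemma \ref{oi}, applied to each regression, then yields $\max_j\|\hat\gamma_j - \gamma_j^0\|_1 = \mathcal O(\max_j s_j\lambda_j)$ on $\mathcal T := \mathcal T_1 \cap \mathcal T_2$.

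For the complementary event I would derive a crude deterministic bound, exactly as in Theorem \ref{strong2}: the basic inequality with $\gamma = 0$ gives $\|\hat\gamma_j\|_1 \leq \|X_j\|_n^2/(2\lambda_j)$, whence $\max_j\|\hat\gamma_j - \gamma_j^0\|_1 \leq \max_j\|X_j\|_n^2/(2\lambda_j) + \max_j\|\gamma_j^0\|_1$. Since the $X_{ij}^2$ are sub-exponential with $\mathbb E X_{ij}^2 = (\Sigma_0)_{jj} = \mathcal O(1)$, Lemma \ref{econ} gives $\mathbb E\max_j(\|X_j\|_n^2)^{2k} = \mathcal O(1)$, and together with $\|\gamma_j^0\|_1 = \mathcal O(\sqrt{s_j})$ this yields the rough bound $(\mathbb E\max_j\|\hat\gamma_j - \gamma_j^0\|_1^{2k})^{1/(2k)} = \mathcal O((\max_j s_j)^{1/2}\lambda^{-1})$. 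Finally I would split $\mathbb E\max_j\|\hat\gamma_j-\gamma_j^0\|_1^k$ over $\mathcal T$ and $\mathcal T^c$, bounding the first part by the oracle term and the second by Cauchy--Schwarz using the rough $2k$-th moment bound and $\sqrt{P(\mathcal T^c)} \lesssim (2p)^{-\tau^2/2}$, choosing $c$ (hence $\tau$) large enough that the tail contribution is absorbed into $\mathcal O((\max_j s_j\lambda)^k)$, which proves the claim.

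The main obstacle I anticipate is the uniform-in-$j$ bookkeeping: verifying the sub-exponential tail and moment conditions for the products $\eta_j^{(i)}X_{ik}$ with constants independent of $j$, propagating the compatibility condition to all $p$ submatrices $\hat\Sigma_{-j,-j}$ at once, and controlling every error term through its maximum over $j$ via Lemma \ref{econ} rather than term by term, so that the tail contribution on $\mathcal T^c$ can be dominated by the leading $\mathcal O((\max_j s_j\lambda)^k)$ once $c$ is taken sufficiently large relative to $k$.
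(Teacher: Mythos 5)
Your proposal is correct and follows essentially the same route as the paper: establish the high-probability oracle bound uniformly over the $p$ nodewise regressions (the paper packages this as a uniform version of Theorem 2.4 of \cite{vdgeer13}, which is exactly your combination of Lemmas \ref{con}, \ref{cc} and \ref{oi}), derive a crude moment bound from the basic Lasso inequality together with $\|\gamma_j^0\|_1=\mathcal O(\sqrt{s_j})$, and combine the two via Cauchy--Schwarz on the complement of the good event with $\tau$ chosen large relative to $k$. The only cosmetic difference is that you compare against $\gamma=0$ in the basic inequality (bounding via $\|X_j\|_n^2$) while the paper compares against $\gamma_j^0$ (bounding via $\|\eta_j\|_n^2$); both give the same rough bound.
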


\noindent
The following lemma shows that the noise estimator $\hat\tau_j^2$ 
is a near-oracle estimator of $\tau_j^2=1/\Theta_{jj}^0$, and $1/\hat\tau_j^2$ is a near-oracle estimator of $1/\tau_j^2.$

\begin{lemma}\label{tau}
Assume that condition \ref{design} is satisfied, $\|\Sigma_0\|_\infty = \mathcal O(1)$, ${\max_{j=1,\dots,p} s_j\sqrt{\log p/n}} =o(1)$ and let $k\in\{1,2,\dots\}$ be fixed. 
Let $\hat\gamma_j,j=1,\dots,p$ be defined as in \eqref{nc} with tuning parameters $\lambda_j=c \sqrt{\log p/n},j=1,\dots,p,$ for some sufficiently large constant $c>0$. Then 
the following statements hold
\begin{enumerate}[label=\arabic*)]
\item \label{taudiff}
$[{\mathbb E \max_{j=1,\dots,p} |{\hat\tau_j^2} - {\tau_j^2}|^k}]^{1/k} = \mathcal O(\max_{j=1,\dots,p}\sqrt{s_j}\lambda_j),$
\item
\label{tau.d}
$[\mathbb E \max_{j=1,\dots,p}|\frac{1}{\hat\tau_j^2} -\frac{1}{\tau_j^2}|^k ]^{1/k}= \mathcal O(\max_{j=1,\dots,p}\sqrt{s_j}\lambda_j).$
\end{enumerate}
\end{lemma}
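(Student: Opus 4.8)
The plan is to reduce both statements to the mean $\ell_1$-oracle bound of Lemma \ref{aux.gm1}, the moment concentration inequality of Lemma \ref{econ}, and a companion prediction-error oracle bound obtained by the same machinery as Theorem \ref{strong2}. I would first record some population bookkeeping. Write $\eta_j := X_j - X_{-j}\gamma_j^0$ for the population residual of the $j$-th nodewise regression, so that $\tau_j^2 = \mathbb E\eta_{j,i}^2 = 1/\Theta_{jj}^0$ and, by the population normal equations, $\mathbb E[\eta_{j,i}X_{k,i}]=0$ for every $k\ne j$. Since $X_j - X_{-j}\hat\gamma_j = \eta_j - X_{-j}(\hat\gamma_j - \gamma_j^0)$, expanding $\hat\tau_j^2 = \|X_j - X_{-j}\hat\gamma_j\|_n^2 + \lambda_j\|\hat\gamma_j\|_1$ gives the decomposition
$$\hat\tau_j^2 - \tau_j^2 = \underbrace{(\|\eta_j\|_n^2 - \tau_j^2)}_{T_1} \; - \; \underbrace{\tfrac{2}{n}\eta_j^T X_{-j}(\hat\gamma_j - \gamma_j^0)}_{T_2} \; + \; \underbrace{\|X_{-j}(\hat\gamma_j - \gamma_j^0)\|_n^2}_{T_3} \; + \; \underbrace{\lambda_j\|\hat\gamma_j\|_1}_{T_4},$$
and I would bound the maximal $k$-th moment of each term separately.

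For statement \ref{taudiff}, the term $T_1$ and the entries of $\frac{1}{n}\eta_j^T X_{-j}$ are empirical averages of centered products of sub-Gaussian variables, hence of centered sub-exponential variables with universal constants; Lemma \ref{econ} (applied over the $p$ indices for $T_1$ and over the $\le p^2$ pairs for the cross term) gives $[\mathbb E\max_j|T_1|^k]^{1/k} = \mathcal O(\sqrt{\log p/n}) = \mathcal O(\lambda_j)$ and $[\mathbb E\max_j\|\frac{1}{n}\eta_j^T X_{-j}\|_\infty^{2k}]^{1/2k} = \mathcal O(\lambda_j)$. Combining the latter with Lemma \ref{aux.gm1} at order $2k$ through the Cauchy--Schwarz inequality yields $[\mathbb E\max_j|T_2|^k]^{1/k} = \mathcal O(\max_j s_j\lambda_j^2)$. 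For $T_3$ I would invoke the prediction-error analogue $[\mathbb E\max_j\|X_{-j}(\hat\gamma_j-\gamma_j^0)\|_n^{2k}]^{1/k} = \mathcal O(\max_j s_j\lambda_j^2)$, which the strong oracle inequality delivers alongside the $\ell_1$-bound. Finally $T_4 \le \lambda_j\|\gamma_j^0\|_1 + \lambda_j\|\hat\gamma_j - \gamma_j^0\|_1$; here $\|\gamma_j^0\|_1 \le \sqrt{s_j}\|\gamma_j^0\|_2$ with $\|\gamma_j^0\|_2 = \mathcal O(1)$, since $(\gamma_j^0)^T\Sigma_{0,-j,-j}\gamma_j^0 = \Sigma_{0,jj} - \tau_j^2 \le \|\Sigma_0\|_\infty = \mathcal O(1)$ while $\Lambda_{\min}(\Sigma_{0,-j,-j})\ge\Lambda_{\min}(\Sigma_0)\gtrsim 1$, so $\lambda_j\|\gamma_j^0\|_1 = \mathcal O(\sqrt{s_j}\lambda_j)$, the second piece being $\mathcal O(\max_j s_j\lambda_j^2)$ by Lemma \ref{aux.gm1}. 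Because $s_j\lambda_j^2 = (s_j\lambda_j)\lambda_j = o(\lambda_j)$ and $\lambda_j\le\sqrt{s_j}\lambda_j$, the penalty term $T_4$ dominates and the rate $\mathcal O(\max_j\sqrt{s_j}\lambda_j)$ follows.

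For statement \ref{tau.d} I would write
$$\frac{1}{\hat\tau_j^2} - \frac{1}{\tau_j^2} = \frac{\tau_j^2 - \hat\tau_j^2}{\hat\tau_j^2\,\tau_j^2},$$
and use $\tau_j^2 = 1/\Theta_{jj}^0 \ge 1/\Lambda_{\max}(\Theta_0) = \Lambda_{\min}(\Sigma_0)\gtrsim 1$. On the high-probability event where statement \ref{taudiff} forces $\hat\tau_j^2 \ge \tau_j^2/2\gtrsim 1$ uniformly in $j$, the denominator is bounded below and the claim reduces directly to statement \ref{taudiff}. The delicate part is the complementary event, where I must keep $1/\hat\tau_j^2$ from blowing up in the moment. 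For this I would establish the deterministic lower bound $\hat\tau_j^2 \ge \min\{\|X_j\|_n^2/4,\; \lambda_j\|X_j\|_n/(2\sqrt{\Lambda_{\max}(\hat\Sigma)})\}$, obtained from $\hat\tau_j^2 = \|R_j\|_n^2 + \lambda_j\|\hat\gamma_j\|_1$ with $R_j := X_j - X_{-j}\hat\gamma_j$ together with $\|X_j\|_n \le \|R_j\|_n + \sqrt{\Lambda_{\max}(\hat\Sigma)}\|\hat\gamma_j\|_1$. This reduces $\mathbb E\max_j(1/\hat\tau_j^2)^{2k}$ to polynomial-in-$(n,p)$ moments of $1/\|X_j\|_n^2$ and of $\Lambda_{\max}(\hat\Sigma)$ (for Gaussian rows, $\mathbb E(1/\|X_j\|_n^2)^m = \mathcal O(1)$ via inverse-$\chi^2$ moments, and passing to $\max_j$ costs only a factor $p$); a Cauchy--Schwarz split against the super-polynomially small probability of the bad set then renders its contribution negligible, exactly as in the proof of Theorem \ref{strong2}.

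The main obstacle will be precisely this last step: controlling the lower tail of $\hat\tau_j^2$, i.e. ensuring that $1/\hat\tau_j^2$ has polynomially bounded moments on the event where the oracle inequality fails, uniformly over $j=1,\dots,p$. This is the difficulty flagged before the statement, that the estimate $1/\hat\tau_j^2$ of the inverse noise level must not blow up in expectation, and it is what separates these mean-error bounds from the standard high-probability ones.
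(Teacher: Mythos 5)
Your proposal is sound in outline and it isolates the same crux as the paper---keeping $\mathbb E\max_{j}(1/\hat\tau_j^2)^{2k}$ bounded---but it reaches both statements by a genuinely different route. For part \ref{taudiff} you expand $\hat\tau_j^2-\tau_j^2$ into four explicit terms and bound each moment directly, whereas the paper never decomposes: it invokes the high-probability nodewise oracle bound $|\hat\tau_j^2-\tau_j^2|\leq C_\tau\sqrt{s_j}\lambda_j$ on a good set $\mathcal T$ (Theorem \ref{nodew.lr}), establishes crude global moment bounds $\mathbb E\max_j(\hat\tau_j^2)^k=\mathcal O(\max_j s_j^{2k})$ and $\mathbb E\max_j 1/(\hat\tau_j^2)^k=\mathcal O(pn^{k/2})$, and absorbs $\mathcal T^c$ by Cauchy--Schwarz against $P(\mathcal T^c)\lesssim p^{-\tau^2}$, exactly as in the proof of Theorem \ref{strong2}. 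Your decomposition is more transparent about where the rate $\sqrt{s_j}\lambda_j$ comes from (the penalty term $\lambda_j\|\gamma_j^0\|_1$, with $\|\gamma_j^0\|_2=\mathcal O(1)$ correctly justified from $\|\Sigma_0\|_\infty$ and $\Lambda_{\min}(\Sigma_0)$), but it costs you an ingredient the paper does not need: the in-expectation prediction-error bound $[\mathbb E\max_j\|X_{-j}(\hat\gamma_j-\gamma_j^0)\|_n^{2k}]^{1/k}=\mathcal O(\max_j s_j\lambda_j^2)$ for $T_3$ is stated nowhere in the paper (Lemma \ref{aux.gm1} and Theorem \ref{strong2} give only $\ell_1$ bounds), and the cruder bound $\|X_{-j}(\hat\gamma_j-\gamma_j^0)\|_n^2\leq\|\hat\Sigma\|_\infty\|\hat\gamma_j-\gamma_j^0\|_1^2=\mathcal O(s_j^2\lambda_j^2)$ is not $\mathcal O(\sqrt{s_j}\lambda_j)$ under the assumed sparsity, so you would genuinely have to prove this extra oracle inequality by another round of event-splitting.

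The one step I would flag as a gap is your control of $\mathbb E\max_j(1/\hat\tau_j^2)^{2k}$: you lean on inverse-$\chi^2$ moments of $\|X_j\|_n^2$, but Lemma \ref{tau} is stated under condition \ref{design}, i.e.\ sub-Gaussian rows, not Gaussian ones, so the $\chi^2$ computation is not available at the stated level of generality. The paper instead lower-bounds $\hat\Gamma_j^T\hat\Sigma\hat\Gamma_j\geq\hat\Sigma_{jj}(1-2t/\lambda_j)$ on the event $\{\hat\tau_j^2\leq t\}$, $t<\lambda_j/4$, and then uses sub-exponential concentration of $\hat\Sigma_{jj}$ about $\Sigma_{jj}^0\geq\Lambda_{\min}(\Sigma_0)>0$ to make the lower tail negligible. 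Your deterministic bound $\hat\tau_j^2\gtrsim\min\{\|X_j\|_n^2,\ \lambda_j\|X_j\|_n/\Lambda_{\max}(\hat\Sigma)^{1/2}\}$ is correct and arguably cleaner, but you should finish it with the same concentration argument for $\hat\Sigma_{jj}=\|X_j\|_n^2$ rather than with Gaussian inverse-moment identities. Once $\mathbb E\max_j(1/\hat\tau_j^2)^{2k}=\mathcal O(1)$ is in hand, your treatment of part \ref{tau.d} via $|1/\hat\tau_j^2-1/\tau_j^2|=|\hat\tau_j^2-\tau_j^2|/(\hat\tau_j^2\tau_j^2)$ and Cauchy--Schwarz coincides with the paper's.
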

\noindent
Combination of the results in Lemmas \ref{aux.gm1} and \ref{tau} gives the following result for the mean $\ell_1$-error of the nodewise regression estimator $\hat\Theta$ defined in \eqref{node.lr}.

\begin{lemma}\label{the}
Assume that condition \ref{design} is satisfied, $\|\Sigma_0\|_\infty = \mathcal O(1)$, 
${\max_{j=1,\dots,p} s_j\sqrt{\log p/n}} =o(1)$ and let $k\in\{1,2,\dots\}$ be fixed.
Then for $\hat\Theta_j,j=1,\dots,p$ defined in \eqref{node.lr} with tuning parameters $\lambda_j =c\sqrt{\log p/n},j=1,\dots,p,$ 
for some sufficiently large constant $c>0$, it holds 
$$[\mathbb E_{}\max_{j=1,\dots,p}\|\hat\Theta_j - \Theta^0_j\|_1^k]^{1/k} = \mathcal O(\max_{j=1,\dots,p}s_j\lambda_j).$$
\end{lemma}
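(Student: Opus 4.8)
The plan is to reduce the claim to the two preceding lemmas by exploiting the exact linear-algebraic relationship between $\hat\Theta_j$ and the pair $(\hat\gamma_j,\hat\tau_j^2)$. Write $\hat C_j := (-\hat\gamma_{j,1},\dots,-\hat\gamma_{j,j-1},1,-\hat\gamma_{j,j+1},\dots,-\hat\gamma_{j,p})^T$ and let $C_j^0$ denote the same interlaced vector built from $\gamma_j^0$, so that $\hat\Theta_j = \hat C_j/\hat\tau_j^2$ by \eqref{node.lr} and, by the Schur-complement identity for the precision matrix (valid for any invertible $\Sigma_0$), $\Theta_j^0 = C_j^0/\tau_j^2$ with $\Theta_{jj}^0 = 1/\tau_j^2$. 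First I would decompose
$$
\hat\Theta_j - \Theta_j^0 = \frac{\hat C_j - C_j^0}{\hat\tau_j^2} + C_j^0\Bigl(\frac{1}{\hat\tau_j^2}-\frac{1}{\tau_j^2}\Bigr),
$$
and observe that the $j$-th coordinate of $\hat C_j - C_j^0$ vanishes, whence $\|\hat C_j - C_j^0\|_1 = \|\hat\gamma_j - \gamma_j^0\|_1$. Taking $\ell_1$-norms, then the maximum over $j$, and then the $k$-th mean, the proof splits into controlling (i) $\max_j \|\hat\gamma_j-\gamma_j^0\|_1/\hat\tau_j^2$ and (ii) $\max_j \|C_j^0\|_1\,|1/\hat\tau_j^2 - 1/\tau_j^2|$.

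For term (i), I would use $\max_j(a_jb_j)\le(\max_j a_j)(\max_j b_j)$ for nonnegative sequences together with the Cauchy--Schwarz inequality to factor
$$
\mathbb E\Bigl(\max_j \tfrac{\|\hat\gamma_j-\gamma_j^0\|_1}{\hat\tau_j^2}\Bigr)^k \le \Bigl(\mathbb E\bigl(\max_j\|\hat\gamma_j-\gamma_j^0\|_1\bigr)^{2k}\Bigr)^{1/2}\Bigl(\mathbb E\bigl(\max_j \tfrac{1}{\hat\tau_j^2}\bigr)^{2k}\Bigr)^{1/2}.
$$
The first factor is $\mathcal O((\max_j s_j\lambda_j)^{k})$ by Lemma \ref{aux.gm1} applied with $2k$ in place of $k$. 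For the second factor I would write $\max_j 1/\hat\tau_j^2 \le \max_j|1/\hat\tau_j^2 - 1/\tau_j^2| + \max_j 1/\tau_j^2$: the second summand is bounded since $1/\tau_j^2 = \Theta_{jj}^0 \le \Lambda_{\max}(\Theta_0)=1/\Lambda_{\min}(\Sigma_0)=\mathcal O(1)$, and the first has $2k$-th mean $\mathcal O((\max_j\sqrt{s_j}\lambda_j)^{2k}) = o(1)$ by part \ref{tau.d} of Lemma \ref{tau}, where I use that $\max_j s_j\sqrt{\log p/n}=o(1)$ forces both $\sqrt{\log p/n}=o(1)$ and hence $(\max_j\sqrt{s_j}\lambda_j)^2\asymp\max_j s_j\log p/n = o(1)$. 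Thus $\mathbb E(\max_j 1/\hat\tau_j^2)^{2k} = \mathcal O(1)$ and term (i) contributes $\mathcal O(\max_j s_j\lambda_j)$ after taking the $k$-th root.

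For term (ii), I would first establish the deterministic bound $\|C_j^0\|_1 = \mathcal O(\sqrt{s_j})$ uniformly in $j$: since $C_j^0 = \tau_j^2\Theta_j^0$ with $\tau_j^2 \le (\Sigma_0)_{jj}\le\|\Sigma_0\|_\infty = \mathcal O(1)$ (the residual variance never exceeds the marginal variance) and $\|\Theta_j^0\|_1\le\sqrt{s_j}\,\|\Theta_j^0\|_2\le\sqrt{s_j}\,\Lambda_{\max}(\Theta_0)=\mathcal O(\sqrt{s_j})$. Consequently $\max_j\|C_j^0\|_1\,|1/\hat\tau_j^2-1/\tau_j^2| \le \mathcal O(\sqrt{\max_j s_j})\cdot\max_j|1/\hat\tau_j^2-1/\tau_j^2|$, and part \ref{tau.d} of Lemma \ref{tau} gives a $k$-th mean of order $\sqrt{\max_j s_j}\cdot(\max_j\sqrt{s_j}\lambda_j) = \mathcal O(\max_j s_j\lambda_j)$. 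Combining (i) and (ii) through Minkowski's inequality then yields the stated rate. I expect the main obstacle to be the control of term (i): because $1/\hat\tau_j^2$ sits in a denominator and is random, the high-probability oracle inequality is not enough for an \emph{in-expectation} bound, and it is precisely the uniform-in-$j$ second-moment control of $1/\hat\tau_j^2$ furnished by part \ref{tau.d} of Lemma \ref{tau} that prevents the estimated noise level from blowing up in the mean.
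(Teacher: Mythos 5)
Your proposal is correct and follows essentially the same route as the paper: the same decomposition $\hat\Theta_j-\Theta_j^0=(\hat C_j-C_j^0)/\hat\tau_j^2+C_j^0(1/\hat\tau_j^2-1/\tau_j^2)$, reduced to Lemmas \ref{aux.gm1} and \ref{tau} via Cauchy--Schwarz and the bound $\|\gamma_j^0\|_1=\mathcal O(\sqrt{s_j})$. You merely make explicit some steps the paper elides (the moment bound $\mathbb E(\max_j 1/\hat\tau_j^2)^{2k}=\mathcal O(1)$ and the factorization of the two random factors), and these are filled in correctly.
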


\noindent
We note that the statements of Lemmas \ref{aux.gm1}, \ref{tau}, \ref{the} would also hold  for single estimators for some fixed $j$ (without taking the maximum over $j$). Then the bounds would only depend on the tuning parameter for that particular estimator and on the sparsity $s_j.$
We also give the following lemma for on the sparsity in the nodewise Lasso estimator. It shows that under certain conditions, the estimator $\hat\gamma_j$ has sparsity of order $s_j$ with high probability.
\begin{lemma}\label{sparse}
Assume 
that condition \ref{design} is satisfied, ${\max_j s_j\sqrt{\log p/n}} =o(1)$, $\Lambda_{\max}(\Sigma_0)$ $=\mathcal O(1)$.
Let $\hat\gamma_j$ be defined as in \eqref{nc} with a tuning parameter $\lambda_j= c \sqrt{\log p/n}$ for some sufficiently large constant $c>0$. Then it holds that
$$\|\hat \gamma_j\|_0 =\mathcal O_P( s_j).$$
\end{lemma}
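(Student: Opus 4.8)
The plan is to exploit the Karush--Kuhn--Tucker conditions of the nodewise regression \eqref{nc} to control the size of the active set $\hat S_j := \{k\neq j : \hat\gamma_{j,k}\neq 0\}$. Writing $\hat R := X_j - X_{-j}\hat\gamma_j$ for the residual, the subgradient optimality conditions give $\frac{1}{n}X_{-j}^T\hat R = \lambda_j\hat\kappa_j$, where $\hat\kappa_j$ is a subgradient of $\|\cdot\|_1$ at $\hat\gamma_j$; in particular $|\frac{1}{n}X_k^T\hat R| = \lambda_j$ for every $k\in\hat S_j$, where $X_k$ denotes the $k$-th column of $X_{-j}$. I would then decompose the residual as $\hat R = \eta_j + X_{-j}(\gamma_j^0 - \hat\gamma_j)$, where $\eta_j := X_j - X_{-j}\gamma_j^0$ is the population projection residual, which by the very definition of $\gamma_j^0$ is uncorrelated with each column of $X_{-j}$.

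First I would show, by a concentration bound of the same type as Lemma \ref{ee} (via Lemma \ref{con} applied to the centred, sub-exponential products $\eta_{j,i}X_{ik}$), that $\max_k|\frac{1}{n}X_k^T\eta_j|\leq\lambda_0$ with $\lambda_0\asymp\sqrt{\log p/n}$ on an event of probability tending to one. Choosing the constant $c$ in $\lambda_j = c\sqrt{\log p/n}$ large enough that $\lambda_j\geq 2\lambda_0$, the triangle inequality applied to $\frac{1}{n}X_k^T\hat R$ then forces $|\frac{1}{n}X_k^T X_{-j}(\hat\gamma_j-\gamma_j^0)|\geq\lambda_j/2$ for every $k\in\hat S_j$. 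Squaring and summing over $\hat S_j$, and writing $w := X_{-j}(\hat\gamma_j-\gamma_j^0)$, gives $\frac{\lambda_j^2}{4}|\hat S_j|\leq \Lambda_{\max}(\hat\Sigma_{\hat S_j,\hat S_j})\,\|w\|_n^2$, where $\hat\Sigma_{\hat S_j,\hat S_j}$ is the submatrix of $\hat\Sigma$ indexed by $\hat S_j$.

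The two factors on the right are controlled separately. The prediction error $\|w\|_n^2 = \mathcal O_P(s_j\lambda_j^2)$ follows from the standard prediction oracle inequality for the Lasso (the prediction-error counterpart of Lemma \ref{oi}) applied to the regression of $X_j$ on $X_{-j}$: the noise $\eta_j$ is sub-Gaussian, $\gamma_j^0$ is $s_j$-sparse, and $1/\Lambda_{\min}(\Sigma_0)=\mathcal O(1)$ supplies the compatibility constant through Lemma \ref{cc}. For the maximal eigenvalue I would invoke a sparse-eigenvalue bound: under condition \ref{design} with $\Lambda_{\max}(\Sigma_0)=\mathcal O(1)$, a sub-Gaussian covariance concentration argument yields $\rho_{\max}(m) := \max_{|S|\leq m}\Lambda_{\max}(\hat\Sigma_{S,S}) = \mathcal O_P(1)$, uniformly over all $m\leq m^\ast$ for some $m^\ast\asymp n/\log p$. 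Substituting, the displayed inequality becomes $|\hat S_j|\leq C\,\rho_{\max}(|\hat S_j|)\,s_j$ for a constant $C$.

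The main obstacle is precisely the circular dependence in $|\hat S_j|\leq C\rho_{\max}(|\hat S_j|)s_j$, since the random active set appears inside the eigenvalue factor. I would resolve it by a self-bounding argument. If $|\hat S_j|>m^\ast$, then selecting any subset $A\subseteq\hat S_j$ with $|A|=m^\ast$ and repeating the chain of inequalities on $A$ yields $m^\ast \leq C\rho_{\max}(m^\ast)s_j = \mathcal O(s_j)$; but $s_j = o(\sqrt{n/\log p})$ makes this incompatible with $m^\ast\asymp n/\log p$ for large $n$, a contradiction. Hence $|\hat S_j|\leq m^\ast$, so $\rho_{\max}(|\hat S_j|)\leq\rho_{\max}(m^\ast)=\mathcal O_P(1)$, and the inequality closes to give $\|\hat\gamma_j\|_0 = |\hat S_j| = \mathcal O_P(s_j)$. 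The delicate point throughout is ensuring that the concentration, prediction-error, and sparse-eigenvalue bounds all hold on a common high-probability event, and that $m^\ast$ is chosen large enough relative to $s_j$ yet small enough for the sparse-eigenvalue bound to remain valid.
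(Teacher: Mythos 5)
Your proof is correct, but it takes a genuinely different route from the paper's. Both arguments start from the KKT conditions and the observation that every active coordinate forces the corresponding coordinate of a certain gradient-type vector to have magnitude at least $\lambda_j$ minus small noise terms; the difference lies in how the resulting sum of squares over $\hat S_j$ is bounded from above. You keep the empirical Gram matrix and bound $\sum_{k\in\hat S_j}(X_k^Tw/n)^2$ by $\Lambda_{\max}(\hat\Sigma_{\hat S_j,\hat S_j})\|w\|_n^2$, which forces you to (i) establish a uniform sparse-eigenvalue bound $\rho_{\max}(m)=\mathcal O_P(1)$ for all $m\lesssim n/\log p$ --- a genuine extra ingredient, standard for sub-Gaussian designs but proved nowhere in the paper --- and (ii) break the circular inequality $|\hat S_j|\leq C\rho_{\max}(|\hat S_j|)s_j$ by the self-bounding/contradiction step. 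The paper sidesteps both issues by recentering the KKT identity around the population covariance: it writes $\Sigma^0_{-j,-j}(\hat\gamma_j-\gamma_j^0)+\lambda_j\hat Z_j = X_{-j}^T\eta_j/n+(\Sigma^0_{-j,-j}-\hat\Sigma_{-j,-j})(\hat\gamma_j-\gamma_j^0)$, lower-bounds the squared $\ell_2$-norm of the left-hand side by $(\lambda_j-2\lambda_0)^2\hat s_j$ on the usual high-probability event, and upper-bounds $\|\Sigma^0_{-j,-j}(\hat\gamma_j-\gamma_j^0)\|_2^2$ by $\Lambda_{\max}(\Sigma^0_{-j,-j})$ --- which is deterministic and bounded by assumption, so no circularity arises --- times the empirical prediction error plus a perturbation term $\|\hat\gamma_j-\gamma_j^0\|_1^2\|\hat\Sigma-\Sigma_0\|_\infty$, which is of smaller order by the $\ell_1$-oracle inequality together with $s_j\sqrt{\log p/n}=o(1)$. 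What your route buys is that only restricted empirical eigenvalues enter the final inequality; what the paper's route buys is a shorter, self-contained argument requiring no sparse-eigenvalue concentration and no contradiction step.
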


\section{Proofs for Section \ref{subsec:saulasso}: The de-sparsified Lasso}
\label{sec:lrproof}

\subsection{Proofs for Section \ref{subsec:saulasso}: Strong asymptotic unbiasedness of the de-sparsified Lasso}
\label{sec:saulassop}

\begin{proof}[Proof of Lemma \ref{SAUbxi}]
For the de-sparsified estimator $\hat b_\xi$ we have by simple algebra the equality
$$\hat b_\xi - \xi^T\beta_0 =
  \xi^T\hat\Theta X^T\epsilon / n
+
 \xi^T (\hat \Sigma\hat\Theta - I)^T (\hat \beta-\beta_0).
$$
Consider any $\beta_0\in\mathcal B(d_n)$. First note that 
$$\mathbb E_{\beta_0} \xi^T \hat\Theta^T X^T\epsilon / n =\mathbb E\mathbb E_{\beta_0}( \xi^T \hat\Theta^T X^T\epsilon / n|X)
=
\mathbb E\xi^T \hat\Theta^T X^T\mathbb E_{\beta_0}( \epsilon |X)/n=0.$$
We then have  by the definition of $\hat b_\xi $, by the H\"older's inequality and the Cauchy-Schwarz inequality
\begin{eqnarray*}
\mathbb E _{\beta_0}(\hat b_\xi - \xi^T\beta_0) &=&
 \underbrace{\mathbb E_{\beta_0} \xi^T\hat\Theta X^T\epsilon / n}_{=0} 
+
\mathbb E_{\beta_0} \xi^T (\hat \Sigma\hat\Theta - I)^T (\hat \beta-\beta_0)\\
&\leq &
\mathbb E_{\beta_0} \|\xi\|_1\|\hat \Sigma\hat\Theta - I\|_\infty \|\hat \beta-\beta_0\|_1 
\\
&\leq &
\|\xi\|_1 (\mathbb E_{\beta_0} \|\hat \Sigma\hat\Theta - I\|_\infty^2)^{1/2} (\mathbb E_{\beta_0} \|\hat \beta-\beta_0\|_1^2)^{1/2}. 
\end{eqnarray*}
By the Karush-Kuhn-Tucker conditions for $\hat\gamma_j,j=1,\dots,p$ (see \cite{vdgeer13}), we have 
$$\|\hat \Sigma\hat\Theta - I\|_\infty \leq \max_{j=1,\dots,p}\lambda_j /\hat\tau_j^2.$$ 
By Lemma \ref{tau} from Section \ref{subsec:oi} it follows that 
$$(\mathbb E\|\hat \Sigma\hat\Theta - I\|_\infty^2)^{1/2} \leq 
\max_{j=1,\dots,p}\lambda_j(\mathbb E_{\beta_0}\max_{j=1,\dots,p}1 /(\hat\tau_j^2)^2)^{1/2} =\mathcal O(\max_{j=1,\dots,p}\lambda_j ).$$
Next we apply Theorem \ref{strong2}. Conditions $\|\beta_0\|_0\leq d_n,$ $\|\beta_0\|_2=\mathcal O(1)$, \ref{model.sg}, \ref{design} and sparsity $d_n=o(\sqrt{n}/\log p)$ imply that conditions of Theorem \ref{strong2} are satisfied.
Hence
$(\mathbb E_{\beta_0} \|\hat \beta-\beta_0\|_1^2)^{1/2}=\mathcal O(s\lambda). $
Hence, and since $\|\xi\|_1=\mathcal O(1)$, and using the last display we obtain that
$$
\mathbb E _{\beta_0}(\hat b_\xi - \xi^T\beta_0) = \mathcal O( s\lambda \max_{j=1,\dots,p} \lambda_j) = o(1/\sqrt{n}),
$$
where we used the sparsity condition $s\leq d_n=o(\sqrt{n}/\log p).$
Thus we have shown  $\sqrt{n}(\mathbb E _{\beta_0}(\hat b_\xi - \xi^T\beta_0) ) = o(1)$. 
But then there exists $\delta_n\rightarrow 0$ such that
$\sqrt{n/\delta_n}(\mathbb E _{\beta_0}(\hat b_\xi - \xi^T\beta_0) ) = o(1)$ (take e.g.
 $\delta_n:= \sqrt{\sqrt{n}(\mathbb E _{\beta_0}(\hat b_\xi - \xi^T\beta_0) )}$) and hence the 
estimator $\hat b_{\xi}$ is strongly asymptotically unbiased with a rate $m_n := n/\delta_n$.


\end{proof}

\subsection{Proofs for Section \ref{subsec:lr.random}: Main results for random design}
\label{sec:lr.randomp}

Before proving the statement of Theorem \ref{crlb.lr}, we need auxiliary Lemmas \ref{aux1}, \ref{aux2} and \ref{a1}.
Throughout this section, we denote by $\phi$
the probability density function of a standard normal random
variable.
\begin{lemma}\label{aux1}
Let $Z\sim \mathcal N(0,1).$
Then for all $t\in\mathbb R$
$$\mathbb E\left[ e^{tZ-t^2/2} - 1 - tZ \right]^2 = e^{t^2}-1 -t^2.$$
Moreover, for $2t^2<1$ we have 
$$\mathbb E e^{t^2Z^2} = \frac{1}{\sqrt{1-2t^2}}.$$
\end{lemma}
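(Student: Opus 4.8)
The plan is to reduce both identities to elementary Gaussian moment computations, using repeatedly the moment generating function $\mathbb E e^{tZ} = e^{t^2/2}$ of a standard normal variable.

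For the first identity, I would set $W := e^{tZ - t^2/2}$ and observe first that $W$ is a mean-one likelihood-ratio factor, since $\mathbb E W = e^{-t^2/2}\mathbb E e^{tZ} = 1$. Expanding the square gives
$$(W - 1 - tZ)^2 = W^2 - 2W - 2tZW + 1 + 2tZ + t^2 Z^2,$$
so the task is to evaluate $\mathbb E W^2$, $\mathbb E W$, $\mathbb E (ZW)$, $\mathbb E Z$ and $\mathbb E Z^2$. The first of these is $\mathbb E e^{2tZ - t^2} = e^{-t^2}e^{2t^2} = e^{t^2}$, while the last two are $0$ and $1$. The only term requiring a little care is the cross term $\mathbb E(ZW) = e^{-t^2/2}\mathbb E(Z e^{tZ})$, which I would obtain by differentiating the identity $\mathbb E e^{tZ} = e^{t^2/2}$ with respect to $t$ to get $\mathbb E(Z e^{tZ}) = t e^{t^2/2}$, whence $\mathbb E(ZW) = t$. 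Substituting these values yields
$$\mathbb E(W - 1 - tZ)^2 = e^{t^2} - 2 - 2t^2 + 1 + t^2 = e^{t^2} - 1 - t^2,$$
as claimed.

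For the second identity, I would compute the integral directly. Writing $a := t^2$, the density of $Z$ gives
$$\mathbb E e^{t^2 Z^2} = \frac{1}{\sqrt{2\pi}}\int_{-\infty}^\infty e^{-(1/2 - a)z^2}\, dz,$$
an integral which converges precisely when $1/2 - a > 0$, i.e. $2t^2 < 1$. Evaluating the Gaussian integral via $\int_{-\infty}^\infty e^{-bz^2}\,dz = \sqrt{\pi/b}$ for $b>0$ gives $\frac{1}{\sqrt{2\pi}}\sqrt{\pi/(1/2 - a)} = (1 - 2t^2)^{-1/2}$.

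The computation presents no genuine obstacle; the only point worth flagging is the evaluation of the cross term $\mathbb E(Z e^{tZ})$ by differentiating under the expectation, which is legitimate here by dominated convergence since, on any compact interval of $t$-values, the integrand and its $t$-derivative admit an integrable dominating function.
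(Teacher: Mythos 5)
Your proof is correct and follows essentially the same route as the paper: both arguments reduce the first identity to the three Gaussian moments $\mathbb E e^{2tZ-t^2}=e^{t^2}$, $\mathbb E e^{tZ-t^2/2}=1$ and $\mathbb E Z e^{tZ-t^2/2}=t$ (you obtain the cross term by differentiating the moment generating function, the paper by an equivalent direct calculation), and both evaluate the second identity by the same elementary Gaussian integral. The only difference is that you write out the expansion of the square explicitly, which the paper leaves implicit.
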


\begin{proof}[Proof of Lemma \ref{aux1}]
By direct calculation
$$\mathbb E\left[ e^{tZ-t^2/2} \right]^2 = \mathbb E e^{2tZ-t^2} = e^{t^2},$$
$$\mathbb Ee^{tZ-t^2/2}=1$$
and
$$\mathbb EZe^{tZ-t^2/2}=t\mathbb Ee^{tZ-t^2/2}=t.$$
The first result of the lemma follows immediately.
The second result is also easily found by standard calculations:
$$\mathbb E e^{t^2Z^2} =\int e^{t^2z^2}\phi(z)dz =\int \phi(z\sqrt{1-2t^2})dz=\frac{1}{\sqrt{1-2t^2}}.$$
\end{proof}

\begin{lemma}\label{aux2}
Suppose that $u\in\mathbb R^p$ satisfies $2u^T \Sigma_0 u < 1.$
Let $Z=(X,Y)$, where $Y=X\beta+\epsilon$, $\epsilon\sim \mathcal N(0,I)$ independent of $X$, and $X\sim\mathcal N(0,\Sigma_0)$. Denote the corresponding probability density of $Z$ by $p_\beta$ and let $s_{\beta_0}(Z) := X^T\epsilon.$
Then it holds 
$$\mathbb E_{\beta_0}\left(\frac{p_{\beta_0+u}(Z) - p_{\beta_0}(Z)}{p_{\beta_0}(Z)}-s_{\beta_0}(Z)^T u\right)^2 = 
(1-2u^T \Sigma_0 u)^{-n/2} -1 -nu^T \Sigma_0 u
.$$
\end{lemma}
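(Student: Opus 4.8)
The plan is to exploit the Gaussian structure to obtain a closed form for the likelihood ratio, condition on the design $X$, and then reduce the whole computation to the two identities already established in Lemma \ref{aux1}.

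First I would compute the likelihood ratio explicitly. Since the marginal law $X\sim\mathcal N(0,\Sigma_0)$ does not depend on $\beta$, it cancels in the ratio $p_{\beta_0+u}/p_{\beta_0}$, leaving only the conditional density of $Y$ given $X$, which is $\mathcal N(X\beta, I)$. Writing $Y-X\beta_0=\epsilon$ and expanding the two squared norms, a direct computation yields
$$\frac{p_{\beta_0+u}(Z)}{p_{\beta_0}(Z)} = \exp\left(\epsilon^T X u - \tfrac{1}{2}u^T X^T X u\right) = \exp\left(s_{\beta_0}(Z)^T u - \tfrac{1}{2}u^T X^T X u\right),$$
so the quantity inside the expectation is $(L-1-W)^2$, where $L:=\exp(W-\tfrac12\sigma^2)$, $W:=s_{\beta_0}(Z)^T u$ and $\sigma^2:=u^T X^T X u$.

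Next I would condition on $X$. Given $X$ we have $\epsilon\sim\mathcal N(0,I)$, hence $W=\epsilon^T(Xu)$ is $\mathcal N(0,\sigma^2)$ and can be written as $W=\sigma Z'$ with $Z'\sim\mathcal N(0,1)$. Then $L=\exp(\sigma Z'-\tfrac12\sigma^2)$ has exactly the form appearing in the first identity of Lemma \ref{aux1} with $t=\sigma$, so
$$\mathbb E_{\beta_0}\!\left[(L-1-W)^2 \mid X\right] = e^{\sigma^2}-1-\sigma^2 = e^{u^T X^T X u}-1-u^T X^T X u.$$

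Finally I would take the expectation over $X$ and combine the two contributions. For the linear term, $\mathbb E_X[u^T X^T X u]=u^T(n\Sigma_0)u=n\,u^T\Sigma_0 u$, since the $n$ rows are i.i.d.\ with covariance $\Sigma_0$. For the exponential term, writing $v^2:=u^T\Sigma_0 u$, each $(X^{(i)})^T u=v\,Z_i$ with $Z_i\sim\mathcal N(0,1)$ i.i.d., so $u^T X^T X u=v^2\sum_{i=1}^n Z_i^2$; by independence and the second identity of Lemma \ref{aux1} (valid precisely under the hypothesis $2v^2=2u^T\Sigma_0 u<1$),
$$\mathbb E_X\!\left[e^{u^T X^T X u}\right] = \prod_{i=1}^n \mathbb E\,e^{v^2 Z_i^2} = (1-2u^T\Sigma_0 u)^{-n/2}.$$
Subtracting yields the claimed formula. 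The only delicate point, and the main obstacle, is the moment generating function of the scaled chi-square variable $u^TX^TXu$: its finiteness is guaranteed exactly by the assumption $2u^T\Sigma_0 u<1$, which is why that hypothesis enters. Everything else is routine manipulation of Gaussian integrals once the conditioning on $X$ has reduced the problem to Lemma \ref{aux1}.
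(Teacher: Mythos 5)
Your proof is correct and follows essentially the same route as the paper's: condition on $X$, reduce the conditional expectation to the identity $e^{t^2}-1-t^2$ from Lemma \ref{aux1} with $t^2=u^TX^TXu$, and then integrate out $X$ row by row using the second identity of Lemma \ref{aux1}, which is where the hypothesis $2u^T\Sigma_0 u<1$ enters. No gaps.
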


\begin{proof}[Proof of Lemma \ref{aux2}]
Denote the density of $Y$ given $X$ by $p_{\beta_0}(\cdot|X)$, i.e. for $y=(y_1,\dots,y_n)$
$$p_{\beta_0}(y|X):=\prod_{i=1}^n \phi(y_i-(X^{(i)})^T\beta_0)=\frac{1}{(2\pi)^{n/2}}e^{-(y-X\beta_0)^T (y-X\beta_0)/2},$$
where $\phi$ is the standard normal density. \\ 
Given $X,$ the random variable $\epsilon^T Xu$ is $\mathcal N(0,nu^T \hat\Sigma u)$-distributed.
It follows therefore from the first result of Lemma \ref{aux1} with $t^2=nu^T\hat\Sigma u$ that
\begin{eqnarray*}
&& \mathbb E_{\beta_0}\left[\left(\frac{p_{\beta_0+u}(Y-Xu|X) - p_{\beta_0}(Y|X)}{p_{\beta_0}(Y|X)}-s_{\beta_0}(Z)^T u\right)^2 \lvert X
\right]\\
&&= 
\mathbb E e^{nu^T \hat\Sigma u} -1 -nu^T \hat\Sigma u
.\end{eqnarray*}
Since $(X^{(i)})^T u\sim \mathcal N(0,u^T\Sigma_0 u)$ for $i=1,\dots,n$, we have by the second result of Lemma \ref{aux1}
$$\mathbb E_{\beta_0}e^{((X^{(i)})^T u)^2} = \frac{1}{\sqrt{1-2u^T \Sigma_0 u}}.$$
Hence
$$\mathbb E_{\beta_0} [e^{nu^T \hat\Sigma u} -1 -nu^T \hat\Sigma u]
=  (1-2u^T \Sigma_0 u)^{-n/2} -1 -nu^T \Sigma_0 u
,$$
from which  the result follows.
\end{proof}

\begin{lemma}\label{a1}
Suppose that $nu^T \Sigma_0 u=o(1).$ Then
$$(1-2u^T \Sigma_0 u)^{-n/2} -1 -nu^T \Sigma_0 u=o(nu^T\Sigma_0u).
$$
\end{lemma}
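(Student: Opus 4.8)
The plan is to reduce the claim to a one-variable asymptotic computation. I would set $a := u^T\Sigma_0 u \geq 0$ and observe that the hypothesis $n u^T\Sigma_0 u = na = o(1)$ forces $na \to 0$ and, since $n\to\infty$, also $a \to 0$ (indeed $a = o(1/n)$). In particular $2a < 1$ for all $n$ large, so $(1-2a)^{-n/2}$ is well defined and we may pass to logarithms. If $a = 0$ the left-hand side is identically $0$ and there is nothing to prove, so it suffices to treat $a > 0$.

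First I would write $(1-2a)^{-n/2} = \exp\bigl(-\tfrac{n}{2}\log(1-2a)\bigr)$ and Taylor-expand the logarithm. Since $2a \to 0$,
$$-\tfrac{n}{2}\log(1-2a) = \tfrac{n}{2}\bigl(2a + 2a^2 + \mathcal O(a^3)\bigr) = na + na^2 + \mathcal O(na^3).$$
The crucial observation is that the correction terms are negligible relative to $na$: from $a \to 0$ we get $na^2 = (na)\,a = o(na)$ and likewise $na^3 = o(na)$, so the exponent equals $na\,(1 + o(1))$.

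Next I would expand the exponential. Because $na = o(1)$, the exponent $na(1+o(1))$ tends to $0$, so
$$\exp\bigl(na(1+o(1))\bigr) = 1 + na(1+o(1)) + \tfrac{1}{2}\bigl(na(1+o(1))\bigr)^2 + \cdots = 1 + na + o(na),$$
where I used $na\cdot o(1) = o(na)$ for the linear term and $(na)^2 = (na)(na) = o(na)$ (again from $na = o(1)$) to absorb all quadratic and higher contributions. Subtracting $1 + na$ gives exactly $(1-2a)^{-n/2} - 1 - na = o(na)$, which is the assertion.

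The computation is entirely routine; the only point requiring care — and the one I would state explicitly — is the bookkeeping of orders of magnitude, namely that one must invoke $a \to 0$ to control the higher-order logarithmic terms ($na^2 = o(na)$) and separately invoke $na \to 0$ to control the higher-order exponential terms ($(na)^2 = o(na)$). Both are consequences of the single hypothesis $na = o(1)$, but they enter the two expansions in distinct ways.
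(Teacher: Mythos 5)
Your proof is correct and follows essentially the same route as the paper's: write $(1-2u^T\Sigma_0 u)^{-n/2}$ as $\exp(-\tfrac{n}{2}\log(1-2u^T\Sigma_0 u))$, Taylor-expand the logarithm to get an exponent of $nu^T\Sigma_0 u + o(nu^T\Sigma_0 u)$, and then Taylor-expand the exponential using $nu^T\Sigma_0 u = o(1)$. Your version is in fact slightly more careful than the paper's, which compresses the same two expansions into a single display; no issues.
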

\begin{proof}
Since $nu^T \Sigma_0 u = o(1),$ we can use the following Taylor expansions of $\log$ and $\exp$ 
\begin{eqnarray*}
(1-2u^T \Sigma_0 u)^{-n/2} &=&
 e^{-n \log (1-2u^T\Sigma_0u)/2} = e^{nh^T\Sigma_0 u +o(nh^T\Sigma_0u)}\\
&=&1+nh^T\Sigma_0 h +o(nh^T\Sigma_0h).
\end{eqnarray*}
Hence
$$(1-2u^T \Sigma_0 u)^{-n/2} -1 -nu^T \Sigma_0 u=o(n u^T\Sigma_0 u).
$$

\end{proof}


\begin{proof}[Proof of Theorem \ref{crlb.lr}]
By assumption \eqref{g.nice} on the differentiability of $g$ and by strong asymptotic unbiasedness of $T_n$ at $\beta_0$ and 
$\beta_0+h/\sqrt{m_n}$, it follows 
\begin{eqnarray*}
h^T \dot g(\beta_0) 
&=& 
\sqrt{m_n} \left(  g(\beta_0+h/\sqrt{m_n}) - g(\beta_0)  \right) + o(1)
\\
&=&
\sqrt{m_n} \left( \mathbb E_{\beta_0 + h/\sqrt{m_n}} T_n(Z)  - \mathbb E_{\beta_0} T_n(Z)\right) +o(1),
\end{eqnarray*}
where $Z:=(X,Y)$. We denote probability density corresponding to $Z$ by $p_\beta$, i.e.
$$p_{\beta_0}(X,Y):=\frac{1}{(2\pi)^{n/2}}e^{-(Y-X\beta_0)^T (Y-X\beta_0)/2}.$$
Let $s_{\beta_0}(Z) := X^T\epsilon.$
We may rewrite the expressions to obtain
\begin{eqnarray*}
&&\sqrt{m_n} \left( \mathbb E_{\beta_0 + h/\sqrt{m_n}} T_n(Z)  - \mathbb E_{\beta_0} T_n(Z)\right) +o(1) 
\\
&&\quad\quad=
\sqrt{m_n}\int T_n(z) (p_{\beta_0 + h/\sqrt{m_n}}(z) - p_{\beta_0}(z))dz\\
& &\quad\quad=
\mathbb E_{\beta_0} T_n(Z) \frac{p_{\beta_0+{h}/{\sqrt{m_n}}}(Z) - p_{\beta_0}(Z)}{p_{\beta_0}(Z)/\sqrt{m_n}}
\\
&&\quad\quad=
\mathbb E_{\beta_0} T_n(Z) \left(\frac{p_{\beta_0+{h}/{\sqrt{m_n}}}(Z) - p_{\beta_0}(Z)}{p_{\beta_0}(Z)/\sqrt{m_n}}
-
{s_{\beta_0}(Z)^T h}
\right) \\
&& \quad\quad\quad+\; 
\mathbb E_{\beta_0} T_n(Z) {s_{\beta_0}(Z)^T h}
\\
&&\quad\quad=
\mathbb E_{\beta_0} (T_n(Z)-g({\beta_0})) \times
\\
&& \quad\quad\quad   \left(\frac{p_{\beta_0+h/\sqrt{m_n}}(Z) - p_{\beta_0}(Z)}{p_{\beta_0}(Z)/\sqrt{m_n}}-
{s_{\beta_0}(Z)^T h}\right) \\
&&\quad\quad\quad+\;
\mathbb E_{\beta_0} T_n(Z) {s_{\beta_0}(Z)^T h},
\end{eqnarray*}
where we used that $\mathbb E_{\beta_0}s_{\beta_0}(Z)=0$, $\int p_{\beta_0}(z)dz =1$ and $\int p_{\beta_0+h/\sqrt{m_n}}(z)dz =1$.
Since the variance of $T_n$ is $\mathcal O(1/n)$ by the definition of strong asymptotic unbiasedness, then
\begin{eqnarray*}
\mathbb E_{\beta_0} (T_n(Z)-g({\beta_0}))^2 &= &\textrm{var}(T_n(Z)) +
\left[ \mathbb E_{\beta_0} (T_n(Z)-g({\beta_0}))\right]^2 \\
&=& \mathcal O(1/n)+o(1/n)=\mathcal O(1/n).
\end{eqnarray*}

\noindent
By Lemmas \ref{aux2} and \ref{a1} with $u:= h/\sqrt{m_n}$ and since $h^T \Sigma_0h=1$,
\begin{eqnarray*}
\mathbb E_{\beta_0}\left(\frac{p_{\beta_0+h/\sqrt{m_n}}(Z) - p_{\beta_0}(Z)}{p_{\beta_0}(Z)}-s_{\beta_0}(Z)^T h/\sqrt{m_n}\right)^2 
&=& o(n/m_nh^T\Sigma_0 h)
\\
&=&o(n/m_n).
\end{eqnarray*}
Hence, multiplying by $m_n$
$$\mathbb E_{\beta_0}\left(\frac{p_{\beta_0+h/\sqrt{m_n}}(Z) - p_{\beta_0}(Z)}{p_{\beta_0}(Z)/\sqrt{m_n}}-s_{\beta_0}(Z)^T h\right)^2 
= o(n).$$ 
Consequently, and by the Cauchy-Schwarz inequality, we have the upper bound
\begin{eqnarray*}
&& \left\lvert\mathbb E_{\beta_0} (T_n(Z)-g({\beta_0})) \left(\frac{p_{\beta_0+h/\sqrt{m_n}}(Z) - p_{\beta_0}(Z)}{p_{\beta_0}(Z)/\sqrt{m_n}}-s_{\beta_0}(Z)^T h\right)\right\lvert
\\
&\leq &
 \sqrt{\mathbb E_{\beta_0} (T_n(Z)-g({\beta_0}))^2} 
\sqrt{\mathbb E \left(\frac{p_{\beta_0+h/\sqrt{m_n}}(Z) - p_{\beta_0}(Z)}{p_{\beta_0}(Z)/\sqrt{m_n}}-s_{\beta_0}(Z)^T h\right)^2}
\\
&=&\mathcal O\left(1/\sqrt{n}\right) o(\sqrt{n})=o(1).
\end{eqnarray*}
Next observe that by the triangle inequality we have 
$$|h^T \dot g(\beta_0)| - |\textrm{cov}_{\beta_0}(T_n, \epsilon^T X h)|
\leq 
  |h^T \dot g(\beta_0)-\textrm{cov}_{\beta_0}(T_n, \epsilon^T X h)|=o(1 ),$$
	and hence and by the Cauchy-Schwarz inequality it follows
\begin{eqnarray*}
|h^T \dot g(\beta_0) |
 & \leq &
|\textrm{cov}_{\beta_0}(T_n, \epsilon^T X h)|
+
o(1)
\\
&\leq & \sqrt{\textrm{var}_{\beta_0}(T_n)} \sqrt{\textrm{var}_{\beta_0}(\epsilon^T Xh)} + o(1).
\\
&\leq &  \sqrt{\textrm{var}_{\beta_0}(T_n)} \sqrt{n}+ o(1),
\end{eqnarray*}
where we used that $\textrm{var}_{\beta_0}(X^T \epsilon)=n\Sigma_0$ and  $h^T \Sigma_0 h = 1$.
By the strong asymptotic unbiasedness assumption on $T_n$, we have $\textrm{var}_{\beta_0}(T_n)=\mathcal O(1/n)$ and thus taking squares of both sides of the last inequality we obtain
\begin{eqnarray*}
|h^T \dot g(\beta_0) |^2
\leq  {n} {\textrm{var}_{\beta_0}(T_n)} + 2\sqrt{n} \sqrt{\textrm{var}_{\beta_0}(T_n)}o(1) + o(1) \leq {n} {\textrm{var}_{\beta_0}(T_n)} + o(1).
\end{eqnarray*}

\end{proof}



\begin{proof}[Proof of Theorem \ref{bxi}]
To obtain the lower bound on the variance, we apply Theorem \ref{crlb.lr} with $h:=\Theta_0 \xi/\sqrt{\xi^T \Theta_0 \xi}$ (note that for $g(\beta)=\xi^T\beta$, the condition on $g$ is satisfied).  
Then by direct calculation, we see that the condition $h^T \Sigma_0h=1$ is satisfied for this choice of $h$. 
Moreover, since $\beta_0+h/\sqrt{n}\in B_{}(\beta_0,c/\sqrt{n})$ by assumption, then it implies that also 
$\beta_0+h/\sqrt{m_n}\in B_{}(\beta_0,c/\sqrt{m_n})$.
Thus
by Theorem \ref{crlb.lr} follows the lower bound
$\textrm{var}_{\beta_0}(T) \geq \frac{\xi^T \Theta_0 \xi +o(1)}{n},$ for any strongly asymptotically unbiased estimator $T$ of $\xi^T\beta_0$ at $\beta_0$ with the rate $m_n.$
\\
Next we turn to proving the upper bound. 
By assumptions of this theorem, the conditions of Lemma \ref{SAUbxi} are also satisfied and thus Lemma \ref{SAUbxi} implies that $\hat b_\xi$ is strongly asymptotically unbiased at $\beta_0$. 
It remains to calculate the variance of $\hat b_{\xi}.$
Consider the following decomposition 
$$\hat b_{\xi} - \xi^T \beta_0 =
 \xi^T \Theta_0 X^T\epsilon / n 
+ 
\xi^T(\hat\Theta-\Theta_0)^T X^T\epsilon / n 
+
\xi^T (\hat \Sigma\hat\Theta - I)^T (\hat \beta-\beta_0).
$$
Then one can show using 
the Cauchy-Schwarz inequality that
\begin{eqnarray}\label{bxi.deco}
 \textrm{var}_{\beta_0} (\hat b_\xi ) &=&
 \underbrace{\textrm{var}_{\beta_0} (\xi^T \Theta_0  X^T\epsilon / n)}_{r_1} 
+
\underbrace{\textrm{var}_{\beta_0} ( \xi^T(\hat\Theta-\Theta_0)^T X^T\epsilon / n ) }_{r_2}
\\\nonumber
&&+\;
\underbrace{\textrm{var}_{\beta_0} ( \xi^T(\hat \Sigma \hat\Theta - I)^T (\hat \beta-\beta_0) )}_{r_3} 
\\\nonumber
&& +\; \mathcal O(r_1^{1/2} r_2^{1/2}+ r_1^{1/2} r_3^{1/2}+ r_2^{1/2} r_3^{1/2}).
\end{eqnarray}
First note that as in the proof of Lemma \ref{SAUbxi}, we have $ \mathbb E \xi^T\Theta_0 X^T\epsilon / n = 0$ 
and hence
\begin{eqnarray*}
r_1=\textrm{var}_{\beta_0} (\xi^T \Theta_0 X^T\epsilon / n) 
&=& 
\mathbb E_{\beta_0} ( \mathbb E[(\xi^T \Theta_0 X^T\epsilon / n)^2|X] )
\\
&=&
\mathbb E_{\beta_0} (\Theta_0\xi)^T X^TX/n \Theta_0 \xi) = \xi^T \Theta_0\xi/n.
\end{eqnarray*}
For a random variable $U,$ we have $\textrm{var}(U) \leq \mathbb EU^2$ and hence by H\"older's inequality and the Cauchy-Schwarz inequality
\begin{eqnarray*}
r_2&=&
\textrm{var}_{\beta_0} ( \xi^T(\hat\Theta-\Theta_0)^T X^T\epsilon / n ) \\
&\leq & \mathbb E_{\beta_0}( \xi^T(\hat\Theta-\Theta_0)^T X^T\epsilon / n )^2 
\\
&\leq &
\|\xi\|_1^2 \mathbb E_{\beta_0} \vertiii{\hat\Theta-\Theta_0}_1^2 \|X^T\epsilon / n\|_\infty^2
\\
&\leq &
\|\xi\|_1^2 \left(\mathbb E_{\beta_0} \vertiii{\hat\Theta-\Theta_0}_1^4\right)^{1/2}\left(\mathbb E_{\beta_0} \|X^T\epsilon / n\|_\infty^4\right)^{1/2}
\\
&=&\mathcal O(\max_{j=1,\dots,p}\lambda_j ^2s_j^2\log p/n)=o(1/n),
\end{eqnarray*}
where we used the  result of Lemma \ref{the} and applied Lemma \ref{con}.
For the remainder $r_3$ we have
\begin{eqnarray*}
r_3 &\leq & 
 \mathbb E_{\beta_0} ( \xi^T(\hat \Sigma \hat\Theta - I)^T (\hat \beta-\beta_0) )^2 
\\
&\leq & \|\xi\|_1^2
\mathbb E _{\beta_0}\|\hat \Sigma \hat\Theta - I\|_\infty^2 \|\hat \beta-\beta_0\|_1^2
\\
&\leq & \|\xi\|_1^2
\left(\mathbb E_{\beta_0} \|\hat \Sigma \hat\Theta - I\|_\infty^4\right)^{1/2} \left(\mathbb E_{\beta_0}\|\hat \beta-\beta_0\|_1^4\right)^{1/2}
\\
&=&
\mathcal O(\max_{j=1,\dots,p}\lambda_j ^2 s^2\lambda^2)=o(1/n),
\end{eqnarray*}
where we used Lemma \ref{tau} and Theorem \ref{strong2}.
\\
Thus using the above calculations and using \eqref{bxi.deco} we conclude that
$$\textrm{var}_{\beta_0}(\hat b_\xi) = {\xi^T \Theta_{0}\xi/n + o(1/n)}.$$

\end{proof}

\subsection{Proofs for Section \ref{subsec:lr.fixed}: Main results for fixed design}
\label{sec:lr.fixedp}

\begin{proof}[Proof of Theorem \ref{fixed}]
The proof follows the same lines as the proof of Theorem \ref{crlb.lr}. The only difference is that we need to check the condition 
$$\mathbb E_{\beta_0}\left(\frac{p_{\beta_0+h/\sqrt{m_n}}(Z) - p_{\beta_0}(Z)}{p_{\beta_0}(Z)/\sqrt{m_n}}-s_{\beta_0}(Z)^T h\right)^2 = o(n),$$
for fixed design, where $p_{\beta_0},Z$ and $s_{\beta_0}$ are defined identically as in the proof of Theorem \ref{crlb.lr}. 
We denote $u:=h/\sqrt{m_n}.$ 
Analogously as in the proof of Lemma \ref{aux2} in Section \ref{sec:lr.randomp}, we obtain
\begin{eqnarray*}
\mathbb E_{\beta_0}\left(
e^{-\epsilon^T X u + \frac{1}{2}u^TX^TXu} -1
 - \epsilon^TX u 
\right)^2
&=&
e^{u^TX^TXu} - 1 - u^TX^TXu
\\
&=&
o(u^TX^TXu) =o(nu^T\hat\Sigma u)
.
\end{eqnarray*}
Then by the assumption $h^T \hat\Sigma h=\mathcal O(1)$ we obtain
$$o(nu^T\hat\Sigma u)  = o( nh^T\hat\Sigma h/m_n)=o( h^T \hat\Sigma h n/m_n) = o(n/m_n).$$
Hence plugging in $u=h/\sqrt{m_n}$ and multiplying by $m_n$ we obtain
$$\mathbb E_{\beta_0}\left(\frac{p_{\beta_0+h/\sqrt{m_n}}(Z) - p_{\beta_0}(Z)}{p_{\beta_0}(Z)/\sqrt{m_n}}-s_{\beta_0}(Z)^T h\right)^2 = o(n).$$
\end{proof}

\begin{proof}[Proof of Theorem \ref{LR2}]
The lower bound follows by Theorem \ref{fixed} (note that $g(\beta)=\beta_j$ and thus the condition on $g$ is satisfied) applied with
$h:=\hat\Theta_j/\sqrt{\hat\Theta_{jj}}$. We only need to check that 
$h^T\hat\Sigma h =\mathcal O(1)$.
By the assumption $\beta_0+\hat\Theta_j/\sqrt{\hat\Theta_{jj}n}\in\mathcal B(d_n)$ we obtain that $s_j:=\|\hat\Theta_j\|_0 \leq d_n$.
Then
\begin{eqnarray}
\nonumber
h^T \hat \Sigma h&=&
\hat\Theta_j^T \hat\Sigma \hat\Theta_j /\hat\Theta_{jj} 
\\\nonumber
&\leq & \|\hat\Theta_j^T\|_1 \| \hat\Sigma \hat\Theta_j - e_j\|_\infty  /\hat\Theta_{jj}+ 
\hat\Theta_j^Te_j/\hat\Theta_{jj}
\\\nonumber
&\leq &\mathcal O\left(\sqrt{s_j} \|\hat\Sigma \hat\Theta_j \hat\tau_j^2 - \hat\tau_j^2\|_\infty\right) + 1,
\\\label{var.fixed}
&\leq &\mathcal O\left(\sqrt{s_j} \lambda_j/\hat\tau_j^2\right) + 1 =o(1)+1,
\end{eqnarray}
where we used the KKT condition for $\hat\gamma_j$, $1/\hat\tau_j^2 = \hat\Theta_{jj} \leq \|\hat\Theta_j\|_2=\mathcal O(1)$
and $s_j \leq d_n=o(\sqrt{n}/\log p).$
This yields the lower bound $\hat\Theta_{jj}+o(1).$ 

Next we turn to proving the upper bound.
Strong asymptotic unbiasedness of $\hat b_j$ follows similarly as in Lemma \ref{SAUbxi} under the assumptions  $\beta_0\in\mathcal B(d_n)$, $d_n = o\left({\sqrt{n}}/{\log p} \right)$, if $\hat\Sigma$ satisfies the compatibility condition with a universal constant and $\beta_0+\hat\Theta_j/\sqrt{\hat\Theta_{jj}n}\in B(\beta_0, c/\sqrt{n})$.\\
For the variance of 
$$\hat b_j- \beta_j^0 =
 (\hat\Theta_j)^T X^T\epsilon / n 
+
(\hat \Sigma\hat\Theta_j - e_j)^T (\hat \beta-\beta_0),
$$
 we get (using that $|\hat\Theta_j^T \hat\Sigma \hat\Theta_j/\hat\Theta_{jj}-1|=o(1)$ as derived in \eqref{var.fixed})
\begin{eqnarray*}
\textrm{var}_{\beta_0}(\hat b_j) 
&=& 
\hat\Theta_j^T  \hat\Sigma \hat\Theta_j/n + 
\mathcal O( \mathbb E\|\hat\Sigma\hat\Theta_j-e_j\|_\infty^2 \|\hat\beta-\beta_0\|_1^2)
\\
&=& 
\hat\Theta_j^T  \hat\Sigma \hat\Theta_j/n + 
\mathcal O( \|\hat\Sigma\hat\Theta_j-e_j\|_\infty^2 \mathbb E_{\beta_0}\|\hat\beta-\beta_0\|_1^2)
\\
&=& \hat\Theta_j^T  \hat\Sigma \hat\Theta_j/n+ \mathcal O(\lambda_j^2/(\hat\tau_j^2)^2s^2\lambda^2)
\\
&=&\hat\Theta_{jj}/n + o(1/n),
\end{eqnarray*}
where we used $1/\hat\tau_j^2 = \hat\Theta_{jj} \leq \|\hat\Theta_j\|_2=\mathcal O(1).$
\end{proof}

\subsection{Proofs for Section \ref{subsec:ex.lr}: Le Cam's bounds}
\label{LRlecam}

\begin{proof}[Proof of Theorem \ref{lecam.bj}]
We apply Theorem \ref{lecam} from Section \ref{sec:lecam}.
In this setting, we have asymptotic linearity of the de-sparsified Lasso (see \cite{vdgeer13}) with
the influence function $l_{\beta_0}(X_i,Y_i)=(\Theta^0_j)^T X_i \epsilon_i$, where $\Theta_j^0$ is the $j$-th column of the precision matrix. 
We first show the bias condition \eqref{nob} is satisfied with the influence function $l_\beta$. 
By direct calculation, for any $h\in\mathbb R^p$ we have
\begin{eqnarray*}
P_{\beta_0}(l_{\beta_0} h^T s_{\beta_0})-h^T e_j 
&=&
(\Theta_j^0)^T\mathbb E X_1 \epsilon_1^2  X_1^T h -h_j 
\\
&=&
(\Theta_j^0)^T \mathbb EX_1X_1^T\mathbb E( \epsilon_1^2|X_1) h-h_j=0.
\end{eqnarray*}
Therefore in this case the bias condition holds.
Hence we can conclude by Theorem \ref{lecam} that 
the de-sparsified estimator 
$$\hat b_j = \hat \beta_j + \hat\Theta_j^T X^T(Y-X\hat\beta) /n $$
satisfies for every $\tilde\beta_n\in B_{}(\beta_0,\frac{c}{\sqrt{n}})$ 
$$\frac{
\sqrt{n}(\hat b_j - \tilde\beta_n) }{
 (\Theta_{jj}^0)^{1/2}
} \stackrel{{\tilde\beta_n}}{\rightsquigarrow} \mathcal N( 0, 1 ).$$ 
In addition, no asymptotically linear estimator satisfying the condition \eqref{nob} (here condition \eqref{lr.nobias}) can have smaller asymptotic variance than $\Theta_{jj}^0$ as follows by the lower bound on the asymptotic variance in Lemma \ref{lb2} in Section \ref{sec:lecam}, i.e.
$$
V_{\beta_0} \geq  \dot g({\beta_0})^T I_{\beta_0}^{-1}\dot g({\beta_0}) +o(1) = \Theta_{jj}^0+o(1).
$$ 

\end{proof}

\section{Proofs for Section \ref{subsec:ggm.all}: Gaussian graphical models}
\label{sec:ggm.lbp}

\subsection{Proofs for Section \ref{subsec:ggm.sau}: Strong asymptotic unbiasedness of the de-sparsified nodewise Lasso}
\label{sec:ggm.saup}

\begin{proof}[Proof of Lemma \ref{SAU}]
By the Karush-Kuhn-Tucker conditions cor\-res\-pon\-ding
 to the node\-wise Lasso estimator, we have $\|\hat\Sigma\hat\Theta_i-e_i\|_\infty = \mathcal O(\lambda_i/\hat\tau_i^2).$
Hence, and applying a version of Lemma \ref{the} without the maximum over $j=1,\dots,p$, we obtain
\begin{eqnarray*}
\mathbb E _{\Theta_0}(\hat T_{ij} - \Theta^0_{ij}) &=&
 \underbrace{\mathbb E_{\Theta_0} (\Theta^0_i)^T(\hat\Sigma - \Sigma_0)\Theta^0_j}_{=0} 
+
\mathbb E_{\Theta_0}(\hat\Theta_i-\Theta^0_i)^T (\hat\Sigma\Theta^0_j- e_j)
\\
&& +\;\; 
\mathbb E_{\Theta_0}(\hat \Sigma\hat\Theta_i - e_i)^T (\hat \Theta_j-\Theta^0_j)
\\
&\leq &
(\mathbb E_{\Theta_0} \| \hat\Theta_i-\Theta^0_i \|_1^2 )^{1/2} (\mathbb E_{\Theta_0}\| \hat\Sigma\Theta^0_j-e_j\|_\infty^2)^{1/2}
\\
&& +\;\;
(\mathbb E_{\Theta_0} \|\hat \Sigma\hat\Theta_i - e_i\|_\infty^2)^{1/2} (\mathbb E_{\Theta_0}\| \hat\Theta_j-\Theta^0_j\|_1^2)^{1/2} 
\\
& \leq &
 \mathcal O( s_i\lambda_i \lambda_j(\mathbb E_{\Theta_0}1/(\hat\tau_j^2)^2)^{1/2} )  +
\mathcal O( s_j\lambda_i\lambda_j(\mathbb E_{\Theta_0}1/(\hat\tau_i^2)^2)^{1/2})
\\
&= & o(1/\sqrt{n}). 
\end{eqnarray*}

\end{proof}

\subsection{Proofs for Section \ref{subsec:ggm.ub}: Main results}
\label{sec:ggm.mainp}

In this section we will give the proof of Theorems \ref{ggm.lc} and \ref{ggm.main}. In the proof of Theorem \ref{ggm.lc}, we use Lemma \ref{ggm.error.lc} which is stated and proved in Appendix \ref{sec:A}.

\begin{proof}[Proof of Theorem \ref{ggm.lc}]
The proof is similar to the proof of Theorem \ref{crlb.lr}.
By strong asymptotic unbiasedness of $T_n$ at $\Theta_0$ in the direction $H$, it follows 
\begin{eqnarray*}
\xi_1^TH\xi_2 
&=& 
\sqrt{m_n} \left(  \xi_1^T(\Theta_0+H/\sqrt{m_n})\xi_2 - \xi_1^T\Theta_0\xi_2 \right) 
\\
&=&
\sqrt{m_n} \left( \mathbb E_{\Theta_0 + H/\sqrt{m_n}} T_n(X)  - \mathbb E_{\Theta_0} T_n(X)\right) +o(1) 
\end{eqnarray*}
Let $s_{\Theta_0}(X) :=-n( \hat\Sigma-\Sigma_0)/2.$
Denoting the  probability density corresponding to $X$ by $p_\Theta$, we may further rewrite the expressions to obtain
\begin{eqnarray*}
&&\sqrt{m_n} \left( \mathbb E_{\Theta_0 + H/\sqrt{m_n}} T_n(X)  - \mathbb E_{\Theta_0} T_n(X)\right) 
\\
&&\quad\quad=
\sqrt{m_n}\int T_n(x) (p_{\Theta_0 + H/\sqrt{m_n}}(x) - p_{\Theta_0}(x))dz\\
& &\quad\quad=
\sqrt{m_n}\mathbb E_{\Theta_0} T_n(X) \frac{p_{\Theta_0+{H}/{\sqrt{m_n}}}(X) - p_{\Theta_0}(X)}{p_{\Theta_0}(X)}
\\
&&\quad\quad=
\mathbb E_{\Theta_0} T_n(X) \left(\frac{p_{\Theta_0+{H}/{\sqrt{m_n}}}(X) - p_{\Theta_0}(X)}{p_{\Theta_0}(X)/\sqrt{m_n}}
-
{\textrm{vec}(H)^T \textrm{vec}(s_{\Theta_0}(X))}
\right) \\
&& \quad\quad\quad+\; 
\mathbb E_{\Theta_0} T_n(X) {\textrm{vec}(H)^T \textrm{vec}(s_{\Theta_0}(X))}
\\
&&\quad\quad=
\mathbb E_{\Theta_0} (T_n(X)-\xi_1^T{\Theta_0}\xi_2) \times
\\
&& \quad\quad\quad  \times \left(\frac{p_{\Theta_0+H/\sqrt{m_n}}(X) - p_{\Theta_0}(X)}{p_{\Theta_0}(X)/\sqrt{m_n}}-
{\textrm{vec}(H)^T \textrm{vec}(s_{\Theta_0}(X))}\right) \\
&&\quad\quad\quad+\;
\mathbb E_{\Theta_0} T_n(X) {\textrm{vec}(H)^T \textrm{vec}(s_{\Theta_0}(X))},
\end{eqnarray*}
where in the last equality we used  that $\mathbb E_{\Theta_0}s_{\Theta_0}(X)=0$, $\int p_{\Theta_0}(x)dx =1$ and $\int p_{\Theta_0+H/\sqrt{m_n}}(x)dx=1$.
Since the variance of $T_n$ is $\mathcal O(1/n)$ by the definition of strong asymptotic unbiasedness, then
\begin{eqnarray*}
\mathbb E_{\Theta_0} (T_n(X)-\xi_1^T {\Theta_0}\xi_2))^2 &= &\textrm{var}_{\Theta_0}(T_n(X)) +
\left[ \mathbb E_{\Theta_0} (T_n(X)-\xi_1^T{\Theta_0}\xi_2)\right]^2 \\
&=& \mathcal O(1/n)+o(1/n)=\mathcal O(1/n).
\end{eqnarray*}
We need to use Lemma \ref{ggm.error.lc} in Appendix \ref{sec:A} to conclude that
the remainder is small.
Lemma \ref{ggm.error.lc} implies
\begin{eqnarray*}
\mathbb E \left( \frac{p_{\Theta_0 + H/\sqrt{m_n}}(X)}{p_{\Theta_0}(X)} - 1 - \textrm{vec}(H)^T \textrm{vec}(s_{\Theta_0}(X))/\sqrt{m_n} \right)^2
=o(\delta_n).
\end{eqnarray*}
Consequently, and by the Cauchy-Schwarz inequality, we have the upper bound
$$\left\lvert\mathbb E_{\Theta_0} (T_n(X)-\xi_1^T{\Theta_0}\xi_2) \left(\frac{p_{\Theta_0+H/\sqrt{m_n}}(X) - p_{\Theta_0}(X)}{p_{\Theta_0}(X)/\sqrt{m_n}}-
\textrm{vec}(H)^T \textrm{vec}(s_{\Theta_0}(X))\right)\right\lvert
$$
\begin{eqnarray*}
&&
\leq 
 \sqrt{\mathbb E_{\Theta_0} (T_n(X)-\xi_1^T{\Theta_0}\xi_2)^2} \times 
\\
&&
\times
\sqrt{\mathbb E_{\Theta_0} \left(\frac{p_{\Theta_0+H/\sqrt{m_n}}(X) - p_{\Theta_0}(X)}{p_{\Theta_0}(X)/\sqrt{m_n}}
-\textrm{vec}(H)^T \textrm{vec}(s_{\Theta_0}(X))\right)^2}
\\
&&
=\mathcal O\left(1/\sqrt{n}\right)o(\sqrt{n})=o(1).
\end{eqnarray*}
Thus we have
$$\xi_1^TH\xi_2 
 = \mathbb E_{\Theta_0} T_n(X) {\textrm{vec}(H)^T \textrm{vec}(s_{\Theta_0}(X))} + o(1).$$
Hence, because 
\begin{eqnarray*}
&&|\xi_1^TH \xi_2| - |\textrm{cov}(T_n, \textrm{vec}(H)^T \textrm{vec}(s_{\Theta_0}(X)))|
\\
&& \leq 
  |\xi_1^TH\xi_2-\textrm{cov}(T_n,\textrm{vec}(H)^T \textrm{vec}(s_{\Theta_0}(X)))|\\
	&&=o(1 )
	\end{eqnarray*}
	it follows using the Cauchy-Schwarz inequality that
\begin{eqnarray*}
|\xi_1^TH\xi_2|
 &=&
|\textrm{cov}_{\Theta_0}(T_n, \textrm{tr}(-n(\hat\Sigma-\Sigma_0)H/2))|
+
o(1)
\\
&\leq & \sqrt{\textrm{var}_{\Theta_0}(T_n)} \sqrt{\textrm{var}_{\Theta_0}(-n\textrm{tr}((\hat\Sigma-\Sigma_0)H/2))} + o(1).
\end{eqnarray*}
Now we have
\begin{eqnarray*}
\textrm{var}_{\Theta_0}(\textrm{tr}(n(\hat\Sigma-\Sigma_0)H/2))
&=&\textrm{var}_{\Theta_0}( n\textrm{tr}(\hat\Sigma H/2) )\\
&=&
\textrm{var}_{\Theta_0}(\xi_1^T \Theta_0 X^TX\Theta_0 \xi_2)/\sigma_{}^2 \\
&=& n\textrm{var}_{\Theta_0}(\xi_1^T \Theta_0 X^{(1)}(X^{(1)})^T\Theta_0 \xi_2)/\sigma_{}^2\\
&=&
n(\xi_1^T\Theta_0\xi_1\xi_2^T \Theta_0\xi_2 + (\xi_1^T \Theta_0\xi_2 )^2)/\sigma_{}^2\\
& =& n.
\end{eqnarray*}
Hence we conclude
\begin{eqnarray*}
|\xi_1^TH\xi_2|
\leq \sqrt{n} \sqrt{\textrm{var}_{\Theta_0}(T_n)} + o(1).
\end{eqnarray*}
Plugging in $H=\Theta_0(\xi_1\xi_2^T + \xi_2\xi_1^T)\Theta_0/\sigma_{} $ we obtain 
$$
|\xi_1^TH\xi_2|^2=
(\xi_1^T \Theta_0 \xi_1\xi_2^T \Theta_0\xi_2  + (\xi_1^T \Theta_0 \xi_2)^2)^2/\sigma_{}^2 = \sigma_{}^2.
$$
By the strong asymptotic unbiasedness assumption on $T_n$, we have $\textrm{var}_{\Theta_0}(T_n)$ $=\mathcal O(1/n)$ and thus taking squares of both sides of the last inequality we obtain
\begin{eqnarray*}
&& \xi_1^T \Theta_0 \xi_1\xi_2^T \Theta_0\xi_2  + (\xi_1^T \Theta_0 \xi_2)^2\\
&\leq &  {n} {\textrm{var}_{\Theta_0}(T_n)} + 2\sqrt{n} \sqrt{\textrm{var}_{\Theta_0}(T_n)}o(1) + o(1)\\
& \leq & {n} {\textrm{var}_{\Theta_0}(T_n)} + o(1).
\end{eqnarray*}

\end{proof}


\begin{proof}[Proof of Theorem \ref{ggm.main}]
To obtain the lower bound on the variance, we apply Theorem \ref{ggm.lc} with $H:=(\Theta^0_i(\Theta^0_j)^T + \Theta^0_j(\Theta^0_i)^T)/\sigma_{}$.  
Then by direct calculation, see that the condition $\textrm{tr}(H^T \Sigma_0H)=\mathcal O(1)$ is satisfied for this choice of $H$. 
Moreover, $\Theta_0+H/\sqrt{n}\in B_{}(\Theta_0,c/\sqrt{n})$ by assumption, but then also
$\Theta_0+H/\sqrt{m_n}\in B_{}(\Theta_0,c/\sqrt{m_n})$.
Hence by Theorem \ref{ggm.lc} follows the lower bound
$\textrm{var}_{\Theta_0}(T) \geq \frac{\xi^T \Theta_0 \xi +o(1)}{n}.$
\\
Next $\hat T_{ij}$ is strongly asymptotically unbiased at $\Theta_0$ in every direction $H$ such that 
$\Theta_0+H/\sqrt{n}\in\mathcal G(d_1,\dots,d_p)$, which follows by Lemma \ref{SAU}.
\\
It remains to calculate the variance of $\hat T_{ij}.$
First we have that
$$\textrm{var}_{\Theta_0}((\Theta^0_i)^T (\hat\Sigma -\Sigma_0)\Theta^0_j)= 
\frac{1}{n} \textrm{var}_{\Theta_0}((\Theta^0_i)^T X_1X_1^T \Theta^0_j)=(\Theta^0_{ii}\Theta^0_{jj}+(\Theta_{ij}^0)^2 )/ n.
$$
By basic calculations, it follows that
\begin{eqnarray*}
\textrm{var}_{\Theta_0}(\hat T_{ij}) 
&=&
(\Theta^0_{ii}\Theta^0_{jj}+(\Theta_{ij}^0)^2 )/ n + \mathcal O(\mathbb E_{\Theta_0}((\hat\Sigma\Theta^0_i -e_i)^T(\hat\Theta_j-\Theta^0_j))^2)\\
&&+\;\;
\mathcal O(\mathbb E_{\Theta_0}((\hat\Theta_i-\Theta^0_i)^T (\hat\Sigma \hat\Theta_j- e_j))^2)
\\
&= &
(\Theta^0_{ii}\Theta^0_{jj}+(\Theta_{ij}^0)^2 )/ n + \mathcal O(\mathbb E_{\Theta_0}\|\hat\Sigma\Theta^0_i -e_i\|_\infty ^2
\|\hat\Theta_j-\Theta^0_j\|_1^2)\\
&&+\;\;
\mathcal O(\mathbb E_{\Theta_0}\|\hat\Theta_i-\Theta^0_i\|_1^2 \|\hat\Sigma \hat\Theta_j- e_j\|_1^2).
\\
&= &
(\Theta^0_{ii}\Theta^0_{jj}+(\Theta_{ij}^0)^2 )/ n 
\\
&&+ \mathcal O( (\mathbb E_{\Theta_0}\|\hat\Sigma\Theta^0_i -e_i\|_\infty ^4)^{1/2}
(\mathbb E_{\Theta_0}\|\hat\Theta_j-\Theta^0_j\|_1^4)^{1/2})\\
&&+\;\;
\mathcal O((\mathbb E_{\Theta_0}\|\hat\Theta_i-\Theta^0_i\|_1^4)^{1/2}(\mathbb E_{\Theta_0} \|\hat\Sigma \hat\Theta_j- e_j\|_1^4)^{1/2})
\\
&=&(\Theta^0_{ii}\Theta^0_{jj}+(\Theta_{ij}^0)^2 )/ n+ \mathcal O(\lambda_i^2 s_j^2\lambda_j^2(\mathbb E_{\Theta_0}1/(\hat\tau_i^2)^2)^{1/2}) 
\\&& +\;\;
\mathcal O(s_i^2\lambda_i^2 \lambda_j^2(\mathbb E_{\Theta_0}1/(\hat\tau_j^2)^2)^{1/2} \\
&=&
(\Theta^0_{ii}\Theta^0_{jj}+(\Theta_{ij}^0)^2 )/ n+ o(1/n)
.
\end{eqnarray*}

\end{proof}

\section{Proofs for Section \ref{sec:lecam}: Le Cam's bounds for general models}
\label{sec:lcp}
In this section we give the proof of Theorem \ref{lecam}, for which we need Lemma \ref{portm} below.
Some technical results (contained in Lemmas \ref{dens}  and \ref{remainder}) are stated and proved in Appendix \ref{sec:B}.

\begin{lemma}\label{portm}
Assume the conditions of Theorem \ref{lecam}. 
Suppose that  $Z_n\rightsquigarrow Z$, where $Z$ is a random vector with values in $\mathbb R^2$. 
Let $X_n=\psi(Z_n)$ and $U_n = \psi(Z)$, where 
\[
x \mapsto \psi(x_1,x_2)=
\left(
\begin{array}{cc}
1/\sqrt{v_{11}} & 0\\
0 & 1
\end{array}
\right) \left[
V^{1/2}
\left(
\begin{array}{c}
x_1\\
x_2
\end{array}
\right)
 + 
\left(
\begin{array}{c}
-v_{12}\\
{-v_{22}/2}
\end{array}
\right)
\right].
\]
Then the following statements hold.
\begin{enumerate}
\item
For any function $f:\mathbb R^2 \rightarrow \mathbb R$ which is bounded and continuous it holds that 
$$\lim_{n\rightarrow \infty} \mathbb E f(X_n) - \mathbb E f(U_n)=0.$$
\item
Let $f$ be any bounded and continuous function $f:\mathbb R \rightarrow \mathbb R$.
Suppose that 
$$\lim_{M\rightarrow \infty}\lim_{n\rightarrow \infty} \mathbb E \min(0,M-e^{U_{n,2}})=0.$$
Then
it holds that
$$\lim_{n\rightarrow \infty}\mathbb E f(X_{n,1})e^{X_{n,2}} - \mathbb Ef(U_{n,1})e^{U_{n,2}} = 0.$$
\end{enumerate}
\end{lemma}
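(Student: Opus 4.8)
The plan is to derive both statements from the weak convergence $X_n\rightsquigarrow\psi(Z)$ supplied by the continuous mapping theorem, the only subtlety being that in statement~2 the factor $e^{x_2}$ is unbounded, which I will handle by truncation together with a uniform integrability argument. For statement~1, I would first note that $\psi$ is affine and therefore continuous. Since $Z_n\rightsquigarrow Z$ under the hypotheses of Theorem~\ref{lecam}, the continuous mapping theorem gives $X_n=\psi(Z_n)\rightsquigarrow\psi(Z)$, and $U_n=\psi(Z)$ has the same distributional limit. (To track the dependence of $\psi$ on $n$ through the entries of $V$, write $\psi(x)=Ax+b$, use $A=\mathcal O(1)$ from the eigenvalue conditions, and argue along subsequences on which $A\to A_\infty,\ b\to b_\infty$, so that both $X_n$ and $U_n$ converge to $A_\infty Z+b_\infty$.) For bounded continuous $f$ the definition of weak convergence then yields $\mathbb E f(X_n)\to\mathbb E f(\psi(Z))$ and $\mathbb E f(U_n)\to\mathbb E f(\psi(Z))$, so their difference tends to zero.

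For statement~2 the map $(x_1,x_2)\mapsto f(x_1)e^{x_2}$ is continuous but not bounded, so statement~1 cannot be applied directly. I would truncate: for $M>0$ set $g_M(x_1,x_2):=f(x_1)\,(e^{x_2}\wedge M)$, which is bounded and continuous, and decompose
\[
\mathbb E f(X_{n,1})e^{X_{n,2}}-\mathbb E f(U_{n,1})e^{U_{n,2}}
=\big(\mathbb E g_M(X_n)-\mathbb E g_M(U_n)\big)+R_n^X(M)-R_n^U(M),
\]
where $R_n^X(M):=\mathbb E f(X_{n,1})\,(e^{X_{n,2}}-M)^+$ and $R_n^U(M)$ is defined analogously with $U_n$. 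By statement~1 the first bracket tends to zero for every fixed $M$. The $U_n$-remainder is controlled by the stated hypothesis: since $\min(0,M-e^{x_2})=-(e^{x_2}-M)^+$, one has $|R_n^U(M)|\le\|f\|_\infty\,\mathbb E(e^{U_{n,2}}-M)^+=-\|f\|_\infty\,\mathbb E\min(0,M-e^{U_{n,2}})$, whose iterated limit vanishes by assumption.

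The main obstacle is the $X_n$-remainder $R_n^X(M)$, that is, the \emph{uniform integrability} of the likelihood-ratio factor $e^{X_{n,2}}$: weak convergence by itself never controls the upper tail of an unbounded function, so a further input is needed. I would supply it from the likelihood structure behind Theorem~\ref{lecam}, where $X_{n,2}$ is (asymptotically) the log-likelihood ratio and hence $\mathbb E e^{X_{n,2}}\le 1$, whereas a direct Gaussian computation gives $\mathbb E e^{U_{n,2}}=1$ exactly because $U_{n,2}\sim\mathcal N(-v_{22}/2,v_{22})$. Since $x_2\mapsto e^{x_2}\wedge M$ is bounded and continuous, statement~1 yields $\mathbb E(e^{X_{n,2}}\wedge M)\to\lim_n\mathbb E(e^{U_{n,2}}\wedge M)$, so that
\[
\limsup_{n\to\infty}\mathbb E(e^{X_{n,2}}-M)^+
=\limsup_{n\to\infty}\big(\mathbb E e^{X_{n,2}}-\mathbb E(e^{X_{n,2}}\wedge M)\big)
\le 1-\lim_n\mathbb E(e^{U_{n,2}}\wedge M)
=\lim_n\mathbb E(e^{U_{n,2}}-M)^+ ,
\]
which tends to $0$ as $M\to\infty$ by hypothesis; thus $\limsup_n|R_n^X(M)|\le\|f\|_\infty\lim_n\mathbb E(e^{U_{n,2}}-M)^+\to0$.

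To conclude, I would combine the three pieces in the standard order: given $\varepsilon>0$, first choose $M$ large enough that $\limsup_n|R_n^X(M)|$ and $\limsup_n|R_n^U(M)|$ are below $\varepsilon$, then let $n\to\infty$ to make the bounded-continuous term $\mathbb E g_M(X_n)-\mathbb E g_M(U_n)$ arbitrarily small. Since $\varepsilon$ is arbitrary this gives $\lim_n\big(\mathbb E f(X_{n,1})e^{X_{n,2}}-\mathbb E f(U_{n,1})e^{U_{n,2}}\big)=0$, which is statement~2. I expect the only genuinely delicate point to be the justification that $\mathbb E e^{X_{n,2}}\le1$ (equivalently, that no mass of the likelihood ratio escapes to infinity), which ties this lemma to the density expansion results of Lemmas~\ref{dens} and~\ref{remainder}.
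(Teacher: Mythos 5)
Your proposal is correct, but it reaches both parts of the lemma by a genuinely different route than the paper. For part~1, the paper confronts the $n$-dependence of $\psi$ head-on: it partitions a compact rectangle $R$ with $P(Z\notin R)<\epsilon$ into finitely many cells of small diameter, uses $\Lambda_{\max}(DVD)=\mathcal O(1)$ to get a Lipschitz bound on $\psi$ uniform in $n$, and builds a simple-function approximation $f_\epsilon$ of $f\circ\psi$ whose error is controlled uniformly in $n$; you instead extract convergent subsequences of the bounded matrices $A=DV^{1/2}$ and shifts $b$ and invoke the continuous mapping theorem plus a Slutsky argument along each subsequence. Both work; yours is shorter and more modular, the paper's is self-contained and constructive. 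For part~2, the paper runs the classical Le Cam third-lemma sandwich: it splits $f=f^+-f^-$, proves $\liminf\ge 0$ for each nonnegative piece via the truncation $e^{x_2}\wedge M$ and the hypothesis on $\mathbb E\min(0,M-e^{U_{n,2}})$, and gets the matching $\limsup\le 0$ by applying the same inequality to $K-f^+$ — a step that silently uses $K(\mathbb E e^{X_{n,2}}-\mathbb E e^{U_{n,2}})\to 0$, i.e.\ the very same likelihood-ratio normalization you invoke. You instead use a single three-term truncation decomposition and control the $X_n$-tail directly through $\mathbb E e^{X_{n,2}}\le 1$ together with $\mathbb E e^{U_{n,2}}=1$ and part~1 applied to $e^{x_2}\wedge M$; this makes the uniform-integrability input explicit where the paper leaves it implicit, which is a genuine clarification.

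Two small points deserve care. First, as the lemma is literally stated ($Z_n\rightsquigarrow Z$ arbitrary), nothing forces $\mathbb E e^{X_{n,2}}\le 1$; you must, as you indicate, read the hypothesis ``conditions of Theorem~\ref{lecam}'' as fixing $Z_n$ to be the specific vector built from $(\sqrt{n}(T_n-g(\beta_{n,0})),\Lambda_n+\tfrac12 h^TI h)$, in which case $X_{n,2}=\Lambda_n$ is \emph{exactly} (not just asymptotically) the log-likelihood ratio and $\mathbb E_{\beta_{n,0}}e^{\Lambda_n}\le 1$ holds by integrating the numerator density over $\{p_{\beta_{n,0}}>0\}$; the paper's own $K-f^+$ cancellation needs the comparable fact $\mathbb E e^{X_{n,2}}\to\mathbb E e^{U_{n,2}}$, so you are not assuming more than the paper does. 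Second, since $v_{22}=h^TI_{\beta_{n,0}}h$ varies with $n$, $\lim_n\mathbb E(e^{U_{n,2}}\wedge M)$ need not exist; replace it by a $\liminf_n$/$\limsup_n$ in your chain of inequalities, which changes nothing in the conclusion.
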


\begin{proof}[Proof of Lemma \ref{portm}]
We first prove the first statement. Let $\epsilon>0$ and let $f:\mathbb R^2 \rightarrow \mathbb R$ be continuous and bounded.\\
The map $\psi$ is linear, i.e. $\psi(x)=Ax+b$ for some $A\in\mathbb R^{2\times 2}$ and $b\in\mathbb R^2$ ($A,b$ depending on $n$).
Denote 
\[
D:=
\left(
\begin{array}{cc}
1/\sqrt{v_{11}} & 0\\
0 & 1
\end{array}
\right)
\] 
Observe that for any $x\in\mathbb R^2$
$$\|Ax\|_2^2 = x^T A^T Ax = x^T DVDx \leq \Lambda_{\max}(DVD) 
x^Tx
.$$
By Lemma \ref{eigen} we have that $\Lambda_{\max}(DVD) = \mathcal O(1)$
and $\|b\|_{2} =\mathcal O(1).$
Therefore, when $\|x\|_2 = \mathcal O(1)$, then 
\begin{equation}\label{bounded}
\|Ax+b\|_2=\mathcal O(1).
\end{equation}
Take a compact rectangle $R\subset \mathbb R^2$ not depending  on $n$ and such that $P(Z\not \in R) <\epsilon.$
\\
Divide the rectangle $R$ into a finite number of non-overlapping rectangles of diameter at most $\delta/L^{1/2},$ where 
$L$ is a universal constant such that $L \geq\Lambda_{\max}(DVD)$. 
By construction, the number of these rectangles, denote it $N$, does not depend on $n$.
So we have $R = \cup_{j=1}^N R_j$, where each $R_j$ is a rectangle of diameter at most $\delta/L^{1/2}.$ 
\\
For all $x,y\in R_j$ it holds that $\|x-y\|_2 \leq \delta/L^{1/2}$ and thus
\begin{equation}\label{e1}
\|\psi(x)-\psi(y)\|_2=\|A(x-y)\|_2 \leq L^{1/2} \|x-y\|_2 \leq \delta.
\end{equation}
Note that by \eqref{bounded}, there exists a compact set $S$ not depending on $n$ such that $\psi(R)\subset S$ for all $n.$
The continuous function $f$ is uniformly continuous on the compact set $S$.
Hence for the $\epsilon$ there exists a $\delta>0$ such that for all $z,v\in S $ it holds that if $\|z-v\|_2<\delta$ then $|f(z)-f(v)|<\epsilon.$
But then since for all $x,y\in R_j$ we have that $\psi(x),\psi(y)\in S$, we obtain by \eqref{e1} and the absolute continuity of $f$ that
$$|f(\psi (x))-f( \psi (y))|<\epsilon$$
for all $n.$
Take a point $x_j$  from each set $R_j$ and define $f_{\epsilon}  =\sum_{j=1}^N f(\psi(x_j)) \mathbf 1_{R_j}.$
Then 
$|f( \psi(x))-f_{\epsilon}(x)|<\epsilon$ for all $x\in R$ (and all $n$) and hence if $f$ takes values in $[-K,K],$ we have the following upper bounds
\begin{equation}\label{1}
|\mathbb Ef( \psi(Z))-\mathbb Ef_{\epsilon}(Z)| \leq \epsilon + 2KP(Z\not\in R),
\end{equation}
\begin{equation}\label{2}
|\mathbb Ef( \psi(Z_n))-\mathbb Ef_{\epsilon}(Z_n)| \leq \epsilon + 2KP(Z_n\not\in R),
\end{equation}
\begin{equation}\label{3}
|\mathbb Ef_{\epsilon}(Z_n)-\mathbb Ef_{\epsilon}(Z)| \leq \sum_{j=1}^N |P(Z_n\in R_j) - P(Z\in R_j)| |f(\psi(x_j))|.
\end{equation}
Since $Z_n\rightsquigarrow Z$, for all $j=1,\dots,N$ it holds 
$$|P(Z_n\in R_j) - P(Z\in R_j) |  \rightarrow 0.$$
Similarly, 
$$|P(Z\not \in R) - P(Z_n\not\in R)| = |P(Z \in R) - P(Z_n\in R)| \rightarrow 0.$$
Finally, by construction we have $P(Z\not \in R) <\epsilon$.
We thus conclude that the upper bounds \eqref{1}, \eqref{2} and \eqref{3} can be made smaller than $C\epsilon$ for $n$ sufficiently large. 
The claim follows by combining the three upper bounds. 
\\\\
Next we prove the second statement.
Denote $g(x_1,x_2)=f(x_1)e^{x_2}.$
We write  $g=g^+ - g^-$, where $g^+ = \max\{g,0\}$ is the positive part and $g^{-}:=\max\{-g,0\}$ is the negative part.
We first prove for the positive part $g^+$ that
\begin{equation}\label{fpos}
\lim_{n\rightarrow \infty}\mathbb E g^+(X_{n}) - \mathbb Eg^+(U_{n}) = 0.
\end{equation}
This will be achieved by first showing that 
$$\liminf_{n\rightarrow\infty}\mathbb E f^+(X_n)e^{X_{n,2}} - \mathbb E f^+(U_n)e^{U_{n,2}} 
\geq 
0$$
and secondly showing that 
$$\limsup_{n\rightarrow\infty}\mathbb E f^+(X_n)e^{X_{n,2}} - \mathbb E f^+(U_n)e^{U_{n,2}} 
\leq 
0.$$
Then combining the two gives \eqref{fpos}.
\\
First we prove that $\liminf_{n\rightarrow\infty}\mathbb E f^+(X_n)e^{X_{n,2}} - \mathbb E f^+(U_n)e^{U_{n,2}} 
\geq 
0$.
For every $M$,  since $g^+$ is non-negative, it holds that
$f^+(x)e^{x_2} \geq
f^+(x) (e^{x_2}\wedge M)$, where the symbol  $\wedge$ denotes the minimum. Hence
\begin{eqnarray*}
&&
\mathbb E f^+(X_{n})e^{X_{n,2}} - \mathbb Ef^+(U_{n})e^{U_{n,2}} 
\\
&& \geq 
\mathbb E f^+(X_{n})(e^{X_{n,2}} \wedge M) - \mathbb Ef^+(U_{n})e^{U_{n,2}} 
\\
&&= 
[\mathbb E f^+(X_{n})(e^{X_{n,2}}\wedge M) - \mathbb Ef^+(U_{n})(e^{U_{n,2}}\wedge M)]
\\&&
+\;
[\mathbb E f^+(U_{n})(e^{U_{n,2}}\wedge M )- \mathbb Ef^+(U_{n})e^{U_{n,2}}]
\end{eqnarray*}
We have $f^+(x)(e^{x_2} \wedge M) -f^+(x)e^{x_2} = f^+(x)\min (0,M-e^{x_2})$.
Taking limes inferior of both sides, it follows that
\begin{eqnarray*}
&&
\liminf_{n\rightarrow\infty}\mathbb E f^+(X_n)e^{X_{n,2}} -\mathbb Ef^+(U_n)e^{U_{n,2}}
\\
&&\geq \liminf_{n\rightarrow\infty}
[\mathbb E f^+(X_n)(e^{X_{n,2}}\wedge M) - \mathbb Ef^+(U_n)(e^{U_{n,2}}\wedge M)]
\\
&& +\;
\liminf_{n\rightarrow\infty} -
\mathbb Ef^+(U_n) \min (0,M-e^{U_{n,2}}).
\end{eqnarray*}
For every fixed $M$, the function $x\mapsto f^+(x)(e^{x_2}\wedge M)$ is bounded and continuous.
We may thus apply the first result of the lemma to conclude
$$\liminf_{n\rightarrow \infty}\mathbb E f^+(X_n)(e^{X_{n,2}}\wedge M) - \mathbb Ef ^+(U_n)(e^{U_{n,2}}\wedge M) = 0.$$ 
Therefore, we have
\begin{eqnarray}\label{sum3}
&&
\liminf_{n\rightarrow\infty}\mathbb E f^+(X_n)e^{X_{n,2}} - \mathbb E f^+(U_n)e^{U_{n,2}} 
\\\nonumber
&\geq & 
\liminf_{n\rightarrow\infty} -\mathbb E f^+(U_n)\min (0,M-e^{U_{n,2}}).
\end{eqnarray}
Next since $|f^+|\leq K$ we have 
$$|-\mathbb E f^+(U_n)\min (0,M-e^{U_{n,2}})| \leq K \mathbb E \min (0,M-e^{U_{n,2}}).$$
Then the assumption 
$$\lim_{m\rightarrow \infty}\lim_{n\rightarrow \infty} \mathbb E \min (0,M-e^{U_{n,2}})=0$$ 
implies that also
\begin{eqnarray*}
&&
\lim_{m\rightarrow \infty}\liminf_n -\mathbb E f^+(U_{n})\min (0,M-e^{U_{n,2}})
\\&&=
-\lim_{m\rightarrow \infty}\limsup_n \mathbb E f^+(U_{n})\min (0,M-e^{U_{n,2}}) =
0,
\end{eqnarray*}
so we conclude that
 \begin{eqnarray}
\liminf_{n\rightarrow\infty}\mathbb E f^+(X_n)e^{X_{n,2}} - \mathbb E f^+(U_n)e^{U_{n,2}} 
\geq 
0.
\end{eqnarray}
\\
Now we prove 
$$\limsup_{n\rightarrow\infty}\mathbb E f^+(X_n)e^{X_{n,2}} - \mathbb E f^+(U_n)e^{U_{n,2}} 
\leq 
0.$$
Similarly as before,  since $K - f^+ \geq 0 $ ($K$ is an upper bound on $f$), we have that
\begin{eqnarray}
\nonumber
&& \liminf_{n\rightarrow\infty} \mathbb E (K -f^+(X_{n,1}))e^{X_{n,2}} - \mathbb E(K -  f^+(U_{n,1}))e^{U_{n,2}} 
\\\nonumber
&&\geq \;\; \liminf_{n\rightarrow\infty}
\mathbb E (K -f^+(X_{n,1}))(e^{X_{n,2}}\wedge M)-\mathbb E (K -f^+(U_{n,1}))(e^{U_{n,2}}\wedge M)
\\\label{secon}
&& \;\;\;+\;
\liminf_{n\rightarrow\infty} 
\mathbb E (K -f^+(U_{n,1}))(e^{U_{n,2}}\wedge M) -\mathbb E (K -f^+(U_{n,1}))e^{U_{n,2}}.
\end{eqnarray}
By the the first part of the lemma, we have that for every fixed $M$ it holds 
$$\liminf_n \mathbb E (K -f^+(X_{n,1}))(e^{X_{n,2}} \wedge M) -\mathbb E (K -f^+(U_{n,1}))(e^{U_{n,2}}\wedge M)=0,
$$
since the function $(x_1,x_2)\mapsto (K -f^+(x_{1}))(e^{x_{2}} \wedge M)$ is bounded and continuous.\\
For the term in \eqref{secon}, we have since $|K-f^+|\leq 2K$ 
$$
|-\mathbb E (K -f^+(U_{n,1}))\min(0,M-e^{U_{n,2}}) |\leq
2K\mathbb E e^{U_{n,2}}\min(0,M-e^{U_{n,2}}).
$$
Hence by the assumption
$\lim_{m\rightarrow \infty}\lim_{n\rightarrow\infty} 
\mathbb E \min(0,M-e^{U_{n,2}}) =0,$
we have that
 $$\liminf_{n\rightarrow\infty} -
\mathbb E (K -f^+(U_{n,1}))\min(0,M-e^{U_{n,2}})
=0.$$
Thus we conclude that
\begin{eqnarray*}
\liminf_{n\rightarrow\infty} \mathbb E (K -f^+(X_{n,1}))e^{X_{n,2}} - \mathbb E(K -  f^+(U_{n,1}))e^{U_{n,2}} 
\geq 0.
\end{eqnarray*}
Now note that
\begin{eqnarray*}
&&\liminf_{n\rightarrow\infty} \mathbb E (K -f^+(X_{n,1}))e^{X_{n,2}} - \mathbb E(K -  f^+(U_{n,1}))e^{U_{n,2}} 
\\
&& =\;\;
\liminf_n  - \mathbb E f^+(X_{n,1})e^{X_{n,2}} + \mathbb Ef^+(U_{n,1})e^{U_{n,2}} \\
&& =\;\;
-\limsup_n \mathbb E f^+(X_{n,1})e^{X_{n,2}} - \mathbb Ef^+(U_{n,1})e^{U_{n,2}}.
\end{eqnarray*}
So in conclusion we have shown that
\begin{eqnarray*}
&&\limsup_n \mathbb E f^+(X_{n,1})e^{X_{n,2}} - \mathbb Ef^+(U_{n,1})e^{U_{n,2}}
 \\
&& \quad\quad\leq 0 \leq \liminf_n \mathbb E f^+(X_{n,1})e^{X_{n,2}} - \mathbb Ef^+(U_{n,1})e^{U_{n,2}}.
\end{eqnarray*}
This proves \eqref{fpos}.
\\
The same procedure can be used for the negative part $f^-$ (since $f^{-}$ is also bounded and positive)
to show that
\begin{equation*}
\lim_{n\rightarrow \infty}\mathbb E f^-(X_{n,1})e^{X_{n,2}} - \mathbb Ef^-(U_{n,1})e^{U_{n,2}} = 0.
\end{equation*}
We then conclude that
\begin{eqnarray*}
&& \lim_{n\rightarrow \infty}\mathbb E f(X_{n,1})e^{X_{n,2}} - \mathbb Ef(U_{n,1})e^{U_{n,2}} \\
&&\;\;\leq 
\lim_{n\rightarrow \infty} |\mathbb E f^+(X_{n,1})e^{X_{n,2}} - \mathbb Ef^+(U_{n,1})e^{U_{n,2}}|\\
&&\;\;
\;\;\;\;+\lim_{n\rightarrow \infty} |\mathbb E f^-(X_{n,1})e^{X_{n,2}} - \mathbb Ef^-(U_{n,1})e^{U_{n,2}}|\\
&&\;\;=
 0.
\end{eqnarray*}

\end{proof}

\begin{proof}[Proof of Theorem \ref{lecam}]
We denote $\ell_\beta(x):= \log p_{\beta}(x)$ and 
$$\Lambda_n:= \sum_{i=1}^n \ell_{{\beta_{n,0}}+h/\sqrt{n}}(X^{(i)}) - \ell_{{\beta_{n,0}}}(X^{(i)}).$$
Further for  $0 \leq t\leq 1$ denote $g(t):=\ell((1-t)\beta + t\beta_0).$
Then by a two-term Taylor expansion of $g$ we have
$$g(1) - g(0) = \dot g(0) + \frac{1}{2}\ddot g(\tilde t),$$
where $0\leq \tilde t\leq 1.$
Rewriting this from the definition of $g$ gives
$$\ell_{\beta} - \ell_{\beta_0} = \ell_{\beta_0}(\beta-\beta_0) + \frac{1}{2} (\beta-\beta_0)^T \ddot \ell_{\tilde\beta}(\beta-\beta_0),$$
where $\tilde\beta := (1-\tilde t )\beta + \tilde t\beta_0.$
Applying the above with $\tilde \beta:={\beta_{n,0}}+h/\sqrt{n}$ and $\beta_0:={\beta_{n,0}}$ we obtain
$$\Lambda_n:= \sum_{i=1}^n \frac{1}{\sqrt{n}}\dot\ell_{{\beta_{n,0}}}(X^{(i)})^Th+
 \frac{1}{2} h^T \frac{1}{n}\sum_{i=1}^n\ddot \ell_{\tilde\beta}(X^{(i)})h,$$
where $\tilde\beta := (1-\tilde t )({\beta_{n,0}}+h/\sqrt{n}) + \tilde t{\beta_{n,0}} = {\beta_{n,0}} + (1-\tilde t)h/\sqrt{n}.$
We can then write the decomposition of $\Lambda_n$ as follows
\begin{eqnarray}\nonumber
\Lambda_n 
&=& \frac{1}{\sqrt{n}}\sum_{i=1}^n h^T\dot\ell_{{\beta_{n,0}}}(X^{(i)})
+
\frac{1}{2} h^T \frac{1}{n}\sum_{i=1}^n \ddot\ell_{\tilde\beta}(X^{(i)}) h
\\\nonumber
&=& \frac{1}{\sqrt{n}}\sum_{i=1}^n h^T\dot\ell_{{\beta_{n,0}}}(X^{(i)}) 
+
\frac{1}{2} \underbrace{h^T \frac{1}{n}\sum_{i=1}^n (\ddot\ell_{\tilde\beta}(X^{(i)})-\ddot\ell_{{\beta_{n,0}}}(X^{(i)})) h}_{rem_1}
\\
&& +\;\;
\frac{1}{2} \underbrace{h^T \left(\frac{1}{n}\sum_{i=1}^n \ddot \ell_{{\beta_{n,0}}}(X^{(i)}) + I({{\beta_{n,0}}})\right) h}_{rem_2}
\\\nonumber
&&\;\;-\frac{1}{2} h^T I({\beta_{n,0}}) h
.
\end{eqnarray}
We will now show that the remainders $rem_1,rem_2$ converge in probability to zero. First observe that since ${\beta_{n,0}}+h/\sqrt{n}\in B_{}({\beta_{n,0}}, \frac{c}{\sqrt{n}}),$
this implies that $\|h\|_2 \leq c$ and $\|h\|_0 \leq 2s.$ Combining these two properties yields $\|h\|_1 \leq \sqrt{2s} c = \mathcal O(\sqrt{s}).$\\
Regarding the first remainder we have for each $j,k=1,\dots,p$
$$\frac{1}{n}\sum_{i=1}^n(\ddot\ell_{\tilde\beta}(X^{(i)})-\ddot\ell_{{\beta_{n,0}}}(X^{(i)}) )_{j,k}= 
\frac{1}{n}\sum_{i=1}^n(\dddot \ell_{\bar\beta}(X^{(i)})^T(\tilde\beta - {\beta_{n,0}}))_{j,k},$$
where $\bar\beta = (1-t)\tilde\beta + t{\beta_{n,0}}.$ By assumption $\|\dddot \ell_{\bar\beta}\|_\infty \leq L$ and hence using H\"older's inequality
\begin{eqnarray*}
\|\frac{1}{n}\sum_{i=1}^n\ddot\ell_{\tilde\beta}(X^{(i)})-\ddot\ell_{{\beta_{n,0}}}(X^{(i)}) \|_\infty &\leq & 
\| \frac{1}{n}\sum_{i=1}^n\dddot \ell_{\bar\beta}(X^{(i)})\|_\infty\|\tilde\beta - {\beta_{n,0}}\|_1
\\
&\leq & L\|(1-\tilde t) h/\sqrt{n}\|_1 =\mathcal O(\sqrt{s/n}).
\end{eqnarray*}
Thus using H\"older's inequality, for the first remainder we get
$$|h^T \frac{1}{n}\sum_{i=1}^n (\ddot\ell_{\tilde\beta}(X^{(i)})-\ddot\ell_{{\beta_{n,0}}}(X^{(i)})) h|\leq
\mathcal O(s^{3/2}/\sqrt{n})=o(1).
$$
For the second remainder we have using assumption $\|\frac{1}{n}\sum_{i=1}^n \ddot \ell_{{\beta_{n,0}}}(X^{(i)}) + I({{\beta_{n,0}}})\|_\infty = \mathcal O_P(\lambda)$ that
\begin{eqnarray*}
\|h^T (\frac{1}{n}\sum_{i=1}^n \dot s_{{\beta_{n,0}}}(X^{(i)}) + I({\beta_{n,0}}))h\|_\infty 
&\leq &
 \|h\|_1^2 \|\frac{1}{n}\sum_{i=1}^n \dot s_{{\beta_{n,0}}}(X^{(i)}) + I_{{\beta_{n,0}}}\|_\infty
\\
& =&\mathcal O_P( s\lambda ) = o_P(1).
\end{eqnarray*}
Hence collecting the results, from the decomposition of $\Lambda_n$ we have
\begin{eqnarray}
\label{lik.exp}
\Lambda_n 
&=& \frac{1}{\sqrt{n}}\sum_{i=1}^n h^T\dot \ell_{{\beta_{n,0}}}(X^{(i)}) 
-
\frac{1}{2} h^T I({\beta_{n,0}}) h + o_P(1)
.
\end{eqnarray}

\noindent
We introduce the following notation. Let
\[
V:=
\left(
\begin{array}{cc}
V_{{\beta_{n,0}}} & P_{{\beta_{n,0}}}(l_{{\beta_{n,0}}} h^T s_{{\beta_{n,0}}})\\
P_{{\beta_{n,0}}}(l_{{\beta_{n,0}}} h^T s_{{\beta_{n,0}}}) & h^TI({{\beta_{n,0}}})h
\end{array}
\right)
\]
Furthermore, we denote the entries of the matrix $V$ by $v_{ij},i,j=1,2.$\\
The condition \ref{lindeberg} implies that for any fixed $a\in\mathbb R^2$ it holds for all $\epsilon>0$ 
\begin{eqnarray*}\label{alindeberg}
&&
\lim_{n\rightarrow \infty}\mathbb E(a^T V^{-1/2}(l_{{\beta_{n,0}}}(X^{(i)}), h^Ts_{{\beta_{n,0}}}(X^{(i)}))^T)^2
\times
\\
&&
\times \mathbf  1_{|a^T V^{-1/2}(l_{{\beta_{n,0}}}(X^{(i)}), h^Ts_{{\beta_{n,0}}}(X^{(i)}))^T|> \epsilon \sqrt{na^T a}} =0,
\end{eqnarray*}
where we used that $\|V\|_\infty =\mathcal O(1)$, which follows by Lemma \ref{eigen}.
By the Lindeberg's central limit theorem we thus have by condition \eqref{alindeberg} for any $a\in\mathbb R^2$ that
$$\frac{1}{\sqrt{na^T a}}\sum_{i=1}^n a^T V^{-1/2}(l_{{\beta_{n,0}}}(X^{(i)}), h^Ts_{{\beta_{n,0}}}(X^{(i)}))^T\rightsquigarrow \mathcal N(0,1).$$
Hence, using the likelihood expansion \eqref{lik.exp} and by the asymptotic linearity \eqref{al}, we conclude that
$$ a^T V^{-1/2}(\sqrt{n}(T_n-g({{\beta_{n,0}}})), \Lambda_n +  \frac{1}{2}h^TI({{\beta_{n,0}}})h) \rightsquigarrow a^T Z,\textrm{ where }Z\sim\mathcal N(0,I_2).$$
Then by the  Wold device we have
\[
Z_n
:=V^{-1/2} 
\left(
\begin{array}{c}
\sqrt{n}(T_n-g({{\beta_{n,0}}})) \\
 \Lambda_n + \frac{1}{2}h^TI({{\beta_{n,0}}})h
\end{array}
\right)\stackrel{{{\beta_{n,0}}}}{\rightsquigarrow} \mathcal N_2(0,I)\sim Z.
\]
Now let $f:\mathbb R\rightarrow \mathbb R$ be bounded and continuous.
We may rewrite
\begin{eqnarray*}
&&
\mathbb E_{{{\beta_{n,0}}}+h/\sqrt{n}} f\left(\frac{\sqrt{n}(T_n-g({{\beta_{n,0}}})) - v_{12}}{\sqrt{v_{11}}}\right) 
\\
&&= 
\mathbb E_{{\beta_{n,0}}} f\left(\frac{\sqrt{n}(T_n-g({{\beta_{n,0}}})) - v_{12}}{\sqrt{v_{11}}}\right)  e^{\Lambda_n}
\\
&&=
\mathbb E_{{\beta_{n,0}}} f(X_{n,1}) e^{X_{n,2}},
\end{eqnarray*}
where 
$X_n:= (X_{n,1},X_{n,2})= \psi(Z_n) 
$ for the function $\psi$ given by
\[
\psi(x_1,x_2)=\left(
\begin{array}{cc}
1/\sqrt{v_{11}} & 0\\
0 & 1
\end{array}
\right) \left[
V^{1/2}(x_1,x_2)^T + 
\left(
\begin{array}{c}
-v_{12}\\
{-v_{22}/2}
\end{array}
\right)
\right].
\]
Similarly, define $U_n$ as follows
\begin{eqnarray*}
U_n&:=&(U_{n,1},U_{n,2}) =\psi(Z)
\\
&\sim&
\left(
\begin{array}{cc}
1/\sqrt{v_{11}} & 0\\
0 & 1
\end{array}
\right) \left[
V^{1/2} \mathcal N(0,I_2) + 
\left(
\begin{array}{c}
-v_{12}\\
{-v_{22}/2}
\end{array}
\right)
\right]
\\
&=&
\mathcal N\left(
\left(
\begin{array}{c} 
-\frac{v_{12}}{\sqrt{v_{11}}}\\
-\frac{v_{22}}{2}
\end{array}
\right),
\left(
\begin{array}{cc}
1 & \frac{v_{12}}{\sqrt{v_{11}}}\\
\frac{v_{12}}{\sqrt{v_{11}}} & v_{22}
\end{array}
\right) \right).
\end{eqnarray*}
Since we know that $Z_n\rightsquigarrow Z$, we hope that in some sense $X_n=\psi(Z_n)$ is close to $U_n=\psi(Z).$ Note that the function $\psi$ depends on $n$, so we cannot directly apply the Portmanteau Lemma.\\
We aim to apply Lemma \ref{portm} with $X_n=\psi(Z_n)$ and $U_n=\psi(Z)$ defined above and with the function $g(x_1,x_2) = f(x_1) e^{x_2}$. 
By Lemma \ref{remainder}, we have that
$$\lim_{M\rightarrow \infty}\lim_{n\rightarrow \infty} \mathbb E |\min(0,M-e^{U_{n,2}})|=0.$$
Hence we get  by the second part of Lemma \ref{portm}
$$ \lim_{n\rightarrow \infty} \mathbb E g(X_n)- \mathbb E g(U_n) = 0.$$
Next we calculate $\mathbb E g(U_n).$
We have
\begin{eqnarray*}
\mathbb E_{{\beta_{n,0}}} g(U_n)=\mathbb Ef(U_{n,1}) e^{U_{n,2}}
=
\int_{\mathbb R^2} 
f\left(  {u_1} \right) e^{u_2} f_{U_n}(u_1,u_2) du,
\end{eqnarray*}
where $f_Y$ denotes the density of a random variable $Y.$
We use Lemma \ref{dens} to obtain that
$f_{U_n}(u)e^{u_2} = f_{Y}(u)$, where
\[
Y\sim\mathcal N\left(
\left(
\begin{array}{c}
0\\
{v_{22}/2}
\end{array}
\right),
\left(
\begin{array}{cc}
1 & \frac{v_{12}}{\sqrt{v_{11}}}\\
\frac{v_{12}}{\sqrt{v_{11}}} & v_{22}
\end{array}
\right) \right).
\]
Hence
\begin{eqnarray*}
\mathbb E_{{\beta_{n,0}}} g(U_n)
&=&
\int_{\mathbb R^2} 
f\left(  {u_1} \right)  f_{Y}(u_1,u_2) du =\mathbb Ef(Y_1),
\end{eqnarray*}
where $Y\sim\mathcal N(0,1).$
Hence
for any bounded continuous function $f$ we have shown 
$$\lim_{n\rightarrow \infty}| \mathbb E_{{{\beta_{n,0}}}+h/\sqrt{n}} f\left(\frac{\sqrt{n}(T_n-g({{\beta_{n,0}}})) - v_{12}}{\sqrt{v_{11}}}\right) - \mathbb Ef(Y)| = 0.$$
By the Portmanteau Lemma (note that $Y$ in the above display does not depend on $n$), 
 we thus have
$$\frac{\sqrt{n}(T_n-g({{\beta_{n,0}}})) - v_{12}}{\sqrt{v_{11}}} \stackrel{{{\beta_{n,0}}}+h/\sqrt{n}}{\rightsquigarrow} \mathcal N(0,1).$$
Therefore, by the differentiability assumption \ref{difg} on $g$, we get
$$\frac{\sqrt{n}(T_n-g({{\beta_{n,0}}}+h/\sqrt{n})) + h^T\dot g({{\beta_{n,0}}}) - P_{{\beta_{n,0}}} l_{{\beta_{n,0}}} h^Ts_{{\beta_{n,0}}}}{{V_{{\beta_{n,0}}}}^{1/2}} \stackrel{{{\beta_{n,0}}}+h/\sqrt{n}}{\rightsquigarrow} \mathcal N(0,1).$$

\end{proof}

\appendix

\section{Proofs for Section 14
}
\label{sec:proof.ois}

\noindent
Before proving Lemmas \ref{aux.gm1}, \ref{tau}, \ref{the} and \ref{sparse} we first recall a version of Theorem 2.4 from \cite{vdgeer13}. 
We denote $\eta_j := X_j - X_{-j}\gamma^0_j,j=1,\dots,p.$

\begin{theorem}[a version of Theorem 2.4 in \cite{vdgeer13}]\label{nodew.lr}
Suppose that 
conditions \ref{design} are satisfied
and assume that ${s_j\log p/n} =o(1).$
Consider the nodewise regression estimator $\hat\Theta_j$ and the corresponding $\hat\tau_j^2$ with $\lambda_j=\lambda \geq \tau\sqrt{\log p/n}$ for $j=1,\dots,p.$
Then for $\tau>1$, on the set 
\begin{eqnarray*}
\mathcal T_j&:=&
\{\|X_{-j}^T\eta_j\|_\infty/n \leq c\tau\sqrt{\log p /n},
\\
&&\;\; \|\hat\Sigma_{-j,-j}-\Sigma^0_{-j,-j}\|_\infty \leq c\tau\sqrt{\log p /n},
\\
&& \;\; |\eta_j^T\eta_j/n - \tau_j^2| \leq c\tau\sqrt{\log p/n}\},
\end{eqnarray*}
 (where $c$ is some sufficiently large constant),
we have the following claims for $j=1,\dots,p,$ 
$$\|\hat\gamma_j-\gamma_j^0\|_1\leq C_\tau s_j\sqrt{\log p/n},\;\;\;|\hat\tau_j^2-\tau_j^2|\leq C_\tau\sqrt{ s_j\log p/n},$$
$$\|\hat\Theta_j-\Theta_j^0\|_1\leq C_\tau s_j\sqrt{\log p/n},$$
for some constant $C_\tau>0.$
Moreover, for some constant $c_1>0$ we have
$$P(\mathcal T_j^c) \leq c_1 (2p)^{-\tau^2}.$$
\end{theorem}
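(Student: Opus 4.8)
The plan is to recognize the nodewise regression \eqref{nc} as an ordinary Lasso problem in which $X_j$ plays the role of the response, $X_{-j}$ the design, and the population residual $\eta_j := X_j - X_{-j}\gamma_j^0$ the noise. By the definition of $\gamma_j^0$ as the population least-squares projection, the residual satisfies the orthogonality relation $\mathbb{E}[X_{-j}^T\eta_j] = 0$, and $\tau_j^2 = \mathbb{E}[\eta_{j,i}^2] = 1/\Theta_{jj}^0$. The support of $\gamma_j^0$ has cardinality $s_j-1$, and under condition \ref{design} the vector $\gamma_j^0$ has bounded $\ell_2$-norm, so that $\|\gamma_j^0\|_1 = \mathcal{O}(\sqrt{s_j})$. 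With these identifications every bound reduces to a statement about the Lasso with sub-Gaussian design, with $\Sigma_0$ replaced by the submatrix $\Sigma^0_{-j,-j}$ (whose smallest eigenvalue is bounded below by $\Lambda_{\min}(\Sigma_0)\geq L$ by eigenvalue interlacing).

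Second, I would work on the event $\mathcal{T}_j$. Its middle event controls $\|\hat\Sigma_{-j,-j} - \Sigma^0_{-j,-j}\|_\infty$, so that Lemma \ref{cc}, together with $\Lambda_{\min}(\Sigma_0)\geq L$ and the sparsity assumption, guarantees that $\hat\Sigma_{-j,-j}$ satisfies the compatibility condition with constant $L/2$. Its first event controls the empirical analogue of the noise term, $\|X_{-j}^T\eta_j\|_\infty/n \leq c\tau\sqrt{\log p/n}$, which is exactly the quantity playing the role of $\lambda_0$ in the Lasso oracle inequality. Choosing $\lambda_j\geq 2\lambda_0$, an application of Lemma \ref{oi} (with $X_{-j},\eta_j$ in place of $X,\epsilon$) yields $\|\hat\gamma_j - \gamma_j^0\|_1 \leq C_\tau s_j\sqrt{\log p/n}$ and the companion prediction bound $\|X_{-j}(\hat\gamma_j - \gamma_j^0)\|_n^2 = \mathcal{O}(s_j\log p/n)$.

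Third, I would derive the bound on $\hat\tau_j^2$ by expanding $\hat\tau_j^2 = \|X_j - X_{-j}\hat\gamma_j\|_n^2 + \lambda_j\|\hat\gamma_j\|_1$ around the population residual. Writing $X_j - X_{-j}\hat\gamma_j = \eta_j - X_{-j}(\hat\gamma_j - \gamma_j^0)$, the squared term splits into $\|\eta_j\|_n^2$, a cross term, and the prediction error; on $\mathcal{T}_j$ the third event gives $|\|\eta_j\|_n^2 - \tau_j^2| \leq c\tau\sqrt{\log p/n}$, the prediction error is $\mathcal{O}(s_j\log p/n)$, and the cross term is handled by Cauchy-Schwarz. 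The penalty contributes $\lambda_j\|\hat\gamma_j\|_1 = \mathcal{O}(\lambda_j(\|\gamma_j^0\|_1 + \|\hat\gamma_j - \gamma_j^0\|_1)) = \mathcal{O}(\sqrt{s_j\log p/n})$, which dominates, giving $|\hat\tau_j^2 - \tau_j^2| \leq C_\tau\sqrt{s_j\log p/n}$; since $\tau_j^2$ is bounded away from zero, also $|1/\hat\tau_j^2 - 1/\tau_j^2| = \mathcal{O}(\sqrt{s_j\log p/n})$. The bound on $\hat\Theta_j$ then follows by writing $\hat\Theta_j = b(\hat\gamma_j)/\hat\tau_j^2$ and $\Theta_j^0 = b(\gamma_j^0)/\tau_j^2$, where $b(\gamma)$ inserts a $1$ in the $j$-th coordinate and $-\gamma$ elsewhere, and splitting the difference into a term bounded by $\|\hat\gamma_j-\gamma_j^0\|_1/\hat\tau_j^2$ and a term bounded by $|1/\hat\tau_j^2 - 1/\tau_j^2|\,(1+\|\gamma_j^0\|_1)$; both are $\mathcal{O}(s_j\sqrt{\log p/n})$.

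Finally, the probability bound comes from a union bound over the three events defining $\mathcal{T}_j$, each handled by the sub-exponential concentration inequality of Lemma \ref{con}. For the middle event the summands $X_{ki}X_{li} - \mathbb{E}X_{ki}X_{li}$ are products of sub-Gaussians, hence centered sub-exponential; for the third event the summands $\eta_{j,i}^2 - \tau_j^2$ are squares of sub-Gaussians. I expect the first event, controlling $\|X_{-j}^T\eta_j\|_\infty/n$, to be the main obstacle: unlike in the ordinary linear model, where $\epsilon$ and $X$ are independent and Lemma \ref{ee} applies verbatim, here $X_{-j}$ and $\eta_j$ are \emph{dependent}. The remedy is to observe that the products $X_{ki}\eta_{j,i}$ are nevertheless centered, by the population orthogonality $\mathbb{E}[X_{-j}^T\eta_j]=0$, and sub-exponential by Cauchy-Schwarz applied to the two sub-Gaussian factors, so Lemma \ref{con} again applies and each event has complementary probability at most $(2p)^{-\tau^2}$. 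Summing the three contributions gives $P(\mathcal{T}_j^c) \leq c_1(2p)^{-\tau^2}$.
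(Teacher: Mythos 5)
Your proposal is sound, but note that the paper itself gives no proof of this statement: it is imported verbatim as ``a version of Theorem 2.4 in \cite{vdgeer13}'' at the start of Appendix \ref{sec:proof.ois}, so there is nothing internal to compare against. Your reconstruction follows the standard argument of that reference — nodewise regression as a Lasso with response $X_j$, noise $\eta_j$, compatibility via Lemma \ref{cc} on the event $\mathcal T_j$, the basic-inequality oracle bound, the expansion of $\hat\tau_j^2$, and sub-exponential concentration for the centered products $X_{ki}\eta_{j,i}$ despite the dependence of $X_{-j}$ and $\eta_j$ — and correctly identifies and resolves the one point (that dependence) where the linear-model argument does not apply verbatim.
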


\begin{proof}[Proof of Lemma \ref{aux.gm1}]
The proof of this lemma is essentially the same as the proof of Theorem \ref{strong2}, 
but we need to make adjustments to obtain an oracle bound for the expectation of the \textit{maximum} of $\ell_1$-errors
of $p$ Lasso estimators, i.e. we show an oracle bound for
$$(\mathbb E\max_{j=1,\dots,p}\|\hat\gamma_j-\gamma_j\|_1^k)^{1/k}.$$
First we summarize the oracle inequality for the nodewise regression which holds with high probability. 
Let $\eta_j := X_j - X_{-j}\gamma^0_j$ for $j=1,\dots,p$, then $\eta_j$ is a sub-Gaussian random vector with a universal constant, since $\Lambda_{\max}(\Theta_0)=1/\Lambda_{\min}(\Sigma_0)=\mathcal O(1).$
Under $1/\Lambda_{\min}(\Sigma_0)=\mathcal O(1),$ one can check that for some universal constant $L>0,$ it holds $\Lambda_{\min}(\Sigma^0_{-j,-j}) \geq L$.
Let $\mathcal T_j$ be as in Theorem \ref{nodew.lr}. Then on $\mathcal T:= \cap_{j=1}^p \mathcal T_j$ it holds
$\max_{j=1,\dots,p}\|\hat\gamma_j-\gamma_j^0\|_1\leq 16\lambda_j \max_{j=1,\dots,p}{s_j}/{L}.$
\\\\
We now proceed to show that the oracle inequality for the Lasso holds also in expectation. We follows the steps of the proof of Theorem \ref{strong2} to get
\begin{eqnarray*}
\mathbb E\max_{j=1,\dots,p} (\|\epsilon_j\|_n^2/\lambda_j + 2\|\gamma_j^0\|_1)^k
&\leq &
\mathbb E \max_{j=1,\dots,p}2^{k-1}\left( (\|\epsilon_j\|_n^2/\lambda_j)^k + (2\|\gamma_j^0\|_1)^k\right)
\end{eqnarray*}
By Lemma \ref{econ} we have 
\begin{eqnarray*}
\mathbb E \max_{j=1,\dots,p} (\|\epsilon_j\|_n^2)^k &= &
\mathcal O(1).
\end{eqnarray*}
Next observe that by assumption on the eigenvalues of $\Theta_0$, we have $\|\gamma_j^0\|_2=\mathcal O(1)$ and hence
$$\|\gamma_j^0\|_1^k \leq (\max_{j=1,\dots,p}\sqrt{s_j}\|\gamma_j^0\|_2)^k \leq \mathcal O(\max_{j=1,\dots,p}s_j^{k/2}).$$
Further steps again follow the steps of the proof of Theorem \ref{strong2}. Thus we obtain
\begin{eqnarray*}
\mathbb E_{}\max_{j=1,\dots,p} \|\hat \gamma_j-\gamma_j^0\|_1^k 
&=& \mathcal O(\max_{j=1,\dots,p}s_j^k\lambda_j^k)
,
\end{eqnarray*}
where we chose $\tau$ sufficiently large (this is possible since $k$ is fixed) so that 
 $$\max_{j=1,\dots,p}s_j^{k/2}\lambda_j^{-k} p^{-\tau^2 /2}=\mathcal O(\max_{j=1,\dots,p}s_j^k\lambda_j^k).$$
 Hence we conclude that 
\begin{eqnarray}\label{expoi}
(\mathbb E_{} \max_{j=1,\dots,p} \|\hat \gamma_j -\gamma_j^0\|_1^k)^{1/k}
=\mathcal O(\max_{j=1,\dots,p}s_j\lambda_j)
.
\end{eqnarray}

\end{proof}

\begin{proof}[Proof of Lemma \ref{tau}]\vskip 0.1cm
Before proving statements \ref{taudiff} and \ref{tau.d} of the lemma, we first prove that
$\mathbb E \max_{j=1,\dots,p} 1/(\hat\tau_j^2)^k = \mathcal O(1).$
Throughout the proof, we use the notation
$$\hat \Gamma_j:= (-\hat\gamma_{j,1},\dots,-\hat\gamma_{j,j-1},1,-\hat\gamma_{j,j+1},\dots,-\hat\gamma_{j,p}).$$ 
\textbf{Proof of $\mathbb E \max_{j=1,\dots,p} 1/(\hat\tau_j^2)^k = \mathcal O(1)$:}\\\\
We first show the rough bound $\mathbb E\max_{j=1,\dots,p} \frac{1}{(\hat\tau_j^2)^k} = \mathcal O(pn^{k/2}).$ First observe that
for each $j=1,\dots,p$  it holds for $t\geq 0$
\begin{eqnarray*}
P(\hat\tau_j^2 \leq t ) &=&
P(\hat\Gamma_j^T \hat\Sigma \hat\Gamma_j + \lambda_j \|\hat\gamma_j\|_1 \leq t )\\
& \leq &
P(\hat\Gamma_j^T \hat\Sigma \hat\Gamma_j \leq t \wedge \lambda_j \|\hat\gamma_j\|_1 \leq t )
\end{eqnarray*}
Using  the following lower bound 
\begin{eqnarray*}
\hat\Gamma_j^T \hat\Sigma \hat\Gamma_j 
&=&
 \hat\Sigma_{jj} - 2\hat\Sigma_{j,-j}^T \hat\gamma_j + \hat\gamma_j^T \hat\Sigma_{-j,-j}\hat\gamma_j
\\
&\geq &
 \hat\Sigma_{jj} - 2|\hat\Sigma_{j,-j}^T \hat\gamma_j| + \hat\gamma_j^T \hat\Sigma_{-j,-j}\hat\gamma_j
\\
&\geq &
 \hat\Sigma_{jj} - 2\|\hat\Sigma_{j,-j}\|_\infty \|\hat\gamma_j \|_1 + \hat\gamma_j^T \hat\Sigma_{-j,-j}\hat\gamma_j
\\
&\geq &
 \hat\Sigma_{jj} - 2\hat\Sigma_{jj}t/\lambda_j 
\\
&=& 
 \hat\Sigma_{jj}(1 - 2t/\lambda_j),
\end{eqnarray*}
 we obtain that
\begin{eqnarray*}
P(\hat\Gamma_j^T \hat\Sigma \hat\Gamma_j \leq t \wedge \lambda_j \|\hat\gamma_j\|_1 \leq t )
& \leq &
P(\hat\Sigma_{jj}(1 - 2t/\lambda_j) \leq t \wedge \lambda_j \|\hat\gamma_j\|_1 \leq t )
\\
& \leq &
P(\hat\Sigma_{jj}(1 - 2t/\lambda_j) \leq t).
\end{eqnarray*}
Next we use concentration results for $\hat \Sigma_{jj}$ around its mean under the sub-Gaussianity assumption on $X$.
For $t \leq \lambda_j/4 $ it holds that $1 - 2t/\lambda_j\geq 1/2$ and thus 
\begin{eqnarray*}
P(\hat\Sigma_{jj}(1 - 2t/\lambda_j) \leq t)
&=&
P\left(\hat\Sigma_{jj} - \Sigma_{jj}^0  \leq 2t- \Sigma_{jj}^0\right)
.
\end{eqnarray*}
For $0<t< \lambda_j/4$ and $n$ sufficiently large it holds that $2t- \Sigma_{jj}^0<0$ (by the minimal eigenvalue
 condition on $\Sigma_0$) and thus
\begin{eqnarray*}
P(\hat\Sigma_{jj}(1 - 2t/\lambda_j) \leq t)
&=&
P\left(\hat\Sigma_{jj} - \Sigma_{jj}^0  \leq 2t- \Sigma_{jj}^0\right)\\
&\leq &
P\left(|\hat\Sigma_{jj} - \Sigma_{jj}^0|  \geq |2t- \Sigma_{jj}^0|\right)\\
&\leq &
e^{-c\left(\Sigma_{jj}^0 -2t  \right) \sqrt{n/\log p}n}
,
\end{eqnarray*}
for some constant $c>0.$
Hence collecting the above inequalities, we have so far shown that for any $0<t< \lambda_j/4$ and $n$ sufficiently large it holds 
\begin{equation}\label{bound.}
P(\hat\tau_j^2 \leq t ) \leq 
e^{-c\left(\Sigma_{jj}^0 -2t\right) \sqrt{n/\log p}}
.
\end{equation}
Then by rewriting the expectation as an integral 
\begin{eqnarray*}
\mathbb E \max_{j=1,\dots,p} \frac{1}{(\hat\tau_j^2)^k} 
&=& 
\int_0^\infty P( \max_{j=1,\dots,p}1/(\hat\tau_j^2)^k > x) dx 
\\
&=& 
\int_0^\infty \max_{j=1,\dots,p} pP( 1/(\hat\tau_j^2)^k > x) dx 
\\
&=& 
p\int_0^1\max_{j=1,\dots,p} P(1/(\hat\tau_j^2)^k > x) dx 
\\
&&
+\;\;p\int_1^{\left(\lambda_j/4\right)^{-k}} \max_{j=1,\dots,p} P(1/(\hat\tau_j^2)^k > x) dx
\\
&&+\;\;
p\int_{\left(\lambda_j/4\right)^{-k}}^\infty \max_{j=1,\dots,p}P(1/(\hat\tau_j^2)^k > x) dx
\\
&\leq &
p+
p{\left(\lambda_j/4\right)^{-k}}
+
p\underbrace{\int_{\left(\lambda_j/4\right)^{-k}}^\infty P(1/(\hat\tau_j^2)^k > x) dx}_{ii}
\end{eqnarray*}
Next we calculate an upper bound on $ii.$ 
\begin{eqnarray*}
ii=\int_{\left(\lambda_j/4\right)^{-k}}^\infty P(1/(\hat\tau_j^2)^k  > x) dx
 &=&
\int_{\left(\lambda_j/4\right)^{-k}}^\infty P(1/\hat\tau_j^2 > x^{1/k}) dx \\
 &=&
\int_{\left(\lambda_j/4\right)^{-1}}^\infty P(\hat\tau_j^2 < x^{-1}) dx 
\end{eqnarray*}
Now we can use the bound \eqref{bound.} since $x^{-1}\leq \lambda_j/4.$ Using the bound and by standard calculations, we obtain
\begin{eqnarray*}
\int_{\left(\lambda_j/4\right)^{-1}}^\infty P(\hat\tau_j^2 < x^{-1}) dx 
&\leq & 
\int_{\left(\lambda_j/4\right)^{-1}}^\infty 
e^{-c\left(\Sigma_{11}^0 -2/x\right) \sqrt{n/\log p}n}
dx
= o(1).
\end{eqnarray*}
Hence we obtain the rough bound
\begin{eqnarray*}
{\mathbb E \max_{j=1,\dots,p} \frac{1}{(\hat\tau_j^2)^k}} 
= \mathcal O\left(p\left(\lambda_j/4\right)^{-k}\right) = \mathcal O(pn^{k/2}/(\log p)^{k/2}).
\end{eqnarray*}
Define, 
 for $\tau>1$ and $j=1,\dots,p$, the sets 
\begin{eqnarray*}
\mathcal T_j&:=&
\{\|X_{-j}^T\eta_j\|_\infty/n \leq c\tau\sqrt{\log p /n},\\
&&\;\; \|\hat\Sigma_{-j,-j}-\Sigma^0_{-j,-j}\|_\infty \leq c\tau\sqrt{\log p /n},
\\
&&\;\; \|\eta_j^T\eta_j/n - \tau_j^2\|_\infty \leq c\tau\sqrt{\log p/n}\},
\end{eqnarray*}
 (where $c$ is some sufficiently large constant).
Then by Theorem \ref{nodew.lr}, on $\mathcal T=\cap_{j=1}^p\mathcal  T_j$ we have $\max_{j=1,\dots,p} 1/(\hat\tau_j^2)^k \leq C_\tau $ for some constant $C_\tau>0.$ But then and by the Cauchy-Schwarz inequality
\begin{eqnarray*}
{\mathbb E \max_{j=1,\dots,p}\frac{1}{(\hat\tau_j^2)^k}} 
& = &
{\mathbb E \max_{j=1,\dots,p}\frac{1}{(\hat\tau_j^2)^k}}1_{\mathcal T} 
+
{\mathbb E \max_{j=1,\dots,p}\frac{1}{(\hat\tau_j^2)^k}}1_{\mathcal T^c}
\\
& \leq &
\mathcal O(1) + \sqrt{\mathcal O(p{n}^{k/2})} \sqrt{2p}(2p)^{-\tau^2 /2} = \mathcal O(1),
\end{eqnarray*}
where we chose $\tau$ sufficiently large.\\

\noindent
\textbf{Proof of part 
\ref{taudiff} }
First we show that $\mathbb E(\hat\tau_j^2)^k =\mathcal O(1).$ We have
\begin{eqnarray*}
{\hat\tau_j^2} & = & 
\|X_j - X_{-j}\hat\gamma_j\|_n^2 + \lambda_j \|\hat\gamma_j\|_1
\\
&=&
\hat\Gamma_j^T \hat\Sigma \hat \Gamma_j /n +\lambda_j \|\hat\gamma_j\|_1
\\
&\leq &
\|\hat\Gamma_j\|_1^2 \|\hat\Sigma\|_\infty + \lambda_j \|\hat\gamma_j\|_1
.
\end{eqnarray*}
Hence by basic calculations
\begin{eqnarray*}
\mathbb E\max_{j=1,\dots,p}  ({\hat\tau^2_j})^k &\leq &
\mathbb E\max_{j=1,\dots,p}\left[ \|\hat\Gamma_j\|_1^2 \|\hat\Sigma\|_\infty + \lambda_j \|\hat\gamma_j\|_1\right]^k
 \\
&\leq & 
 2^{k-1}\mathbb E\max_{j=1,\dots,p}\left[ (\|\hat\Gamma_j\|_1^2 \|\hat\Sigma\|_\infty)^k + (\lambda_j \|\hat\gamma_j\|_1)^k\right]
.
\end{eqnarray*}
We have 
\begin{eqnarray*}
\mathbb E\max_{j=1,\dots,p}\|\hat\gamma_j\|_1^k & \leq & 
\mathbb E\max_{j=1,\dots,p}(\|\hat\gamma_j-\gamma^0_j\|_1 +\|\gamma_j^0\|_1)^k
\\
&\leq &
\mathbb E\max_{j=1,\dots,p} 2^{k-1}(\|\hat\gamma_j-\gamma^0_j\|_1^k +\|\gamma_j^0\|_1^k)
\\
& =&
	\mathcal O(s_j^{k/2}).
\end{eqnarray*}
Hence

\begin{eqnarray*}
\mathbb E\max_{j=1,\dots,p}  ({\hat\tau^2_j})^k 
&\leq &
\mathbb E\max_{j=1,\dots,p}\left[ \|\hat\Gamma_j\|_1^2 \|\hat\Sigma\|_\infty + \lambda_j \|\hat\gamma_j\|_1\right]^k
 \\
&=& \mathcal O(s_j^{2k}).
\end{eqnarray*}

\noindent
Hence, by Theorem \ref{nodew.lr}, on $\mathcal T$ we have that $\max_{j=1,\dots,p}\hat\tau_j^2=\mathcal O(1)$, hence it follows 
\begin{eqnarray*}
{\mathbb E \max_{j=1,\dots,p}{(\hat\tau_j^2)^k}} 
= 
{\mathbb E \max_{j=1,\dots,p}{(\hat\tau_j^2)^k} }1_{\mathcal T} 
+
{\mathbb E \max_{j=1,\dots,p}{(\hat\tau_j^2)^k} }1_{\mathcal T^c}=\mathcal O(1).
\end{eqnarray*}
We have under $1/\Lambda_{\min}(\Theta_0 )=\mathcal O(1)$ that $\tau_j^2=1/\Theta_{jj}^0=\mathcal O(1)$ and hence
$$\mathbb E\max_{j=1,\dots,p}|\hat\tau_j^2 - \tau_j^2|^k = \mathcal O(\mathbb E\max_{j=1,\dots,p}(\hat\tau_j^2)^k + (\tau_j^2)^k) = \mathcal O(1).$$
We can then apply the same procedure as before to get
\begin{eqnarray*}
{\mathbb E \max_{j=1,\dots,p}{|\hat\tau_j^2 - \tau_j^2|^k}} 
 &= &
{\mathbb E \max_{j=1,\dots,p}{|\hat\tau_j^2 - \tau_j^2|^k} }1_{\mathcal T} 
+
{\mathbb E \max_{j=1,\dots,p}{|\hat\tau_j^2 - \tau_j^2|^k} }1_{\mathcal T^c}
\\
&=&
\mathcal O(\max_{j=1,\dots,p}\sqrt{s_j}\lambda_j).
\end{eqnarray*}

\noindent
\textbf{Proof of part 
\ref{tau.d} }
First we have
\begin{eqnarray*}
&& 
\mathbb E \max_{j=1,\dots,p}\left\lvert\frac{1}{\hat\tau_j^2} -\frac{1}{\tau_j^2}\right\rvert^k 
\\
&&\leq  
\mathbb E \max_{j=1,\dots,p} \frac{|\hat\tau_j^2 - \tau_j^2|^k}{(\hat\tau_j^2)^k(\tau_j^2)^k}
\\
&&\leq 
\max_{j=1,\dots,p} 1/(\tau_j^2)^k \sqrt{\mathbb E \max_{j=1,\dots,p}{|\hat\tau_j^2 - \tau_j^2|^{2k}} }
 \sqrt{\mathbb E \max_{j=1,\dots,p}1/(\hat\tau_j^2)^{2k}}.
\end{eqnarray*}
Using that $\mathbb E \max_{j=1,\dots,p}1/(\hat\tau_j^2)^{2k} =\mathcal O(1)$ and Part \ref{taudiff}, we obtain the claim.

\end{proof}

\begin{proof}[Proof of Lemma \ref{the}]
For some $\tau>0$ and each $j=1,\dots,p$ define the sets
\begin{eqnarray*}
\mathcal T_j&:=&
\{\|X_{-j}^T\eta_j\|_\infty/n \leq c\tau\sqrt{\log p /n},\\
&&\;\; \|\hat\Sigma_{-j,-j}-\Sigma^0_{-j,-j}\|_\infty \leq c\tau\sqrt{\log p /n},
\\
&&\;\; \|\eta_j^T\eta_j/n - \tau_j^2\|_\infty \leq c\tau\sqrt{\log p/n}\},
\end{eqnarray*}
By Theorem \ref{nodew.lr}, when $\lambda_j \geq c\tau\sqrt{\log p/n}$ uniformly in $j$, we have on the set 
$\cap_{i=1}^p \mathcal T_j$ that 
$$\max_{j=1,\dots,p}\|\hat\Theta_j - \Theta^0_j\|_1\leq C_\tau\max_{j=1,\dots,p}s_j\lambda_j,\;\;\; 
\max_{j=1,\dots,p}|\hat\tau_j^2 - \tau_j^2| \leq C_\tau\max_{j=1,\dots,p}\sqrt{s_j}\lambda_j,$$
for some $C_\tau>0.$
Next we rewrite
$$\mathbb E\max_{j=1,\dots,p}\|\hat \Theta_j-\Theta^0_j\|_1^k = \mathbb E\max_{j=1,\dots,p}\|\hat \Theta_j-\Theta^0_j\|_1^k 1_{\mathcal T } 
+ \mathbb E\max_{j=1,\dots,p}\|\hat \Theta_j-\Theta^0_j\|_1^k 1_{\mathcal T^c}.
$$
Then
\begin{eqnarray*}
&&\mathbb E_{}\max_{j=1,\dots,p}\|\hat \Theta_j-\Theta^0_j\|_1^k
\\
&& \leq  
\mathbb E_{}\max_{j=1,\dots,p} \left[ \|\hat\gamma_j-\gamma^0_j\|_1/\hat\tau_j^2 
+
 \|\gamma^0_j\|_1|1/\hat \tau_j^2- 1/\tau_j^2 |\right]^k
\\
&&\leq 
\mathbb E_{}\max_{j=1,\dots,p} 2^{k-1} \left[ (\|\hat\gamma_j-\gamma^0_j\|_1/\hat\tau_j^2)^k 
+
 (\|\gamma^0_j\|_1|1/\hat \tau_j^2- 1/\tau_j^2 |)^k\right]
\\
&&= \mathcal O((s_j\lambda_j)^k) 
,
\end{eqnarray*}
where in the last display we used Lemmas \ref{aux.gm1} and \ref{tau}.

\end{proof}

\begin{proof}[Proof of Lemma \ref{sparse}]
Let $\eta_j:= X_{j}-X_{-j}\gamma_j^0.$
The Karush-Kuhn-Tucker corresponding to the optimization problem \eqref{nc} give
$$\hat\Sigma_{-j,-j} \hat \gamma_j + \lambda_j \hat Z_j = X_{-j}^T \eta_j/n,$$
where $\hat Z_{j,i}=\textrm{sign}(\hat\gamma_{j,i})$ if $\hat\gamma_{j,i}\not = 0$ and $\hat Z_{j,i}\in [-1,1]$ otherwise, for $i=1,\dots,p$.
Rearranging them, we obtain
\begin{equation}\label{spar}
\Sigma_{-j,-j}^0 (\hat \gamma_j-\gamma_j^0) + \lambda_j \hat Z_j = X_{-j}^T \eta_j/n + 
(\Sigma_{-j,-j}^0- \hat\Sigma_{-j,-j}) (\hat \gamma_j-\gamma_j^0).
\end{equation}
Firstly, 
\begin{eqnarray*}
\|\Sigma_{-j,-j}^0 (\hat \gamma_j-\gamma_j^0)\|_2^2
&\leq & 
\Lambda_{\max}(\Sigma_{-j,-j}^0)  \|X_{-j}(\hat\gamma_j-\gamma_j^0)\|_n^2
\\
&& \; + \; \Lambda_{\max}(\Sigma_{-j,-j}^0)  \|\hat\gamma_j-\gamma_j^0\|_1^2 \|\hat\Sigma-\Sigma_0\|_\infty.
\end{eqnarray*}
Secondly,
$$\|(\Sigma_{-j,-j}^0 -\hat\Sigma_{-j,-j})(\hat \gamma_j-\gamma_j^0)\|_\infty
\leq \|\Sigma_{-j,-j}^0 -\hat\Sigma_{-j,-j}\|_\infty \|\hat\gamma_j-\gamma_j^0\|_1.$$ 
Denote $\hat s_j := \|\hat\gamma_j\|_0.$ On the set
$$\mathcal T_j:=\{\|X_{-j}^T\eta_j/n\|_\infty \leq\lambda_0, \|(\Sigma_{-j,-j}^0 -\hat\Sigma_{-j,-j})(\hat\gamma_j-\gamma_j^0)\|_\infty\leq \lambda_0\},$$ 
we have
\begin{eqnarray*}
&&\|\lambda_j \hat Z_j + X_{-j}^T\eta_j/n +(\Sigma_{-j,-j}^0 -\hat\Sigma_{-j,-j})(\hat \gamma_j-\gamma_j^0)\|_2^2 \\
&=&
\sum_{i=1}^p |\lambda_j \hat Z_{j,i} + X_i^T\eta_j +e_i^T(\Sigma_{-j,-j}^0 -\hat\Sigma_{-j,-j})(\hat \gamma_j-\gamma_j^0) |^2
\\
&\geq & (\lambda_j - 2\lambda_0)^2\hat s.
\end{eqnarray*}
Combining the above observations, we obtain
\begin{eqnarray*}
&& \Lambda_{\max}(\Sigma_{-j,-j}^0) \|X_{-j}(\hat\gamma_j-\gamma_j^0)\|_n^2 + \; \Lambda_{\max}(\Sigma_{-j,-j}^0)  \|\hat\gamma_j-\gamma_j^0\|_1^2 \|\hat\Sigma-\Sigma_0\|_\infty
\\
&&\geq 
\|\Sigma_{-j,-j}^0 (\hat \gamma_j-\gamma_j^0) 
\|_2^2
\\
&&=
\|- \lambda \hat Z_j + X_{-j}^T \eta_j/n + (\Sigma_{-j,-j}^0- \hat\Sigma_{-j,-j}) (\hat \gamma_j-\gamma_j^0) \|_2^2
\\
&&\geq 
(\lambda_j -2\lambda_0)^2\hat s.
\end{eqnarray*}
Hence on the set $\mathcal T$,
$$\hat s_j \leq \frac{\Lambda_{\max}(\Sigma_{-j,-j}^0) \|X_{-j}(\hat\gamma_j-\gamma_j^0)\|_n^2
+ \; \Lambda_{\max}(\Sigma_{-j,-j}^0)  \|\hat\gamma_j-\gamma_j^0\|_1^2 \|\hat\Sigma-\Sigma_0\|_\infty
}{(\lambda_j - 2\lambda_0)^2}.$$
Finally, taking $\lambda_0:=c\sqrt{\log p/n}$ for some $c>0$ sufficiently large and taking $\lambda \geq 3\lambda_0$ and under the assumption $\Lambda_{\max}(\Sigma_0)=\mathcal O(1)$, we obtain
$$\hat s_j =\mathcal O_P(s_j).$$

\end{proof}

\section{Additional proofs for Section 16
}
\label{sec:A}
In this section we give Lemma \ref{ggm.error.lc} and its proof, but we need the following auxiliary Lemmas \ref{mgf}, 
\ref{ggm.error}, \ref{lim1}.

\begin{lemma}
\label{mgf}
Let $x\sim \mathcal N(0_p,\Sigma_0)$ and let $\Theta_0=\Sigma_0^{-1}.$
Then for any $t\in\mathbb R$ and $A\in\mathbb R^{p\times p}$ such that $\Theta_0-2tA$ is symmetric and positive definite it holds
$$\mathbb E_{\Theta_0}e^{tx^T A x} = \left(\frac{\emph{det}(\Theta_0)}{\emph{det}(\Theta_0-2tA)}\right)^{1/2}.$$
\end{lemma}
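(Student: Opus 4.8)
This is a standard Gaussian quadratic-form moment computation, and the plan is to evaluate the expectation by writing out the density, merging the two quadratic exponents into a single one, and recognizing the leftover integral as a Gaussian normalizing constant. First I would record the density of $x\sim\mathcal N(0_p,\Sigma_0)$ in terms of the precision matrix $\Theta_0=\Sigma_0^{-1}$, namely
\[
p_{\Theta_0}(x)=\frac{(\det\Theta_0)^{1/2}}{(2\pi)^{p/2}}\,e^{-\frac12 x^T\Theta_0 x},
\]
using $\det(\Sigma_0)^{-1/2}=(\det\Theta_0)^{1/2}$. The case $t=0$ is trivial, so I may assume $t\neq 0$; then the hypothesis that $\Theta_0-2tA$ is symmetric, together with symmetry of $\Theta_0$, forces $A$ to be symmetric. (Alternatively one replaces $A$ by $\tfrac12(A+A^T)$ using $x^TAx=x^T\tfrac{A+A^T}{2}x$, which leaves both sides unchanged.)

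\textbf{Key steps.} The second step is to combine exponents inside the integral:
\[
\mathbb E_{\Theta_0}\,e^{t x^T A x}
=\frac{(\det\Theta_0)^{1/2}}{(2\pi)^{p/2}}\int_{\mathbb R^p} e^{\,t x^TAx-\frac12 x^T\Theta_0 x}\,dx
=\frac{(\det\Theta_0)^{1/2}}{(2\pi)^{p/2}}\int_{\mathbb R^p} e^{-\frac12 x^T(\Theta_0-2tA)x}\,dx.
\]
Writing $M:=\Theta_0-2tA$, the hypothesis guarantees $M$ is symmetric and positive definite, so the integrand is (up to a constant) the density of a centered Gaussian with precision $M$, i.e.\ covariance $M^{-1}$. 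The third step is to invoke the standard multivariate Gaussian normalization identity
\[
\int_{\mathbb R^p} e^{-\frac12 x^T M x}\,dx=(2\pi)^{p/2}(\det M)^{-1/2},
\]
valid precisely because $M$ is symmetric positive definite. Substituting this back and cancelling the $(2\pi)^{p/2}$ factors yields
\[
\mathbb E_{\Theta_0}\,e^{t x^T A x}=(\det\Theta_0)^{1/2}(\det M)^{-1/2}=\left(\frac{\det\Theta_0}{\det(\Theta_0-2tA)}\right)^{1/2},
\]
which is the claim.

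\textbf{Main obstacle.} There is essentially no deep difficulty here; the only points requiring care are (i) justifying that the symmetry assumption lets one treat $A$ as symmetric, and (ii) ensuring convergence of the integral, which is exactly what the positive-definiteness of $\Theta_0-2tA$ provides — this is also what makes the determinant on the right-hand side well defined and positive, so the square root is unambiguous.
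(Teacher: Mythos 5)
Your proof is correct and follows essentially the same route as the paper's: write out the density in terms of $\Theta_0$, merge the two quadratic exponents into $-\frac12 x^T(\Theta_0-2tA)x$, and recognize the resulting integral as a Gaussian normalizing constant $(2\pi)^{p/2}\det(\Theta_0-2tA)^{-1/2}$. Your additional remarks on symmetrizing $A$ and on positive definiteness guaranteeing convergence are sensible but not points the paper dwells on.
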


\begin{proof}[Proof of Lemma \ref{mgf}]
By direct calculation, we obtain
\begin{eqnarray*}
\mathbb E_{\Theta_0}e^{tx^T A x} 
& = &
\int_{\mathbb R^p} \frac{\textrm{det}(\Theta_0)^{1/2}}{(2\pi)^{p/2}} e^{-\frac{1}{2} x^T \Theta_0 x} e^{tx^T A x}  dx
\\
&=&
\int_{\mathbb R^p} \frac{\textrm{det}(\Theta_0)^{1/2}}{(2\pi)^{p/2}} e^{-\frac{1}{2} x^T (\Theta_0 - 2tA) x} dx
\\
&=&
\int_{\mathbb R^p} \frac{\textrm{det}(\Theta_0)^{1/2}\textrm{det}(\Theta_0-2tA)^{1/2}}{(2\pi)^{p/2}\textrm{det}(\Theta_0-2tA)^{1/2}} e^{-\frac{1}{2} x^T (\Theta_0 - 2tA) x} dx
\\
&=&
\frac{\textrm{det}(\Theta_0)^{1/2}}{\textrm{det}(\Theta_0-2tA)^{1/2}}.
\end{eqnarray*}

\end{proof}

\begin{lemma}\label{ggm.error} 
Suppose that $\Theta_0+H/\sqrt{m_n}$ is a symmetric positive definite matrix.
Let $p_{\Theta_0}$ be the joint density of the random sample $x_1,\dots,$ $x_n$, where each $x_i$ is an $\mathcal N(0,\Theta_0^{-1})$-distributed random vector. Then it holds
{\normalfont
\begin{eqnarray*}
&&\mathbb E_{\Theta_0} \left( \frac{p_{\Theta_0 + H/\sqrt{m_n}}(x)}{p_{\Theta_0}(x)} - 1 - n\textrm{tr}((\hat\Sigma - \Sigma_0)H/\sqrt{m_n}) \right)^2
\\
&&\;\;=
\frac{\textrm{det}(\Theta_0 + H/\sqrt{m_n})^{n}}{\textrm{det}(\Theta_0)^{n}}\left(\frac{\textrm{det}(\Theta_0)}{\textrm{det}(\Theta_0+2H)}\right)^{n/2}
- 1  + \sum_{i=1}^n \textrm{var}(x_i^T H x_i) 
  \\
&&  
\quad\quad - 2 
n 
\textrm{tr}[ ( (\Theta_0+H)^{-1} 
- \Sigma_0) H].
\end{eqnarray*}
}
\end{lemma}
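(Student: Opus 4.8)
The plan is to write the likelihood ratio in closed form, expand the square into its six monomials, and evaluate each expectation either directly, via the Gaussian moment generating function of Lemma \ref{mgf}, or via a change of measure to the perturbed precision matrix. First I would record the explicit ratio. Since each $x_i\sim\mathcal N(0,\Theta_0^{-1})$ has density proportional to $\textrm{det}(\Theta_0)^{1/2}e^{-x_i^T\Theta_0 x_i/2}$, the joint ratio factorizes as
\[
L := \frac{p_{\Theta_0+H/\sqrt{m_n}}(x)}{p_{\Theta_0}(x)} = \left(\frac{\textrm{det}(\Theta_0+H/\sqrt{m_n})}{\textrm{det}(\Theta_0)}\right)^{n/2} e^{-\frac{1}{2\sqrt{m_n}}\sum_{i=1}^n x_i^T H x_i},
\]
and I would note the identity $\sum_i x_i^T H x_i = n\,\textrm{tr}(\hat\Sigma H)$, so that the centering term is $S := n\,\textrm{tr}((\hat\Sigma-\Sigma_0)H/\sqrt{m_n}) = \frac{1}{\sqrt{m_n}}\sum_i\bigl(x_i^T H x_i - \textrm{tr}(\Sigma_0 H)\bigr)$. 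Expanding then gives
\[
\mathbb E_{\Theta_0}(L-1-S)^2 = \mathbb E_{\Theta_0}L^2 - 2\,\mathbb E_{\Theta_0}L - 2\,\mathbb E_{\Theta_0}(LS) + 1 + 2\,\mathbb E_{\Theta_0}S + \mathbb E_{\Theta_0}S^2.
\]

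Next I would dispatch the four routine pieces. Two vanish or are trivial: $\mathbb E_{\Theta_0}L = \int p_{\Theta_0+H/\sqrt{m_n}} = 1$ because $\Theta_0+H/\sqrt{m_n}$ is a genuine precision matrix by hypothesis, and $\mathbb E_{\Theta_0}S = 0$ since $\mathbb E_{\Theta_0}\hat\Sigma=\Sigma_0$. For $\mathbb E_{\Theta_0}S^2=\textrm{var}_{\Theta_0}(S)$ I would use independence of the $x_i$ to write it as a sum of per-observation variances, recovering the $\sum_{i=1}^n\textrm{var}(x_i^T H x_i)$ term. The quadratic term $\mathbb E_{\Theta_0}L^2$ is where Lemma \ref{mgf} enters: $L^2$ carries the deterministic prefactor $(\textrm{det}(\Theta_0+H/\sqrt{m_n})/\textrm{det}\,\Theta_0)^n$ multiplying $e^{-\frac{1}{\sqrt{m_n}}\sum_i x_i^T H x_i}$, and by independence its expectation factorizes into $n$ copies of $\mathbb E_{\Theta_0}e^{-x^T H x/\sqrt{m_n}}$; applying Lemma \ref{mgf} with $A=H$ and $t=-1/\sqrt{m_n}$ produces the factor $(\textrm{det}\,\Theta_0/\textrm{det}(\Theta_0+2H/\sqrt{m_n}))^{n/2}$, which is exactly the first displayed term in the statement.

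The one genuinely delicate piece, and the main obstacle, is the cross term $\mathbb E_{\Theta_0}(LS)$. Here I would exploit the change of measure $\mathbb E_{\Theta_0}(L\, f) = \mathbb E_{\Theta_0+H/\sqrt{m_n}} f$ valid for any integrable $f$: applied to $f=\frac{1}{\sqrt{m_n}}\sum_i(x_i^T H x_i-\textrm{tr}(\Sigma_0 H))$, this converts the cross term into a \emph{first} moment under the perturbed law, where $\mathbb E_{\Theta_0+H/\sqrt{m_n}}[x_i x_i^T]=(\Theta_0+H/\sqrt{m_n})^{-1}$. Carrying out the trace computation yields $\frac{n}{\sqrt{m_n}}\,\textrm{tr}\bigl[\bigl((\Theta_0+H/\sqrt{m_n})^{-1}-\Sigma_0\bigr)H\bigr]$, which after the prefactor $-2$ is precisely the final term of the stated identity. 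The subtlety is to keep the change-of-measure legitimate (the perturbed density must integrate to one) and to handle the two distinct moment evaluations under $\Theta_0$ and under $\Theta_0+H/\sqrt{m_n}$ without conflating them.

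Finally I would assemble the six contributions: $\mathbb E_{\Theta_0}L=1$ together with the standalone $+1$ and the $-2\,\mathbb E_{\Theta_0}L$ leave a net $-1$, while $\mathbb E_{\Theta_0}S=0$ drops out, so the surviving terms are exactly $\mathbb E_{\Theta_0}L^2-1+\mathbb E_{\Theta_0}S^2-2\,\mathbb E_{\Theta_0}(LS)$, matching the four summands of the claim. Throughout, the only substantive hypothesis I would need to invoke explicitly is that $\Theta_0+2H/\sqrt{m_n}$ is symmetric positive definite, since that is precisely the domain condition under which the moment generating function in Lemma \ref{mgf} is finite and the integral defining $\mathbb E_{\Theta_0}L^2$ converges; I would flag this before applying Lemma \ref{mgf}. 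The remaining bookkeeping to present the three nondegenerate terms in the displayed form is routine.
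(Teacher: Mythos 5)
Your proof is correct, and its skeleton --- factor the likelihood ratio $L$, expand $(L-1-S)^2$ into its six monomials, use $\mathbb E_{\Theta_0}L=1$ and $\mathbb E_{\Theta_0}S=0$, get $\mathbb E_{\Theta_0}L^2$ from the Gaussian moment generating function of Lemma \ref{mgf} and $\mathbb E_{\Theta_0}S^2$ from independence across observations --- coincides with the paper's. Where you genuinely diverge is the cross term $\mathbb E_{\Theta_0}(LS)$: the paper writes $\mathbb E_{\Theta_0}[Ze^{tZ}]$ as the derivative of the moment generating function $\mathbb E_{\Theta_0}e^{tZ}$ (again via Lemma \ref{mgf}) and evaluates at $t=-1/2$, which forces it to justify interchanging differentiation and integration; your change of measure $\mathbb E_{\Theta_0}(Lf)=\mathbb E_{\Theta_0+H/\sqrt{m_n}}f$ reduces the same quantity to the first moment of a quadratic form under the perturbed Gaussian, using $\mathbb E_{\Theta_0+H/\sqrt{m_n}}[x_ix_i^T]=(\Theta_0+H/\sqrt{m_n})^{-1}$, which is shorter and avoids the interchange argument entirely. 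Both routes yield $-2n\,\textrm{tr}\bigl[\bigl((\Theta_0+H/\sqrt{m_n})^{-1}-\Sigma_0\bigr)H/\sqrt{m_n}\bigr]$. Your observation that the substantive domain condition is positive definiteness of $\Theta_0+2H/\sqrt{m_n}$ (needed for $\mathbb E_{\Theta_0}L^2<\infty$ via Lemma \ref{mgf} with $t=-1$), and not merely of $\Theta_0+H/\sqrt{m_n}$ as hypothesized, is a point the paper glosses over; it is harmless in the downstream application since $H$ is bounded and $m_n\to\infty$, but you are right to flag it. Note finally that the identity as displayed in the statement drops several factors of $1/\sqrt{m_n}$ --- in $\textrm{det}(\Theta_0+2H)$, in $\textrm{var}(x_i^THx_i)$, and in the final trace --- relative to what both your computation and the paper's own proof actually produce; your version, with $H/\sqrt{m_n}$ carried throughout, is the one consistent with how the lemma is used in Lemma \ref{ggm.error.lc}.
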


\begin{proof}[Proof of Lemma \ref{ggm.error}]
The density is given by
$$p_{\Theta_0}(x_1,\dots,x_n) 
= \frac{\textrm{det}(\Theta_0)^{n/2}}{(2\pi)^{np/2}} e^{-\frac{1}{2}\sum_{i=1}^n x_i^T \Theta_0 x_i}$$
For simplicity of notation, denote $U:=H/\sqrt{m_n}.$
Then we have
$$\frac{p_{\Theta_0 + U}(x) }{p_{\Theta_0}(x)} - 1 =
\frac{\textrm{det}(\Theta_0 + U)^{n/2}e^{-\frac{1}{2}\sum_{i=1}^n x_i^T U x_i}}{\textrm{det}(\Theta_0)^{n/2}} -1$$
The score function is given by  $s_{\Theta_0}(x) = n(\hat\Sigma -  \Sigma_0)/2.$
Let
$$Z:=\textrm{vec}(U)^T \textrm{vec}(s_{\Theta_0}(x)) = \textrm{tr}(n(\hat\Sigma - \Sigma_0)U)=
\sum_{i=1}^n x_i^T U x_i- n \textrm{tr}(\Sigma_0U).$$
First observe that
$$\mathbb E_{\Theta_0}Z^2 = \textrm{var}(\sum_{i=1}^n x_i^T U x_i) = \sum_{i=1}^n \textrm{var}(x_i^T U x_i).$$
We have
\begin{eqnarray*}
&&
\mathbb E _{\Theta_0}\left( \frac{p_{\Theta_0 + U}(x)}{p_{\Theta_0}(x)} - 1 - Z \right)^2
\\
&&=
\mathbb E_{\Theta_0} \left( \frac{\textrm{det}(\Theta_0 + U)^{n/2}e^{-\frac{1}{2}\sum_{i=1}^nx_i^T U x_i}}{\textrm{det}(\Theta_0)^{n/2}} -1 - Z \right)^2
\\
&&=
\frac{\textrm{det}(\Theta_0 + U)^{n}}{\textrm{det}(\Theta_0)^{n}}\mathbb E_{\Theta_0} e^{-\sum_{i=1}^nx_i^T U x_i} + 1 + \mathbb E_{\Theta_0}Z^2+
\\
&& \;\; - \; 2 \frac{\textrm{det}(\Theta_0 + U)^{n/2}}{\textrm{det}(\Theta_0)^{n/2}} \mathbb E_{\Theta_0} e^{-\frac{1}{2}\sum_{i=1}^nx_i^T U x_i}
+
 2 \mathbb E_{\Theta_0}Z   \\
&& \;\;
-\; 2 \frac{\textrm{det}(\Theta_0 + U)^{n/2}}{\textrm{det}(\Theta_0)^{n/2}}
\mathbb E_{\Theta_0}  Z  e^{-\frac{1}{2}\sum_{i=1}^nx_i^T U x_i}.
\end{eqnarray*}
Using Lemma \ref{mgf}, since $\Theta_0+U/2$ is a symmetric positive definite matrix,  we obtain
\begin{eqnarray*}
\mathbb E_{\Theta_0} \left( \frac{p_{\Theta_0 + U}(x)}{p_{\Theta_0}(x)} - 1 - Z \right)^2
&&=
\frac{\textrm{det}(\Theta_0 + U)^{n}}{\textrm{det}(\Theta_0)^{n}}\left(\frac{\textrm{det}(\Theta_0)}{\textrm{det}(\Theta_0+2U)}\right)^{n/2}
+ 1 \\
&& \;\;+ \sum_{i=1}^n \textrm{var}_{\Theta_0}(x_i^T U x_i) 
\\\;\;
&& - \; 2 \frac{\textrm{det}(\Theta_0 + U)^{n/2}}{\textrm{det}(\Theta_0)^{n/2}} \left(\frac{\textrm{det}(\Theta_0)}{\textrm{det}(\Theta_0+U)}\right)^{n/2}
  \\
&&  \;\;
-\; 2 \frac{\textrm{det}(\Theta_0 + U)^{n/2}}{\textrm{det}(\Theta_0)^{n/2}}
\mathbb E_{\Theta_0}  Z  e^{-\frac{1}{2}\sum_{i=1}^nx_i^T U x_i}
\\
&&=
\frac{\textrm{det}(\Theta_0 + U)^{n}}{\textrm{det}(\Theta_0)^{n}}\left(\frac{\textrm{det}(\Theta_0)}{\textrm{det}(\Theta_0+2U)}\right)^{n/2}
- 1
\\
&& + \sum_{i=1}^n \textrm{var}_{\Theta_0}(x_i^T U x_i) 
  \\
&&  
 \underbrace{-\; 2\frac{\textrm{det}(\Theta_0 + U)^{n/2}}{\textrm{det}(\Theta_0)^{n/2}}
\mathbb E_{\Theta_0}  Z  e^{-\frac{1}{2}\sum_{i=1}^nx_i^T U x_i}}_{i}.
\end{eqnarray*}
Next we calculate $i$. We have again by Lemma \ref{mgf}
$$\mathbb E_{\Theta_0} e^{tZ} 
= e^{-n\textrm{tr}(\Sigma_0 U)t}\mathbb E_{\Theta_0} e^{t\sum_{i=1}^nx_i^T U x_i}
= e^{-n\textrm{tr}(\Sigma_0 U)t} \left(\frac{\textrm{det}(\Theta_0)}{\textrm{det}(\Theta_0-2tU)}\right)^{n/2}.$$
Since $e^{tZ}\leq e^{Z}$ for $t<0$ and  $ \mathbb E_{\Theta_0} e^{Z} < \infty$, we can interchange differentiation and integration below to obtain
$$\mathbb E_{\Theta_0} Ze^{tZ} = \mathbb E_{\Theta_0} (e^{tZ} )' = (\mathbb E_{\Theta_0} e^{tZ})'.$$
Hence
\begin{eqnarray}\nonumber
\mathbb E_{\Theta_0} Ze^{tZ} &=& (\mathbb E_{\Theta_0} e^{tZ})' 
\\\nonumber
&=& 
e^{-n\textrm{tr}(\Sigma_0 U)t} \left(\frac{\textrm{det}(\Theta_0)}{\textrm{det}(\Theta_0-2tU)}\right)^{n/2} 
\times 
\\\label{prod}&&
n\left[
\textrm{tr}( (\Theta_0-2tU)^{-1} U)
-\textrm{tr}(\Sigma_0 U) 
\right]
\end{eqnarray}
Finally, taking $t=-1/2$ in \eqref{prod}, we obtain
$$i=-\; 2 
n 
\textrm{tr}[ ( (\Theta_0+U)^{-1} 
- \Sigma_0) U]
.$$
Hence
\begin{eqnarray*}
\mathbb E_{\Theta_0} \left( \frac{p_{\Theta_0 + U}(x)}{p_{\Theta_0}(x)} - 1 - Z \right)^2
&=&
\frac{\textrm{det}(\Theta_0 + U)^{n}}{\textrm{det}(\Theta_0)^{n}}\left(\frac{\textrm{det}(\Theta_0)}{\textrm{det}(\Theta_0+2U)}\right)^{n/2}
- 1 \\
&&+ \sum_{i=1}^n \textrm{var}_{\Theta_0}(x_i^T U x_i) 
  \\
&&  
-\; 2 
n 
\textrm{tr}[ ( (\Theta_0+U)^{-1} 
- \Sigma_0) U]
\end{eqnarray*}
This finishes the proof.

\end{proof}

\begin{lemma}\label{lim1}
Let $0<\delta=\delta_n \rightarrow 0 $ and let $a,b=\mathcal O(1).$
Then
$$\left( 1 + \frac{a{\delta} }{\sqrt{n}( b\sqrt{\delta} + \sqrt{n})} \right)^{n} - 1 = O(\delta).$$
\end{lemma}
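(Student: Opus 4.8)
The plan is to track the order of the quantity inside the parentheses and then pass through the logarithm. Write $x_n := \dfrac{a\delta}{\sqrt{n}(b\sqrt{\delta}+\sqrt{n})}$, so that the claim is $(1+x_n)^n - 1 = O(\delta)$. First I would pin down the order of $x_n$. Since $b=\mathcal O(1)$ and $\delta=\delta_n\to 0$, we have $b\sqrt{\delta}\to 0$, so the factor $b\sqrt{\delta}+\sqrt{n}$ is of the same order as $\sqrt{n}$; concretely, for all $n$ large enough $b\sqrt{\delta}+\sqrt{n}\geq \sqrt{n}/2$, whence $\sqrt{n}(b\sqrt{\delta}+\sqrt{n})\geq n/2$ and therefore $|x_n|\leq 2|a|\delta/n = O(\delta/n)$. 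In particular $n|x_n| = O(\delta)\to 0$ and $x_n\to 0$.

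Next I would pass to the exponential form $(1+x_n)^n = \exp\!\big(n\log(1+x_n)\big)$ and expand the logarithm. Using the elementary bound $|\log(1+y)-y|\leq y^2$ valid for $|y|\leq 1/2$ (applicable since $x_n\to 0$), one gets $n\log(1+x_n) = n x_n + O(n x_n^2)$. From the first step $n x_n = O(\delta)$, while $n x_n^2 \leq n\cdot O(\delta/n)^2 = O(\delta^2/n)=o(\delta)$; hence $n\log(1+x_n) = O(\delta)$, and in particular this exponent tends to $0$.

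Finally I would exponentiate using $|e^y-1|\leq 2|y|$ for $|y|$ sufficiently small, applied with $y = n\log(1+x_n)\to 0$, to conclude
$$
\left(1+x_n\right)^n - 1 = e^{\,n\log(1+x_n)} - 1 = O\!\big(n\log(1+x_n)\big) = O(\delta),
$$
which is exactly the assertion. There is no genuinely hard step here: the only point requiring a little care is that $b$ need not be positive, so I would explicitly absorb $b\sqrt{\delta}$ as a lower-order perturbation of $\sqrt{n}$ to guarantee the denominator stays comparable to $n$; once that is done, the estimate $n x_n = O(\delta)$ and the smallness of $n x_n^2$ make the remaining manipulations routine.
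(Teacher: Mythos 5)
Your proof is correct and follows essentially the same route as the paper: write the power as $\exp(n\log(1+x_n))$, expand the logarithm, observe that $n x_n=\mathcal O(\delta)$, and finish with a linear bound on $e^y-1$ for small $y$. Your version is in fact slightly more careful than the paper's, since you explicitly justify that the denominator $b\sqrt{\delta}+\sqrt{n}$ stays comparable to $\sqrt{n}$ even when $b$ is not positive, and you give explicit elementary inequalities in place of the paper's $o(\cdot)$ shorthand.
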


\begin{proof}[Proof of Lemma \ref{lim1}]
\begin{eqnarray*}
\left( 1 + \frac{a{\delta} }{\sqrt{n}( b\sqrt{\delta} + \sqrt{n})} \right)^{n} - 1
&=&
e^{ {n} \log \left(1+\frac{{\delta} a}{\sqrt{n}( b\sqrt{\delta} + \sqrt{n})}\right) } -1
\\
&=&
e^{ n \left[ \frac{a{\delta} }{\sqrt{n}( b\sqrt{\delta} + \sqrt{n})} + 
o\left(\frac{a{\delta} }{\sqrt{n}( b\sqrt{\delta} + \sqrt{n})}\right) \right] } -1
\end{eqnarray*}
Next
$$\frac{a n{\delta}}{\sqrt{n}(b\sqrt{\delta} + \sqrt{n})} =\mathcal O( \delta).$$
Hence, and using that $e^x -1 = o(x)$ for $x\rightarrow 0$, we obtain
\begin{eqnarray*}
e^{ n\left[ \frac{a{\delta}}{\sqrt{n}(b\sqrt{\delta} + \sqrt{n})} + 
o\left(\frac{a{\delta}}{\sqrt{n}(b\sqrt{\delta} + \sqrt{n})}\right) \right] } -1
&=&
\mathcal O(\delta).
\end{eqnarray*}

\end{proof}

\noindent

\begin{lemma}\label{ggm.error.lc}
Let $p_{\Theta_0}$ be the joint density of the random sample $X_1,\dots,X_n$, where each $X_i$ is an $\mathcal N(0,\Theta_0^{-1})$-distributed random vector.
Let $H := \Theta_0 (\xi_1\xi_2^T$ $+\xi_2\xi_1^T)\Theta_0/\sigma_{}.$
Then
\begin{eqnarray*}
\mathbb E_{\Theta_0} \left( \frac{p_{\Theta_0 + H/\sqrt{m_n}}(x)}{p_{\Theta_0}(x)} - 1 - \emph{tr}(s_{\Theta_0}(X)H)/\sqrt{m_n} \right)^2
=\mathcal O(\delta_n).
\end{eqnarray*}
\end{lemma}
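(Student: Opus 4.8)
The plan is to reduce the claim to the exact second-moment identity already established in Lemma \ref{ggm.error} and then expand that identity asymptotically. First I would check the hypotheses needed to invoke Lemma \ref{ggm.error}. Since $\Theta_0\in\mathcal G(d_1,\dots,d_p)$ gives $\Lambda_{\max}(\Theta_0)=\mathcal O(1)$ and $1/\Lambda_{\min}(\Theta_0)=\mathcal O(1)$, and the direction $H=\Theta_0(\xi_1\xi_2^T+\xi_2\xi_1^T)\Theta_0/\sigma$ satisfies $\|H\|_F=\mathcal O(1)$ (using the eigenvalue bounds together with $\|\Theta_k^0\|_2=\mathcal O(1)$), both $\Theta_0+H/\sqrt{m_n}$ and $\Theta_0+2H/\sqrt{m_n}$ are symmetric positive definite for all $n$ large enough; this is exactly what makes the moment generating functions computed via Lemma \ref{mgf} finite, and hence Lemma \ref{ggm.error} applicable. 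Writing $U:=H/\sqrt{m_n}$ and $A:=\Sigma_0 U$, and identifying $\textrm{tr}(s_{\Theta_0}(X)H)/\sqrt{m_n}$ with the linear term used in Lemma \ref{ggm.error}, the left-hand side of the lemma equals
\[
\frac{\det(\Theta_0+U)^{n}}{\det(\Theta_0)^{n}}\Big(\frac{\det(\Theta_0)}{\det(\Theta_0+2U)}\Big)^{n/2} - 1 + \sum_{i=1}^n \textrm{var}(x_i^T U x_i) - 2n\,\textrm{tr}\big[((\Theta_0+U)^{-1} - \Sigma_0)U\big].
\]

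Next I would expand the three groups of terms, the guiding scaling being that the eigenvalues of $A$ are of order $1/\sqrt{m_n}$, so that $\textrm{tr}(A^k)=\mathcal O(m_n^{-k/2})$ and $n\,\textrm{tr}(A^2)=\mathcal O(n/m_n)=\mathcal O(\delta_n)$, recalling $\delta_n\asymp n/m_n$ and hence $1/\sqrt{m_n}=\sqrt{\delta_n}/\sqrt{n}$. For the determinant ratio I would write each factor as $\det(I+tA)$ and use $\log\det(I+tA)=t\,\textrm{tr}(A)-\tfrac{t^2}{2}\textrm{tr}(A^2)+\mathcal O(\|A\|^3)$; the terms linear in $\textrm{tr}(A)$ cancel in the combination $n\log\det(I+A)-\tfrac{n}{2}\log\det(I+2A)$, leaving $\tfrac n2\textrm{tr}(A^2)+\mathcal O(n\,m_n^{-3/2})=\mathcal O(\delta_n)$, so the whole factor is $1+\mathcal O(\delta_n)$. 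Lemma \ref{lim1} is the clean device for taming the $n$-th power here: after substituting $1/\sqrt{m_n}=\sqrt{\delta_n}/\sqrt n$ the exponent takes precisely the scalar form $a\delta_n/(\sqrt n(b\sqrt{\delta_n}+\sqrt n))$ anticipated by its statement, which yields $(\cdots)^n-1=\mathcal O(\delta_n)$ without having to bound higher-order remainders by hand. For the variance sum I would invoke the Gaussian quadratic-form identity $\textrm{var}(x^TUx)=2\,\textrm{tr}(\Sigma_0U\Sigma_0U)=2\,\textrm{tr}(A^2)$ for $x\sim\mathcal N(0,\Sigma_0)$, so $\sum_{i=1}^n\textrm{var}(x_i^TUx_i)=2n\,\textrm{tr}(A^2)=\mathcal O(\delta_n)$. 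For the trace term I would use the Neumann expansion $(\Theta_0+U)^{-1}=\Sigma_0-\Sigma_0U\Sigma_0+\mathcal O(\|U\|^2)$, valid once $\|A\|=\mathcal O(m_n^{-1/2})<1$, giving $\textrm{tr}[((\Theta_0+U)^{-1}-\Sigma_0)U]=-\textrm{tr}(A^2)+\mathcal O(m_n^{-3/2})$ and hence a contribution of $\mathcal O(\delta_n)$.

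Summing the three $\mathcal O(\delta_n)$ contributions gives the stated bound. I expect the main obstacle to be the bookkeeping of the determinant ratio raised to the $n$-th power: a naive termwise Taylor estimate is insufficient because the exponent $n$ can amplify third- and higher-order remainders, so the uniform control afforded by Lemma \ref{lim1}, together with the positive-definiteness of $\Theta_0+2U$ underlying Lemma \ref{mgf}, is what keeps the argument rigorous. A secondary point of care is that the expansions of $\log\det(I+tA)$ and of $(\Theta_0+U)^{-1}$ are only legitimate once $\|A\|<1$, which holds for all large $n$ by the eigenvalue bounds on $\Theta_0$ and $\|H\|_F=\mathcal O(1)$. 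I would finally remark that the bound obtained is genuinely of the leading order $n\,\textrm{tr}(\Sigma_0H\Sigma_0H)/m_n$, consistent with the stated $\mathcal O(\delta_n)$, and that this is exactly the form in which the estimate feeds into the Cauchy--Schwarz step of the proof of Theorem \ref{ggm.lc}.
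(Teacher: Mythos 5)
Your proposal is correct and follows the same skeleton as the paper's proof: both reduce the claim to the exact identity of Lemma \ref{ggm.error} (after verifying positive definiteness of $\Theta_0+H/\sqrt{m_n}$ via the eigenvalue bounds) and then show that each of the three resulting terms is $\mathcal O(\delta_n)=\mathcal O(n/m_n)$. Where you differ is in how the individual terms are estimated: the paper exploits the rank-two structure of $H$ to compute everything \emph{exactly} --- the determinant ratio collapses to a scalar expression of the precise form required by Lemma \ref{lim1}, the variance term is evaluated via the bivariate-normal identity $\mathrm{var}(Y_1Y_2)=S_{11}S_{22}+S_{12}^2$, and the trace term via a rank-two Woodbury inversion --- whereas you use general perturbation expansions (Taylor expansion of $\log\det(I+tA)$ with cancellation of the linear terms, the identity $\mathrm{var}(x^TUx)=2\,\mathrm{tr}((\Sigma_0U)^2)$, and a Neumann series for $(\Theta_0+U)^{-1}$). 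Both routes are valid; yours is more robust in that it does not rely on $H$ having rank two, while the paper's exact computations avoid having to control the cubic remainder $\mathcal O(n\,m_n^{-3/2})$ in the exponent (which you correctly note is $o(\delta_n)$). One small remark: once you take the $\log\det$-expansion route, Lemma \ref{lim1} becomes redundant --- you only need $e^{\mathcal O(\delta_n)}-1=\mathcal O(\delta_n)$ --- so invoking it on top of the expansion is harmless but unnecessary; in the paper it is the exact rank-two determinant formula that puts the $n$-th power into the scalar form that Lemma \ref{lim1} handles.
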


\begin{proof}[Proof of Lemma \ref{ggm.error.lc}]
We apply Lemma \ref{ggm.error} with 
$$H := \Theta_0 (\xi_1\xi_2^T+\xi_2\xi_1^T)\Theta_0/\sigma_{},$$ where
$\sigma_{}^2:= \xi_1^T \Theta^0 \xi_1\xi_2^T \Theta^0 \xi_2 + (\xi_1^T \Theta^0 \xi_2)^2 $.
To apply Lemma \ref{ggm.error}, we need to show that  $\Theta_0 + H/\sqrt{m_n}$ is a symmetric, positive definite matrix (for $n$ sufficiently large).
 This can be seen as follows.
First note that 
$H := \Theta_0 (\xi_1\xi_2^T+\xi_2\xi_1^T)\Theta_0/\sigma_{}$ is symmetric and $\Theta_0$ is symmetric and hence the symmetry of the sum follows.
Next we look at positive definiteness.
\\
For any $u\in\mathbb R^p$ we have by the Cauchy-Schwarz inequality
\begin{eqnarray*}
u^T H u& =& u^T\Theta_0 (\xi_1\xi_2^T+\xi_2\xi_1^T)\Theta_0 u /\sigma_{} 
\\
&\leq &2 \sqrt{\xi_1^T \Theta_0 \xi_1 u^T \Theta_0 u}
\sqrt{\xi_2^T \Theta_0 \xi_2 u^T \Theta_0 u}/\sigma_{}  \leq  \Lambda_{\max}(\Theta) u^T u.
\end{eqnarray*}
Thus 
\begin{eqnarray*}
u^T \Theta_0 u + u^T H u/\sqrt{m_n}  & \geq &
 u^T \Theta_0 u - \mathcal O(\Lambda_{\max}(\Theta_0)u^T u)/\sqrt{m_n}
\\
&\geq &
\left[\Lambda_{\min}(\Theta_0)  - \mathcal O(1)/\sqrt{m_n}\right] u^T u.
\end{eqnarray*}
This shows that the matrix $\Theta_0 + H/\sqrt{m_n}$ is  positive definite for $n$ sufficiently large.
But then we can apply Lemma \ref{ggm.error} which gives
\begin{eqnarray*}
&&\mathbb E_{\Theta_0} \left( \frac{p_{\Theta_0 + U}(x)}{p_{\Theta_0}(x)} - 1 -\textrm{tr}(s_{\Theta_0}(X)H)/\sqrt{m_n} \right)^2
\\
&&=
\underbrace{
\frac{\textrm{det}(\Theta_0 + U)^{n}}{\textrm{det}(\Theta_0)^{n}}\left(\frac{\textrm{det}(\Theta_0)}{\textrm{det}(\Theta_0+2U)}\right)^{n/2}
}_{i} -1
\\&&\;\;  + \;\; \underbrace{\sum_{i=1}^n \textrm{var}_{\Theta_0}(x_i^T U x_i) }_{ii}
  \\
&&  \;\;
-\; 2 
n 
\underbrace{\textrm{tr}[ ( (\Theta_0+U)^{-1} 
- \Sigma_0) U]}_{iii}.
\end{eqnarray*}
Now we calculate the terms $i, ii, iii.$
\vskip 0.2cm
\noindent
\textit{$1.)$ Calculation of $i$}.\vskip 0.2cm
\noindent
 First we calculate $i$. Observe that since $H$ is of rank $2$ it follows
\begin{eqnarray*}
i&=&
\textrm{det}(\Theta_0 + H/\sqrt{m_n}) =\textrm{det}[\Theta_0 + \Theta_0 (\xi_1\xi_2^T+\xi_2\xi_1^T)\Theta_0/(2\sigma_{}\sqrt{m_n})]\\
&=&
(1 + \xi_1^T\Theta_0  \xi_2/(2\sigma_{}\sqrt{m_n}) )^2\textrm{det}(\Theta_0). 
\end{eqnarray*}
And hence
\begin{eqnarray*}
&&\frac{\textrm{det}(\Theta_0 + H/\sqrt{m_n})^{n}}{\textrm{det}(\Theta_0)^{n}}
\left(\frac{\textrm{det}(\Theta_0)}{\textrm{det}(\Theta_0+2H/\sqrt{m_n})}\right)^{n/2}
\\
&&=
\left[ \frac{(1 + \xi_1^T\Theta_0  \xi_2/(2\sigma_{}\sqrt{m_n}) )^2}{ (1 + \xi_1^T\Theta_0  \xi_2/(\sigma_{}\sqrt{m_n}) ) } \right]^{n}
\end{eqnarray*}
\vskip 0.2cm
\noindent
\textit{$2.)$ Calculation of $ii$}.\vskip 0.2cm
\noindent
The following property holds: if $Y\sim \mathcal N_2(0, S)$, then it holds
$$\textrm{var}(Y_1 Y_2) = S_{11}S_{22} + S_{12}^2.$$
Further observe that since  $\Theta_0 x_i \sim \mathcal N(0,\Theta_0)$ we obtain
\begin{eqnarray*}
\textrm{var}_{\Theta_0}(x_i^T H x_i/\sqrt{m_n}) &=& 
 \textrm{var}_{\Theta_0}(\xi_1^T \Theta_0 x_i x_i^T\Theta_0 \xi_2^T )/(\sigma_{}^2 m_n ) \\
&=& 
(\xi_1^T \Theta_0 \xi_1 \xi_2^T \Theta_0 \xi_2 + (\xi_1^T \Theta_0 \xi_2)^2)
/(\sigma_{}^2 m_n )=
1/m_n
.\end{eqnarray*}
Thus 
$$ii= n /m_n.$$
\vskip 0.2cm
\noindent
\textit{$3.)$ Calculation of $iii$}.\vskip 0.2cm
\noindent
Finally, by inversion of a sum of a matrix with another matrix of rank $2$, we get (we omit the calculations)
\begin{eqnarray*}
iii=\textrm{tr}[ ( (\Theta_0+H/\sqrt{m_n})^{-1} 
- \Sigma_0) H/\sqrt{m_n}]
&=&
\mathcal O\left(\frac{1}{m_n}\right). 
\end{eqnarray*}
By Lemma \ref{ggm.error} and the above calculations of $i,ii,iii$ it then follows
\begin{eqnarray*}
&&
\mathbb E _{\Theta_0}\left( \frac{p_{\Theta_0 + H/\sqrt{m_n}}(x)}{p_{\Theta_0}(x)} - 1 - \textrm{tr}(s_{\Theta_0}(X)H)/\sqrt{m_n}  \right)^2
\\
&&=
\underbrace{
\left(
1 + \frac{(\xi_1^T \Theta_0 \xi_2)^2/(4\sigma_{}^2 m_n)}{1+ 2\xi_1^T \Theta_0 \xi_2/(\sigma_{} m_n)}
 \right)^{n}
-1}_{I}\\
&& \;\;+\;  \frac{n}{ m_n} -\mathcal O\left(\frac{n}{ m_n}\right). 
\end{eqnarray*}
For the first term, by Lemma \ref{lim1} we have that $I=\mathcal O(\delta_n)$. 
Hence we conclude that
\begin{eqnarray*}
&&
\mathbb E_{\Theta_0} \left( \frac{p_{\Theta_0 + H/\sqrt{m_n}}(x)}{p_{\Theta_0}(x)} - 1 - \textrm{tr}(s_{\Theta_0}(X)H) /\sqrt{m_n}  \right)^2
\\
&& =\mathcal O(\delta_n) + \mathcal O\left(\frac{n}{m_n}\right) =\mathcal O(\delta_n).
\end{eqnarray*} 
\end{proof}

\section{Additional proofs for Section 17
}
\label{sec:B}
In this section, we give Lemmas \ref{dens}, \ref{remainder}. 

\begin{lemma}
\label{dens}
Let $Z\in\mathbb R^2$ be $\mathcal N(\mu,\Sigma)$-distributed, where 
\[
\mu = \left(
\begin{array}{c}
\mu_1
\\
\mu_2
\end{array}
\right),
\Sigma = 
\left(
\begin{array}{cc}
\sigma_{11} &\sigma_{12}\\
\sigma_{12} & \sigma_{22}
\end{array}
\right).
\]
Suppose that $\mu_2 = -\sigma_{22}/2$.
Let $Y\in \mathbb R^2$ be $\mathcal N(\mu+a,\Sigma)$-distributed, with
\[
a = \left(
\begin{array}{c}
\sigma_{12}
\\
\sigma_{22}
\end{array}
\right).
\]
Let $\phi_Z$ be the density of $Z$ and $\phi_Y$ be the density of $Y.$
Then we have the following equality for all $z=(z_1,z_2)\in\mathbb R^2$:
$$\phi_Z(z)e^{z_2} = \phi_Y(z).$$
\end{lemma}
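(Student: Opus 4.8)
The plan is to write both Gaussian densities explicitly and compare them via their ratio. Since $Z\sim\mathcal N(\mu,\Sigma)$ and $Y\sim\mathcal N(\mu+a,\Sigma)$ share the same covariance $\Sigma$, they also share the same normalizing constant $(2\pi)^{-1}(\det\Sigma)^{-1/2}$, so the claimed identity $\phi_Z(z)e^{z_2}=\phi_Y(z)$ is equivalent to $\phi_Y(z)/\phi_Z(z)=e^{z_2}$; that is, the difference of the two quadratic exponents must equal exactly $z_2$. (Note that $\Sigma$ is implicitly invertible, since the densities are written with $\det\Sigma$, so $\Sigma^{-1}$ is well defined.)

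Writing $w:=z-\mu$ and using the symmetry of $\Sigma^{-1}$, I would expand
\[
-\tfrac12 (w-a)^T\Sigma^{-1}(w-a)+\tfrac12\, w^T\Sigma^{-1}w = a^T\Sigma^{-1}w-\tfrac12\, a^T\Sigma^{-1}a,
\]
where the quadratic-in-$w$ terms cancel. The \emph{key observation}, which carries the real content of the lemma, is that the shift vector $a=(\sigma_{12},\sigma_{22})^T$ is precisely $\Sigma e_2$ with $e_2=(0,1)^T$, as one checks by a single matrix-vector product. Consequently $a^T\Sigma^{-1}=(\Sigma e_2)^T\Sigma^{-1}=e_2^T$, which gives $a^T\Sigma^{-1}w=w_2=z_2-\mu_2$ and $a^T\Sigma^{-1}a=e_2^T a=\sigma_{22}$. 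The exponent therefore collapses to $z_2-\mu_2-\tfrac12\sigma_{22}$.

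Finally I would invoke the hypothesis $\mu_2=-\sigma_{22}/2$, so that $-\mu_2-\tfrac12\sigma_{22}=\tfrac12\sigma_{22}-\tfrac12\sigma_{22}=0$, leaving exactly $z_2$. This yields $\phi_Y(z)/\phi_Z(z)=e^{z_2}$ and hence the claim for every $z\in\mathbb R^2$. There is no genuine obstacle in this argument: everything after recognizing the identity $a=\Sigma e_2$ is a one-line cancellation, and that recognition is the only step requiring a moment's attention.
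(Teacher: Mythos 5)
Your proof is correct and follows essentially the same route as the paper's: both expand the difference of the quadratic exponents, use the key identity $\Sigma^{-1}a=e_2$ (equivalently $a=\Sigma e_2$) to reduce it to $z_2-\mu_2-\tfrac12\sigma_{22}$, and then invoke $\mu_2=-\sigma_{22}/2$. No gaps.
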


\begin{proof}[Proof of Lemma \ref{dens}]
The density of $Z$ is 
$$\phi_Z(z) = \frac{1}{2\pi \sqrt{\textrm{det}(\Sigma)}} e^{-\frac{1}{2}(z-\mu)^T \Sigma^{-1}(z-\mu)}.$$
It holds that
$$\Sigma^{-1}a=(0,1)^T.$$
Then
$$\frac{1}{2}(z-\mu)^T \Sigma^{-1}(z-\mu) = \frac{1}{2}(z-\mu-a)^T \Sigma^{-1}(z-\mu-a) + a^T \Sigma^{-1}(z-\mu)-\frac{1}{2}a^T \Sigma^{-1}a.$$
We also have
$$a^T \Sigma^{-1}(z-\mu)  -\frac{1}{2}a^T \Sigma^{-1}a = (0,1)^T (z-\mu) - \frac{1}{2}(0,1)^T a = z_2 - \mu_2 - \frac{1}{2}\sigma_{22}=z_2.$$
\end{proof}


\begin{lemma}\label{eigen}
Let $\mu$ and $\Sigma$ be defined as follows
\begin{eqnarray*}
\mu=\left(
\begin{array}{c}
-\frac{v_{12}}{\sqrt{v_{11}}}\\
-\frac{v_{22}}{2}
\end{array}
\right)
, \;\;\;\;\; \Sigma = 
\left(
\begin{array}{cc}
1 & \frac{v_{12}}{\sqrt{v_{11}}}\\
\frac{v_{12}}{\sqrt{v_{11}}} & v_{22}
\end{array}
\right).
\end{eqnarray*}
Suppose that $V_\beta=\mathcal O(1), 1/V_\beta=\mathcal O(1)$ and $\Lambda_{\max} (I_\beta )= \mathcal O(1)$. (The relationship between these quantities and the $v_{ij}$'s is given in the proof of Theorem \ref{lecam}).
Then 
$$\|\mu\|_2^2 = \mathcal O(1) \;\;\;\;\textrm{ and }\;\;\;\;\Lambda_{\max}(\Sigma)=\mathcal O(1).$$
\end{lemma}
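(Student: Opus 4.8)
The plan is to read off the entries $v_{ij}$ of the matrix $V$ constructed in the proof of Theorem~\ref{lecam}, namely $v_{11}=V_{\beta}$, $v_{12}=v_{21}=P_{\beta}(l_{\beta}h^Ts_{\beta})$ and $v_{22}=h^TI_{\beta}h$, and then to bound each of them using the hypotheses together with two elementary facts: that $\|h\|_2=\mathcal O(1)$, and that $V$ is a Gram matrix, so its off-diagonal entry is dominated by its diagonal via the Cauchy--Schwarz inequality. Everything then reduces to checking that the normalisation by $\sqrt{v_{11}}$ appearing in both $\mu$ and $\Sigma$ does not blow up.

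First I would record the bounds on the diagonal entries. By hypothesis $v_{11}=V_\beta=\mathcal O(1)$ and $1/v_{11}=1/V_\beta=\mathcal O(1)$, so $v_{11}$ is bounded both above and away from zero. For $v_{22}$, recall that in the setting of Theorem~\ref{lecam} we have $\tilde\beta_n\in B(\beta,c/\sqrt{n})$, whence $\|h\|_2=\sqrt{n}\|\tilde\beta_n-\beta\|_2\le c=\mathcal O(1)$; combining this with $\Lambda_{\max}(I_\beta)=\mathcal O(1)$ gives $v_{22}=h^TI_\beta h\le \Lambda_{\max}(I_\beta)\|h\|_2^2=\mathcal O(1)$.

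Next I would control the off-diagonal entry. Applying the Cauchy--Schwarz inequality to $v_{12}=P_\beta(l_\beta\,h^Ts_\beta)$ and using that $P_\beta(h^Ts_\beta)^2=h^TI_\beta h=v_{22}$ yields $|v_{12}|\le (P_\beta l_\beta^2)^{1/2}(P_\beta(h^Ts_\beta)^2)^{1/2}=\sqrt{v_{11}v_{22}}=\mathcal O(1)$. The decisive consequence is that $v_{12}/\sqrt{v_{11}}$, which is the only potentially dangerous quantity, is itself bounded: $|v_{12}|/\sqrt{v_{11}}\le\sqrt{v_{22}}=\mathcal O(1)$, and moreover $v_{12}^2/v_{11}\le v_{22}$.

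Finally I would assemble the two conclusions. For the mean, $\|\mu\|_2^2=v_{12}^2/v_{11}+v_{22}^2/4\le v_{22}+v_{22}^2/4=\mathcal O(1)$. For the covariance, every entry of $\Sigma$ is now $\mathcal O(1)$ — the diagonal entries being $1$ and $v_{22}$, and the off-diagonal being $v_{12}/\sqrt{v_{11}}$ — so for the symmetric $2\times 2$ matrix $\Sigma$ we obtain $\Lambda_{\max}(\Sigma)\le\|\Sigma\|_F=\mathcal O(1)$. There is no genuine obstacle in this lemma; the one step that carries all the weight is the Cauchy--Schwarz bound $v_{12}^2\le v_{11}v_{22}$, which is exactly the structural statement that the Gram matrix $V$ has its off-diagonal entry dominated by its diagonal, ensuring the normalisation by $\sqrt{v_{11}}$ stays under control.
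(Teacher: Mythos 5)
Your proof is correct and follows essentially the same route as the paper: both hinge on the Cauchy--Schwarz bound for $v_{12}$ together with $\|h\|_2=\mathcal O(1)$ and $\Lambda_{\max}(I_\beta)=\mathcal O(1)$ to control $v_{22}$. The only cosmetic differences are that you bound $v_{12}^2\le v_{11}v_{22}$ directly (rather than via $V_\beta\Lambda_{\max}(I_\beta)h^Th$ as the paper does) and bound $\Lambda_{\max}(\Sigma)$ by the Frobenius norm instead of computing the eigenvalues of the $2\times 2$ matrix explicitly; neither changes the substance.
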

\begin{proof}
First observe that 
\begin{eqnarray*}
v_{12}^2 &=&(\mathbb E_\beta l_\beta h^T s_\beta)^2 \leq \mathbb E_\beta l_\beta^2 \mathbb E_\beta (h^Ts_\beta)^2 
\\
&=& V_\beta h\mathbb E_\beta s_\beta s_\beta^T h
\leq V_\beta\Lambda_{\max}(\mathbb E_\beta s_\beta s_\beta^T) h^Th.
\end{eqnarray*}
Then by assumption $\Lambda_{\max}(\mathbb E_\beta s_\beta s_\beta^T) = \mathcal O(1)$, $V_\beta=\mathcal O(1)$ and since $h^Th = \mathcal O(1),$ we have that
$(\mathbb E_\beta l_\beta h^T s_\beta)^2 = \mathcal O(1).$
Also observe that $v_{22}=h^T I_\beta h \leq \Lambda_{\max}(I_\beta) h^Th =\mathcal O(1)$ by assumption  $\Lambda_{\max}(I_\beta)=\mathcal O(1)$.
\\
Then, and by $1/V_\beta=\mathcal O(1)$, it follows that
$$\|\mu\|_2^2 = v_{12}^2/v_{11} + v_{22}^2/4 = (P_\beta l_\beta h^Ts_\beta)^2 / V_\beta + (h^T I_\beta h)^2/4 = \mathcal O(1).$$
We proceed to check that the eigenvalues of $\Sigma$ are bounded. We have
$$\lambda_{1,2} = \frac{1+v_{22}  \pm \sqrt{D} }{2},$$
where $D=(1+v_{22})^2 -4(v_{22} - v_{12}^2/v_{11})=(1-v_{22})^2 + 4 v_{12}^2/v_{11}.$
Clearly, $D\geq 0 $, and as above, one sees that $D=\mathcal O(1).$  Hence also $\Lambda_{\max}(\Sigma) = \mathcal O(1).$
\end{proof}

\begin{lemma} \label{remainder}
Suppose that 
\begin{eqnarray*}
U_n&\sim&
\mathcal N\left(
\left(
\begin{array}{c}
-\frac{v_{12}}{\sqrt{v_{11}}}\\
-\frac{v_{22}}{2}
\end{array}
\right),
\left(
\begin{array}{cc}
1 & \frac{v_{12}}{\sqrt{v_{11}}}\\
\frac{v_{12}}{\sqrt{v_{11}}} & v_{22}
\end{array}
\right) \right).
\end{eqnarray*}
Suppose that $V_\beta=\mathcal O(1), 1/V_\beta=\mathcal O(1)$, $\Lambda_{\max} (I_\beta )= \mathcal O(1)$ and $h\in\Theta$. (The relationship between these quantities and the $v_{ij}$'s is given in the proof of Theorem \ref{lecam}).\\
Then it holds  that
$$\lim_{M\rightarrow \infty}\lim_{n\rightarrow \infty} \mathbb E \min(0,M-e^{U_{n,2}})=0.$$
\end{lemma}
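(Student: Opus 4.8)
The plan is to observe that the whole expression depends on $U_n$ only through its second marginal, which is $\mathcal N(-v_{22}/2,\,v_{22})$-distributed, and to exploit the fact that this particular mean-variance pairing (coming from the log-likelihood ratio) forces $\mathbb E e^{U_{n,2}}=1$. First I would record that $\min(0,M-e^{U_{n,2}})\le 0$ and that
$$-\,\mathbb E\min(0,M-e^{U_{n,2}}) = \mathbb E\big[(e^{U_{n,2}}-M)\mathbf 1_{e^{U_{n,2}}>M}\big]=\mathbb E\big[(e^{U_{n,2}}-M)^+\big]\ge 0,$$
so it suffices to bound this nonnegative quantity. Writing $U_{n,2}=-v_{22}/2+\sqrt{v_{22}}\,Z$ with $Z\sim\mathcal N(0,1)$, I note that by Lemma \ref{eigen} we have $v_{22}=h^TI_{\beta_{n,0}}h=\mathcal O(1)$, so there is a constant $B$ and an index $N_0$ with $0\le v_{22}(n)\le B$ for all $n\ge N_0$. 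The degenerate case $v_{22}=0$ gives $U_{n,2}\equiv 0$, hence $e^{U_{n,2}}=1$ and $(1-M)^+=0$ for $M\ge 1$, so I treat it separately and trivially.

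Next I would carry out the exact Gaussian computation. Completing the square gives the identity
$$e^{-v_{22}/2+\sqrt{v_{22}}\,z}\,\phi(z)=\phi\big(z-\sqrt{v_{22}}\big),$$
from which, integrating over $\{U_{n,2}>\log M\}=\{Z>(\log M+v_{22}/2)/\sqrt{v_{22}}\}$,
$$\mathbb E\big[e^{U_{n,2}}\mathbf 1_{U_{n,2}>\log M}\big]=1-\Phi\!\left(\frac{\log M-v_{22}/2}{\sqrt{v_{22}}}\right).$$
Since $(e^{U_{n,2}}-M)^+\le e^{U_{n,2}}\mathbf 1_{e^{U_{n,2}}>M}$, this yields the clean bound $0\le -\mathbb E\min(0,M-e^{U_{n,2}})\le 1-\Phi\big((\log M-v_{22}/2)/\sqrt{v_{22}}\big)$.

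Finally I would uniformize in $n$. The function $v\mapsto (\log M-v/2)/\sqrt v$ has derivative $-\tfrac12(\log M)v^{-3/2}-\tfrac14 v^{-1/2}<0$ on $(0,B]$, hence is strictly decreasing, so its minimum over $(0,B]$ is attained at $v=B$. Therefore, once $\log M>B/2$,
$$0\le -\,\mathbb E\min(0,M-e^{U_{n,2}})\le 1-\Phi\!\left(\frac{\log M-B/2}{\sqrt{B}}\right)\qquad\text{for all }n\ge N_0,$$
and the right-hand side does not depend on $n$ and tends to $0$ as $M\to\infty$. Consequently $\lim_{M\to\infty}\lim_{n\to\infty}\mathbb E\min(0,M-e^{U_{n,2}})=0$. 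I expect the only genuine difficulty to be precisely this uniformity: the variance $v_{22}=v_{22}(n)$ need not converge, so one cannot simply pass to a limiting distribution of $U_{n,2}$. The exact tail formula together with the monotonicity of the threshold in $v_{22}$ is what converts the boundedness $v_{22}=\mathcal O(1)$ into a bound uniform in $n$, which is exactly the hypothesis required by Lemma \ref{portm}.
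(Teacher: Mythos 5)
Your proof is correct and follows essentially the same route as the paper's: both rest on the exponential-tilting identity $\phi_{U_n}(x)e^{x_2}=\phi_Y(x)$ (Lemma \ref{dens} in the paper, your completed square) followed by Gaussian tail bounds on the resulting tilted and untilted tail probabilities. If anything your version is slightly cleaner, since bounding $\mathbb E[(e^{U_{n,2}}-M)^+]$ by $\mathbb E[e^{U_{n,2}}\mathbf 1_{e^{U_{n,2}}>M}]$ drops the term $MP(e^{U_{n,2}}>M)$ and so avoids the paper's separate control of $M e^{-(\log M)^2/8}$, and your monotonicity argument in $v_{22}$ makes the uniformity in $n$ explicit where the paper's tail bounds are written somewhat loosely.
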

 
\begin{proof}[Proof of Lemma \ref{remainder}]
We have 
\begin{eqnarray*}
\mathbb E \min(0,M-e^{U_{n,2}})
&=&
\int_{-\infty }^{\infty}\int_{-\infty }^{\infty} \min(0,M-e^{x_2}) \phi_{U_n}(x)dx
\\
&=&
\int_{-\infty }^{\infty}\int_{\log M }^{\infty} M-e^{x_2} \phi_{U_n}(x)dx
\\
&=&
M P(U_{n,2} > \log M) - P(Y_{n,2} > \log K),
\end{eqnarray*}
where  we applied Lemma \ref{dens} and denoted
\[
Y\sim\mathcal N\left(
\left(
\begin{array}{c}
0\\
{v_{22}/2}
\end{array}
\right),
\left(
\begin{array}{cc}
1 & \frac{v_{12}}{\sqrt{v_{11}}}\\
\frac{v_{12}}{\sqrt{v_{11}}} & v_{22}
\end{array}
\right) 
 \right).
\]
Thus we have 
$$P(Y_{n,2} > \log M) \leq e^{-(\log M)^2/2},$$
and
\begin{eqnarray*}
&&P(U_{n,2} > \log M) =P(U_{n,2}+v_{12}/\sqrt{v_{11}} > \log M+v_{12}/\sqrt{v_{11}}) 
\\
&&\leq e^{-(\log M+v_{12}/\sqrt{v_{11}})^2/2}.
\end{eqnarray*}
Now observe that 
\begin{eqnarray*}
v_{12}^2 &=& (\mathbb E_\beta l_\beta h^T s_\beta)^2 \leq \mathbb E_\beta l_\beta^2 \mathbb E_\beta (h^Ts_\beta)^2 \\
&=& V_\beta h\mathbb E_\beta s_\beta s_\beta^T h
\leq V_\beta\Lambda_{\max}(\mathbb E_\beta s_\beta s_\beta^T) h^Th.
\end{eqnarray*}
Then by assumption $\Lambda_{\max}(\mathbb E_\beta s_\beta s_\beta^T) = \mathcal O(1)$, $V_\beta=\mathcal O(1)$ and since $h^Th = \mathcal O(1),$ we have that
$(\mathbb El_\beta h^T s_\beta)^2 = \mathcal O(1).$
Hence $v_{12}=\mathcal O(1).$
We also have by the assumption $1/V_{\beta} = \mathcal O(1)$ that $1/\sqrt{v_{11}}=\mathcal O(1).$
Hence we can conclude that $v_{12}/\sqrt{v_{11}}\leq L$ for some constant $L>0.$
Now without loss of generality, choose $M$ such that $\log M>2L.$ Then we have
$\log M+v_{12}/\sqrt{v_{11}} \geq \log M /2.$ Thus we obtain
\begin{eqnarray*}
\mathbb E \min(0,M-e^{U_{n,2}})
&=&
M P(U_{n,2} > \log M) - P(Y_{n,2} > \log K)
\\
&\leq &
M e^{-(\log M+v_{12}/\sqrt{v_{11}})^2/2} +e^{-(\log M)^2/2}  \\
&\leq &
Me^{-(\log M)^2/8} + e^{-(\log M)^2/2}.
\end{eqnarray*}
Finally, taking the limits we obtain
$$\lim_{M\rightarrow \infty }\lim_{n\rightarrow \infty}\mathbb E \min(0,M-e^{U_{n,2}})
 \leq \lim_{M\rightarrow \infty }\lim_{n\rightarrow \infty} Me^{-(\log M)^2/2} + e^{-(\log M)^2/8} =0.$$

\end{proof}

\end{document}